\theoremstyle{definition}
\newtheorem{thm}{Theorem}[section]
\newtheorem{conj}[thm]{Conjecture}
\newtheorem{lem}[thm]{Lemma}
\newtheorem{defin}[thm]{Definition}
\newtheorem{prop}[thm]{Proposition}
\newtheorem{cor}[thm]{Corollary}
\newtheorem{question}{Question}
\theoremstyle{remark}
\newtheorem*{rem}{Remark}
\newtheorem{clm}{Claim}[thm]
\newcommand{\im}{\,\mathrm{Im}\,}
\newcommand{\re}{\,\mathrm{Re}\,}
\renewcommand{\Re}{\mathrm{Re}}
\renewcommand{\SS}{\mathbb{S}}
\newcommand{\HH}{\mathbb{H}}
\newcommand{\RR}{\mathbb{R}}
\newcommand{\QQ}{\mathbb{Q}}
\newcommand{\CC}{\mathbb{C}}
\newcommand{\ZZ}{\mathbb{Z}}
\newcommand*\rfrac[2]{{}^{#1}\!/_{#2}}     
\newcommand{\del}{\partial}
\renewcommand{\H}{\mathcal{H}}
\newcommand{\K}{\mathcal{K}}
\newcommand{\C}{\mathcal{C}}
\newcommand{\Periph}{\Gamma_\infty}
\definecolor{bettergreen}{rgb}{0,0.6,0.4}
\definecolor{purple}{rgb}{0.4,0,0.6}
\newcommand*{\defeq}{\mathrel{\vcenter{\baselineskip0.5ex \lineskiplimit0pt
                     \hbox{\scriptsize.}\hbox{\scriptsize.}}}%
                     =}
\newtheoremstyle{TheoremNum}
        {10pt}{\topsep}              
        {\itshape}                      
        {}                              
        {\bfseries}                     
        {.}                             
        { }                             
        {\thmname{#1}\thmnote{ \bfseries #3}}
    \theoremstyle{TheoremNum}
    \newtheorem{thmn}{Theorem}
    \newtheorem{corn}{Corollary}
\newcommand{\define}[1]{\textbf{#1}}
\newcommand{\orb}{\mathcal{O}}
\newcommand{\orbQ}{\mathcal{Q}}
\newcommand{\thin}{< \epsilon}
\newcommand{\thick}{\geq \epsilon}
\newcommand{\thinD}{< \epsilon/d_{L}}
\newcommand{\thickD}{\geq \epsilon/d_{L}}
\newcommand{\thickDN}{\geq \epsilon/d_{N}}
\newcommand{\horopack}{\mathcal{H}}
\renewcommand{\H}{\horopack}
\newcommand{\nerve}{\mathcal{N}}
\newcommand{\mat}[4]{\bigl(\begin{smallmatrix} #1 & #2 \\ #3 & #4\end{smallmatrix}\bigr)}
\newcommand{\I}{\mathfrak{i}}
\renewcommand{\line}{\mathcal{L}}
\newcommand{\prj}{\mathcal{P}}
\newcommand{\hatC}{\widehat{\CC}}
\renewcommand{\C}{\mathfrak{c}}
\newcommand{\Isom}{\mathrm{Isom}}
\renewcommand{\L}{N}
\renewcommand{\K}{M}
\newcommand{\edHT}{(\epsilon,d_L)\mbox{-twisted}}
\newcommand{\edHTN}{(\epsilon,d_N)\mbox{-twisted}}
\newcommand{\SmL}{\SS^3 \setminus L} 
\newcommand{\SmK}{\SS^3 \setminus K} 
\newcommand{\NL}{\mathcal{L}}
\definecolor{Gray}{gray}{0.9}
\title{Symmetries and Hidden Symmetries of $(\epsilon, d_L)$-Twisted Knot Complements}
\author{Neil R Hoffman}
 \address{Department of Mathematics, Oklahoma State University, Stillwater, OK}
\email[]{neil.r.hoffman@okstate.edu}
\author{Christian Millichap}
 \address{Department of Mathematics, Furman University, Greenville, SC}
\email[]{christian.millichap@furman.edu}
\author{William Worden}
 \address{Department of Mathematics, Rice University, Houston, TX}
\email[]{william.worden@rice.edu}
\begin{document}
\maketitle
\date\today

\begin{abstract}
In this paper we analyze symmetries, hidden symmetries, and commensurability classes of $\edHT$ knot complements, which are the complements of knots that have a sufficiently large number of twists in each of their twist regions. These knot complements can be constructed via long Dehn fillings on fully augmented link complements. 

We show that such knot complements have no hidden symmetries, which implies that there are at most two other knot complements in their respective commensurability classes. Under mild additional hypotheses, we show that these knots have at most four (orientation-preserving) symmetries and are the only knot complements in their respective commensurability classes. Finally, we provide an infinite family of explicit examples of  $\edHT$ knot complements that are the unique knot complements in their respective commensurability classes obtained by filling a fully augmented link complement with four crossing circles.
\end{abstract}

\section{Introduction}

Two manifolds are said to be \define{commensurable} if they share a common finite-sheeted cover. In the case of hyperbolic 3-manifolds, this property is an equivalence relation and the equivalence classes are called \define{commensurability classes}. In general, commensurability classes of hyperbolic 3-manifolds are not well-understood, and it is often difficult to decide if two such manifolds are commensurable. To make the commensurability problem more tractable, it is natural to restrict to the case of hyperbolic knot complements. Evidence from the literature suggests that hyperbolic knot complements are rarely commensurable with one another. More precisely, Reid and Walsh put forth the following conjecture:

\begin{conj}[{\cite[Conjecture 5.2]{RW08}}]\label{conj:NR}
There are at most three hyperbolic knot complements in a commensurability class.
\end{conj}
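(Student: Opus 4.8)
Conjecture~\ref{conj:NR} is open in general; what follows is the strategy by which it is approached — including the route this paper takes for $(\epsilon,d_L)$-twisted knots — together with the two points at which it currently breaks down for an arbitrary hyperbolic knot $K$. The first move is to reduce via arithmeticity: arithmeticity is a commensurability invariant, and by Reid's theorem the figure-eight knot is the only knot with an arithmetic complement, so the arithmetic case contributes exactly one knot (alone in its class) and is finished. Assume then that $M = S^3\setminus K$ is non-arithmetic, $\Gamma = \pi_1(M)$. By Margulis's theorem $\mathrm{Comm}^+(\Gamma)$ is a lattice, so $\mathcal{O} = \mathbb{H}^3/\mathrm{Comm}^+(\Gamma)$ is a finite-volume orientable orbifold depending only on the commensurability class, and every hyperbolic knot complement commensurable with $M$ is a finite cover of $\mathcal{O}$. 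The conjecture becomes: at most three finite covers of $\mathcal{O}$ are knot complements in $S^3$.

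By the Neumann--Reid cusp dichotomy, the cusp cross-section of $\mathcal{O}$ is either rigid ($S^2(2,3,6)$, $S^2(2,4,4)$, or $S^2(3,3,3)$) or non-rigid (a torus or $S^2(2,2,2,2)$), and — since $K$ is now non-arithmetic — rigidity is equivalent to $K$ having hidden symmetries. In the rigid case, granting the Neumann--Reid conjecture that the figure-eight knot and the two Aitchison--Rubinstein dodecahedral knots are the only knots with hidden symmetries, the only commensurability class that occurs is that of the dodecahedral pair, which has size two — well within the claimed bound. So the rigid case reduces cleanly to the hidden-symmetry conjecture; this paper removes it for $(\epsilon,d_L)$-twisted knots by showing they have no hidden symmetries, just as Reid--Walsh remove it for $2$-bridge knots.

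The substantive case is the non-rigid one, where $K$ has no hidden symmetries, so $\mathrm{Comm}^+(\Gamma) = N^+(\Gamma)$ and $\mathcal{O} = M/\mathrm{Sym}^+(M)$ is the quotient of the knot complement by its finite (cyclic or dihedral) orientation-preserving symmetry group, with a single cusp of torus or $S^2(2,2,2,2)$ type. Any commensurable knot complement $M' = S^3\setminus K'$ likewise equals $M'/\mathrm{Sym}^+(M')$, hence is a regular orbifold cover of $\mathcal{O}$ with deck group $\mathrm{Sym}^+(M')$. Now bring in the topology of a knot exterior: $M'$ has one cusp, $H_1(M';\mathbb{Z}) \cong \mathbb{Z}$, and its meridian is the unique filling slope yielding $S^3$ and is primitive. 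Pushing these constraints down to the cusp $2$-orbifold of $\mathcal{O}$ restricts the image of each such meridian to a small set of slopes, and one argues — this is the core of the Reid--Walsh analysis for $2$-bridge knots, drawing on Gonz\'alez-Acu\~na--Whitten and extended by Boileau--Boyer--Cebanu--Walsh — that at most three slopes, hence at most three knots, can arise. For a restricted family such as the $(\epsilon,d_L)$-twisted knots this step is made effective through the fully augmented link model, whose Dehn-filling description pins down $\mathrm{Sym}^+(M)$ and the cusp geometry of $\mathcal{O}$ precisely enough to enumerate, and typically exclude, the candidate covers.

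The principal obstacle is that in the non-rigid case there is no a priori bound on the degree of a cover $M' \to \mathcal{O}$, so one cannot simply enumerate: a uniform proof would need surgery-theoretic control over which (possibly high-degree) cyclic covers of a one-cusped orbifold can be knot complements in $S^3$ — the general analogue of the $2$-bridge argument — whereas what is presently available is a patchwork of family-by-family results (2-bridge knots, the $(\epsilon,d_L)$-twisted knots treated here, and others), each leaning on special geometric structure. Bridging that patchwork to a uniform statement, and separately settling the Neumann--Reid hidden-symmetry conjecture invoked in the rigid case, are the two hurdles that remain; short of either, the conjecture is provable only for explicit families, which is what the results of this paper accomplish.
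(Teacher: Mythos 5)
This statement is \Cref{conj:NR}, the Reid--Walsh conjecture quoted from \cite{RW08}: it is an open conjecture, and the paper contains no proof of it --- it only supplies supporting evidence (no hidden symmetries for $\edHT$ knots, hence at most three knot complements in those classes via \cite{BBCW12}, sharpened to uniqueness under extra hypotheses). You correctly treat it as open rather than offering a purported proof, and your reduction (arithmetic case via Reid's theorem; dichotomy on the cusp type of the minimal orbifold; regular covers $M'\to\orb$ with deck group $Sym^+(M')$ in the absence of hidden symmetries) matches the standard framework the paper works in.

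However, your assessment of \emph{where} the conjecture remains open is backwards, and this is a genuine error in the write-up. The non-rigid case is not ``the substantive case'' awaiting a uniform argument: the theorem of Boileau--Boyer--Cebanu--Walsh \cite{BBCW12}, which the paper cites and uses directly, establishes unconditionally that a hyperbolic knot complement without hidden symmetries has at most three knot complements in its commensurability class (knot complements in the class correspond to cyclic fillings of a common one-cusped orbifold, and the orbifold cyclic surgery theorem bounds these by three). There is no missing ``surgery-theoretic control over high-degree cyclic covers'' in that case, and the family-by-family results you list (2-bridge knots \cite{RW08}, the $\edHT$ knots of this paper) are not patches for the non-rigid case of the $\le 3$ bound; they serve to \emph{rule out hidden symmetries} for those families (so that \cite{BBCW12} applies) and, with further work on exceptional/cyclic fillings of the quotient orbifold, to improve ``at most three'' to ``exactly one.'' The genuinely open territory for \Cref{conj:NR} is the rigid case, i.e., commensurability classes containing knots with hidden symmetries, where one currently has neither the Neumann--Reid conjecture nor an unconditional bound of three; your sketch instead declares that case ``reduced cleanly'' and locates the obstacle in the case that is already a theorem.
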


Much work has been done to verify the conjecture for particular classes of knot complements, which will be discussed below.
Perhaps the biggest step forward toward a resolution comes from a result of Boileau, Boyer, Cebanu, and Walsh \cite{BBCW12}, which asserts that the conjecture holds for a hyperbolic knot complement $\SmK$ that does not admit hidden symmetries. A \define{hidden symmetry} of a (finite volume) hyperbolic $3$-manifold $M$ is a symmetry of a finite-sheeted cover of $M$ that does not come from a symmetry of $M$. For hyperbolic knot complements, admitting hidden symmetries is equivalent to non-normally covering an orbifold \cite[Proposition 9.1]{NR92}, and we will frequently make use of this perspective. There are only three hyperbolic knot complements known to admit hidden symmetries: the figure-8 knot complement and the two dodecahedral knot complements of Aitchison and Rubinstein \cite{AitchisonRubinstein92}. One of these dodecahedral knots is shown in \Cref{fig:dodeca}. Neumann and Reid have conjectured that these are the only hyperbolic knot complements admitting hidden symmetries. While confirming this conjecture in full generality has proven difficult, certain special (infinite) classes of knot complements have been shown to not have hidden symmetries: $2$-bridge knot complements \cite{RW08}, \cite{MW16}, certain highly-twisted pretzel knot complements \cite{Mil17}, and $(-2,3,n)$-pretzel knot complements with $n \neq 7$ \cite{MaMa08}, as well as certain classes of knot complements that arise from surgery on a common manifold \cite{Hoffman2010}, \cite{CDM19}. 

\begin{figure}[h]
	\begin{subfigure}{.32\textwidth}
 		\raggedright
   		\includegraphics[scale=2]{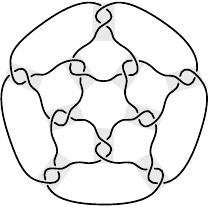}
   		\caption{}
   		\label{fig:dodeca}
	\end{subfigure}
	\begin{subfigure}{.32\textwidth}
 		\centering
   		\includegraphics[scale=1.9]{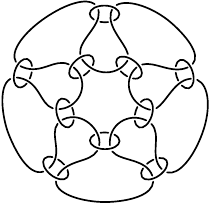}
   		\caption{}
   		\label{fig:dodeca_FAL}
	\end{subfigure}	
	\begin{subfigure}{.32\textwidth}
 		\raggedleft
   		\includegraphics[scale=2]{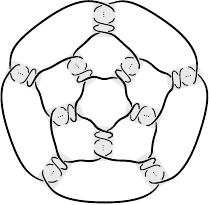}
   		\caption{}
   		\label{fig:dodeca_twisted}
	\end{subfigure}	
	\caption{The dodedecahedral knot on the left is one of only three knots whose complements are known to have hidden symmetries. The twist regions of this knots are highlighted in gray. By augmenting each twist region with a crossing circle and reducing the number of twists modulo 2 we get the FAL in center. If we perform a $\frac{1}{q_i}$-Dehn surgery along each circle $C_i$ of the FAL, we get the knot on the right, with $2q_i$ crossings in each twist region. It is a consequence of \Cref{thm:no_rigid_cusps} that if each $|q_i|$ is sufficiently large, the complement of the knot on the right will have no hidden symmetries.}
	\label{fig:hs_knots}
\end{figure}

With an eye toward expanding this analysis to a far broader class of hyperbolic knot complements, we consider knot complements obtained as Dehn fillings of fully augmented link (FAL) complements. To construct an FAL $L$, start with a (twist-reduced) diagram of a knot $K$, insert a trivial component encircling each twist region (called a crossing circle), and reduce modulo 2 the number of crossings in each twist region. Conversely, $K$ can be recovered from $L$ by Dehn surgery along the crossing circles of $L$, for appropriately chosen surgery slopes (see \Cref{sec:background} for details);  {for example consider the filling used to obtain \Cref{fig:dodeca}} from  \Cref{fig:dodeca_FAL}. In fact, if we choose for each crossing circle component $C_i$ of $L$ \emph{any} slope of the form $\frac{1}{q_i}$, for any $q_i\in\ZZ_{\neq 0}$, the result will be a knot, which by the 6 Theorem will be hyperbolic assuming none of the $q_i$ are too small. Thus, we see that any hyperbolic knot $K$ can be obtained via Dehn surgery on an FAL, and a given FAL will be a Dehn surgery ancestor of infinitely many hyperbolic knots. In particular, the three knot complements known to have hidden symmetries can be obtained by surgery on an FAL; see \Cref{fig:dodeca_FAL} for the FAL ancestor of one of the dodecahedral knots.

In addition to furnishing Dehn surgery ancestors for all hyperbolic knots, the class of FALs also has the benefit of some very nice geometric properties. Most notably, FALs admit an explicit geometric decomposition into a pair of right-angled ideal hyperbolic polyhedra, as was first demonstrated by Agol and D. Thurston in the appendix to \cite{Lac04} (see also \Cref{subsec:FALdecomp}). For sufficiently long Dehn surgeries of the crossing circles, the deformation of the geometric structure is minimal and we are able to study many topological and geometric properties of the resulting knot complements in terms of the geometric structures of the FAL complements {(see for example \cite{Pur07, FP07})}. Going forward this will be our main line of attack, and as a result we will be forced to restrict our analysis to hyperbolic knot complements having sufficiently many twists in each twist region, which we call $\edHT$ knot complements. The exact parameters for the number of twists necessary for a knot to be $\edHT$ depends on the geometry of the FAL being filled; see \Cref{subsub:Twistedknots} for details and \Cref{subsec:examples-twisted-generic} for explicit examples.  Our first result shows that such knot complements do not admit hidden symmetries, providing a major step towards analyzing their commensurability classes.

\begin{thm}\label{thm:no_rigid_cusps}
Let $M = \SS^{3} \setminus K$ be an $(\epsilon,d_L)$-twisted  knot complement. Then $M$ admits no hidden symmetries. 
\end{thm}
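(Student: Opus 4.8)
The plan is to exploit the characterization (via \cite[Proposition 9.1]{NR92}) that a hyperbolic knot complement admits hidden symmetries if and only if it irregularly covers some orbifold. Equivalently, by work of Neumann--Reid, one can detect the obstruction at the level of cusps: if $M = \SS^3 \setminus K$ has hidden symmetries, then the cusp cross-section of the minimal orbifold $\orb_{\min}$ in its commensurability class must be a Euclidean turnover $S^2(2,3,6)$, $S^2(2,4,4)$, or $S^2(3,3,3)$ --- that is, $M$ must have a ``rigid cusp'' after passing to the commensurator quotient. So the heart of the argument, and the reason for the theorem's label \texttt{no\_rigid\_cusps}, will be to rule out these three Euclidean orbifold cusp shapes. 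Concretely, I would show that the cusp shape (modulus) of an $\edHT$ knot complement cannot be commensurable with any of the three rigid Euclidean 2-orbifold groups; since the cusp field and cusp shape are commensurability invariants (up to the appropriate scaling), it suffices to compute the cusp shape of $M$ with enough precision.

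The key geometric input is that $M$ is obtained by sufficiently long Dehn filling on the crossing circles of a fully augmented link complement $\SS^3 \setminus L$, which has a totally geodesic decomposition into two right-angled ideal polyhedra (Agol--D. Thurston, appendix to \cite{Lac04}). The cusp corresponding to the knot $K$ in $\SS^3 \setminus L$ has a cusp cross-section tiled by rectangles coming from this right-angled structure, so its cusp shape lies in $\QQ(i)$ --- in particular the cusp is \emph{not} rigid, and moreover its shape is ``rectangular'' in a strong sense. The point of the $\edHT$ hypothesis is that for sufficiently long fillings, Neumann--Zagier / Hodgson-type estimates guarantee that the cusp shape of the knot cusp in $M$ is within a controlled small distance of the cusp shape of the corresponding cusp in $\SS^3 \setminus L$. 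Thus the cusp shape of $M$ stays in a small neighborhood of $\QQ(i)$, bounded away from the moduli $e^{i\pi/3}$ (for $S^2(2,3,6)$ and $S^2(3,3,3)$) and away from forcing the symmetry group needed for $S^2(2,4,4)$. Combining this with the fact that a rigid cusp would force the trace field / cusp field to be $\QQ(\sqrt{-3})$ or $\QQ(i)$ \emph{together with} an order-$3$ or order-$4$ rotational symmetry fixing the cusp, I would derive a contradiction: the long-filling estimates prevent the knot cusp from acquiring such a symmetry, since the nearby FAL cusp has only the rectangular (order $\leq 2$) symmetry and symmetries are rigid (they cannot appear ``in the limit'' without already being present, by Mostow rigidity applied to the would-be orbifold quotient).

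More carefully, the steps I would carry out are: (1) recall the Neumann--Reid dichotomy reducing ``has hidden symmetries'' to ``the knot cusp becomes a rigid Euclidean turnover in $\orb_{\min}$,'' and reduce further to showing $M$ has no hidden symmetries by showing it cannot nontrivially fiber over an orbifold with a rigid cusp; (2) using the FAL decomposition, pin down the cusp shape of the knot cusp of $\SS^3\setminus L$ and observe it is a rectangular torus (shape in $i\RR_{>0}$ or more generally in a lattice with a reflection symmetry), hence not one of the three rigid shapes; (3) invoke the quantitative Dehn filling control afforded by the $\edHT$ hypothesis (as set up in \Cref{subsub:Twistedknots}) to conclude the knot cusp of $M$ has shape close to that rectangular torus, in particular admitting at most an order-$2$ rotational symmetry and with modulus bounded away from $\exp(i\pi/3)$; (4) conclude that no rigid Euclidean orbifold cusp can occur, so by Step (1) there are no hidden symmetries. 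The main obstacle is Step (3): making the Dehn-filling estimate genuinely quantitative and strong enough to simultaneously exclude all three turnovers --- in particular ensuring the estimate rules out not just the wrong cusp \emph{shape} but the wrong cusp \emph{symmetry group}, since a priori a long filling could in principle create extra symmetry; handling this likely requires combining the cusp-shape bound with an equivariant/rigidity argument showing any symmetry of the hypothetical orbifold quotient would have to be visible already in the FAL, where the right-angled polyhedral structure makes the symmetry group explicitly computable and rigid-cusp-free.
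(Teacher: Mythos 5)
Your Step 1 (reducing ``hidden symmetries'' to ``the knot complement covers a rigid-cusped orbifold'' via Neumann--Reid \cite[Prop.~9.1]{NR92}) is exactly how the paper begins. But your Steps 2--3 depart from the paper in a way that contains a genuine gap, and that gap is not a technicality --- it is precisely the difficulty the paper's Section~\ref{sec:order_4} is built to overcome.

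The core problem is the $\SS^2(2,4,4)$ case. You claim that because the planar cusp of the FAL complement is tiled by unit rectangles, its cusp shape is ``rectangular, hence not one of the three rigid shapes,'' and then that long fillings keep the knot cusp close to rectangular. But being rectangular does \emph{not} obstruct covering a $\SS^2(2,4,4)$ cusp: a rectangular lattice with rational modulus (e.g.\ $\ZZ \oplus 2\I\ZZ$) is a finite-index sublattice of the square lattice $\ZZ[\I]$, so a rectangular torus cusp can perfectly well cover a $(2,4,4)$ turnover. Indeed, octahedral FAL complements are arithmetic with trace field $\QQ(\I)$, their horoball packings genuinely admit order-$4$ rotational symmetries (the paper points this out at the start of \Cref{sec:order_4}), and the paper constructs in \Cref{lem:rigid} an explicit two-cusped orbifold with a $\SS^2(2,4,4)$ cusp that some FAL complements do cover. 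So the $\SS^2(2,4,4)$ case cannot be ruled out by cusp shape at all --- one must rule it out at the level of the whole orbifold. There is a secondary issue as well: cusp fields like $\QQ(\sqrt{-3})$ are dense in $\CC$, so ``the filled cusp shape is close to purely imaginary'' does not by itself exclude the shape from lying in $\QQ(\sqrt{-3})$; you would need a more structural argument, not a perturbation bound, and you acknowledge this worry but don't resolve it.

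The paper's route is different and avoids both problems. It never tries to control the cusp shape of $M$. Instead, \Cref{prop:HT_non-empty} uses the thick/thin decomposition (this is where the $\edHT$ hypothesis enters) to show that the hypothetical cover $M \to \orb$ restricts to a cover $\SS^3 \setminus L \to \orbQ$, where $\orbQ$ is $\orb$ with the images of the filling core geodesics drilled out. The drilled orbifold $\orbQ$ has a rigid cusp covered by $K_0$ and torus cusps covered by the crossing circles. One then analyzes the FAL's preferred horoball packing $\H$ directly. For $\SS^2(3,3,3)$ and $\SS^2(2,3,6)$, an order-$3$ symmetry of $\H$ forces every unshaded face of the right-angled polyhedra to have at least $6$ vertices (\Cref{lem:6_vert}), contradicting $\chi(\SS^2) = 2$ via the nerve (\Cref{lem:no_ord3}). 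For $\SS^2(2,4,4)$, an order-$4$ symmetry with all crossing circles covering torus cusps forces $\orbQ$ to be one specific small orbifold (\Cref{lem:rigid}), and \Cref{lem:no_knot_covers} then shows via a cusp-killing homomorphism argument that no Dehn filling of that orbifold can be covered by a knot complement. None of this is captured by a cusp-shape estimate, and the $(2,4,4)$ branch in particular genuinely requires examining the global orbifold, not just its cusp.
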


Although we prefer to delay the rather technical definition of $(\epsilon,d_L)$-twisted until the next section, we will show in \Cref{prop:HT_non-empty} that all sufficiently long $\frac{1}{q_i}$ fillings of  FAL complements along crossing circles have this property.  Thus we immediately get the following corollary about sequences of knot complements geometrically converging to an FAL complement:

\begin{cor}\label{cor:no_geo_limit}
Let $\{ M_i = \SS^3 \setminus K_i\}$ be a sequence of hyperbolic knot complements resulting from filling the crossing circles of an FAL complement $\SS^3\setminus L$, such that each filling slope has length at least $n_i$, with $\lim_{i\to \infty} n_i=\infty$. Then only finitely many $M_i$ have hidden symmetries.
\end{cor}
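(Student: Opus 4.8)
The plan is to derive \Cref{cor:no_geo_limit} from \Cref{thm:no_rigid_cusps} together with the (forthcoming) \Cref{prop:HT_non-empty}, which guarantees that a sufficiently long $\frac{1}{q_i}$-filling of a fixed FAL complement is $(\epsilon,d_L)$-twisted. The key observation is that ``$(\epsilon,d_L)$-twisted'' is, for a fixed ambient FAL complement $\SS^3 \setminus L$, an \emph{open and cofinite} condition on the tuple of filling slopes: there is a threshold $N = N(L,\epsilon)$ so that whenever every filling slope on a crossing circle has length at least $N$, the resulting knot complement is $(\epsilon,d_L)$-twisted. This is exactly the content we are allowed to invoke from \Cref{prop:HT_non-empty}.

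First I would fix the FAL complement $\SS^3 \setminus L$ and a suitable constant $\epsilon$ (the one appearing in the definition of $(\epsilon,d_L)$-twisted), and let $N = N(L,\epsilon)$ be the threshold length provided by \Cref{prop:HT_non-empty}, so that any knot complement obtained by filling the crossing circles of $\SS^3 \setminus L$ along slopes each of length at least $N$ is $(\epsilon,d_L)$-twisted. Next, since $\lim_{i\to\infty} n_i = \infty$, there exists $i_0$ such that $n_i \geq N$ for all $i \geq i_0$. For each such $i$, every filling slope producing $M_i$ has length at least $n_i \geq N$, so $M_i$ is $(\epsilon,d_L)$-twisted. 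Applying \Cref{thm:no_rigid_cusps}, $M_i$ admits no hidden symmetries for every $i \geq i_0$. Hence the set of indices $i$ for which $M_i$ has hidden symmetries is contained in $\{1,\dots,i_0-1\}$, which is finite. This completes the argument.

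The only subtlety — and the main thing to get right rather than the ``main obstacle,'' since the corollary is genuinely a soft consequence — is the uniformity of the threshold: we need $N$ to depend only on $L$ and $\epsilon$, not on the particular filling slopes, and we need the definition of $(\epsilon,d_L)$-twisted to be phrased so that ``all slopes long enough'' suffices (as opposed to some slope-dependent bound). Both of these are handled by \Cref{prop:HT_non-empty} as stated in the excerpt, which asserts precisely that all sufficiently long $\frac{1}{q_i}$ fillings have the $(\epsilon,d_L)$-twisted property; one should also note that for $i$ large the hypothesis forces the slopes $\frac{1}{q_i}$ to exist with $|q_i|$ large, so that the fillings under consideration are indeed of the form covered by that proposition, and that by the 6-Theorem these fillings are hyperbolic knot complements as claimed. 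With those remarks in place the proof is essentially the three-line pigeonhole argument above.
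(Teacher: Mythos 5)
Your proof is correct and follows the same route the paper intends: combine \Cref{prop:HT_non-empty}(1), which provides a uniform threshold (take $N = \max_j k_j$ translated into a lower bound on slope length, since for a fixed FAL the length of the $\tfrac{1}{q_i}$-slope on a given crossing circle tends to infinity exactly as $|q_i|\to\infty$) with \Cref{thm:no_rigid_cusps}, and then note that the hypothesis $n_i\to\infty$ pushes all but finitely many $M_i$ past the threshold. The paper states the corollary as an immediate consequence of those two results, and your argument makes that deduction explicit without introducing anything new.
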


In light of this, it is perhaps unsurprising that each of the three examples of knot complements known to admit hidden symmetries arise with just two twists per twist region. Geometrically, this means that the hyperbolic structures of these knot complements are not similar to the hyperbolic structures of their ancestor FAL complements. 

By combining Theorem \ref{thm:no_rigid_cusps} with \cite[Theorem 1.4]{BBCW12} we also get the following corollary, which provides further evidence in favor of the conjecture of Reid and Walsh:

\begin{cor}
There are at most three hyperbolic knot complements in the commensurability class of an $\edHT$ knot complement.	
\end{cor}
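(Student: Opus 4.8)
The plan is to read this off directly from \Cref{thm:no_rigid_cusps} together with the theorem of Boileau, Boyer, Cebanu, and Walsh already quoted in the introduction. Recall that \cite[Theorem 1.4]{BBCW12} establishes \Cref{conj:NR} --- at most three hyperbolic knot complements in a commensurability class --- for every hyperbolic knot complement that admits no hidden symmetries. So the whole task reduces to verifying that hypothesis for $\edHT$ knot complements, which is exactly the content of \Cref{thm:no_rigid_cusps}.

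Concretely, I would let $M = \SS^3 \setminus K$ be an $\edHT$ knot complement, apply \Cref{thm:no_rigid_cusps} to see that $M$ admits no hidden symmetries, and then feed $M$ into \cite[Theorem 1.4]{BBCW12} to conclude that its commensurability class contains at most three hyperbolic knot complements. Since $M$ was an arbitrary $\edHT$ knot complement, this is the assertion to be proved.

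The main obstacle here is that there essentially isn't one: all of the geometric work lives in the proof of \Cref{thm:no_rigid_cusps}, and \cite{BBCW12} enters purely as a black box. The only point worth flagging is that being $\edHT$ is a property of the particular complement $M$, not of its commensurability class --- a commensurable hyperbolic knot complement $\SS^3 \setminus K'$ need not itself be $\edHT$, nor even a Dehn filling of the same FAL. This is harmless, since \cite[Theorem 1.4]{BBCW12} requires only that the one distinguished complement under consideration lack hidden symmetries, after which its conclusion bounds the number of hyperbolic knot complements in the entire commensurability class. Hence nothing beyond the two citations is needed.
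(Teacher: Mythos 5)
Your proposal is correct and matches the paper's argument exactly: the paper derives this corollary by combining \Cref{thm:no_rigid_cusps} with \cite[Theorem 1.4]{BBCW12}, precisely as you describe. Your additional remark about the asymmetry of the $\edHT$ hypothesis (it need only hold for the one distinguished complement, not for the whole commensurability class) is a correct and worthwhile clarification.
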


Work of Margulis \cite{Mar91} shows that any (non-arithmetic) hyperbolic knot complement covers a unique (orientable) minimal volume orbifold $\orb$ in its commensurability class. Theorem \ref{thm:no_rigid_cusps}  implies that when $M=\SS^3\setminus K$ is an $\edHT$ knot complement such a cover must be regular and in particular, $\orb$ must be the quotient of $M$ by its group of orientation-preserving symmetries. Thus it is natural to study the symmetry group of $M$ in order to more fully understand the commensurability class of $M$. Since our definition of an $\edHT$ knot complement (see \Cref{subsub:Twistedknots}) guarantees that the core geodesics introduced under Dehn filling are the shortest geodesics in $M$, any symmetry of $M$ must map this set of geodesics to itself.  This provides a significant restriction on the symmetry groups of $\edHT$ knot complements, which we highlight in the following theorem. For this result we not only need our knot complements to be $\edHT$, but also generic, which means that no isometry of $\SmK$ permutes the core geodesics of the filling solid tori. For instance, if these core geodesics all have different lengths, then this filling is generic. See \Cref{subsub:TandGknots} for more details on this.

\begin{thm}\label{thm:max_size_sym_group}
Let $M = \SS^{3} \setminus K$ be an $(\epsilon,d_L)$-twisted and generic knot complement. Then $M$ has an orientation-preserving symmetry group of order at most $4$.  
\end{thm}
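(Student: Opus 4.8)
The plan is to reduce the problem to the rigidity of a single thrice-punctured sphere appearing in the geometric decomposition of the fully augmented link complement that $M$ fills. Throughout, write $G = \Isom^+(M)$ for the orientation-preserving symmetry group, realize $M$ as the result of $\frac{1}{q_i}$-filling the crossing circles $C_1,\dots,C_n$ of an FAL $L$, and let $\gamma_1,\dots,\gamma_n \subset M$ be the core geodesics of the filling solid tori.

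\emph{Step 1: pass to the FAL complement.} By the definition of an $\edHT$ knot complement (\Cref{subsub:Twistedknots}), the $\gamma_i$ are precisely the shortest geodesics in $M$, so $G$ permutes $\{\gamma_1,\dots,\gamma_n\}$, and the genericity hypothesis (\Cref{subsub:TandGknots}) then forces each $\phi \in G$ to fix every $\gamma_i$ setwise. Hence $\phi$ restricts to a self-homeomorphism of $M \setminus \bigcup_i \gamma_i$, which is homeomorphic to $\SmL$, and by Mostow--Prasad rigidity this restriction is isotopic to a unique isometry of the complete hyperbolic structure on $\SmL$. This produces a homomorphism $\iota \colon G \to \Isom^+(\SmL)$, which is injective because $\pi_1\big(M \setminus \bigcup_i \gamma_i\big) \to \pi_1(M)$ is surjective: if $\iota(\phi) = \id$ then $\phi$ acts trivially on $\pi_1(M)$, so $\phi = \id$. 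Moreover every $\iota(\phi)$ fixes each cusp of $\SmL$ --- the crossing-circle cusps because $\phi$ fixes each $\gamma_i$, and the knot-strand cusp because $\phi$ fixes the unique cusp of $M$.

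\emph{Step 2: restrict to one crossing surface.} Fix a crossing circle, say $C_1$, and let $\Sigma \subset \SmL$ be the totally geodesic thrice-punctured sphere coming from the crossing disk bounded by $C_1$ in the polyhedral decomposition of \Cref{subsec:FALdecomp}. This surface is canonically associated to $C_1$ --- up to isotopy it is the only essential twice-punctured disk bounded by $C_1$ --- so it is invariant under every cusp-fixing isometry of $\SmL$; in particular $\iota(G)$ preserves $\Sigma$, and restriction gives a homomorphism $r \colon \iota(G) \to \Isom(\Sigma)$. The composite $r \circ \iota \colon G \to \Isom(\Sigma)$ is injective: if $r(\iota(\phi)) = \id_\Sigma$, then $\iota(\phi)$ is an orientation-preserving isometry of $\SmL$ fixing the connected, codimension-one, totally geodesic surface $\Sigma$ pointwise, and examining its differential along $\Sigma$ shows it must be the identity or the reflection across $\Sigma$; orientation-preservation rules out the reflection, so $\iota(\phi) = \id$ and $\phi = \id$. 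Finally, $\iota(\phi)$ fixes the cusp of $\SmL$ corresponding to $C_1$, so $r(\iota(\phi))$ fixes the corresponding cusp of $\Sigma$; hence $r \circ \iota$ lands in the stabilizer of a cusp inside $\Isom(\Sigma)$. Since $\Sigma$ is a thrice-punctured sphere, $\Isom(\Sigma)$ has order $12$ and the stabilizer of any one of its cusps has order $4$ (generated by the orientation-reversing involution fixing all three cusps together with the order-two rotation interchanging the other two). Therefore $|G| = |r(\iota(G))| \le 4$. The bound is $4$ rather than $2$ precisely because an element of $G$, though orientation-preserving on $M$, may reverse the co-orientation of $\Sigma$ and so act orientation-reversingly on $\Sigma$.

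\emph{Main obstacle.} The real content lies in the geometric claim of Step 2: that $C_1$ determines a canonical, and hence $\iota(G)$-invariant, totally geodesic thrice-punctured sphere in $\SmL$. This is where the rigid structure of FAL complements does the work; once it is granted, the remaining ingredients are soft --- Mostow--Prasad rigidity, surjectivity of $\pi_1$ under Dehn filling, the fact that an orientation-preserving isometry fixed pointwise on a codimension-one totally geodesic surface is trivial, and the elementary computation of a cusp stabilizer in the isometry group of a thrice-punctured sphere. One should also check carefully that ``the $\gamma_i$ are the shortest geodesics'' is genuinely part of the $\edHT$ definition, since this is what lets $G$ act on $\{\gamma_i\}$ in the first place, with genericity supplying the upgrade to fixing each $\gamma_i$ individually.
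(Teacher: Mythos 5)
Your proof follows essentially the same strategy as the paper's: show that orientation-preserving symmetries of $M$ restrict to an injective action on a crossing disk that fixes the $C_1$-cusp, and then bound this by the order-4 cusp stabilizer in the isometry group of a thrice-punctured sphere. The surrounding soft steps — Mostow--Prasad rigidity giving a monomorphism $G\hookrightarrow\Isom^+(\SmL)$ and the cusp-fixing restriction (the paper routes this through Kojima via \Cref{prop:symrelation}), triviality of an orientation-preserving isometry fixed pointwise on a codimension-one totally geodesic surface, and the stabilizer count in $\Isom(\Sigma)\cong S_3\times\ZZ_2$ — are all fine and track the paper's outline closely.

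However, the crux — your claim that for an arbitrary crossing circle $C_1$, the crossing disk $\Sigma$ is ``up to isotopy the only essential twice-punctured disk bounded by $C_1$'' — is false in general, and you have located exactly where the real work lives without doing it. The paper discusses this explicitly in \Cref{subsec:PD}: a crossing circle may bound several inequivalent such disks, which it calls \define{generalized crossing disks}; in the example of \Cref{fig:to_poly} the middle crossing circle bounds a thrice-punctured sphere on both sides. Consequently a cusp-fixing isometry of $\SmL$ need not preserve your $\Sigma$ — it can carry $\Sigma$ to a different generalized crossing disk of $C_1$ — and the restriction homomorphism $r$ is then undefined. Closing this gap is precisely the content of \Cref{prop:CDs} (there exists at least one crossing circle with a unique generalized crossing disk), supplemented by \Cref{lemma:3PSintersections} and a Ford-domain argument showing the components of the preimage of the quotient 2-orbifold are all generalized crossing disks of the same crossing circle. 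Your Step 2 must therefore be applied not to an arbitrary $C_1$ but to a crossing circle guaranteed by \Cref{prop:CDs} to have a unique generalized crossing disk, and that existence statement needs proof.
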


In addition to establishing that such symmetry groups must be very small, we also determine how any non-trivial symmetries must act on the cusp of $M$; see \Cref{cor:free_cusp_symms}.  Furthermore, we note that there are infinite families of $\edHT$ and generic knot complements with symmetry groups of order exactly 4; see \Cref{rem:tight}. Finally, combining \Cref{thm:max_size_sym_group} and \Cref{thm:no_rigid_cusps} shows that the minimal orbifold cover $M \rightarrow \orb$ described in the previous paragraph is at most degree 4; this is stated in \Cref{cor:HTHDwithRigidCusps}. This result makes it quite feasible to directly analyze $\orb$, and thus assist with distinguishing commensurability classes of  $(\epsilon,d_L)$-twisted and generic knot complements.



Futer and Purcell establish that if a twist-reduced diagram of a knot has enough twist regions and enough twists in each twist region, then the associated complement cannot admit exceptional surgeries (see \cite[Corollary 1.8]{FP07}). We refer the reader to \Cref{fig:dodeca_twisted} for a visual of twist regions and \Cref{sec:background} for the definition. We extend this result by obstructing quotients of  $(\epsilon,d_L)$-twisted and generic knot complements with the same properties  from admitting exceptional surgeries. We summarize both our result and those of Futer and Purcell in the following theorem.

\newcommand{\secondSlopeLongText}{Let $\SS^3 \setminus K$ be an $(\epsilon,d_L)$-twisted and generic knot complement admitting a twist-reduced diagram with at least $9$ twist regions  such that each twist region has at least $6$  crossings. Then $\SS^3 \setminus K$ has no non-trivial exceptional fillings, and the quotient $\orbQ$ of $\SS^3 \setminus K$ by its symmetries that act freely on the cusp has no non-trivial exceptional fillings that are good orbifolds.    
}

\begin{thm}\label{lem:second_slope_long}
\secondSlopeLongText
\end{thm}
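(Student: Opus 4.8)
The first assertion requires no new work: it is exactly Futer--Purcell \cite[Corollary 1.8]{FP07}, whose hypotheses (a twist-reduced diagram with at least $9$ twist regions, each with at least $6$ crossings) are ours and whose conclusion is that $\SS^3\setminus K$ has no non-trivial exceptional surgeries. So everything is in the statement about $\orbQ$, and the plan is to reduce it to the first assertion through a tower of finite covers.

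Write $M=\SS^3\setminus K$ and $\orbQ=M/G$, where $G$ is the group of orientation-preserving symmetries acting freely on the cusp (and the identity). By \Cref{cor:free_cusp_symms}, $G$ acts freely on the cusp torus, so $\orbQ$ is a cusped orbifold with one torus cusp and $M\to\orbQ$ is a finite cover; $\orbQ$ may genuinely be an orbifold, its singular locus coming from fixed sets of $G$ in the interior of $M$. Fix a non-trivial slope $\gamma$ on $\partial\orbQ$ for which $\orbQ(\gamma)$ is a (very) good orbifold; we must show $\orbQ(\gamma)$ is hyperbolic. Inside the cusp subgroup $\Gamma_\infty\cong\ZZ^2$ of $\Gamma := \pi_1^{\mathrm{orb}}(\orbQ)$, let $e\ge 1$ be smallest with $\gamma^e$ lying in the peripheral subgroup of $\pi_1(M)$, and let $\tilde\gamma$ be the corresponding primitive slope on $\partial M$, so the preimage in $\partial M$ of a $\gamma$-curve is a union of $\tilde\gamma$-curves. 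A short computation with primitive elements of $\Gamma_\infty$ shows that $\gamma$ non-trivial (i.e.\ not the image of the meridian direction of $K$) forces $\tilde\gamma$ to be a non-trivial slope on $K$, so by the first assertion $M(\tilde\gamma)$, a Dehn surgery on $K$, is hyperbolic.

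Now build the tower. Since $\orbQ(\gamma)$ is good it has a finite cover $\varphi\colon N\to\orbQ(\gamma)$ with $N$ a closed manifold; drilling $\varphi^{-1}(\kappa)$, where $\kappa$ is the core of the Dehn filling, produces a compact manifold $\widehat M:=N\setminus\mathcal{N}(\varphi^{-1}(\kappa))$ that finitely covers $\orbQ(\gamma)\setminus\kappa\cong\orbQ$, and refilling $\widehat M$ along the meridians of the drilled tori---which are lifts of $\gamma$---recovers $N$. Take a common finite cover $\widehat{\widehat M}$ of $M$ and $\widehat M$ as covers of $\orbQ$: it corresponds to the intersection of the two finite-index subgroups of $\Gamma$, which is torsion-free since it lies in $\pi_1(M)$, so $\widehat{\widehat M}$ is a manifold finitely covering $M$. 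Because $\gamma^e=\tilde\gamma$ is already the shortest power of $\gamma$ lying in the cusp subgroup of $\widehat{\widehat M}$, the (multi-)slope on $\partial\widehat{\widehat M}$ lying over $\tilde\gamma$ via $\widehat{\widehat M}\to M$ coincides with the one lying over $\gamma$ via $\widehat{\widehat M}\to\widehat M\to\orbQ$; call it $\sigma$. Then $\widehat{\widehat M}(\sigma)$ is at once a finite cover of $M(\tilde\gamma)$ and of $\widehat M(\gamma)=N$. Since $M(\tilde\gamma)$ is hyperbolic, so is $\widehat{\widehat M}(\sigma)$, hence so is $N$ (a closed $3$-manifold finitely covered by a hyperbolic manifold is hyperbolic, by geometrization); finally $\orbQ(\gamma)$ is a good orbifold finitely covered by the hyperbolic manifold $N$, so $\orbQ(\gamma)$ is hyperbolic. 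This gives the second assertion.

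The main obstacle is the case $e>1$, i.e.\ when $\gamma$ does not lift to $M$ (equivalently, the $G$-action on the Dehn-filling solid torus has fixed points). When $e=1$ one simply has $\orbQ(\gamma)=M(\tilde\gamma)/G$, and hyperbolicity is immediate; when $e>1$ this fails---$M(\tilde\gamma)/G$ is only $\orbQ(\gamma)$ with an extra cone-circle inserted along $\kappa$---and the detour through $\widehat{\widehat M}$ becomes necessary. It is precisely here that the hypothesis ``$\orbQ(\gamma)$ is a good orbifold'' is used, to produce the manifold cover $N$ that starts the argument. The remaining points to nail down are routine bookkeeping: the compatibility of slopes up and down the cover tower, and the fact that $\widehat M$ really covers $\orbQ$ (so that the interior singular locus of $\orbQ$ is unwrapped consistently).
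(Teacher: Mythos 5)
Your proof is correct, but it takes a genuinely different route from the paper's. The paper never looks at $\orbQ(\gamma)$ directly; instead it passes to the FAL quotient $\orbQ_L$, expresses a filling of $\orbQ$ as a multi-slope filling $(s,\alpha_p)$ of $\orbQ_L$, estimates every component of that multi-slope (\Cref{prop:quotientbases}, \Cref{lem:bounds}, and the FP07 slope-length bound) to show each has length $>6$ in the preferred horoball packing, and then invokes the orbifold $6$-theorem (\Cref{thm:orb_6_theorem}). It is precisely this quantitative step that forces the ``at least $9$ twist regions'' hypothesis: that is what makes the longitude of the planar cusp long enough ($\geq 18$) for the geometric basis analysis in \Cref{tab:short_quotient_slopes} to yield a slope of length $>6$ on $\orbQ_L$. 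You instead take a qualitative covering-tower argument: use the goodness of $\orbQ(\gamma)$ to obtain a finite manifold cover $N$, drill, take a common cover $\widehat{\widehat M}$ of $M$ and $\widehat{M}$, and propagate the hyperbolicity of $M(\tilde\gamma)$ (from FP Cor.~1.8) up and down the tower. This avoids slope-length estimates entirely and, as a bonus, would run with only the hypotheses of FP Cor.~1.8 ($\geq 4$ twist regions rather than $\geq 9$). On the other hand, the paper's route keeps the heavy technology localized in the orbifold $6$-theorem, whereas your last two steps---``a closed $3$-manifold finitely covered by a hyperbolic one is hyperbolic'' and ``a good orbifold finitely covered by a hyperbolic manifold is hyperbolic''---both lean on the Orbifold/Geometrization Theorem; you should cite these rather than wave at ``geometrization.'' You also rely, as the paper does, on the fact that a compact orientable $3$-orbifold with no bad $2$-suborbifold has a finite manifold cover; this deserves a citation (the paper uses \Cref{thm:BMP03}).

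Two small points in the bookkeeping. First, the sentence ``$\gamma^e=\tilde\gamma$ is already the shortest power of $\gamma$ lying in the cusp subgroup of $\widehat{\widehat M}$'' is not true in general---the cusp subgroup of $\widehat{\widehat M}$ can be a proper subgroup of that of $M$, so the shortest power may be $\gamma^{ek}$ for $k>1$. What you actually need (and what is true) is that the preimage slope of $\tilde\gamma$ under $\widehat{\widehat M}\to M$ and the preimage slope of $\widehat M$'s meridian under $\widehat{\widehat M}\to \widehat M$ are both the primitive element of the cusp lattice of $\widehat{\widehat M}$ in the $\gamma$-direction, hence coincide; the value of that power is irrelevant. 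Second, it is worth stating explicitly that $\widehat{\widehat M}$ is compact and a manifold: it corresponds to the intersection of two finite-index subgroups of $\pi_1^{\mathrm{orb}}(\orbQ)$, one of which is the torsion-free $\pi_1(M)$.
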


See \Cref{sect:6Theorem} for the definition of a good orbifold.The fact that our results extend to quotients allows for the following corollary, which asserts that these knot complements are the unique knots in their respective commensurability classes.

\newcommand{\uniqueKnotComp}{Let $\SS^3 \setminus K$ be an $(\epsilon,d_L)$-twisted and generic knot complement admitting a twist-reduced diagram with at least $9$ twist regions  such that each twist region has at least $6$ crossings. Then $\SS^3 \setminus K$ is the only knot complement in its commensurability class.}

\begin{cor}\label{thm:only_knot_comm_class}
\uniqueKnotComp
\end{cor}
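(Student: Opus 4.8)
The plan is to deduce \Cref{thm:only_knot_comm_class} from \Cref{lem:second_slope_long} together with \Cref{thm:no_rigid_cusps} and the structural results of Boileau--Boyer--Cebanu--Walsh. First I would recall the setup: since $M = \SS^3\setminus K$ is $(\epsilon,d_L)$-twisted, \Cref{thm:no_rigid_cusps} tells us $M$ admits no hidden symmetries, so by \cite[Proposition 9.1]{NR92} the covering $M \to \orb$ onto the minimal orbifold in its commensurability class is regular; equivalently (using that $K$ is not one of the three knots with hidden symmetries, and invoking Margulis for the non-arithmetic case, with the arithmetic case handled separately since the only arithmetic knot is the figure-8), $\orb = M/\mathrm{Sym}^+(M)$. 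Any other knot complement $M' = \SS^3\setminus K'$ commensurable with $M$ also covers $\orb$, and by the Boileau--Boyer--Cebanu--Walsh analysis (their Theorem 1.4, already invoked in the corollary following \Cref{thm:no_rigid_cusps}) such an $M'$ is obtained from $\orb$ by a surgery: more precisely, $M'$ is a cyclic cover of $\orb$ filled along the cusp, so $M'$ arises as an exceptional-type filling of the cusped orbifold quotient of $M$.

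Next I would identify the relevant quotient orbifold. By \Cref{cor:free_cusp_symms} (referenced after \Cref{thm:max_size_sym_group}), the non-trivial symmetries of $M$ that can occur act on the cusp torus in a controlled way, and the quotient $\orbQ$ of $M$ by its symmetries acting freely on the cusp is precisely the cusped orbifold whose Dehn fillings recover the commensurable knot complements. The key input \Cref{lem:second_slope_long} says that when $K$ admits a twist-reduced diagram with at least $9$ twist regions each having at least $6$ crossings, neither $M$ itself nor $\orbQ$ has any non-trivial exceptional filling that is a good orbifold. I would then argue that if $M'$ were a knot complement commensurable with $M$ and not homeomorphic to $M$, it would have to be recovered as a non-trivial filling of $\orbQ$ (this is where the BBCW structure theory is used: the other knot complements in the class, if any, are fillings of this quotient orbifold, and the filling slope is non-trivial precisely because $M' \not\cong M$); moreover $M'$, being a manifold, is certainly a good orbifold. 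This contradicts \Cref{lem:second_slope_long}. Hence no such $M'$ exists, and $M$ is the unique knot complement in its commensurability class.

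The step I expect to be the main obstacle — and the one requiring the most care — is pinning down exactly which orbifold's fillings produce the commensurable knot complements, and verifying that the slope in question is genuinely ``non-trivial'' and ``exceptional'' in the sense of \Cref{lem:second_slope_long}. One has to run through the BBCW dichotomy: either $M' \cong M$ (which is fine), or $M'$ is a strictly smaller-degree cover situation that, combined with the absence of hidden symmetries and the bound on the symmetry group from \Cref{thm:max_size_sym_group}, forces $M'$ to be an honest Dehn filling of $\orbQ$ along a slope that does not give back $M$. Distinguishing the trivial filling (which recovers $M$ or a manifold already counted) from non-trivial ones, and checking that ``non-trivial'' here matches the hypothesis excluded in \Cref{lem:second_slope_long}, is the delicate bookkeeping. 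The remaining ingredients — regularity of the minimal cover from \Cref{thm:no_rigid_cusps}, the goodness of a manifold, and the exceptional-filling obstruction — then assemble routinely into the contradiction.
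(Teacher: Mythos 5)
Your approach matches the paper's: you deduce the corollary from \Cref{lem:second_slope_long} by invoking \cite[Proposition 4.13]{BBCW12} to put the commensurable knot complements in bijection with orbi-lens space fillings of the quotient $\orbQ$ of $\SS^3\setminus K$ by its symmetries acting freely on the cusp, then conclude that only the trivial filling (the meridian) survives. One small correction to the final step: the closed orbifold you should feed to \Cref{lem:second_slope_long} is not the cusped manifold $M'$ (which is a cover of $\orbQ$, not a filling of it) but the orbi-lens space $\orbQ(s)$ that $M'$ determines---this orbifold is good because it is covered by $\SS^3$ and is non-hyperbolic, so if the slope $s$ is non-trivial it is exactly the kind of filling \Cref{lem:second_slope_long} forbids.
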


%


This paper focuses on knot complements having a large number of crossings in each twist region. It is natural to ask if it is sufficient to have a single twist region with a large number of crossings. Further, one might ask for a universal lower bound on what is meant by ``large." The below conjecture asserts that such a condition should be sufficient for the conclusion of \Cref{thm:no_rigid_cusps}. Proving such a result would be a major step toward resolving \Cref{conj:NR}.

\begin{conj}
Let $\SmK$ be a knot complement and $D$ be a twist-reduced diagram of $K$. If $\SmK$ admits hidden symmetries, there exists universal upper bound $C$ on the number of crossings in a twist region of $D$.
\end{conj}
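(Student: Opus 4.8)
The plan, building on \Cref{thm:no_rigid_cusps}, is as follows. Fix a twist-reduced diagram $D$ of $K$ and a twist region $R$ of $D$ with $c$ crossings, and augment $R$ with a crossing circle $C$, so that $M \defeq \SmK$ is obtained from $N \defeq \SS^3 \setminus (K \cup C)$ by a $\frac{1}{q}$ Dehn filling of the $C$-cusp, where $c$ equals $2|q|$ up to a bounded additive error. It then suffices to exhibit a \emph{universal} $Q$ --- independent of $K$, $D$ and the choice of $R$ --- with the property that if $|q| \ge Q$ then $M$ admits no hidden symmetries. The natural two-step approach is: (1) show that a long twist region forces a uniformly short geodesic in $M$; and (2) show that a hyperbolic knot complement with a sufficiently short geodesic of this type cannot admit hidden symmetries.

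For step (1), one wants to feed the linear-in-$|q|$ lower bound on the length of the $\frac{1}{q}$ filling slope into the quantitative Dehn-filling (cone-manifold) estimates underlying the $6$-Theorem, as used by Futer and Purcell \cite{FP07}, to conclude that the core geodesic $\gamma$ of the filling solid torus satisfies $\ell_M(\gamma) \le f(c)$ with $f(c) \to 0$ as $c \to \infty$ and $f$ depending on nothing but $c$. The required universal slope-length bound should come from the right-angled ideal polyhedral decomposition of fully augmented links \cite{Lac04} and the resulting uniform control of crossing-circle cusp geometry (cf. \cite{Pur07}), applied after augmenting \emph{all} twist regions of $D$ to realize $N$ as a filling of an FAL. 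The genuinely new technical point here, relative to the $\edHT$ machinery, is that these estimates must be \emph{localized to a single twist region}: the rest of the diagram is completely uncontrolled.

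For step (2), suppose $M$ has hidden symmetries. By \cite{Mar91} and \cite[Proposition 9.1]{NR92}, $M$ non-normally covers its minimal orbifold $\orb$, which --- since $M$ is a knot complement --- has a rigid Euclidean cusp cross-section, so that the cusp field of $M$ lies in $\{\QQ(i),\QQ(\sqrt{-3})\}$. The short geodesic $\gamma$ descends to a short geodesic $\bar\gamma \subset \orb$, and $\orb \setminus \bar\gamma$ is a two-cusped orbifold commensurable with $N$. If the degree $[M:\orb]$ were universally bounded, then along any hypothetical sequence of counterexamples with $c \to \infty$ one would have $\ell_\orb(\bar\gamma) \to 0$ while $\mathrm{vol}(\orb) \le \mathrm{vol}(M)$ stays bounded in terms of the number of twist regions; Jorgensen--Thurston finiteness plus the finiteness of Dehn fillings yielding a fixed manifold would then bound $c$. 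To eliminate the dependence on the number of twist regions one would instead try to exploit the cusp-field restriction together with the Neumann--Zagier estimate that the surviving cusp shape of $N_{1/q}$ approaches that of $N$ at rate $O(1/q^2)$, which pins down the Gaussian or Eisenstein cusp shape once its height is bounded.

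The hard part --- and the reason this remains a conjecture --- is \emph{uniformity over the infinitely many ancestor links $N$}. For a \emph{fixed} $N$, the fact that only finitely many $\frac{1}{q}$-fillings admit hidden symmetries is already essentially known (it is the mechanism behind \Cref{cor:no_geo_limit}); but here $N$ ranges over links of unbounded topological and geometric complexity, so none of the finiteness inputs above are available off the shelf. Concretely, one must either bound the covering degree $[M:\orb]$ independently of $c$ and of the rest of $K$ --- which knowing that $M$ is a knot complement does not supply --- or obtain a uniform height bound on the permissible cusp shapes. A further, subtler obstacle is that the uniformly shortest geodesic $\gamma$ of $M$ must not be merely permuted with other geodesics of the same length by the commensurator action in a way that prevents it from descending to a short geodesic of $\orb$; ruling this out needs a strict systole gap produced from the single long twist region alone, whereas the $\edHT$ hypothesis provides such a gap only by controlling \emph{all} twist regions at once. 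I expect that surmounting these points requires genuinely new input --- most likely an arithmetic argument constraining $\orb$ --- beyond the geometric-convergence techniques of this paper.
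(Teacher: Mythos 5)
This statement is not proved in the paper: it is stated as a conjecture, and the paper explicitly frames it as open (``Proving such a result would be a major step toward resolving \Cref{conj:NR}''), so there is no in-paper proof to compare against. Your proposal, as you yourself concede in the final paragraph, is a strategy outline rather than a proof, and the step at which it stops is exactly the step the paper's machinery cannot supply. The paper's hidden-symmetry theorem (\Cref{thm:no_rigid_cusps} via \Cref{thm:ht_no_rigid_cusps}) requires the filling to be $\edHT$, and the thresholds there are not universal: $\epsilon$ must be below the systole of the ancestor FAL complement and the fillings long relative to $d_L = 4\,vol(\SS^3\setminus L)/v_0$ (see \Cref{def:edHT}, \Cref{prop:HT_non-empty}, and the quantified \Cref{thm:FALquantify}). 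Section 6 shows, via the pretzel FALs of Meyer--Millichap--Trapp, that FAL complements have arbitrarily short systoles and volumes growing with the number of crossing circles, so these dependencies genuinely cannot be stripped out; this is why the paper only obtains per-ancestor finiteness (\Cref{cor:no_geo_limit}) and not a universal constant $C$.

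Beyond the uniformity issue you flag, your step (1) has a structural problem even for a fixed diagram: making one twist region long produces one short core geodesic, but it does not give the thick/thin control that drives the paper's argument. The proof of \Cref{thm:ht_no_rigid_cusps} needs \emph{all} crossing circles filled long, so that \Cref{prop:HT_non-empty}(2) lets an orbifold cover of the knot complement restrict to a cover of the FAL complement, at which point the horoball-packing analysis of \Cref{sec:hidden_syms} applies; with the rest of the diagram uncontrolled, there is no candidate cusped ancestor whose geometry you understand, and no argument that the short geodesic (or its image in the minimal orbifold) can be drilled to land you in a commensurability statement you can exploit. Your step (2) likewise leans on a bounded covering degree or a cusp-shape/height bound that nothing in the paper provides for knot complements \emph{with} hidden symmetries --- indeed the three known examples all arise from fillings with only two crossings per twist region, and ruling out analogues with one long twist region is precisely the open content of the conjecture. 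So the proposal is a reasonable reading of where the difficulty lies, but it does not prove the statement, and no proof of it exists in the paper.
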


\subsection{FAL structure results}
In \Cref{sec:hidden_syms}, a significant amount of work  goes into developing structural results for any  hyperbolic FAL $L$ obtained from fully augmenting a knot $K$. While the main purpose of these results is to provide tools for proving 
{the above}
theorems about $(\epsilon,d_L)$-twisted and generic knot complements, they also {highlight} some interesting properties of horoball packings and orbifold covers of FAL complements. In what follows, let $K_0$ be the component of $L$ corresponding to $K$ and suppose there exists an orbifold cover $p: \SS^{3} \setminus L \rightarrow \orb$.   \Cref{sec:order_3} shows that certain FAL complements cannot admit horoball packings with an order $3$-rotational symmetry, which then restricts the geometry of $\orb$. Specifically, if none of the crossing circle cusps of $\SS^{3} \setminus L$ cover a rigid cusp, then the cusp of $\SS^3 \setminus L$ corresponding to $K_0$ cannot cover an $\SS^{2}(3,3,3)$ or $\SS^{2}(2,3,6)$ rigid cusp; see \Cref{subsub:Twistedknots} for more details on rigid cusps and  \Cref{lem:no_ord3} for a precise statement of this fact. In \Cref{sec:order_4}, FAL complements whose horoball packings admit an order $4$-rotational symmetry are analyzed. Such FAL complements exist, though the geometry of their corresponding horoball packings is quite restrictive, as shown in \Cref{prop:fullH}. As a result, any FAL complement $\SS^3 \setminus L$ where the cusp corresponding to $K_0$ covers a $\SS^2(2,4,4)$ rigid cusp must either have every crossing circle cusp covering a rigid cusp or else $\SS^3 \setminus L$  covers a specific orbifold $\orb$ with two $\SS^{2}(2,4,4)$ cusps; see  \Cref{lem:rigid} for a statement of this result and \Cref{fig:D_orb} and \Cref{fig:DL_orb} for descriptions of $\orb$.

\subsection{Paper Organization}

The paper is arranged as follows. In \Cref{sec:background}, we discuss the necessary background on the geometry of FAL complements and  $(\epsilon,d_L)$-twisted knot complements. In particular, we describe how to construct such knot complements via Dehn filling FAL complements, and discuss some of the important covering space properties of these knot complements. In \Cref{sec:hidden_syms}, we provide a careful analysis of the horoball packings of FAL complements and leverage this analysis to understand orbifold covers of FAL complements that come from restricting orbifold covers of $(\epsilon,d_L)$-twisted knot complements. This all assists in proving \Cref{thm:no_rigid_cusps} at the end of \Cref{sec:hidden_syms}. In \Cref{sec:sym}, we further exploit  our understanding of orbifold covers of FAL complements to prove \Cref{thm:max_size_sym_group} by restricting the degree of certain orbifold covers of FAL complements. In \Cref{sect:unique_in_comm_class}, we prove \ref{lem:second_slope_long} and \Cref{thm:only_knot_comm_class}, which builds off of our previous work along with an orbifold version of the 6-theorem. In \Cref{sec:QR}, we construct explicit examples of $\edHT$ and generic knot complements. For each one of these examples, the knot complement in question is the unique knot complement in its commensurability class.

\subsection{Acknowledgements:} The first author was partially supported by grant from the Simons Foundation (\#524123 to Neil R. Hoffman). Part of this project began when the first and second authors were visiting the  Okinawa Institute of Science and Technology during the Geometry and Topology of 3-manifolds workshop. We thank the Institute and the organizers of the workshop for their hospitality. The second author also wishes to thank Rice University and Oklahoma State University for hosting him during this project. We also thank Dave Futer for providing insightful suggestions on quantifying changes in geometry under Dehn surgery. We wish to thank Jason Deblois for a number of discussions, which led to revisions in \Cref{sec:hidden_syms}. Finally, we wish to thank the referee for a number of thoughtful suggestions about the paper, especially for the prompt to extract the relevant details from the final example as a theorem. 

\section{Background}\label{sec:background}

In this section, we describe the hyperbolic knots and links that we will analyze in this paper. The geometric structures of fully augmented link complements are discussed in  \Cref{subsec:FALdecomp}. In  \Cref{subsec:SuffTwisted}, we describe how to construct $(\epsilon,d_L)$-twisted knot complements by performing Dehn fillings on fully augmented link complements. For the remainder of the paper, all manifolds and orbifolds are assumed to be finite volume, orientable, and hyperbolic unless explicitly stated otherwise.  

\subsection{Fully Augmented Link Complements}
\label{subsec:FALdecomp}

Let $K$ be a hyperbolic link with \define{prime}, \define{twist-reduced} diagram $D(K)$, thought of as a 4-valent planar graph with vertices labelled by over- and under-crossing data (see \cite{FP07}, or \Cref{fig:prime_tr} for definitions). An edge of $D(K)$ is called \define{simple} if it is the unique edge connecting its vertices (i.e., it is not part of a multi-edge). We can partition $D(K)$ into \define{twist regions} by cutting it at the midpoint of every simple edge. More plainly, a twist region is a region of the diagram where two strands twist around each other a maximal number of times---in a twist region with $n$ crossings, the strands twist around each other $\frac{n}{2}$ times. Going forward, we will assume that our link $K$ is embedded in $\SS^3=\RR^3\cup \{\infty\}$ so that it's projection onto the $x,y-$plane is the graph $D(K)$, thus allowing us to refer to twist regions of $K$.

\begin{figure}[h]
	\begin{subfigure}{.45\textwidth}
 		\centering
   		\includegraphics[width=.7\textwidth]{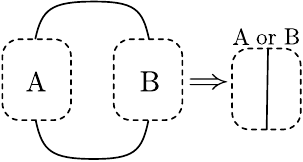}
   		\label{fig:prime}
   		\caption{}
	\end{subfigure}
	\begin{subfigure}{.45\textwidth}
 		\centering
   		\includegraphics[width=.7\textwidth]{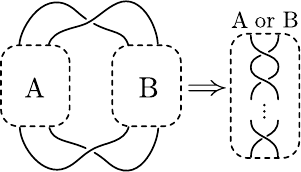}
   		\label{fig:twist_reduced}
   		\caption{}
	\end{subfigure}	
	\caption{Left: A prime link. Right: A twist-reduced link.}
	\label{fig:prime_tr}
\end{figure}

Associated to $K$ is a \define{fully augmented link} $L$, obtained as follows. First, for each twist region $t_i$ of $K$, let $C_i$ be a circle that bounds a twice punctured disk in $\SS^3\setminus K$, punctured by the two strands of $t_i$. Let $L'$ be the disjoint union of $K$ and the circles $C_i$, and let $L$ be the link obtained by reducing modulo 2 the number of crossings in the twist region associated to each circle $C_i$ of $L'$ (see \Cref{fig:to_FAL}). If we consider the complement $\SS^3\setminus L'$, then this reduction modulo 2 is equivalent to removing full twists by cutting along the twice punctured disk bounded by each circle $C_i$, twisting one of the resulting boundary components until at most one crossing remains, then regluing by the identity. Since this operation is a homeomorphism of $\SS^3\setminus L'$, it follows that $\SS^3\setminus L \cong \SS^3 \setminus L'$. Any fully augmented link $L$ constructed in this manner is hyperbolic (see \cite[Theorem 2.2]{FP07} for a short proof of this fact, which follows from \cite[Theorem 4.1]{Ad86}). 

\begin{figure}[h]
	\begin{subfigure}{.32\textwidth}
 		\raggedright
   		\includegraphics[scale=1.3]{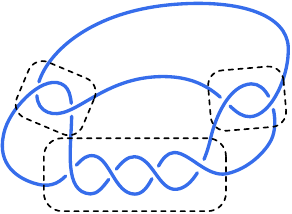}
   		\label{fig:K}
   		\caption{}
	\end{subfigure}
	\begin{subfigure}{.32\textwidth}
 		\centering
   		\includegraphics[scale=1.3]{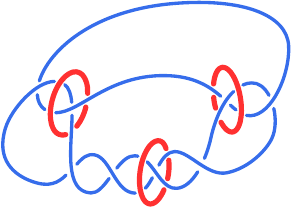}
   		\label{fig:L_prime}
   		\caption{}
	\end{subfigure}	
	\begin{subfigure}{.32\textwidth}
 		\raggedleft
   		\includegraphics[scale=1.3]{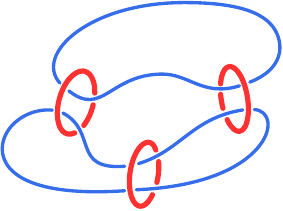}
   		\label{fig:L}
   		\caption{}
	\end{subfigure}	
	\caption{Left: A link $K$ with three twist regions. Center: $L'$ is obtained by augmenting each twist region of $K$ with a circle. Right: the FAL $L$ associated to $K$, obtained by removing full twists from $L'$.}
	\label{fig:to_FAL}
\end{figure}

If $K$ is an $m$-component link with $n$ twist regions, then the fully augmented link (FAL, henceforth) $L$ associated to $K$ will have $m$ \define{planar} components $K_1, \dots, K_m$ (corresponding to the components of $K$, with full twists removed), and $n$ \define{crossing circle} components $C_1,\dots, C_n$. At each crossing circle $C_i$ there is either one crossing, called a \define{half-twist}, or no crossings. A \textbf{planar cusp} of $\SS^3 \setminus L$ is a cusp of this link complement that corresponds with a planar component of $L$. Similary, a \textbf{crossing circle cusp} of $\SS^3 \setminus L$ corresponds with a crossing circle of $L$.

\subsubsection{Polyhedral decomposition} \label{subsec:PD}
In the Appendix of \cite{Lac04}, Agol and D. Thurston give a decomposition of $L$ into two isometric right-angled ideal polyhedra (see also \cite{FP07}, \cite{Pur11}). To simplify the description of this decomposition, we will assume for now that $L$ has no half-twists. Thus each planar component of $L$ can be thought of as a simple closed curve in the projection plane $\prj$, which we identify with $\RR^2\cup \{\infty\} \cong \SS^2\subset \SS^3$. Embed each crossing circle $C_i$ so that the twice punctured \define{crossing disk} it bounds is perpendicular to $\prj$. Then the reflection in $\prj$ preserves $L$, and is therefore a homeomorphism of $\SS^3\setminus L$. By Mostow--Prasad rigidity, this reflection is then an isometry of $\SS^3\setminus L$, and it follows that $\prj\setminus L$ must be totally geodesic in $\SS^3\setminus L$. By a result of Adams \cite{Ad85} the crossing disks also must be totally geodesic surfaces in $\SS^3\setminus L$. By first cutting along $\prj\setminus L$, then along the crossing disks, we obtain two ideal polyhedra $P_1$ and $P_2$. Let $P_1$ be the component that lies above the projection plane. Since we have cut along crossing disks, we can pull apart each half-disk in $P_1$ (see \Cref{fig:to_poly}). By contracting each component of $L$ to a point, we realize $P_1$ as an ideal polyhedron with geodesic faces, 4-valent vertices, and dihedral angles all $\frac{\pi}{2}$. The same is true for $P_2$, as it is isometric to $P_1$. If we shade the faces that come from cutting along disks, and leave projection plane faces unshaded, then the faces of each $P_i$ will be checkerboard colored, and every shaded face will be a triangle.

\begin{figure}[h]
 	\centering
   	\includegraphics[scale=1.1]{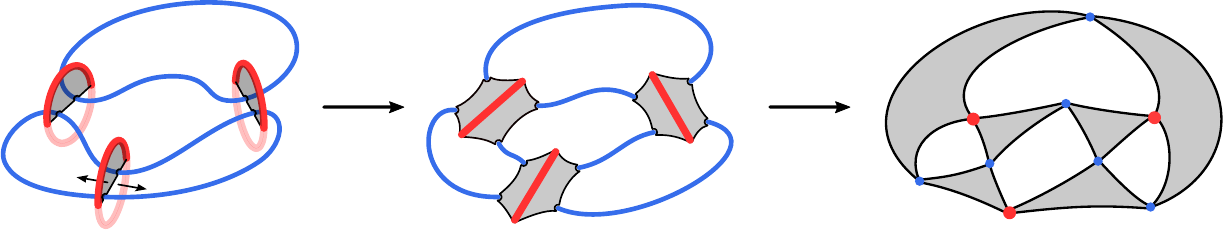}
	\caption{To construct $P_1$, first pull half-disks apart (left), then flatten them onto the projection plane (center), and contract the strands of the link to vertices (right).}
	\label{fig:to_poly}
\end{figure}

If $L$ has a half-twist at one (or more) of its crossing circles, then it still decomposes into polyhedra $P_1$ and $P_2$. The only difference is that when we glue $P_1$ to $P_2$ to recover $\SS^3\setminus L$, we must glue the punctured disks at that crossing circle with a half twist. This means that the corresponding pair of shaded faces of $P_1$ will glue to shaded faces of $P_2$.

The above discussion shows that there are many totally geodesic properly embedded surfaces in an FAL complement, including several thrice-punctured spheres. In the example shown in \Cref{fig:to_poly}, for example, there are 3 obvious thrice-punctured spheres (the crossing disks), and one that is less apparent. In particular, the middle crossing circle bounds a thrice-punctured sphere on both sides (one contains the vertex at $\infty$). We can see this extra thrice-punctured sphere in $P_1\cup P_2$ as follows: a closed path that crosses three unshaded faces, but is not homotopic into the boundary of a shaded face, is the boundary of a triangle embedded in $P_1$, and another in $P_2$. The union of these triangles is a thrice-punctured sphere. Note that if we isotope the middle crossing circle in the left frame of in \Cref{fig:to_poly} so that it wraps around the other planar component, then the ``extra" thrice-punctured sphere is now a crossing disk. In general, if a crossing circle for $L$ bounds multiple twice-punctured disks (whose punctures are meridians of planar components), then such an isotopy exists for each one.  For this reason we will call any such disk a \define{generalized crossing disk}.

\subsubsection{Horoball packing} \label{subsec:HP}

Let $\orb = \HH^3\setminus \Gamma$ be a hyperbolic orbifold with covering map ${\pi:\HH^3\to \orb}$. Given a cusp $\C$ of $\orb$, a \define{cusp neighborhood} is a neighborhood $n(\C)$ of $\C$ such that $\pi^{-1}(n(\C))$ is a union of horoballs. An embedded system of neighborhoods $\{n(\C_i)\}_i$ for the cusps of $\orb$ is called a \define{cusp expansion}. Given a cusp expansion for $\orb$, the union $\bigcup_i \pi^{-1}(n(\C_i))$ of the inverse images of the cusp neighborhoods is called a \define{horoball packing} of $\HH^3$. Such a horoball packing is called \define{maximal} if increasing the size of any cusp neighborhood will result in horoballs in the packing whose interiors are not disjoint. 

We are now ready to describe the (preferred) horoball packing associated to an FAL $L$, which will be our main tool for ruling out hidden symmetries in \Cref{sec:hidden_syms}. We start by describing a certain lift of $P_1\cup P_2$ to $\HH^3$. Going forward we will identify $\HH^3$ with the upper half-space model, with coordinates $(z,t)\in \CC\times \RR_{>0}$. We will identify $\del \HH^3$ with the Riemmann sphere $\hatC$.

Let $\tau$ be a shaded face of $P_1$, and lift $\tau$ so that it has its vertices at $0$, $\I$ and $\infty$. Our choice of lift for $\tau$ determines a holonomy representation for $\pi_1(\SS^3\setminus L)$, and a covering map $\pi:\HH^3\to \SS^3\setminus L$. The horoball packing we are interested in is provided by the following theorem of Futer--Purcell: 

\begin{thm}[\cite{FP07}]\label{thm:hbp_FP07}
Let $L$ be an FAL, and $\pi:\HH^3\to \SS^3\setminus L$ the covering map described above. Then there exists a maximal horoball packing $\H$ of $\HH^3$ such that for any edge $e$ in $\pi^{-1}(P_1\cup P_2)$, the \emph{midpoint} of $e$ is at a tangency of two horoballs.
\end{thm}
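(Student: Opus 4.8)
The plan is to build $\H$ out of the totally geodesic thrice-punctured spheres inside $\SS^3\setminus L$: one reads off the tangency pattern from the rigid cusp geometry of a single such surface, and then uses the right-angled structure of $P_1$ and $P_2$ to glue the local pictures into one global horoball packing.

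First I would set up the local model. Since each $P_i$ is $4$-valent at every ideal vertex and its faces are properly checkerboard-colored, every edge of $P_1\cup P_2$ bounds exactly one shaded face $\tau$, which is half of a crossing disk $D$; as established above $D$ is an embedded totally geodesic thrice-punctured sphere, and the three edges of $\tau$ are exactly the three edges of the natural ideal triangulation of $D$. A thrice-punctured sphere carries a unique complete hyperbolic metric and a canonical maximal (symmetric) horoball neighborhood of its three cusps; lifting to the copy of $\HH^2$ inside $\HH^3$ that contains $D$, with $\tau$ placed at $0,\I,\infty$, this neighborhood lifts to the horoball $\{t\ge 1\}$ at $\infty$ together with the Euclidean-diameter-$1$ horoballs at $0$ and $\I$ and all of their $\pi_1(D)$-translates. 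A short computation in $\HH^2$ — comparing the tangency points of these horoballs with the feet of the perpendiculars from the $3$-fold symmetry center of $\tau$ to its sides — shows that the horoballs are pairwise tangent precisely at the midpoints of the three edges of $\tau$. This is the only source of the ``midpoint at a tangency'' phenomenon, and once we know these horoballs glue into a global packing it will hold along every edge of $\pi^{-1}(P_1\cup P_2)$.

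The gluing step is the heart of the argument. One must show that the canonical cusp neighborhoods of the various crossing disks are mutually compatible, i.e. that together they determine a single cusp expansion of $\SS^3\setminus L$; equivalently, after fixing a lift of one shaded face at $0,\I,\infty$, the orbit $\H$ of $\{t\ge 1\}$ under the holonomy of $\pi_1(\SS^3\setminus L)$ consists of horoballs with pairwise-disjoint interiors. Using the standard fact that a holonomy element sending $\infty$ to a point $p$ carries $\{t\ge 1\}$ to a horoball at $p$ of Euclidean diameter $1/|c|^2$, where $c$ is its lower-left matrix entry, this is the assertion that every isometric sphere of the holonomy group has radius at most $1$. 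This is exactly where right-angledness of $P_1$ and $P_2$ enters: it forces every crossing-circle cusp, and every planar cusp, to meet each incident crossing disk along a curve of one fixed length, so all the crossing disks through a given cusp demand the same horoball size there and the local models are consistent. I would carry this out either by quoting the known explicit cusp cross-sections of FAL complements (planar cusps tiled by unit squares, crossing-circle cusps with width-$2$ rectangular cross-section) or by a direct disjointness check on the orbit of $\{t\ge 1\}$ using the parabolic generators of the subgroups $\pi_1(D)$ together with the polyhedral face-pairings.

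Granting the gluing, both conclusions are immediate. The packing $\H$ is maximal because every cusp of $\SS^3\setminus L$ borders a crossing disk along an edge whose midpoint is already a horoball tangency, so no cusp neighborhood can be enlarged. For the midpoint property, given an edge $e$ of $\pi^{-1}(P_1\cup P_2)$ one conjugates so that the shaded triangle containing $e$ sits at $0,\I,\infty$, and then the local model identifies the midpoint of $e$ with the tangency point of the two horoballs of $\H$ centered at the endpoints of $e$. I expect the compatibility of the crossing-disk cusp neighborhoods — equivalently, the disjointness of the orbit of $\{t\ge 1\}$ — to be the only real obstacle; the midpoint identity and maximality are bookkeeping once the right-angled geometry has pinned down every horoball size.
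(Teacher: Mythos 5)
The paper does not supply a proof of this statement; it is imported verbatim from Futer--Purcell \cite{FP07}, so you are reconstructing their argument rather than anything internal. Your outline has the correct skeleton: the local model at $0,\I,\infty$, the identification of horoball tangency points with edge midpoints via the $3$-fold symmetry of an ideal triangle (the paper's ``midpoint,'' defined as a foot of an altitude from the vertex across the shaded face, coincides with your inscribed-circle touchpoint by exactly that symmetry), right-angledness of $P_1\cup P_2$ as the mechanism forcing the crossing-disk models to agree, and then maximality and the midpoint identity as bookkeeping once the packing exists. This is indeed the shape of Futer--Purcell's argument, and you correctly isolate where the actual work is.

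The trouble is that the reduction you offer for that key step is wrong as stated. You claim compatibility of the crossing-disk cusp neighborhoods is \emph{equivalently} disjointness of the $\pi_1(\SS^3\setminus L)$-orbit of $\{t\ge 1\}$. That orbit produces horoballs covering only the one cusp of $\SS^3\setminus L$ that lifts to $\infty$. A maximal horoball packing $\H$ for an FAL complement must also contain lifts of neighborhoods of every other cusp --- all the crossing circles and any other planar components --- and one needs pairwise disjointness across the whole family, including between lifts of distinct cusps. Already in the local picture at $0,\I,\infty$ the diameter-$1$ horoball at $0$ is a lift of a crossing-circle cusp and hence does not lie in the orbit of $\{t\ge 1\}$, so your isometric-sphere criterion never sees it. A correct version would run the disjointness check simultaneously over representatives for every cusp (one horoball per cusp plus the full holonomy orbit of each) and verify both intra-cusp and inter-cusp overlaps. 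Your fallback of quoting cusp cross-section tilings also needs care: as stated in \Cref{lem:tiled} those tilings live on $T_0\subset \del H_\infty$, i.e., downstream of the horoball choice you are trying to justify, so one would have to first rederive the tiling with its length-$1$ normalization purely from the geometry of the truncated right-angled polyhedra before it may be invoked. Until the compatibility of horoball sizes across \emph{all} cusps is actually established --- not merely asserted to reduce to single-orbit disjointness --- your proof of this theorem remains incomplete.
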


Here the \define{midpoint} of an edge $e$ is defined to be the intersection point of $e$ and a geodesic perpendicular to $e$ that emanates from the vertex $v$, where $v$ is the vertex that lies across the shaded face that $e$ bounds (see \cite[Figure 8]{FP07}). For example, the vertical edges of the triangle $\tau$ have midpoints at height 1, as can easily be checked. \Cref{thm:hbp_FP07} implies that the boundary of the horoball $H_\infty$ at infinity must be at height 1. 
 
To describe the other features of this horoball packing that will be important going forward, we will need to understand the neighborhood of the cusp at $\infty \in \del \HH^3$. To this end, let $L_0$ be the component of $L$ whose corresponding  cusp in $\SS^3\setminus L$ lifts to $\infty$ under $\pi$. Then $\pi(\del H_\infty)$ is a torus that bounds a neighborhood of this cusp. Let $T_0$ be a lift of this torus to $H_\infty$ so that $T_0$ lies above the first quadrant of $\CC$ and has a corner at $(0,1)\in \HH^3$. To ease exposition, it will be convenient to again restrict to the case where $L$ has no half-twists. In this case we have the following lemma from \cite{FP07}:

\begin{lem}\label{lem:tiled}
$T_0$ is tiled by a grid of rectangles, each of which has sides of length 1 parallel to the imaginary axis which intersect shaded faces of $P_1$ or $P_2$, and sides parallel to the real axis intersecting unshaded faces. If $L_0$ is a planar component of $L$ that passes through $m$ crossing disks (counted with multiplicity), then $T_0$ consists of $2m$ tiles and has meridian of length 2, which is parallel with the imaginary axis. If $L_0$ is a crossing circle, then $T_0$ consists of $2$ tiles and has longitude of length 2, parallel with the imaginary axis. See \Cref{fig:cusp_tiles}.
\end{lem}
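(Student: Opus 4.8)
The plan is to read off the cell structure that the polyhedral decomposition $P_1\cup P_2$ induces on the cusp torus, using the normalization of the chosen lift together with the midpoint condition of \Cref{thm:hbp_FP07}; this is the analysis carried out in \cite{FP07}, and I would follow their treatment of the edge identifications in the Agol--Thurston polyhedra. Writing $\Gamma_\infty<\pi_1(\SS^3\setminus L)$ for the rank-two parabolic subgroup fixing $\infty$, we have $T_0\subset\del H_\infty$ and $\del H_\infty/\Gamma_\infty$ is the cusp torus, so it suffices to understand the tiling of $\del H_\infty$. The lift $\pi^{-1}(P_1\cup P_2)$ tessellates $\HH^3$ by isometric copies of $P_1$ and $P_2$, and, as already noted in the text, the vertical edges of $\tau$ have midpoints at height $1$, so \Cref{thm:hbp_FP07} places $\del H_\infty$ at Euclidean height $1$. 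Intersecting the tessellation with $H_\infty$ cuts $\del H_\infty$ into the horospherical links of the ideal vertices sitting at $\infty$; since each $P_i$ is right-angled with $4$-valent ideal vertices, every such link is a Euclidean rectangle, and since the faces are checkerboard-coloured, one pair of opposite sides of each rectangle consists of arcs lying in shaded faces and the other pair of arcs lying in unshaded faces.

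To pin down the dimensions, observe that $\tau$ is lifted with ideal vertices $0,\I,\infty$, so it is a vertical half-plane and $\del H_\infty\cap\tau$ is exactly the horizontal segment from $(0,1)$ to $(\I,1)$, a unit segment parallel to the imaginary axis. Every shaded face of $P_1\cup P_2$ is totally geodesic, and the face identifications recovering $\SS^3\setminus L$ are isometries carrying shaded faces to shaded faces and preserving the horosphere $\del H_\infty$ (here one uses that $L$ has no half-twists, so the shaded-to-shaded gluings are the ``untwisted'' ones). Propagating across the tessellation along shared edges — a shaded side of one rectangle glues to a shaded side of the adjacent rectangle, and opposite sides of a Euclidean rectangle are equal and parallel — every rectangle side lying in a shaded face is a unit segment parallel to the imaginary axis, and so every side lying in an unshaded face is parallel to the real axis. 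This gives the grid description, with the vertical direction the ``shaded'' one.

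It remains to count tiles and locate the meridian and longitude, which I would do by tracing $L_0$ through the diagram of $L$. If $L_0$ is a planar component meeting the crossing disks in $m$ points counted with multiplicity, then a meridian of $L_0$ based near one such intersection point, say with crossing disk $C_j$, lies in the plane of $C_j$ and is cut by $\prj$ into an arc in the upper half of $C_j$ (a shaded face of $P_1$) and an arc in the lower half (a shaded face of $P_2$); hence the meridian crosses exactly two shaded sides, is parallel to the imaginary axis, and has length $2$. Travelling around $L_0$ instead, between consecutive crossing disks the curve runs through regions of $\prj\setminus L$, which are unshaded faces, and tracking the $m$ crossing disks together with the cut along $\prj$ shows that $T_0$ consists of $2m$ rectangles arranged as a $2\times m$ grid. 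If instead $L_0$ is a crossing circle, the single twice-punctured disk it bounds is cut by $\prj$ into two shaded halves, each contributing one rectangle, so $T_0$ has exactly two tiles, and the longitude of $L_0$ runs through both halves and is therefore parallel to the imaginary axis and has length $2$.

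The step I expect to require genuine care is this last one: correctly matching the abstract meridian and longitude of each cusp to a combinatorial path through the shaded and unshaded faces of $P_1\cup P_2$, and verifying the propagation claim that each shaded side is honestly a unit vertical segment rather than merely congruent to the model segment up to the cusp's Euclidean similarity structure. These are precisely the bookkeeping computations in \cite{FP07}.
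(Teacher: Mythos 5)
The paper itself offers no proof of this lemma: it imports the statement directly from \cite{FP07} (``In this case we have the following lemma from \cite{FP07}''). So you are filling a gap that the paper deliberately leaves to its source rather than paraphrasing an argument already present. Your reconstruction is correct in its broad strokes and follows the route Futer--Purcell actually take: read off the cusp tiling from the right-angled polyhedral decomposition, locate $\del H_\infty$ at height $1$ via the midpoint condition of \Cref{thm:hbp_FP07}, and count tiles and slope lengths by tracing $L_0$ through the combinatorics of the diagram.

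The one soft spot you flag is genuine. Propagation-by-congruence gets you that consecutive shaded sides within a single row of the grid are all unit length, but it does not on its own propagate across an edge lying in an unshaded face (i.e., from one row to the next), since rectangles in a grid tiling stacked above one another can a priori have different heights. Two clean ways to close this: (i) apply the midpoint condition uniformly, not just to $\tau$ --- every vertical edge of $\pi^{-1}(P_1\cup P_2)$ has its midpoint at height $1$, so the horoball at its finite endpoint is full-sized; the two non-$\infty$ vertices of any shaded face meeting $\infty$ therefore carry full-sized horoballs, and the tangency at the midpoint of the third (non-vertical) edge forces those centers to be at Euclidean distance $1$, so every shaded side of $T_0$ is unit --- or (ii) note that for a planar cusp the two rows are swapped by the reflection in $\prj$, which is an isometry, so once one row has height $1$ so does the other. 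Either fix is short. The remaining bookkeeping --- matching meridian/longitude to paths crossing shaded/unshaded sides, and the $2m$ (resp.\ $2$) tile count from the $m$ (resp.\ $1$) ideal vertices of each $P_i$ corresponding to $L_0$ --- is as you describe.
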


\begin{figure}[h]
	\begin{subfigure}{.33\textwidth}
 		\centering
   		\includegraphics[scale=1.2]{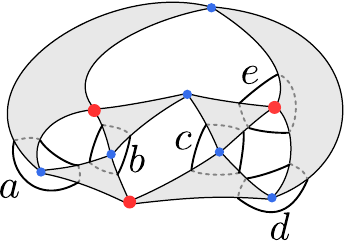}
   		\label{fig:truncation}
   		\caption{}
	\end{subfigure}
	\begin{subfigure}{.40\textwidth}
 		\centering
   		\includegraphics[scale=1.2]{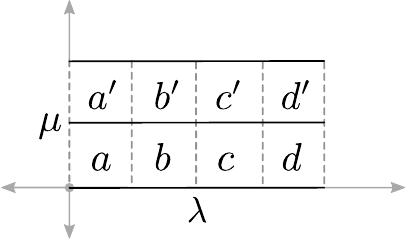}
   		\label{fig:planar}
   		\caption{}
	\end{subfigure}
	\begin{subfigure}{.25\textwidth}
 		\centering
   		\includegraphics[scale=1.2]{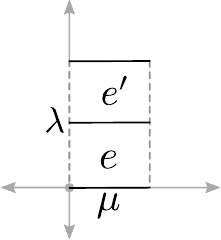}
   		\label{fig:ccircle}
   		\caption{}
	\end{subfigure}	
	\caption{Tilings of a planar cusp (center) and a crossing circle cusp (right). Tiles $a,b,c,d$, and $e$ come from truncation of vertices of $P_1$ as shown (left), while tiles $a',b',c',d'$ and $e'$ come from truncating vertices of $P_2$. Vertices coming from crossing circles are colored red, and larger.}
	\label{fig:cusp_tiles}
\end{figure}

Now let $L_0$ be a planar component, and let the shaded faces that intersect the tiles of $T_0$ lie over the lines $\line_i=\{\Re(z)=l_i\}$ for $0\le i\le m-1$, where $l_0=0$. Then the points $l_i$, $l_i+\I$, and $l_i+2\I$ along the line $\line_i$ are vertices of two shaded faces, and each such vertex is connected by an edge to the vertex at $\infty$. These shaded faces map via $p$ to a crossing disk $D_j$, and $L_0$ must pass through this crossing disk, as it is a vertex of said shaded faces. Since the midpoint of a vertical edge is at height 1, it follows from \Cref{thm:hbp_FP07} that there is a diameter 1 horoball centered at each of these vertices. One of these horoballs (of which there are two up to deck transformation) corresponds to the crossing circle $C_j$ that bounds $D_j$. The other corresponds to the other planar component that passes through $D_j$ (which may be $L_0$ itself). Translating these by the meridian deck transformation, we get a line of \define{full-sized} (i.e., diameter 1) horoballs at the points $l_i+k\I$ for $k\in\ZZ$, which correspond alternately to crossing circle and planar components of $L$ (see \Cref{fig:horo_lines}).

While we restricted to FALs without half-twists to make the statement of \Cref{lem:tiled} more palatable, the consequences of the lemma that we are interested in hold when half-twists are present. In particular, the pattern of lines of alternating color horoballs shown in \Cref{fig:horo_lines} is still present. We leave details to the reader.

\begin{figure}
 	\centering
   	\includegraphics[scale=.95]{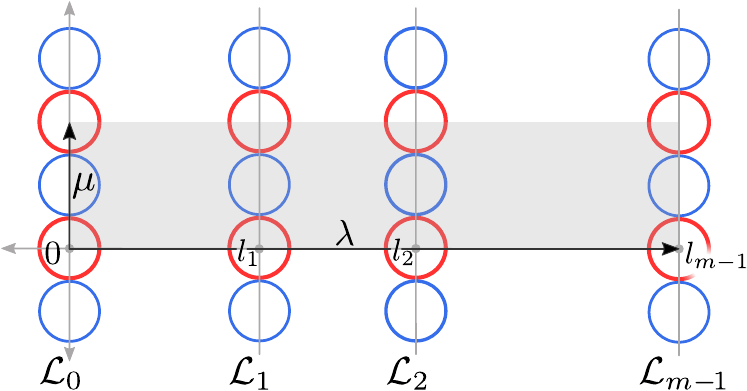}
   	\caption{Looking down from a planar cusp $L_0$ lifted to infinity in $\del \HH^3$, we see lines of full sized horoballs parallel to the imaginary axis of $\CC\subset \del \HH^3$. Each line corresponds to a crossing disk that $L_0$ passes through, and the embedded $\HH^2$ lying above such a line is the developing image in $\HH^3$ of the crossing disk. Horoballs corresponding to crossing circles are colored red, and drawn thicker.}
   	\label{fig:horo_lines}
\end{figure}

Going forward, we will be interested solely in the case where there is a single planar component $K_0$ of $L$. By the horoball packing $\H$, we will always mean the horoball packing obtained from \Cref{thm:hbp_FP07}, where the planar component $K_0$ corresponds to the vertex lifted to $\infty$. In this case, $K_0$ passes through every crossing disk, and every shaded face has two of its vertices corresponding to $K_0$. It follows from the above discussion that for every crossing circle $C_j$ there is a line $\line_{i_j}$ of full-sized horoballs in $\H$ in which the horoballs correspond alternately to $C_j$ and $K_0$. In fact, there are two such lines $\line_{i_j}$. It will be convenient to record the following important consequence:

\begin{lem}\label{lem:tangent}
Let $L$ be an FAL with one planar component $K_0$, and let $\H$ be the horoball packing described above (with $K_0$ at $\infty$). Then for every component of $L$, there is a full-sized horoball in $\H$, necessarily tangent to $H_\infty$, that is a lift of a neighborhood of that component.	
\end{lem}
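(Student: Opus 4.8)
The plan is to deduce the statement directly from \Cref{thm:hbp_FP07}, \Cref{lem:tiled}, and the discussion of lines of alternating-color horoballs that immediately precedes the statement; indeed the lemma is essentially a bookkeeping consequence of that discussion, so the ``proof'' is mostly a matter of assembling the right pieces. First I would record the two elementary observations that make the tangency clause automatic. By \Cref{thm:hbp_FP07} the midpoints of the vertical edges of the lifted shaded triangle $\tau$ (the one with vertices $0$, $\I$, $\infty$) lie at height $1$, so $\partial H_\infty$ is the horosphere $\{t=1\}$; and any \emph{full-sized} (i.e.\ diameter $1$) horoball of $\H$ is centered at a point of $\CC\subset\partial\HH^3$ and therefore attains maximal height exactly $1$. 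Hence every full-sized horoball in $\H$ is tangent to $H_\infty$, and it remains only to exhibit, for each component of $L$, a full-sized horoball in $\H$ that is a lift of a cusp neighborhood of that component.

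Next I would handle the crossing circle components. Since $K_0$ is the unique planar component of $L$, every crossing disk $D_j$ is punctured by two strands of $L$, both of which belong to $K_0$; thus $K_0$ passes through every crossing disk. Applying \Cref{lem:tiled} to the planar cusp $K_0$ (lifted to $\infty$) and invoking the analysis following it, for each crossing circle $C_j$ there is a line $\line_{i_j}=\{\Re(z)=l_{i_j}\}$ along which $\H$ contains full-sized horoballs centered at the points $l_{i_j}+k\I$ for $k\in\ZZ$, corresponding alternately to $C_j$ and to $K_0$. In particular at least one full-sized horoball in this line is a lift of a cusp neighborhood of $C_j$, as required. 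For the planar component $K_0$ itself, one notes that $\SS^3\setminus L$ is hyperbolic and so $L$ has at least one crossing circle $C_1$; the line $\line_{i_1}$ just produced then contains full-sized horoballs that are lifts of a cusp neighborhood of $K_0$ (distinct from $H_\infty$, being centered in $\CC$). This exhausts the components of $L$ and proves the lemma.

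I do not anticipate a substantive obstacle here; the only point requiring a little care is the passage to FALs \emph{with} half-twists, for which \Cref{lem:tiled} cannot be quoted verbatim. There I would appeal to the remark (already made in the text) that the alternating-color line pattern of \Cref{fig:horo_lines} persists when half-twists are present, or, if a fully self-contained argument is wanted, re-derive that pattern directly from \Cref{thm:hbp_FP07} by tracking which vertices of the lifted polyhedra $\pi^{-1}(P_1\cup P_2)$ are joined to $\infty$ by an edge and noting that each such vertex carries a diameter-$1$ horoball.
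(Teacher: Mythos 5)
Your proposal is correct and is essentially the same argument the paper makes: the lemma is recorded as an immediate consequence of the discussion just preceding it (since $K_0$ is the only planar component, it passes through every crossing disk, so for each crossing circle $C_j$ there is a line $\line_{i_j}$ of full-sized horoballs alternating between lifts of $C_j$ and lifts of $K_0$), and the tangency clause follows because $\del H_\infty$ sits at height $1$. Your two small additions — spelling out the height-$1$/tangency point and flagging the half-twist caveat for \Cref{lem:tiled} — are accurate elaborations but do not change the route.
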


\subsection{Knot Complements via Dehn Filling FAL Complements} 
\label{subsec:SuffTwisted}

Throughout this section, let $L = K_0 \sqcup C_1 \sqcup \dots \sqcup C_n$ be an FAL with one planar component $K_0$ and $n$ crossing circles $C_i$, $i=1,\dots, n$. In this section and going forward, we will abuse notation and refer to $K_0$ and the $C_i$ both as components of $L$, and as cusps of $\SS^3\setminus L$.

Let $T_i$ be the torus boundary of a neighborhood of a crossing circle $C_i$. An isotopy class of simple closed curve on $T_i$ is naturally identified with a tuple $(p_i,q_i)\in \ZZ \times \ZZ \cong H_1(T_i,\ZZ)$, where we identify the canonical meridian and longitude of $C_i$ with a basis for $H_1(T_i,\ZZ)$. By removing a neighborhood of $C_i$ and gluing in a solid torus along $T_i$ so that its meridian glues to the curve $(p_i,q_i)$, we obtain the $(p_i,q_i)$--Dehn filling of $C_i$. The parameters $(p_i,q_i)$ describes the \define{slope} of the filling. When $\gcd(p_i,q_i)=1$ it is natural to identify the slope of a filling with the rational number $\frac{p_i}{q_i}$.

The main goal of our work is to better understand the geometry and topology of  knot complements obtained from high parameter fillings of FAL complements. In \cite{Pur07}, Purcell studies fillings of FALs that result in at least $c$ crossings per twist region, where $c>0$ is an explicit universal constant. This condition is equivalent to requiring that the Dehn filling slope on each crossing circle $C_i$ is $\frac{1}{q_i}$, for some $q_{i} \geq c/2$, $i =1, \ldots, n$. A knot with this property is often called a highly twisted knot.

Similar to Purcell, we will analyze knots with a large number of twists in each twist region. In some sense, the complement of such a knot is geometrically similar to the corresponding FAL complement, and we can control certain geometric aspects of the knot complement by leveraging the well-understood structure of the FAL complement. Specifically, we want to obtain control over the short geodesics in an FAL filling using the thick/thin decomposition of orbifolds, which we now define. Let $\orb = \HH^3 / \Gamma$ be a finite volume hyperbolic $3$-orbifold. Given $\epsilon > 0$, the \define{$\epsilon$-thick part} of $\orb$ is defined as 

$$
\orb_{\thick} \coloneqq \{ x \in \orb : d(\tilde{x}, \alpha \tilde{x}) \geq \epsilon \hspace{0.05in} \text{for all infinite order} \hspace{0.05in} \alpha \in \Gamma  \hspace{0.05in} \text{and all lifts} \hspace{0.05in} \tilde{x} \in \HH^3 \hspace{0.05in} \text{of} \hspace{0.05in} x \}.
$$
The \define{$\epsilon$-thin part} of $\orb$ is defined to be $\orb_{\thin} \coloneqq \orb\setminus \orb_{\thick}$, the complement in $\orb$ of the $\epsilon$-thick part.

 We  refer the reader to \cite{DunbarMeyerhoff1994} for a more thorough background on the thick/thin decomposition and other geometric properties of hyperbolic $3$-orbifolds.

\subsubsection{$(\epsilon,d_L)$-twisted knot complements} \label{subsub:Twistedknots}

Let $\SS^{3} \setminus K$ be a knot complement obtained by Dehn filling each crossing circle $C_i$ of $\SS^3\setminus L$ along a curve of slope $\frac{1}{q_{i}}$, and let $d_L=4\frac{vol(\SS^3\setminus L)}{v_0}$, where $v_0$ is the volume of the regular ideal tetrahedron. This setup implies that $L$ has one planar component.

\begin{defin}\label{def:edHT}
	We say that $\SS^{3} \setminus K$  is \define{$(\epsilon,d_L)$-twisted}, or alternatively that $\SS^{3} \setminus K$ is an \define{$(\epsilon,d_L)$-twisted filling} of $\SS^3 \setminus L$ , if the following three conditions hold:
\begin{enumerate}[label=(\roman*)]
\item $(\SS^3 \setminus L)_{\thick}$ is homeomorphic to $\SS^3 \setminus n(L)$, 
\item  $(\SS^{3} \setminus K)_{\thick}$ is homeomorphic to $(\SS^3 \setminus L)_{\thick}$,  and
\item  $(\SS^{3} \setminus K)_{\thickD}$ is homeomorphic to $(\SS^3 \setminus L)_{\thick}$. 
\end{enumerate}

\end{defin} 

For a more general version of this definition, see  \Cref{sec:QR}.

In this setting $\SS^{3} \setminus K$ is a knot complement with a suitable number of twists in each twist region, where the number of twists depends on the geometry of $\SS^3 \setminus L$. This distinguishes our $(\epsilon, d_L)$-twisted knots from  Purcell's highly twisted knots, where the number of the twists required is independent of the geometry of $\SS^3 \setminus L$. We note here that with this definition, the figure-8 knot is \emph{not} $\edHT$ for any choice of $\epsilon$. This follows from observing that the systole of the figure-8 is $1.0870..$, while its FAL ancestor the Borromean rings has systole $2.122..$ and volume $7.327..$, so that $\frac{\epsilon}{d_L}\le \frac{(2.122..)}{4 (7.327..)/v_0}<1.0870$, and (iii) cannot hold. Our reason for wanting to rule out the figure-8 knot complement is that it is the only knot complement that is an \emph{arithmetic} manifold, by \cite{Reid91} (see \cite{NR92} for relevant background on arithmetic manifolds). Thus, it follows that any knot complement that is $\edHT$ is necessarily \emph{non-arithmetic}. This fact is important because it allows us to use the following lemma, which along with the proof of \Cref{prop:HT_non-empty} below, explains our choice of the constant $d_L$ in \Cref{def:edHT}.

\begin{lem}\label{lem:min_vol_non_arithmetic}
Let $\orbQ$ be a cusped non-arithmetic orbifold. Then $vol(\orbQ) \geq \frac{v_0}{4}$. 
\end{lem}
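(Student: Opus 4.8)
The plan is to prove the contrapositive using the classification of small‑volume cusped hyperbolic $3$‑orbifolds. It suffices to show that every finite‑volume orientable cusped hyperbolic $3$‑orbifold $\orbQ$ with $vol(\orbQ) < v_0/4 \approx 0.2537$ is arithmetic; the lemma is then immediate. If one prefers, one may first replace $\orbQ$ by the minimal‑volume orbifold in its commensurability class via Margulis's theorem \cite{Mar91}, since this does not increase the volume and preserves non‑arithmeticity, but this reduction is not essential for the argument.

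First I would recall the enumeration of low‑volume orientable cusped hyperbolic $3$‑orbifolds. By the work of Meyerhoff, Adams, and Hild on minimal‑volume cusped orbifolds, any such orbifold has volume at least $v_0/12 \approx 0.0846$, with equality only for the Bianchi orbifold $\HH^3/\PGL_2(\mathcal{O}_3)$, and moreover there are only finitely many orientable cusped hyperbolic $3$‑orbifolds of volume strictly below $v_0/4$. These can be listed explicitly together with their volumes, each coming with a concrete arithmetic description (as a Bianchi orbifold, a quotient of one, or via an explicit triangulation), from which its invariant trace field and trace set can be read off. Second, one checks that every orbifold on this finite list is arithmetic: by the standard arithmeticity criterion for cusped hyperbolic $3$‑orbifolds each has imaginary quadratic invariant trace field $\QQ(\sqrt{-d})$ with all traces algebraic integers, and indeed each is visibly commensurable with a Bianchi orbifold $\HH^3/\PGL_2(\mathcal{O}_d)$. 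Hence no non‑arithmetic orientable cusped hyperbolic $3$‑orbifold occurs below volume $v_0/4$, which is exactly the claim.

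The substantive ingredient is the classification of small cusped orbifolds, which within the present paper enters only as a citation; the remaining work is then (i) confirming that the quoted enumeration is complete on the whole range $(0, v_0/4)$ and (ii) verifying arithmeticity for each of the finitely many orbifolds in that range. The main obstacle is therefore bibliographic and case‑checking rather than conceptual: no new hyperbolic geometry needs to be done, only a careful assembly of known results on minimal‑volume cusped orbifolds and their commensurability classes.
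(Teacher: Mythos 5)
Your approach is essentially identical to the paper's: invoke the finite classification of cusped hyperbolic $3$-orbifolds of volume below $v_0/4$ (the paper cites Adams \cite[Corollary 6.2]{Ad92}) and then observe that every orbifold on that list is arithmetic (the paper cites Neumann--Reid \cite{NR92b} for this). The only difference is purely bibliographic phrasing, so there is nothing substantive to compare.
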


\begin{proof}
The statement follows from Adams' census of all cusped hyperbolic orbifolds with volume less than $\frac{v_0}{4}$ \cite[Corollary 6.2.]{Ad92} and Neumann--Reid's observations that each element of this census is arithmetic  \cite{NR92b}.
\end{proof}

In the next proposition, we show that $(\epsilon,d_L)$-twisted fillings exist in abundance, and have a nice orbifold covering property. In what follows, let $\gamma_{i}\subset \SS^3\setminus K$ be the core geodesic of the solid torus introduced by filling the $i^{th}$ crossing circle cusp of $\SS^{3} \setminus L$, for $i=1, \ldots, n$.

\begin{prop}\label{prop:HT_non-empty}
Let $\L=\SS^3 \setminus L$ be an FAL with one planar component and $n$ crossing circle components. Choose some ordering of the $n$ crossing circles. 

\begin{enumerate}
\item There exists $\epsilon>0$, and positive integers $k_1,\dots, k_n$, such that if the $|q_i|\ge k_i$ for all $i$ then filling the $n$ crossing circle cusps along $\alpha=(\frac{1}{q_i},\dots,\frac{1}{q_n})$ results in an $(\epsilon,d_L)$-twisted knot complement.
\item If $\K=\SS^{3} \setminus K$ is a $(\epsilon,d_L)$-twisted filling of $\L$ and   $p:\K \rightarrow \orb$ is an orbifold cover, then there is an orbifold cover $p_N: \L \rightarrow \orbQ$, where $\orbQ \cong \orb \setminus \sqcup_{i=1}^{n} p(\gamma_{i})$.

\end{enumerate}
\end{prop}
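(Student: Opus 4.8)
\textbf{Part (1).} The plan is to produce $\epsilon$ and the $k_i$ by a two-stage argument: first pin down $\epsilon$ from the intrinsic geometry of the FAL complement $\L=\SS^3\setminus L$, then invoke a drilling/filling geometric-convergence result to get the $k_i$. For the first stage, fix $\epsilon>0$ small enough that $(\SS^3\setminus L)_{\thick}$ is homeomorphic to $\SS^3\setminus n(L)$ (i.e.\ the $\epsilon$-thin part of the FAL complement is exactly a union of horoball cusp neighborhoods, with no thin tubes around closed geodesics); such an $\epsilon$ exists since $\SS^3\setminus L$ has finitely many cusps and a positive systole, so shrinking $\epsilon$ below the systole and below the Margulis constant isolates the cusps. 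This gives condition (i) of \Cref{def:edHT}. Additionally shrink $\epsilon$ so that $\epsilon/d_L$ is smaller than the systole of $\SS^3\setminus L$ — this will matter for condition (iii). For the second stage, recall that as $\min_i|q_i|\to\infty$ the $(\frac1{q_i})$-fillings $\SS^3\setminus K$ converge geometrically to $\SS^3\setminus L$, with the core geodesics $\gamma_i$ becoming arbitrarily short and their tube radii arbitrarily large (this is the standard Neumann--Zagier / Thurston hyperbolic Dehn surgery picture; see e.g.\ the references to \cite{Pur07, FP07}). Hence for $|q_i|$ large the $\epsilon$-thin part of $\SS^3\setminus K$ consists of the same cusp neighborhood as in $\SS^3\setminus L$ together with small tubes around the $\gamma_i$, and removing those tubes — i.e.\ passing to $(\SS^3\setminus K)_{\thick}$ — recovers something homeomorphic to $(\SS^3\setminus L)_{\thick}=\SS^3\setminus n(L)$, giving (ii). For (iii) we need the $(\epsilon/d_L)$-thick part to also be homeomorphic to $(\SS^3\setminus L)_{\thick}$; since $\epsilon/d_L<\epsilon$ and we arranged $\epsilon/d_L$ below the systole of $\SS^3\setminus L$, the only thin parts at level $\epsilon/d_L$ are again the cusp and the (now possibly larger, but still disjoint and still tube-shaped) neighborhoods of the $\gamma_i$, so the same homeomorphism type results. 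Taking $k_i$ large enough that all of these geometric-convergence statements hold simultaneously for $|q_i|\ge k_i$ finishes Part (1). The main obstacle here is bookkeeping: making sure a single $\epsilon$ works for \emph{all} admissible filling tuples at once, which is why $\epsilon$ must be chosen from $\L$ alone before the $q_i$ enter.

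\textbf{Part (2).} Here the plan is to use the covering property together with the fact that the $\gamma_i$ are distinguished inside $\K$ by the $(\epsilon,d_L)$-twisted hypothesis. By \Cref{lem:min_vol_non_arithmetic} (which applies since $\K$ is non-arithmetic, the figure-8 having been excluded) and the choice $d_L=4\,\mathrm{vol}(\SS^3\setminus L)/v_0$, the cover $p:\K\to\orb$ has degree $d=\mathrm{vol}(\K)/\mathrm{vol}(\orb)\le \mathrm{vol}(\SS^3\setminus L)/(v_0/4)=d_L$ (using $\mathrm{vol}(\K)<\mathrm{vol}(\SS^3\setminus L)$). Now each core geodesic $\gamma_i$ has length less than $\epsilon/d_L$, by the defining condition (iii): indeed, in $(\SS^3\setminus K)_{\thickD}$ the $\gamma_i$ must have been removed, so $\gamma_i$ lies in the $(\epsilon/d_L)$-thin part, forcing $\ell(\gamma_i)<\epsilon/d_L$. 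Consequently its image $p(\gamma_i)$ in $\orb$ is a closed geodesic (an orbifold locus or an ordinary geodesic) of length at most $d\cdot\ell(\gamma_i)\le d_L\cdot(\epsilon/d_L)=\epsilon$, hence lies in the $\epsilon$-thin part of $\orb$. The point is that drilling these short geodesics out of $\orb$ is compatible with the cover: the preimage $p^{-1}\big(\sqcup_i p(\gamma_i)\big)$ is a union of short closed geodesics in $\K$, and since any symmetry (hence any deck-type behavior realized by $p$) permits the restriction to the thick parts, removing $\sqcup_i p(\gamma_i)$ from $\orb$ and removing its preimage from $\K$ yields a covering of the drilled orbifold $\orbQ\cong\orb\setminus\sqcup_i p(\gamma_i)$. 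Finally, the drilled manifold $\K$-minus-those-geodesics is exactly $\SS^3\setminus L$ (drilling the cores of the filling solid tori undoes the Dehn filling), so we obtain $p_N:\L\to\orbQ$ as claimed. One should check that $p^{-1}(\sqcup_i p(\gamma_i))$ contains \emph{only} the $\gamma_i$ and no extra short geodesics — this follows because at level $\epsilon$ in $\orb$, and hence at level (roughly) $\epsilon$ upstairs, conditions (ii)--(iii) guarantee the thin part of $\K$ is exactly the cusp plus the tubes around the $\gamma_i$, so nothing else can map into the drilled locus.

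\textbf{Main obstacle.} I expect the crux of Part (2) to be verifying that the orbifold cover $p$ genuinely restricts to a cover \emph{after} drilling, i.e.\ that $\orbQ$ really is a good hyperbolic orbifold and that $p$ induces $p_N$ on the nose — one must rule out the possibility that some component of $p^{-1}(p(\gamma_i))$ is a geodesic that is \emph{not} one of the $\gamma_i$ (which would make the drilled cover not equal to $\SS^3\setminus L$). The degree bound $d\le d_L$ combined with the length bound $\ell(\gamma_i)<\epsilon/d_L$ is precisely engineered so that the images stay in the $\epsilon$-thin part and no unexpected geodesics intrude; making this airtight is the technical heart of the argument. For Part (1) the only real work is the uniform choice of $\epsilon$, which is routine once one fixes $\L$ first.
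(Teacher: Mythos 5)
The overall architecture of your argument matches the paper's: in Part (1) fix $\epsilon$ from the geometry of $\L$ alone, then push the $k_i$ up; in Part (2) leverage the volume/degree bound $d_p\le d_L$ and the shortness of the core geodesics. However, two specific steps in your writeup would not survive scrutiny.

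\textbf{Part (1).} You write that shrinking $\epsilon$ so that $\epsilon/d_L$ lies below the systole of $\L$ suffices for condition (iii). This doesn't do anything: once $\epsilon$ is below the systole of $\L$ and $d_L>1$, automatically $\epsilon/d_L<\epsilon< \mathrm{sys}(\L)$. Condition (iii) is not a constraint on $\epsilon$; it is an additional constraint on the \emph{filling slopes}, namely that the core geodesics $\gamma_i$ have length \emph{less than} $\epsilon/d_L$, not merely less than $\epsilon$. Satisfying (ii) only forces $\ell(\gamma_i)<\epsilon$. To get (iii) you must enlarge the $k_i$ a second time, pushing the core geodesics even shorter; this is the extra step the paper takes using the cited result from Dunbar--Meyerhoff. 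Your parenthetical ``now possibly larger \dots neighborhoods of the $\gamma_i$'' also points the wrong way: the $(\epsilon/d_L)$-thin part of $M$ is contained in the $\epsilon$-thin part, and what (iii) demands is that the $\gamma_i$ still lie in the smaller thin part.

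\textbf{Part (2).} The inequality on lengths is reversed: if $p:\K\to\orb$ is a cover and $\gamma_i$ covers $p(\gamma_i)$ with degree $m_i\ge 1$, then $\ell(p(\gamma_i))=\ell(\gamma_i)/m_i\le\ell(\gamma_i)<\epsilon/d_L$, \emph{not} $\ell(p(\gamma_i))\le d_p\,\ell(\gamma_i)$. More importantly, the crux you yourself flag --- that $p^{-1}\bigl(\sqcup_i p(\gamma_i)\bigr)$ consists of exactly the $\gamma_j$ --- is not actually established by your reasoning. The preimage of a geodesic of length $\ell'$ can contain geodesics of length up to $d_p\,\ell'$, so if you only know $\ell(p(\gamma_i))<\epsilon$ (your bound), then preimage components can have length up to $d_L\epsilon$, which may far exceed $\epsilon$, and condition (ii) says nothing about such geodesics. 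What saves the argument is the correct bound $\ell(p(\gamma_i))<\epsilon/d_L$ together with the degree bound $d_p\le d_L$, which together give preimage components of length $<\epsilon$, forcing them to be core geodesics by (ii). You need both halves; neither appears in your draft. The paper avoids the geodesic-by-geodesic chase entirely by observing the set-level sandwich $\K_{\thinD}\subset p^{-1}(\orb_{\thinD})\subset \K_{\thin}$ (coming from $1\le d_p\le d_L$), and then using conditions (ii) and (iii) to conclude that all three sets are topologically the same, from which $\orbQ=\mathrm{int}(\orb_{\thickD})$ is covered by $\mathrm{int}(\K_{\thickD})\cong\L$. That argument is cleaner and is the one to emulate.
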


The following proof utilizes properties of geometric convergence. For further background on these ideas we refer the reader to \cite[Chapter E]{BP92}, \cite{DunbarMeyerhoff1994}, and Thurston's notes \cite{Thu78}.

\begin{proof}

 Let $\L=\SS^3\setminus L$, and denote by $\L_\alpha$ the Dehn filling of $\L$ along $\alpha=(\frac{1}{q_i},\dots,\frac{1}{q_n})$. 
 
 Part (1): To start, we require our choice of $\epsilon$ to be smaller than the systole length of $\L$. Such a choice of $\epsilon$ immediately satisfies item (i) from the definition of $(\epsilon,d_L)$-twisted.  After making $\epsilon$ smaller if necessary, \cite[Theorem 5.4, $(2) \Rightarrow (3)$]{DunbarMeyerhoff1994} implies that there exists $q_1,\dots,q_n$ such that for $|q_i|\ge k_i$, we have  $(\L_\alpha)_{\thick}$ is homeomorphic to $\L_{\thick}$, showing item (ii) holds. In particular, this implies that  $(\L_\alpha)_{\thin}$ consists of a neighborhood of the planar cusp $K_0$, and a tubular neighborhood of each core geodesic introduced by Dehn filling a crossing circle. Furthermore, \cite[Theorem 5.4, $(2) \Rightarrow (3)$]{DunbarMeyerhoff1994} implies that the lengths of these core geodesics converge to zero, and the lengths of other geodesics in $\L_\alpha$ remain bounded below by $\epsilon$, as the $|q_i|$ go to $\infty$. Thus by making the $k_i$ larger if necessary, we can guarantee that $(\L_\alpha)_{\ge \epsilon/d_L}$ is homeomorphic to $(\L_\alpha)_{\thick}$, while ensuring that (ii) still holds, giving (iii).

Part (2): Let $\K$ be an $(\epsilon,d_L)$-twisted filling of $\L$ as given by Part (1) and let $d_p$ be the degree of the cover $p$. Since $\K$ is non-arithmetic, we have by \Cref{lem:min_vol_non_arithmetic} that $vol(\orb)\ge \frac{v_0}{4}$. Since volume decreases under Dehn filling, it follows that ${d_p =\frac{vol(M)}{vol(\orb)}\le \frac{vol(\SS^3\setminus L)}{\rfrac{v_0}{4}}=d_L}$. Hence $0<\frac{d_p}{d_L} \leq 1$ and we have that $\K_{\thinD}\subset p^{-1}(\orb_{\thinD})\subset \K_{\thin}$. Since $\K$ is $(\epsilon,d_L)$-twisted, its $\epsilon$-thick and $\epsilon/d_{L}$-thick parts are homeomorphic, and so, $p^{-1}(\orb_{\thickD})$ is homeomorphic $\K_{\thickD}$. Let $\orbQ = \mathrm{int}(\orb_{\thickD})$. Since $\K_{\thinD}$ contains only neighborhoods of the core geodesics $\gamma_i$ and of the planar cusp, it follows that $\orbQ\cong \orb\setminus \sqcup_{i=1}^n p(\gamma_i)$. Since $\K$ is $\epsilon/d_L$-twisted, $\K_{\thickD}$ is homeomorphic to $\SS^3 \setminus n(L)$. Thus we get a cover $p_{\L}:\L \to \orbQ$, summarized below:

\begin{figure}[ht]
$$
\xymatrix{
\L \ar@{.>}[dr]_{p_N} \ar@{>}[r]^{\cong\qquad\,\,\,} & \mathrm{int}(\K_{\thickD}) \ar@{>}[r]^\cong & p^{-1}(\mathrm{int}(\orb_{\thickD})) \ar@{->}[d]^p\\
& \orbQ & \mathrm{int}(\orb_{\thickD}) \ar@{>}[l]_\cong}
$$
\caption{\label{fig:edL_coverings} A schematic for covers used in the proof of \Cref{prop:HT_non-empty}. We stress that with respect to $p_N$ no geodesic in the $\epsilon$-thick part of $M$ can cover a geodesic in the $\epsilon/d_L$-thin part of $\orbQ$.}
\end{figure}
\end{proof}

\begin{rem}
In \Cref{sec:QR}, we produce a quantified version of \Cref{prop:HT_non-empty} by implementing recent tools developed by Futer--Purcell--Schleimer \cite{FPS19}. We also discuss how some of the dependencies on the geometry of $N$ can be simplified if we restrict to filling certain subclasses of FAL complements. 
\end{rem}

The relevance of the proposition above to this paper is made clear by the following theorem of Neumann and Reid, which relates hidden symmetries of a knot complement to covering a rigid cusp orbifold. Recall that there are 3 types of Euclidean turn-overs: $\SS^2(2,3,6)$, $\SS^2(2,4,4)$, and $\SS^2(3,3,3)$. An orientable cusped orbifold is said to have a \define{rigid cusp} if it has a cusp with cross section of Euclidean turn-over. Otherwise, an orientable cusped orbifold is said to have non-rigid cusps. There are two types of non-rigid cusps: pillow-cases, which are homeomorphic to $\SS^{2}(2,2,2,2)$, and tori. 

\begin{thm}[{\cite[Proposition 9.1]{NR92}}]\label{thm:NR}
Let $\SmK$ be a hyperbolic knot complement. Then $\SmK$ admits hidden symmetries if and only if $\SmK$ covers a rigid cusped orbifold.
\end{thm}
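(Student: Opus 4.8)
\textbf{Proof proposal for \Cref{thm:NR} (Neumann--Reid, Proposition 9.1).}

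The statement to establish is that a hyperbolic knot complement $\SmK$ admits hidden symmetries if and only if $\SmK$ covers a rigid cusped orbifold. My plan is to route everything through the commensurator. Recall that for a non-arithmetic finite volume hyperbolic $3$-manifold $M = \HH^3/\Gamma$, Margulis's theorem gives that the commensurator $C(\Gamma) = \{g \in \PSLTC : [\Gamma : \Gamma \cap g\Gamma g^{-1}] < \infty \text{ and } [g\Gamma g^{-1} : \Gamma \cap g\Gamma g^{-1}] < \infty\}$ is itself a discrete group, and $\orb_{\min} = \HH^3/C(\Gamma)$ is the minimal orbifold in the commensurability class; moreover $M$ admits hidden symmetries precisely when $C(\Gamma) \neq N(\Gamma)$, i.e.\ when the cover $M \to \orb_{\min}$ is not regular (equivalently, not the quotient by $\mathrm{Isom}(M)$). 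So the first step is to reduce to understanding the cusp of $\orb_{\min}$: since $\SmK$ has one cusp, a peripheral subgroup $P \cong \ZZ^2$ of $\Gamma$ maps into a maximal peripheral subgroup $\bar P$ of $C(\Gamma)$, and the cusp cross-section of $\orb_{\min}$ is $\RR^2/\bar P$ where $\bar P$ is a Euclidean crystallographic group containing the rank-$2$ translation lattice $P$ with finite index. The cusp of $\orb_{\min}$ is rigid exactly when $\bar P$ contains an order-$3$, order-$4$, or order-$6$ rotation (the point groups yielding $\SS^2(3,3,3)$, $\SS^2(2,4,4)$, $\SS^2(2,3,6)$); otherwise the point group is trivial, $\ZZ/2$, or a Klein four-group, giving a torus or pillowcase cusp.

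The heart of the argument is the ``only if'' direction, and here I would use the key fact---special to knot complements in $\SS^3$---that the peripheral torus $T$ of $\SmK$ carries a distinguished basis $(\mu, \lambda)$ (meridian and longitude) with the property that the meridian $\mu$ bounds a disk in $\SS^3$ and is therefore canonically determined, while the longitude is null-homologous in $\SmK$. One then invokes the homological argument of Gonz\'alez-Acu\~na and Whitten (as used by Neumann--Reid): any symmetry of a knot complement preserves the meridian slope up to sign, and more subtly, any element of the commensurator normalizing (a conjugate of) $P$ must also respect this meridional structure because the meridian is characterized by a covering-theoretic condition---$\mu$ is the shortest slope with the property that some finite cyclic cover filled along it... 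I would instead lean on the cleaner route: if $\SmK$ had a hidden symmetry, then $\bar P \supsetneq$ (the image of $\mathrm{Isom}$-induced peripheral group), and one shows this forces $\bar P$ to contain a rotation of order $3$, $4$, or $6$, because the only way to enlarge the symmetry group ``hidden''-ly on the cusp---given that the visible symmetries already act on $T$ as a subgroup of the dihedral group fixing $\mu$---is to introduce a rotation not fixing the meridian direction, and a Euclidean crystallographic group strictly larger than a dihedral-type group that does \emph{not} fix any slope must have a $3$-, $4$-, or $6$-fold rotation. Conversely, for the ``if'' direction: if $\SmK$ covers a rigid-cusped orbifold $\orb'$, then $C(\Gamma)$ has rigid cusp (since $\orb' $ lies between $M$ and $\orb_{\min}$, so $\bar P$ contains the rotation that $\orb'$'s cusp group does); a rigid cusp group cannot be realized as the peripheral group of the regular quotient $M/\mathrm{Isom}(M)$ because $\mathrm{Isom}(M)$ fixes the meridian slope $\mu$ and hence acts on $T$ through a group fixing a direction---ruling out order-$3$ and order-$6$ rotations outright, and ruling out order-$4$ because an order-$4$ rotation composed with the meridian-preserving structure would force $\lambda = \pm\mu$ under the symmetry, contradicting that $\lambda$ is null-homologous while $\mu$ is not. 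Hence $N(\Gamma) \neq C(\Gamma)$ and $\SmK$ has hidden symmetries.

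\textbf{Main obstacle.} The delicate point is the meridian rigidity step: carefully justifying that \emph{every} element of the commensurator that normalizes the peripheral subgroup must preserve the meridian slope, not just the visible symmetries in $N(\Gamma)$. The visible case is classical (symmetries of knot complements preserve the meridian by the Gordon--Luecke theorem / knot complement problem, or by the homological characterization of $\mu$), but for hidden symmetries one is looking at elements of $C(\Gamma) \setminus N(\Gamma)$, which do \emph{not} descend to homeomorphisms of $\SmK$. The resolution---and the technical core of Neumann--Reid's proof---is that such an element still induces a homeomorphism between finite covers of $\SmK$, and by tracking how the meridian behaves under these covering maps (it remains the unique slope along which filling yields a manifold with $H_1$ of a prescribed form, or using that the cusp shape together with the meridian is a commensurability invariant constrained by being a knot in $\SS^3$), one forces the rigidity. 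Getting this cusp-shape/meridian bookkeeping exactly right, and matching it against the arithmetic of the three Euclidean turnover groups, is where essentially all the work lies; the rest is formal manipulation of covering spaces and the Margulis commensurator dichotomy.
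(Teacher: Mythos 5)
This result is cited in the paper from \cite[Proposition 9.1]{NR92}; the paper gives no proof of its own, so there is nothing in-paper to compare against. Judged on its own, your framework is the correct one and does match Neumann--Reid's: route through the Margulis commensurator, identify the minimal orbifold $\orb_{\min}=\HH^3/C^+(\Gamma)$, classify Euclidean $2$-orbifold cusps into torus / pillowcase (flexible) versus $\SS^2(3,3,3)$ / $\SS^2(2,4,4)$ / $\SS^2(2,3,6)$ (rigid), and lean on the canonical meridian--longitude structure of a knot complement. The ``if'' direction in fact admits a cleaner argument than the one you sketch: in a non-arithmetic class every orbifold covers $\orb_{\min}$, a rigid cusp cannot cover a flexible one (the $3$-, $4$-, or $6$-torsion in $\pi_1^{\mathrm{orb}}$ of a Euclidean turnover has nowhere to inject in $\pi_1^{\mathrm{orb}}$ of a torus or $\SS^2(2,2,2,2)$), hence a rigid cusp upstairs forces a rigid cusp of $\orb_{\min}$; but $\mathrm{Sym}^+(\SmK)$ fixes $\pm\lambda$ homologically and acts on the cusp torus through $\{\pm I\}$, so $\SmK/\mathrm{Sym}^+(\SmK)$ has a flexible cusp, hence $\orb_{\min}\neq \SmK/\mathrm{Sym}^+(\SmK)$ and $C^+(\Gamma)\supsetneq N^+(\Gamma)$.

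The genuine gap is in the ``only if'' direction, and it sits one step earlier than where you place it. You assert that a hidden symmetry forces the peripheral group $\bar P$ of $\orb_{\min}$ to strictly contain the image of the visible peripheral group, and then argue about which crystallographic groups can strictly enlarge a slope-fixing group. But the first assertion is not automatic: $C^+(\Gamma)\supsetneq N^+(\Gamma)$ is a global statement, and there is no a priori reason the extra commensurator elements must be detected at the cusp stabilizer. One must rule out the scenario where $\orb_{\min}$ has a flexible cusp (so $\bar P$ is torus- or pillowcase-type) and yet $\SmK\to\orb_{\min}$ is not regular. That is exactly the substantive content of Neumann--Reid's proposition, and it hinges on a fact specific to knot complements in $\SS^3$ --- essentially that a cover from a knot complement onto a flexibly-cusped orbifold is constrained (one shows such a cover is regular, using that the meridian is the unique slope giving $\SS^3$ under filling, the fact that $H_1(\SmK)=\ZZ$, and how slopes on flexible cusps pull back). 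You flag the ``meridian rigidity'' bookkeeping as the main obstacle, which is accurate; but as written your proposal states the needed enlargement of $\bar P$ as though it follows formally from the definition of hidden symmetry, and it does not. The sketch has the right landmarks but does not discharge the obstacle it correctly names.
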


In the context of \Cref{prop:HT_non-empty}, this theorem tells us that if we can show that $\orb$ does not have a rigid cusp, then it follows that $M$ does not admit hidden symmetries. The proposition allows us to pass to the orbifold $\orbQ$, and show instead that the corresponding cusp of $\orbQ$ is non-rigid. To show this, we will need to use the fact that the other cusps of $\orbQ$ must also be non-rigid, which follows from the fact that rigid cusps cannot be Dehn filled. This fact is discussed below in \Cref{sub:orbDehnFilling}, where we also give a detailed description of fillings of non-rigid cusps by solid torus quotients. Although such a description is really only needed as background for the proof of \Cref{lem:no_knot_covers}, we include it here since it has the added benefit of providing a description of the thin part $\orb_{\thin}$ of $\orb$ (recall from the proof of \Cref{prop:HT_non-empty} that $\orbQ$ is obtained by removing $\orb_{\thin}$ from $\orb$). Components of $\orb_{\thin}$ consist of the quotients of solid tori described below. We note that the core geodesics $p(\gamma_i)$ of $\orb$ correspond in the description below to the curves and arcs of $n$-torsion in the solid torus quotients (where we may have $n=1$).

\subsubsection{Orbifold Dehn filling}\label{sub:orbDehnFilling}

An important distinction between rigid and non-rigid cusps is that non-rigid cusps have well-defined slopes. We include the discussion here to keep the paper self-contained and to set ideas and notation used later (see \cite[Section 4]{DunbarMeyerhoff1994} for further discussion of orbifold Dehn filling).

First, consider a torus cusp $T$ of an orbifold $\orb$. Such a cusp is filled using an \define{orbi-torus} $D^2(n) \times \SS^1$, where for $n=1$ we just have the solid torus $D^2\times \SS^1$. Alternatively, we could consider an orbi-torus as the quotient of a solid torus by an order-$n$ rotation about its core. Associated to a primitive element $(p,q)\in H_1(T,\ZZ)$ is a simple closed curve $\alpha$. If we glue the meridian $\del(D^2(n)\times \{0\})$ of an orbi-torus to $\alpha$, then we say that the fillings slope is $(np,nq)$. Thus the filling slope describes a multi-curve ($n$ copies of $\alpha$) that becomes trivial in the orbifold fundamental group of the filled orbifold.

For an $\SS^2(2,2,2,2)$ cusp $P$ we fill using the order-2 quotient of an orbi-torus, which we call an \define{orbi-tangle}. This is a 2-strand rational tangle in a ball, with each strand labeled by 2-torsion, and an arc labelled by $n$-torsion connecting the strands of the tangle. When $n=1$ this is the 2-fold quotient of a solid torus (with fixed points on the boundary).
A slope on $P$ is defined via a covering map $p: T \rightarrow P$, where $T$ is a torus. Any essential simple closed curve $\alpha$ on $T$ (which we may assume avoids the singularities) will project 
to an essential simple closed curve $\bar{\alpha}$ on $P$. We define a slope for such 
a closed curve in $P$ in terms of the slope it lifts to in $T$. If we are gluing in an orbi-tangle with $n$-torsion on the arc between the tangles, then there is an orbi-disk $D^2(n)$ properly embedded in the orbi-tangle that separates the two tangle strands. The orbi-tangle is glued to $P$ in such a way that the boundary of the orbi-disk glues to $\bar{\alpha}$ on $P$. If the slope of $\bar{\alpha} \subset P$ is $(p,q)$, then the filling slope for such a filling is $(np,nq)$. As with the torus filling, the filling slope describes a multi-curve ($n$ copies of $\alpha$) that becomes trivial after filling. 

On the other hand, it is not possible to (non-trivially) fill a rigid cusp of an orbifold. This is because no solid-torus quotient has a turnover as its boundary. 
 We record this important fact in the following lemma:

\begin{lem} \label{lem:slopes}
Given any cusped (finite volume) hyperbolic $3$-orbifold $\mathcal{O}$, Dehn fillings can be performed only along non-rigid cusps. 
\end{lem}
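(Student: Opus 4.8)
The statement to prove is \Cref{lem:slopes}: in any cusped finite-volume hyperbolic 3-orbifold, Dehn fillings can only be performed along non-rigid cusps. The proof sketch is essentially given in the paragraph preceding it.

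\textbf{Proof plan.}

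The plan is to argue by contradiction via the topological obstruction already highlighted in the text: a rigid cusp has cross-section a Euclidean turnover $\SS^2(2,3,6)$, $\SS^2(2,4,4)$, or $\SS^2(3,3,3)$, and no solid-torus quotient (orbi-solid torus) has such a turnover as its boundary. First I would recall that a (topological) Dehn filling of an orbifold cusp means removing an open cusp neighborhood and gluing back a compact orbifold $V$ whose underlying space is a solid torus, with singular locus either empty (for a torus cusp), a core circle of order $n$ (orbi-torus $D^2(n)\times\SS^1$), or the order-$2$ quotient of such an orbi-torus (orbi-tangle), as enumerated in \Cref{sub:orbDehnFilling}. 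In every case, $\partial V$ is either a torus or a pillowcase $\SS^2(2,2,2,2)$: for the orbi-torus the boundary is literally a torus, and for the orbi-tangle the two order-$2$ strands meet $\partial V$ in four points, giving four cone points of order $2$ on a sphere. In particular, $\partial V$ is never a turnover.

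The key step is the observation that for any orbifold Dehn filling the boundary orbifold of the filling piece must be orbifold-homeomorphic to the cusp cross-section being filled (since the gluing is along that boundary). A rigid cusp has cross-section a Euclidean turnover, which has underlying space $\SS^2$ with exactly three cone points, whereas — as just noted — every admissible filling piece has boundary a torus or a four-times-punctured/coned sphere. These orbifolds are pairwise non-homeomorphic (e.g.\ they are distinguished by their orbifold Euler characteristics and the number of cone points, or simply by the underlying surface in the torus case), so no gluing exists. Hence a rigid cusp admits no Dehn filling, and only non-rigid cusps (tori and pillowcases) can be filled. I would also note the complementary positive fact, implicit in \Cref{sub:orbDehnFilling}, that non-rigid cusps \emph{do} admit fillings, so the lemma is not vacuous.

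The main (and really only) obstacle is being careful about what ``Dehn filling of an orbifold'' is taken to mean, and ensuring the catalog of possible filling pieces $V$ is complete: one must invoke that any compact hyperbolizable (or merely any) orbifold obtained by filling must have underlying space a solid torus with singular locus a properly embedded $1$-orbifold carried by the core, which forces the three cases above. Once that classification is in hand — and it is standard, with a reference to \cite[Section 4]{DunbarMeyerhoff1994} already provided — the conclusion is immediate from the mismatch of boundary orbifold types. No computation beyond comparing cone-point data of the four relevant Euclidean $2$-orbifolds is required.
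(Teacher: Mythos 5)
Your proposal matches the paper's reasoning exactly: the lemma is recorded as an immediate consequence of the remark that no solid-torus quotient (orbi-torus or orbi-tangle) has a Euclidean turnover as its boundary, which is precisely the topological obstruction you identify. You have simply spelled out the catalog of filling pieces from \Cref{sub:orbDehnFilling} in more detail; no new ideas or different route are involved.
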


For a hyperbolic 3-orbifold $\orb$, each slope we Dehn fill along is associated to a parabolic element of the fundamental group. If we inflate a (maximal) cusp neighborhood of a 3-orbifold and use that to determine a horoball packing of $\HH^3$, we define the \define{length} of a slope to be the displacement of the parabolic in the appropriate horosphere. We point out that for any cover of $\orb$ in which a slope lifts, this gives a consistent measure of the length of that slope.

\subsubsection{$(\epsilon,d_L)$-twisted and generic knot complements} \label{subsub:TandGknots}

Generically, one should expect the $\epsilon$-thin part of an orbifold or manifold to be a set of non-isometric tubes and a set of non-isometric cusp neighborhoods. This motivates the following definition:

\begin{defin}
We say that $\SS^3\setminus K$ is an \define{$(\epsilon,d_L)$-twisted and generic} filling of $\SS^3\setminus L$ if it is $\edHT$ and  no isometry of $\SS^3\setminus K$ non-trivially permutes the set of core geodesics of the filling solid tori.
\end{defin}

The above condition is satisfied if $(\SS^3 \setminus K)_{\thin}$ is a set of (disjoint) pairwise non-isometric tubular neighborhoods of geodesics and a single cusp neighborhood. For example, this will be true if the geodesics in the filling solid tori all have different lengths. 
 
 Note that if $\SS^3\setminus L$ has no symmetries that non-trivially permute crossing circle cusps, then \emph{every} $\edHT$ filling is generic in the sense of the above definition. This is because any isometry of $\SS^3\setminus K$ that permutes core geodesics restricts to a symmetry of $(\SS^3\setminus K)_{\thick}$ permuting the corresponding boundary tori, and $(\SS^3\setminus K)_{\thick}$ is homeomorphic to $\SS^3\setminus n(L)$. Since symmetry groups of link complements can be rigorously computed by SnapPy, this allows us to identify certain FALs for which all sufficiently long fillings are $(\epsilon,d_L)$-twisted and generic (see \Cref{subsec:examples-twisted-generic}). Certainly, though, this condition on the symmetry group of $\SS^3\setminus L$ is far from necessary, as the following proposition demonstrates.

\begin{prop}\label{prop:HTHD_exist}
 For any FAL complement $N= \SS^3 \setminus L$ with one planar component, $(\epsilon,d_L)$-twisted and generic fillings exist.
\end{prop}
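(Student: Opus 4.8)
The plan is to bootstrap from \Cref{prop:HT_non-empty}, which already supplies $(\epsilon,d_L)$-twisted fillings: there are $\epsilon>0$ and integers $k_1,\dots,k_n$ so that every multi-slope $\alpha=(\tfrac1{q_1},\dots,\tfrac1{q_n})$ with each $|q_i|\ge k_i$ yields an $(\epsilon,d_L)$-twisted filling $\L_\alpha=\SS^3\setminus K$. So it suffices to find one such $\alpha$ that is, in addition, generic. The key reduction is the elementary observation that if the core geodesics $\gamma_1,\dots,\gamma_n$ of $\L_\alpha$ all have pairwise distinct lengths, then $\L_\alpha$ is an $(\epsilon,d_L)$-twisted and generic filling: any isometry of $\L_\alpha$ preserves the (metrically defined) $\epsilon$-thick/thin decomposition, hence permutes the components of $(\L_\alpha)_{\thin}$; by the proof of \Cref{prop:HT_non-empty} these components are a neighborhood of the planar cusp together with the Margulis tubes around $\gamma_1,\dots,\gamma_n$, so the isometry fixes the cusp component and permutes $\{\gamma_1,\dots,\gamma_n\}$, and since isometries preserve lengths this permutation must be trivial. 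Thus the whole task is to choose the $q_i$ so that, say, $\mathrm{len}(\gamma_1)>\mathrm{len}(\gamma_2)>\dots>\mathrm{len}(\gamma_n)$.

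I would produce such a choice by selecting $q_1,\dots,q_n$ one at a time, using iterated geometric convergence of high-parameter Dehn fillings. After enlarging the $k_i$ if necessary (Thurston's hyperbolic Dehn surgery theorem), assume every partial filling of $\L$ along slopes $\tfrac1{q_i}$ with $|q_i|\ge k_i$ is hyperbolic. Pick $q_1$ with $|q_1|\ge k_1$ arbitrarily. Inductively, suppose $q_1,\dots,q_{m-1}$ are fixed; then $\L_{(q_1,\dots,q_{m-1})}$ is a fixed hyperbolic manifold (with remaining cusps $C_m,\dots,C_n$) in which $\gamma_1,\dots,\gamma_{m-1}$ have fixed positive lengths, of minimum $\mu>0$ (for $m=1$ there is nothing to arrange and we simply take $q_1$ as above). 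As $|q_m|\to\infty$ the fillings $\L_{(q_1,\dots,q_m)}$ converge geometrically to $\L_{(q_1,\dots,q_{m-1})}$, so $\mathrm{len}(\gamma_m)\to 0$ while for $i<m$ the length $\mathrm{len}(\gamma_i)$ converges to its value in $\L_{(q_1,\dots,q_{m-1})}$; hence I may fix $q_m$, $|q_m|\ge k_m$, so that in $\L_{(q_1,\dots,q_m)}$ one has $\mathrm{len}(\gamma_m)<\mu/4$ and $\mathrm{len}(\gamma_i)>3\mu/4$ for all $i<m$. Because $\L_{(q_1,\dots,q_n)}$ also converges geometrically to $\L_{(q_1,\dots,q_m)}$ as the remaining $|q_j|\to\infty$, and lengths of the uniformly short core geodesics vary continuously in that convergence, there is a threshold $T_m$ so that $|q_{m+1}|,\dots,|q_n|\ge T_m$ guarantees $\mathrm{len}(\gamma_m)<\mathrm{len}(\gamma_i)$ for all $i<m$ in the \emph{final} filling $\L_\alpha$. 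Carrying this out for $m=1,\dots,n$ and finally taking each $|q_j|\ge\max(k_j,T_1,\dots,T_{j-1})$ produces an admissible $\alpha$ with $\mathrm{len}(\gamma_1)>\dots>\mathrm{len}(\gamma_n)$, completing the proof.

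The only real obstacle is making the recursive construction robust — ensuring an inequality established at stage $m$ survives the filling of the later crossing circles — which is precisely what the threshold device handles, at the cost of some bookkeeping. Everything it relies on is standard: iterated high-parameter Dehn filling converges geometrically to the unfilled manifold, and lengths of short geodesics are continuous under geometric convergence (see \cite[Chapter E]{BP92} and \cite{DunbarMeyerhoff1994}, as in the proof of \Cref{prop:HT_non-empty}). I would also note that this argument specializes to recover the remark preceding the statement: if $\SS^3\setminus L$ has no symmetry permuting its crossing circle cusps, then every $(\epsilon,d_L)$-twisted filling is already generic, since an isometry of $\L_\alpha$ permuting the $\gamma_i$ restricts to a symmetry of $(\L_\alpha)_{\thick}\cong\SS^3\setminus n(L)$ permuting the corresponding boundary tori.
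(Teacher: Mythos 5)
Your argument is correct and takes essentially the same approach as the paper's: reduce genericity to the condition that the core geodesics $\gamma_1,\dots,\gamma_n$ have pairwise distinct lengths, then arrange this by a greedy choice of the $q_i$ using the fact that $\mathrm{len}(\gamma_i)\to 0$ as $|q_i|\to\infty$. The paper is more terse about the point you handle with the thresholds $T_m$ (namely, that a length inequality arranged at stage $m$ must persist after filling $C_{m+1},\dots,C_n$), so your version is a careful tightening of the same argument rather than a genuinely different route.
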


\begin{proof}
By \Cref{prop:HT_non-empty} there are positive integers $q_1,\dots,q_n$ such that if $|q_i|\ge k_i$, then filling along $\alpha=(\frac{1}{q_1},\dots,\frac{1}{q_n})$ results in an $(\epsilon,d_L)$-twisted knot complement. Choose an ordering of the crossing circle cusps $\{C_1,...,C_n\}$ of $\SS^3 \setminus L$ and let $\alpha = (\frac{1}{q_{1}}, \ldots, \frac{1}{q_{n}})$ be a multi-slope with $|q_i|\ge k_i$ for all $i$. Then any multi-slope $\alpha' = (\frac{1}{q'_{1}}, \ldots, \frac{1}{q'_{n}})$ with each $ |q_i'| \geq |q_i|$ will also produce an $(\epsilon,d_L)$-twisted filling. To guarantee that such a filling is both $(\epsilon,d_L)$-twisted and generic, first fix a filling $\frac{1}{q'_1}$ for any $|q_1'|\ge |q_1|$ on the first crossing circle cusp. Then choose filling slope $\frac{1}{q'_2}$ for $C_2$, with $|q'_2| \geq |q_2|$, so that the core geodesics of the filling tori for $C_1$ and $C_2$ have distinct lengths. This will be true for all but finitely many choices of $q'_2$, since the length of the core geodesic goes to 0 as $q_2'$ goes to infinity. Continuing this procedure, choose the filling slope $\frac{1}{q'_{i}}$ for $C_i$, with $|q'_i| \geq |q_i|$, so that the resulting core geodesic does not have the same length as that associated to any previous filling. Since $|q_i'|\ge k_i$ for all $i$ and the core geodesics of filling solid tori have distinct lengths, the resulting filling is $(\epsilon,d_L)$-twisted and generic.
 
\end{proof}

\begin{rem}
Though \Cref{prop:HTHD_exist} is only an existence statement, the proof shows that given any FAL complement, many of its fillings will be $(\epsilon,d_L)$-twisted and generic. In particular, the above proof did not rely on the choice of ordering of the cusps, and only required omitting finitely many fillings at each step (though this finite number will generally depend on the filling slopes of previously filled cusps). However, we make no formal claim that $(\epsilon,d_L)$-twisted and generic fillings are generic in a precise probabilistic sense, as we prefer to avoid discussions about probability measures that would take us too far from the goals of this paper. 
\end{rem}

An important feature of $(\epsilon,d_L)$-twisted and generic fillings is that we can understand their symmetries in terms of symmetries of $\SS^3 \setminus L$ that map each cusp to itself. Recall that the orientation-preserving symmetry group of a hyperbolic $3$-manifold $M$ consists of the orientation-preserving homeomorphisms of $M$, up to isotopy; we denote this group by $Sym^{+}(M)$.  The following proposition is key for the proof of \Cref{thm:max_size_sym_group}.

\begin{prop} \label{prop:symrelation}
Let $\K$ be an $(\epsilon,d_L)$-twisted filling of $\L=\SS^3 \setminus L$. Then there exists a monomorphism $f: Sym^{+}(\K) \hookrightarrow Sym^{+}(\L)$ such that for all $\beta \in Sym^{+}(\K)$, $f(\beta)$ permutes the crossing circle cusps of $\L$. Furthermore,  $\K$ is $(\epsilon,d_L)$-twisted and generic if and only if, for every $\beta\in Sym^{+}(\L)$, $f(\beta)$ maps every cusp of $\L$ to itself.
\end{prop}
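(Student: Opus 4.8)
The plan is to construct $f$ via the operation of \emph{drilling the core geodesics} of the filling solid tori. First I would fix, once and for all, a homeomorphism $h\colon \K\setminus(\gamma_1\cup\dots\cup\gamma_n)\to\L$; such an $h$ exists because drilling the core of a Dehn‑filling solid torus undoes the filling, and under any such $h$ the knot cusp of $\K$ goes to the planar cusp $K_0$ of $\L$, while the cusp created by drilling $\gamma_i$ goes to the crossing‑circle cusp $C_{\sigma(i)}$ for some fixed permutation $\sigma$. Given $\beta\in Sym^+(\K)$, I would represent it by the unique orientation‑preserving isometry in its isotopy class (Mostow--Prasad rigidity). Because the $\epsilon$‑thick part is defined purely metrically, $\beta$ preserves $\K_{\geq\epsilon}$ and permutes the components of $\K_{<\epsilon}$; since $\K$ is $(\epsilon,d_L)$‑twisted, the proof of \Cref{prop:HT_non-empty} shows that $\K_{<\epsilon}$ consists of a cusp neighborhood of the knot cusp together with disjoint tubular neighborhoods of $\gamma_1,\dots,\gamma_n$. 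The cusp neighborhood is the unique noncompact component, so $\beta$ fixes it and permutes the tubes, hence permutes $\{\gamma_1,\dots,\gamma_n\}$ by some permutation $\pi_\beta$. Thus $\beta$ restricts to a self‑homeomorphism $\beta'$ of $\K\setminus(\gamma_1\cup\dots\cup\gamma_n)$, and I would define $f(\beta)$ to be the isometry of $\L$ isotopic to $h\circ\beta'\circ h^{-1}$; this is well defined since $h$ is fixed and the isotopy class of a concrete homeomorphism of $\L$ determines a unique isometry.

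Next I would verify the formal properties. Since $\beta'$ is orientation‑preserving and conjugation by the fixed homeomorphism $h$ cannot change orientation behavior, $f(\beta)\in Sym^+(\L)$. Restriction commutes with composition and $h(\,\cdot\,)h^{-1}$ distributes over composition, so $f(\beta_1\beta_2)=f(\beta_1)f(\beta_2)$ and $f$ is a homomorphism. For injectivity I would use that the inclusion $\iota\colon\K\setminus(\gamma_1\cup\dots\cup\gamma_n)\hookrightarrow\K$ induces a surjection $\iota_*$ on fundamental groups (Dehn filling kills meridians) satisfying $\iota_*\circ\beta'_*=\beta_*\circ\iota_*$; if $f(\beta)$ is trivial in $Sym^+(\L)$, then $h\beta'h^{-1}$, hence $\beta'$, is isotopic to the identity, so $\beta'_*$ is inner, and pushing an inner automorphism through the surjection $\iota_*$ shows $\beta_*$ is inner, whence $\beta$ is trivial in $Sym^+(\K)$ by asphericity and Mostow rigidity. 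Finally, by construction $f(\beta)$ fixes $K_0$ and sends $C_{\sigma(i)}$ to $C_{\sigma(\pi_\beta(i))}$, so $f(\beta)$ permutes the crossing‑circle cusps of $\L$; this establishes the first assertion.

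For the ``furthermore'' clause I would argue as follows. Since $f(\beta)$ always fixes $K_0$, the map $f(\beta)$ sends every cusp of $\L$ to itself if and only if it fixes every crossing‑circle cusp, i.e.\ if and only if $\sigma\pi_\beta\sigma^{-1}=\mathrm{id}$, i.e.\ if and only if $\pi_\beta=\mathrm{id}$, i.e.\ if and only if $\beta$ maps each core geodesic $\gamma_i$ to itself. Hence ``$f(\beta)$ sends every cusp of $\L$ to itself for every $\beta\in Sym^+(\K)$'' is equivalent to ``no isometry of $\K$ nontrivially permutes $\{\gamma_1,\dots,\gamma_n\}$.'' Since $\K$ is $(\epsilon,d_L)$‑twisted by hypothesis, this last condition is precisely the definition of $\K$ being $(\epsilon,d_L)$‑twisted and generic, which completes the argument.

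I expect the main obstacle to be the bookkeeping in the first paragraph: using the $(\epsilon,d_L)$‑twisted hypothesis to pin down that $\K_{<\epsilon}$ has exactly the expected components, so that every isometry of $\K$ is forced to fix the knot cusp and permute the $\gamma_i$, and tracking the fixed correspondence $\gamma_i\leftrightarrow C_{\sigma(i)}$ through the drilling homeomorphism $h$ so that the permutation‑conjugation step in the third paragraph is legitimate. The injectivity argument is short but also requires some care, specifically in checking that an inner automorphism descends correctly through $\iota_*$ once basepoints and connecting paths are accounted for.
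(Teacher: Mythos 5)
Your proof is correct and takes essentially the same route as the paper: the paper constructs $f$ by appealing to Kojima \cite[Lemma 5]{K88}, whose \textit{rest} homomorphism is precisely the restriction-to-thick-part (equivalently, drill-and-isotope) map you build by hand, and the ``furthermore'' clause is handled by the same permutation bookkeeping in both arguments. The only difference is that you reprove the well-definedness and injectivity of the restriction map directly instead of citing Kojima.
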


\begin{proof}
Let $\K$ be an $(\epsilon,d_L)$-twisted filling of $\L$ and let $\{ \gamma_{i} \}_{i=1}^{n}$ be the set of core geodesics in $\K$ introduced by Dehn filling the crossing circle cusps of $\L$. Then since $\{ \gamma_{i} \}_{i=1}^{n}$ are the only geodesics  in $\K$ with length less than $\epsilon$, any $\beta \in Sym^{+}(\K)$ must map this set of geodesics to itself. As a result, Kojima \cite[Lemma 5]{K88} gives a monomorphism  ${f: Sym^{+}(\K) \hookrightarrow Sym^{+}(\L)}$. This monomorphism is called \textit{rest} in Kojima's paper, and comes from restricting the domain of any symmetry of $\K$ to $\K_{\thickD}$. Furthermore, Kojima's work implies that any $f(\beta)$ must fix the meridional classes of $C_1 \sqcup \dots \sqcup C_n$, and therefore must permute the cusps $\{C_1,\dots,C_n\}$.

Now, assume $\K$ is $(\epsilon,d_L)$-twisted and generic. Then any $\beta \in Sym^{+}(\K)$ must map each $\gamma_{i}$ to itself, for $i=1, \ldots, n$. Suppose $f(\beta)$ exchanges two cusps of $\L$. By the previous paragraph, we know that $f(\beta)$ must exchange two crossing circle cusps of $\L$. However, since the monomorphism $f$ comes from restricting symmetries of $\K$, the previous sentence implies that there is a symmetry of $\K$ that exchanges some $\gamma_{i}$ and $\gamma_{j}$,  where $i \neq j$, which is a contradiction.  Likewise, if every $f(\beta)$ maps every cusp of $N$ to itself, then we must have that each corresponding $\beta$ maps each core geodesic obtained from Dehn filling to itself, which implies $\K$ is $(\epsilon,d_L)$-twisted and generic.
\end{proof}

\section{Hidden symmetries and FALs}
\label{sec:hidden_syms}

In this section we prove \Cref{thm:no_rigid_cusps}. Recall that by \Cref{thm:NR}, for hyperbolic knot complements having hidden symmetries is equivalent to covering a rigid cusped orbifold. Thus \Cref{thm:no_rigid_cusps} will follow as a direct corollary of the following statement:

\begin{thm}\label{thm:ht_no_rigid_cusps}
Let L be an FAL with a single planar component, and let $\SmK$ be an $(\epsilon,d_L)$-twisted filling of $\SmL$. Then $\SmK$ does not cover a rigid cusped orbifold. 
\end{thm}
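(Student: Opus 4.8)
The strategy is to prove the contrapositive-flavored statement directly: assume $M = \SmK$ covers a rigid cusped orbifold $\orb$, and derive a contradiction with the geometry of the horoball packing $\H$ of $\SmL$ described in \Cref{subsec:HP}. By \Cref{prop:HT_non-empty}(2), such a cover $p\colon M\to\orb$ restricts to an orbifold cover $p_N\colon \SmL\to\orbQ$, where $\orbQ$ is obtained from $\orb$ by drilling the images of the core geodesics $\gamma_i$. Since $M$ is a knot complement, its single cusp covers the (unique, rigid) cusp of $\orb$, and hence the planar cusp $K_0$ of $\SmL$ covers a rigid cusp of $\orbQ$. The key point, emphasized in the caption of \Cref{fig:edL_coverings}, is that no $\epsilon$-thick geodesic of $M$ can cover an $\epsilon/d_L$-thin geodesic of $\orbQ$; in particular the core geodesics $\gamma_i$ are the only short geodesics, and the crossing circle cusps of $\SmL$ cannot themselves cover rigid cusps of $\orbQ$ (a rigid cusp cannot be Dehn filled, by \Cref{lem:slopes}, so if a crossing circle cusp covered a rigid cusp then the corresponding core geodesic $p(\gamma_i)$ could not sit inside a solid-torus quotient filling it). So we are reduced to: \textbf{the planar cusp $K_0$ of an FAL complement cannot cover an $\SS^2(2,3,6)$, $\SS^2(2,4,4)$, or $\SS^2(3,3,3)$ cusp of an orbifold $\orbQ$, under a cover in which no crossing circle cusp covers a rigid cusp.}

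**Reducing to symmetries of the horoball packing.** The heart of the argument is that a cover $p_N\colon\SmL\to\orbQ$ taking $K_0$ to a rigid cusp forces the deck-like symmetry group of the horoball packing $\H$ (with $K_0$ lifted to $\infty$) to contain a rotation of order $3$ or $4$ fixing the point at $\infty$. Concretely: lift the cusp cross-section of the rigid cusp of $\orbQ$; its orbifold fundamental group contains an order-$3$ (for $\SS^2(2,3,6)$ or $\SS^2(3,3,3)$) or order-$4$ (for $\SS^2(2,4,4)$) Euclidean rotation, which lifts to an isometry $\phi$ of $\HH^3$ normalizing $\pi_1(\SmL)$, fixing $\infty$, and acting on $\del\HH^3 = \hatC$ as a rotation $z\mapsto \zeta z + c$ of the appropriate order. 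Since $\phi$ normalizes the holonomy group of $\SmL$, it permutes the horoballs of the packing $\H$, and in particular it permutes the full-sized horoballs tangent to $H_\infty$. By \Cref{lem:tangent}, every component of $L$ — every crossing circle and the planar component $K_0$ — has a full-sized horoball tangent to $H_\infty$, arranged (by \Cref{lem:tiled} and the surrounding discussion) in the lines $\line_{i_j}$ of alternating crossing-circle/planar full-sized horoballs parallel to the imaginary axis, with crossing-circle horoballs spaced distance $2$ apart along each such line. The rotation $\phi$ must map this configuration of lines and colored horoball-centers to itself.

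**Getting the contradiction.** Now one plays the rotational symmetry against the rigid combinatorics of the lines $\line_{i_j}$. An order-$3$ or order-$4$ rotation of $\CC$ fixing a point $z_0$ permutes the set of lines $\{\line_{i_j}\}$; but each $\line_{i_j}$ is vertical (parallel to the imaginary axis), and a nontrivial rotation sends a vertical line to a non-vertical line unless the rotation has order $2$. Hence $\phi$ cannot preserve the set of lines unless that set is empty — impossible, since $K_0$ passes through at least one crossing disk. This is essentially the content of \Cref{lem:no_ord3} (ruling out order $3$) and \Cref{sec:order_4}/\Cref{prop:fullH} (severely constraining order $4$); for the $\SS^2(2,4,4)$ case one must work slightly harder, since the paper's introduction flags that order-$4$ packings do exist, and one falls into the exceptional orbifold $\orb$ with two $\SS^2(2,4,4)$ cusps described by \Cref{lem:rigid} — but then \emph{both} its cusps being rigid forces \emph{every} crossing circle cusp of $\SmL$ to cover a rigid cusp, contradicting our reduction above. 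So in every case the assumption that $K_0$ covers a rigid cusp is untenable.

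**Main obstacle.** The genuinely delicate step is the order-$4$ case: unlike order $3$, an order-$4$ rotation of $\CC$ \emph{can} be compatible with a symmetric configuration of horoballs (the relevant FAL complements exist), so ruling it out is not a soft argument about vertical lines. The real work is establishing \Cref{prop:fullH} — a precise description of which horoball packings admit an order-$4$ rotational symmetry — and then showing via \Cref{lem:rigid} that the only way the planar cusp covers an $\SS^2(2,4,4)$ cusp forces all crossing-circle cusps to be rigid-covering, which the structure of $\orbQ$ (coming from drilling the $\gamma_i$ out of $\orb$) forbids. Everything else is bookkeeping with \Cref{lem:tiled}, \Cref{lem:tangent}, and \Cref{prop:HT_non-empty}.
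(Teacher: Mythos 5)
Your high-level roadmap matches the paper's: pass via \Cref{prop:HT_non-empty} to a cover $\SmL\to\orbQ$, observe that the crossing circle cusps cover non-rigid cusps (they arise from drilling the $\gamma_i$, and by \Cref{lem:slopes} a rigid cusp cannot be Dehn filled), and then rule out $K_0$ covering each of the three rigid cusp types. That reduction is sound. But there are two genuine gaps in how you try to close out the cusp-type analysis.

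First, the ``soft'' argument in your order-3 step is wrong. You claim the rotation must preserve the set of vertical lines $\{\line_{i_j}\}$, and conclude a contradiction because a nontrivial order-$3$ rotation doesn't preserve vertical directions. But the rotation is only required to preserve the full packing $\H$, not the subset of vertical lines: under $\sigma$, the line $\line_0$ maps to a line of full-sized horoballs of slope $1/\sqrt 3$, which is a perfectly legitimate line of tangencies inside $\H$. So $\H$ simply contains both the vertical and the rotated lines (this is exactly what \Cref{fig:o3_tiling} depicts), and nothing breaks at this stage. The actual obstruction in the paper is far from free: \Cref{lem:6_vert} computes that the order-$3$ symmetry forces every unshaded face of $P_i$ to have at least $6$ vertices, and only then does \Cref{lem:no_ord3} derive a contradiction from $\chi(\SS^2)=2$ applied to the nerve. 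Invoking \Cref{lem:no_ord3} as a black box would be fine, but your proposed heuristic proof of it does not hold up.

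Second, your treatment of the exceptional $\SS^2(2,4,4)$ orbifold misreads its cusps. You assert that the orbifold produced by \Cref{lem:rigid} has \emph{two} rigid cusps, so that every crossing circle cusp would again cover a rigid cusp and the case self-destructs. In fact that orbifold (the one in \Cref{fig:DL_orb}) has one $\SS^2(2,4,4)$ cusp covered by $K_0$ and one \emph{flexible} cusp covered by the crossing circles---this is exactly why \Cref{lem:no_knot_covers} speaks of ``Dehn-filling the flexible cusp,'' and why that case is not automatically inconsistent with your reduction that crossing circles cover non-rigid cusps. To rule it out the paper has to do real work: \Cref{lem:no_knot_covers} fills that flexible cusp, kills the peripheral torsion, and shows a loop of $2$-torsion survives, so the filled orbifold cannot be covered by a knot complement. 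Your proposal is missing this step entirely. So the order-4 branch of your argument does not close, even granting \Cref{prop:fullH} and \Cref{lem:rigid}.
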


We will delay the proof of the above theorem until the end of the section. However, the idea of the proof is to leverage the thick/thin decomposition of $\SmK$ to show that any orbifold covering $\SmK \rightarrow \orb$ restricts to a covering from the thick part of $\SmK$ to the thick part of $\orb$. Under this covering the thick part of $\orb$ has  at least two cusps, but at most one that is a rigid cusp. However, analyzing the embedded thrice punctured spheres in FAL complements shows that such a restricted cover is impossible. 

In this section we continue to restrict to FALs having a single planar component. Let $L=K_0 \sqcup C_1 \sqcup \cdots \sqcup C_n$ be such an FAL, possibly with half-twists at crossing circles. Recall from \Cref{sec:background} that the complement $\SS^3\setminus L$ decomposes as a union $P_1\cup P_2$ of isometric polyhedra. Since $L$ has a single planar component, every shaded face of $P_1\cup P_2$ has a vertex in the cusp $K_0$. We choose a lift of a shaded face $\tau$ of $P_1$ such that the vertices of $\tau$ lift to $0$, $\I$, and $\infty$. We may further require that the vertex of $\tau$ corresponding to $K_0$ is the one that lifts to $\infty$. With this lift fixed, \Cref{thm:hbp_FP07} gives a maximal horoball packing $\H$. For this section, $\H$ will always denote this particular choice of maximal horoball packing, and we will refer to $\H$ as the \define{preferred horoball packing} for the FAL $L$. It follows from the previous section that $\H$ contains as a subset the pattern of full-sized horoballs shown in \Cref{fig:horo_lines}. Recall that a horoball is \define{full-sized} if it has diameter 1, i.e., it is tangent to the horoball at $\infty$.

The proof of the main theorem follows from analyzing the possible rigid cusped orbifold quotients of all FALs with a single planar component. To prove this proposition, we start in \Cref{sec:order_3} by ruling out the possibility that $K_0$ covers an $\SS^2(2,3,6)$ or $\SS^2(3,3,3)$ rigid cusp. Then in \Cref{sec:order_4} we consider the possible orbifold covers where $K_0$ covers an $\SS^2(2,4,4)$ cusp. In both cases our approach is to study possible symmetries of the preferred horoball packing $\H$. The reason for this is made apparent by the below lemma.

\begin{lem}\label{lem:H_orb}
Let $L$ be an FAL with a single planar component $K_0$, and let
 ${p:\SS^3\setminus L \to \orb}$ be an orbifold cover such that no crossing circle covers a rigid cusp. Then $K_0$ covers a rigid cusp in $\orb$ if and only if $\H$ has a rotational symmetry of order 3 or order 4 fixing $\infty$ (depending on the rigid cusp type).	
\end{lem}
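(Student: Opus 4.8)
The plan is to relate the rigid-cusp structure of $\orb$ directly to the symmetry group of the horoball packing $\H$, using that $p$ restricts to a covering of horoball packings and that the preferred packing $\H$ has $K_0$ lifted to $\infty$. First I would observe that, since $p: \SS^3\setminus L \to \orb$ is an orbifold cover, we may choose a maximal cusp expansion of $\orb$ whose pullback to $\SS^3\setminus L$ is (a cusp expansion realizing) the preferred horoball packing $\H$: concretely, inflate the cusp of $\orb$ under $K_0$ until the first tangency, and note that by \Cref{thm:hbp_FP07} combined with \Cref{lem:tangent} the horoball $H_\infty$ is already maximal against the full-sized horoballs tangent to it, so no further inflation of that cusp is possible; one then inflates the remaining cusps of $\orb$ to obtain a maximal packing downstairs, which must agree with $\H$ (here one uses that $\H$ is the \emph{maximal} packing from \Cref{thm:hbp_FP07}, and that the crossing-circle cusps are non-rigid so their lifts also have well-defined maximal size as in \Cref{sub:orbDehnFilling}). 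The deck group $\Gamma_\orb = \pi_1^{orb}(\orb)$ then acts on $\HH^3$ preserving $\H$, and $\pi_1(\SS^3\setminus L)$ is a finite-index subgroup.

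Next I would use the standard correspondence between the cusp cross-section of $\orb$ at the image of $K_0$ and the stabilizer of $\infty$ in $\Gamma_\orb$. The cusp of $\orb$ that $K_0$ covers has cross-section $\SS^2(2,3,6)$, $\SS^2(2,4,4)$, or $\SS^2(3,3,3)$ exactly when $\mathrm{Stab}_{\Gamma_\orb}(\infty)$ contains, in addition to the translation lattice, a rotation of order $6$, $4$, or $3$ (respectively) fixing $\infty$ — and for the two turnovers $\SS^2(2,3,6)$ and $\SS^2(3,3,3)$ this forces an order-$3$ rotation, while $\SS^2(2,4,4)$ forces an order-$4$ rotation. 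Any such rotation $\rho \in \mathrm{Stab}_{\Gamma_\orb}(\infty)$ fixes $\infty$, acts on $\del\HH^3 = \hatC$ as a Euclidean rotation, and — because $\rho$ preserves $\H$ — it is precisely a rotational symmetry of $\H$ fixing $\infty$ of the required order. Conversely, I would argue that a rotational symmetry $\rho$ of $\H$ of order $3$ or $4$ fixing $\infty$, together with the known translation lattice of $H_\infty$ (which has a meridian of length $2$ by \Cref{lem:tiled}, since $K_0$ is planar), generates a Euclidean crystallographic group realizing the corresponding turnover cross-section; one then needs that this rotation actually descends to $\orb$, i.e. lies in (a group commensurable with and containing) $\Gamma_\orb$ — this is where the hypothesis that \emph{no crossing circle covers a rigid cusp} enters, via the earlier propositions (\Cref{lem:no_ord3}, \Cref{prop:fullH}, \Cref{lem:rigid}) that pin down which symmetries of $\H$ can occur and force them to be realized by the orbifold $\orb$ rather than by some intermediate non-normal cover.

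I expect the main obstacle to be the converse direction and, more precisely, the bookkeeping needed to pass from "a symmetry of $\H$" to "an element of the deck group of $\orb$". A rotational symmetry of the packing need not a priori normalize $\pi_1(\SS^3\setminus L)$ inside $\mathrm{Isom}(\HH^3)$, so one must invoke rigidity of the minimal orbifold in the commensurability class (or argue directly that any symmetry of the maximal packing $\H$ fixing $\infty$ lies in the commensurator and hence, together with $\pi_1(\SS^3\setminus L)$, in $\Gamma_\orb$). The forward direction, by contrast, should be essentially formal once the maximal-packing normalization is set up. I would structure the write-up so that the symmetry-of-$\H$ analysis is quarantined into the subsequent Sections \ref{sec:order_3} and \ref{sec:order_4}, with this lemma serving only to reduce the rigid-cusp question to that analysis.
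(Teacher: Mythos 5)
Your forward direction contains a real gap. You assert that when one inflates the cusps of $\orb$ maximally (starting with $\C_0$, then the others), the resulting horoball packing pulls back to $\H$, justifying this by saying that $\H$ is ``the maximal packing'' and that the crossing-circle cusps are non-rigid. But maximal horoball packings are not unique, and nothing you have cited rules out the possibility that some cusp $\C_{i}$ of $\orb$, maximized only against the previously-inflated cusps, has lifts in $\SS^3\setminus L$ that are \emph{strictly smaller} than the corresponding full-sized horoballs of $\H$. Closing this gap is exactly where the paper's proof does its work: it observes that if some maximal neighborhood $n(\C_{i_0})$ is tangent to no $n(\C_j)$ with $j<i_0$ except possibly cusps already known to lift into $\H$, then the lifts of $n(\C_{i_0})$ would be obtained by \emph{deflating} the corresponding full-sized horoballs of $\H$; but in $\H$ those horoballs touch only blue (planar) horoballs, so after deflation they are isolated in the partial packing $\bigcup_{i<i_0}\pi_\orb^{-1}(n(\C_i))\subset\H$, contradicting the maximality of $n(\C_{i_0})$. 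Without this argument (or some substitute using \Cref{lem:tangent} and the fact that crossing-circle horoballs of $\H$ are tangent only to lifts of $K_0$), the equality $\H_\orb=\H$ — and hence the existence of a rotational symmetry of $\H$ — does not follow.

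On the converse, you correctly flag that one must argue the rotational symmetry of $\H$ lies in the deck group $\Gamma_\orb$ rather than merely in $\mathrm{Isom}(\H)$, but the fix you suggest is circular: \Cref{lem:no_ord3}, \Cref{prop:fullH}, and \Cref{lem:rigid} all come \emph{after} this lemma and each begins by \emph{assuming} an order-$3$ or order-$4$ symmetry of $\H$; they do not supply the missing converse. (For what it is worth, the paper's own one-line converse is also terse and does not directly address the deck-group issue; but since \Cref{lem:H_orb} is invoked only in the forward direction, the burden of the argument is on the direction where your proposal has the gap.)
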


\begin{proof}
Suppose that $K_0$ covers a rigid cusp $\C_0$ of $\orb$. Since by assumption no crossing circle covers a rigid cusp, $K_0$ is the only cusp  covering $\C_0$. We will construct a horoball packing $\H_\orb$ covering cusp neighborhoods of $\orb$. First, let $\pi:\HH^3\to \SS^3\setminus L$ be the cover defined in \Cref{subsec:HP}, and let $\pi_\orb:\HH^3\to \orb$ be the composition $p\circ \pi$. Let $n(\C_0)$ be a maximal embedded cusp neighborhood of $\C_0$ (i.e., it contains every other embedded cusp neighborhood of $\C_0$). For the remaining cusps $\C_1,\dots,\C_m$ of $\orb$, let $n(\C_i)$ be the largest embedded cusp neighborhood whose interior is disjoint from $n(\C_0)$, and from $n(\C_j)$ for $j<i$. Now let $\H_\orb=\bigcup_i \pi_\orb^{-1}(n(\C_i))$. Note that $\pi_\orb^{-1}(n(\C_0))\subset (\H\cap \H_\orb)$, and among these lifts of $n(\C_0)$ are the horoball $H_\infty$ at infinity, and a full-sized horoball tangent to $H_\infty$.

Each cusp $\C_i$, $i\ne 0$, is covered by one or more crossing circles. Since $n(\C_i)$ is maximal, it is either tangent to $n(\C_0)$, or to $n(\C_j)$ for some $j\le i$ (or both). First, consider the case where $n(\C_i)$ is tangent to $n(\C_0)$. Then $\pi_\orb^{-1}(n(\C_i))$ contains a horoball tangent to $H_\infty$, necessarily full-sized. Let $C_k$ be any crossing circle covering $\C_i$, and let $n(C_k)\in p^{-1}(n(\C_i))$. The point of tangency between $n(\C_i)$ and $n(\C_0)$ lifts to a point of tangency between $n(C_k)$ and a neighborhood $n(K_0)\in p^{-1}(n(\C_0))$. It follows that there is a full-sized horoball in $\pi^{-1}(n(C_k))$, and this is the largest horoball covering $C_k$. Let $i_0$ be the smallest index so that $n(\C_{i_0})$ is \emph{not} tangent to $n(\C_0)$. Then since $\pi_\orb=p\circ \pi$, it follows that $\bigcup_{i=0}^{i_0-1} \pi_\orb^{-1}(n(\C_i))\subset \H$.

Consider now the horoballs covering $n(\C_{i_0})$. Since they are not tangent to $n(\C_0)$, they cannot be full-sized. If $C_{k_0}$ is a crossing circle covering $\C_{i_0}$, then the horoballs covering $\C_{i_0}$ in $\H_\orb$ are obtained by equivariantly deflating the horoballs in $\H$ covering $C_{k_0}$. But such horoballs in $\H$ are isolated, and hence they must be isolated in $\bigcup_{i=0}^{i_0} \pi_\orb^{-1}(n(\C_i))$, since $\bigcup_{i=0}^{i_0-1} \pi_\orb^{-1}(n(\C_i))\subset \H$. But this is a contradiction. Since $n(\C_{i_0})$ has a point of tangency with some $n(\C_j)$ for $j\le i_0$, the horoball lifts of $\C_{i_0}$ must be tangent to horoball lifts of $n(\C_j)$.

 It follows that for every crossing circle $C_j$, the largest horoball corresponding to $C_j$ is full-sized. By \Cref{lem:tangent}, the same holds for $\H$. Since $\pi_\orb=p\circ \pi$, it follows that we must have $\H=\H_\orb$. Since deck transformations for $\orb$ are necessarily symmetries of $\H_\orb$, $\H_\orb$ must have an order-4 or order-3 rotation fixing $\infty$ (since $\infty$ covers the rigid cusp of $\orb$). Hence $\H$ has an order-3 or order-4 rotation fixing $\infty$.

Conversely, if $\H$ has an order-4 symmetry then $K_0$ covers an orbifold with an order 4 cone point, which must be $\SS^2(2,4,4)$ as this is the only Euclidean 2-orbifold with this property. Similarly, if $\H$ has an order-3 symmetry then $K_0$ must cover an orbifold with an order 3 cone point, i.e., $\SS^2(2,3,6)$ or $\SS^2(3,3,3)$.
\end{proof}

 In the discussions to follow we will denote by $H_z$ the horoball in $\H$ centered at ${z\in \hatC}$. Since we will usually not need to distinguish between horoballs coming from different crossing circles, we will refer to horoballs that lift from crossing circles as red horoballs. Horoballs that are lifts of $K_0$ will be referred to as blue horoballs. We will similarly refer to red and blue vertices of the polyhedra $P_i$ depending on whether the vertex corresponds to a crossing circle or a planar cusp. Finally, we denote by $\nerve$ the nerve associated to $L$, and we again color the edges of the nerve red or blue depending on the color of the vertex of $P_i$ to which they are dual. In the figures we will color these objects accordingly, and all red vertices, edges, and horoballs will also be drawn thicker.

\subsection{Order 3 symmetries}\label{sec:order_3}
In this section we will rule out the possibility that $K_0$ covers a rigid $\SS^2(2,3,6)$ or $\SS^2(3,3,3)$ cusp. By \Cref{lem:H_orb}, this reduces to showing that $\H$ does not admit order-3 symmetries fixing $\infty$. To this end, suppose  that $\H$ does in fact have an order--3 symmetry $\sigma$ fixing $\infty$ and a point $z\in \CC$, which we may assume is a counterclockwise rotation as seen from $\infty$. Recall that $\line_j$ denotes the line of full-sized horoballs along the line $\{\re(z)=l_j\}$, as in \Cref{fig:horo_lines}. The image $\sigma(\line_0)$ is a line of horoballs having slopes $1/\sqrt{3}$. This line of horoballs must intersect each $\line_j$ in a horoball. In particular, $\line_0\cap \sigma(\line_0)$ is some horoball $H_0$. From here forward we will assume that $H_0$ is red, as shown in \Cref{fig:horo_lines}. If $H_0$ is blue then the discussion to follow and the proofs of \Cref{lem:6_vert} and \Cref{lem:no_ord3} go through with the only change being the reversal of colors. Rotating again by $\sigma$ gives another line $\sigma^2(\line_0)$ of horoballs, this time of slope $-1/\sqrt{3}$. It is clear that this line must also intersect $\line_0$ in a red horoball (since $H_0$ is red). After translating the horoballs in \Cref{fig:order_3} by the group generated by $\mu$ and $\sigma^{-1}\mu\sigma$, we get diameter $1$ horoballs tiling $\CC$ in the pattern shown in \Cref{fig:o3_tiling}. It follows that for each $j$ we have $l_j=k_j \sqrt{3}$, for some $k_j\in \ZZ_{>0}$.

\begin{figure}[h]
	\begin{subfigure}{.49\textwidth}
 		\centering
   		\includegraphics[scale=.9]{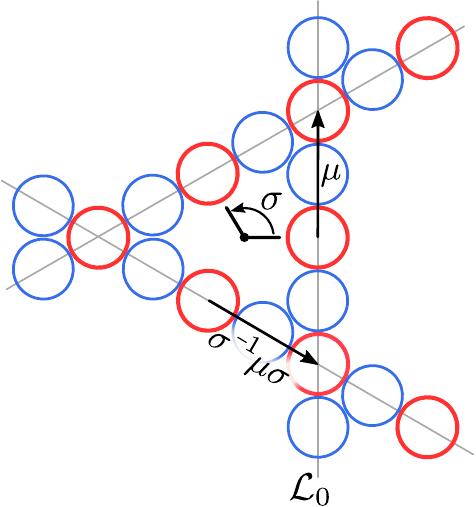}
   		\caption{}
   		\label{fig:order_3}
	\end{subfigure}
	\begin{subfigure}{.49\textwidth}
 		\centering
   		\includegraphics[scale=.9]{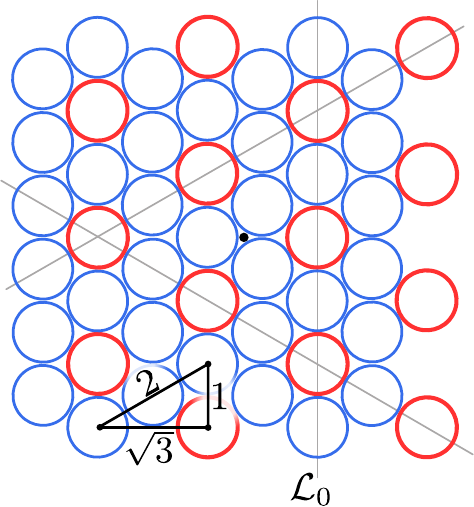}
   		\caption{}
   		\label{fig:o3_tiling}
	\end{subfigure}	
	\caption{The images of a line $\line_0$ of full sized horoballs under an order-3 rotation $\sigma$ by must intersect $\line_0$ in a full sized horoballs, as in (a). By translating the picture in (a) by deck transformations, we get the tiling in (b). As usual, crossing circle horoballs are colored red and drawn thicker.}
	\label{fig:o3}
\end{figure}

\begin{lem}\label{lem:6_vert}
	Suppose $\H$ has an order--3 rotational symmetry fixing infinity. Then every unshaded face of $P_i$, $i=1,2$ has at least $6$ vertices. \end{lem}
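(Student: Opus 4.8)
The plan is to translate the combinatorial statement about faces of $P_i$ into a statement about the horoball packing $\H$ and then exploit the rigid order-3 tiling established just before the lemma. Recall that unshaded faces of $P_i$ correspond to the planar (projection-plane) faces, which lift to totally geodesic copies of $\HH^2$ sitting above circles in $\hatC$; the ideal vertices of such a face correspond to the horoballs of $\H$ tangent to that $\HH^2$-plane along its boundary circle, and consecutive vertices correspond to tangent pairs of horoballs. So the number of vertices of an unshaded face equals the number of horoballs in the "necklace" of mutually-consecutively-tangent horoballs strung along the corresponding circle. In particular it suffices to show that no circle in $\hatC$ bounding an unshaded face can be surrounded by fewer than 6 full-pattern horoballs, given the constraints coming from $\sigma$.

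First I would set up the local picture at the cusp $\infty$. The face $\tau$ we lifted is a shaded face, and the unshaded faces incident to $\infty$ lift to vertical planes over lines in $\CC$; but a generic unshaded face lifts to a plane over an actual circle. I would use the fact, recorded in \Cref{lem:tiled} and the surrounding discussion, that the cusp cross-section $T_0$ at $\infty$ is tiled by unit-by-unit rectangles whose vertical sides meet shaded faces and horizontal sides meet unshaded faces; hence around any vertex of an unshaded face in the polyhedron there is a very rigid local model. The key input from the order-3 symmetry is that, after conjugating into the preferred lift, \Cref{fig:o3_tiling} shows that the full-sized horoballs actually tile $\CC$ in a triangular (hexagonal-packing) pattern, with centers on the lattice generated by the meridian translation $\mu$ and by $\sigma^{-1}\mu\sigma$, and with the lines $\line_j$ occurring at real parts $l_j = k_j\sqrt3$. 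In this triangular packing every full-sized horoball is tangent to exactly six others, and the tangencies realized along any unshaded-face circle are forced to be a subset of the tangencies in this packing.

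The main step, then, is a counting argument: I would show that an unshaded face cannot be a "bigon," "trigon," "square," or "pentagon." A bigon is ruled out immediately since $P_i$ has no bigon faces (the polyhedron is a right-angled ideal polyhedron with 4-valent vertices, and the checkerboard coloring forces unshaded faces to be at least triangles). To rule out 3, 4, and 5 vertices, observe that the boundary of such a face, pushed to the cusp at $\infty$, would be a closed edge-path in the nerve $\nerve$ crossing only a small number of unshaded faces; equivalently, it would correspond to a small cycle of consecutively-tangent horoballs. But in the triangular packing coming from $\sigma$, together with the two colors (red for crossing circles, blue for $K_0$) alternating along each line $\line_j$ as in \Cref{fig:horo_lines}, any short cycle of tangencies is obstructed: going around in 3, 4, or 5 steps in the hex-packing and returning to the start either is not a closed loop at all, or bounds a region whose interior contains no vertex — which contradicts the fact that the developing image of an unshaded face must enclose a genuine face (equivalently, the path is homotopically trivial in the cusp torus, not around a puncture). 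Concretely, I expect the argument to reduce to: a closed edge path in the hexagonal nerve that is essential in the thrice-or-more punctured plane must have length at least 6, because the minimal essential cycles in the hexagonal lattice are the hexagons themselves. Here I would use the already-established fact (from the polyhedral decomposition discussion) that every shaded face is a triangle and contributes a single vertex "between" consecutive unshaded faces, so the face-count matches the tangency-count exactly.

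The hard part will be making rigorous the claim that a cycle of length $< 6$ in the tangency pattern is necessarily \emph{peripheral} (i.e., encircles a single horoball / puncture) rather than bounding an honest face — in other words, ruling out the scenario where a short cycle does bound a legitimate unshaded polygon that happens to sit inside the hex packing. I would handle this by using the right-angled structure: dihedral angles are all $\pi/2$, so adjacent faces meet orthogonally, and the planes over the tangency-circles that would form a 3-, 4-, or 5-gon cannot close up with all right angles in $\HH^3$ unless the circles are arranged in a way incompatible with the rigid lattice positions $l_j = k_j\sqrt3$ and unit vertical spacing. Put differently: the only way to get right-angled polygons in this packing is the size-6 hexagon, and I would verify this by a short case analysis of how three, four, or five pairwise-tangent full-sized horoballs in the triangular lattice can bound a region — each case either fails tangency, fails the right-angle condition, or produces a peripheral loop. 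Once bigons through pentagons are excluded, the lemma follows.
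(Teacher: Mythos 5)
There is a genuine gap, and it is at the very first reduction you make: you translate ``the unshaded face has $< 6$ vertices'' into ``a short cycle of tangencies inside the hexagonal packing of full-sized horoballs,'' asserting that ``the tangencies realized along any unshaded-face circle are forced to be a subset of the tangencies in this packing.'' That claim is false. Lift the face $F$ so that one vertex sits at $\infty$; the two vertices $a_1$ and $a_n$ adjacent to $\infty$ are indeed centers of full-sized horoballs (their edges to $\infty$ have midpoints at height $1$), but the intermediate vertices $a_2,\dots,a_{n-1}$ are centers of \emph{small} horoballs which do not appear in the hexagonal lattice of \Cref{fig:o3_tiling} at all. The entire content of the lemma is controlling those small horoballs---you cannot see them from the hex combinatorics alone. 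Your proposal conflates the hexagonal lattice of full-sized horoballs in $\CC$ with the nerve $\nerve$ of $P_1$ (a triangulation of $\SS^2$); the lemma is a statement about vertex degrees in $\nerve$, not about short cycles in the lattice, and these graphs are not the same object.

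The gap you flag at the end---``the hard part will be making rigorous the claim that a cycle of length $<6$ in the tangency pattern is necessarily peripheral''---is thus misdiagnosed: the real difficulty is not peripherality but the fact that the small horoballs $H_{a_2},\dots,H_{a_{n-1}}$ live in the interstices between the full-sized ones, and your proposed right-angle closing-up argument gives no quantitative handle on them. The paper's actual proof goes directly at this: using $l_j=k_j\sqrt3$ it reduces to the case $|a_1 - a_n|=\sqrt3$, observes that the largest horoball tangent to $H_{a_1}$ and disjoint from the packing has center at distance $1/\sqrt3$ from $a_1$ and radius $1/6$ (a short Euclidean computation with the two flanking blue horoballs $H,H'$), and then notes that $2(1/6 + 1/\sqrt3)<\sqrt3$, so a chain of length $\le 2$ between $a_1$ and $a_n$ cannot close up; hence $n\ge 5$ and, with the vertex at $\infty$, at least six vertices. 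To repair your approach you would ultimately need to reproduce essentially this radius bound; the combinatorial detour through the hex lattice does not circumvent it.
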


\begin{proof}
Let $F$ be an unshaded face of $P_1$. For some $0\le j_0\le m-1$ and $\delta\in\{0,1\}$, there is a lift of $F$ with a vertex at $\infty$, as well as vertices at $a_1=a'_1+\delta \I, \dots, a_n=a'_n+\delta \I$ for some $l_{j_0}=a'_1<a'_2<\dots<a'_n=l_{j_0+1}$. These vertices are centers for a horizontal string of tangent horoballs, since the endpoints of lifted edges of $P_1$ are centers of tangent horoballs.

Recall that $l_j=k_j\sqrt{3}$ for all $j$, so that $|l_{j_0+1}-l_{j_0}|$ can only be a multiple of $\sqrt{3}$. If $|l_{j_0+1}-l_{j_0}|\ne \sqrt{3}$, then it is at least $2\sqrt{3}>3$. It is clear in this case that $n\ge 5$, and hence $F$ has at least $6$ vertices. Hence we may assume that $|l_{j_0+1}-l_{j_0}|=\sqrt{3}$.

We will assume that the horoballs $H_{a_2}$ and $H_{a_{n-1}}$ (centered at $a_2$ and $a_{n-1}$) are as large as possible, without intersecting any horoball in the packing represented in \Cref{fig:o3_tiling}. Then we will show that $H_{a_2}$ and $H_{a_{n-1}}$ are not tangent, so that $3\ne n-1$. Our assumption that $H_{a_2}$ is as large as possible forces it to be tangent to both $H_{a_1}$ and the two blue horoballs $H$ and $H'$ that are tangent to $H_{a_1}$ and $H_{a_n}$ (see \Cref{fig:face_F}) . Hence the center of $H_{a_2}$ is at the barrycenter of the equilateral triangle with vertices the centers of $H_{a_1},H,$ and $H'$. This triangle has height $\frac{\sqrt{3}}{2}$, so the distance $x$ from $a_1$ to the barrycenter satisfies $x^2=(\frac{1}{2})^2+(\frac{\sqrt{3}}{2}-x)^2$, so that $x=\frac{1}{\sqrt{3}}$ (see \Cref{fig:face_F}). Refering to \Cref{fig:pythag_r}, it follows that the radius $r$ of $H_{a_2}$ satisfies $(\frac{1}{2}-r)^2+(\frac{1}{\sqrt{3}})^2=(\frac{1}{2}+r)^2$, so that $r=\frac{1}{6}$. By symmetry the radius of $H_{a_{n-1}}$ is also $\frac{1}{6}$.

Since the center of $a_2$ (resp. $a_{n-1}$) is distance $\frac{1}{\sqrt{3}}$ from $a_1$ (resp. $a_n$) and has radius $\frac{1}{6}$, it follows that if $H_{a_2}$ and $H_{a_{n-1}}$ are tangent then the total distance from $H_{a_1}$ to $H_{a_n}$ must be $2(\frac{1}{6}+\frac{1}{\sqrt{3}})<\sqrt{3}$. But this is a contradiction of our observation that for any $j$, $l_j=k_j \sqrt{3}$, for some $k_j\in \ZZ_{>0}$.
\end{proof}

\begin{figure}[h]
	\begin{subfigure}{.55\textwidth}
		\centering
   		\includegraphics[scale=1.8]{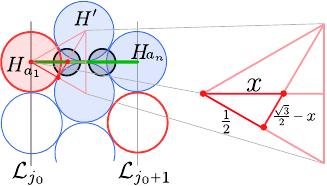}
   		\caption{}
   		\label{fig:face_F}
	\end{subfigure}
	\begin{subfigure}{.44\textwidth}
		\centering
   		\includegraphics[scale=1.8]{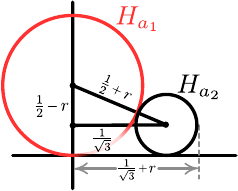}
   		\caption{}
   		\label{fig:pythag_r}
	\end{subfigure}
	\caption{Computing the largest possible radius for $H_{a_2}$ and $H_{a_{n-1}}$.}
	\label{fig:radius}
\end{figure}

\begin{lem}\label{lem:no_ord3}
Let $L$ be an FAL with a single planar component $K_0$, and let ${\pi:\SS^3\setminus L \to \orb}$ be an orbifold cover such that no crossing circle covers a rigid cusp. Then the cusp corresponding to the main component $K_0$ cannot cover an $\SS^2(3,3,3)$ or $\SS^2(2,3,6)$ rigid cusp in $\orb$.	
\end{lem}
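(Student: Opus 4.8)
By \Cref{lem:H_orb}, it suffices to derive a contradiction from the assumption that the preferred horoball packing $\H$ admits an order--3 rotation $\sigma$ fixing $\infty$; equivalently, we must rule out the tiling configuration established in the discussion preceding \Cref{lem:6_vert}, in which every line $\line_j$ sits at real coordinate $l_j = k_j\sqrt{3}$ with $k_j\in\ZZ_{>0}$ and the full-sized horoballs tile $\CC$ as in \Cref{fig:o3_tiling}. The strategy is to combine the combinatorial constraint from \Cref{lem:6_vert} (every unshaded face of $P_i$ has at least $6$ vertices) with an Euler-characteristic / face-counting argument on the polyhedron $P_1$ (equivalently, on the nerve $\nerve$, or on the cusp torus $T_0$ of $K_0$), showing that these constraints are incompatible with $P_1$ being a right-angled ideal polyhedron with $4$-valent vertices and checkerboard coloring in which every shaded face is a triangle.

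\textbf{Key steps, in order.} First I would set up the bookkeeping for $P_1$: it is an ideal polyhedron with all vertices $4$-valent, faces checkerboard-colored into shaded (all triangles) and unshaded faces. Writing $V, E, F$ for the numbers of vertices, edges, and faces, the $4$-valence gives $4V = 2E$, i.e. $E = 2V$, and Euler's formula $V - E + F = 2$ yields $F = V + 2$. Since each edge borders exactly one shaded and one unshaded face (checkerboard), and each shaded face is a triangle bounding $3$ edges, we get $3 F_s = E = 2V$ where $F_s$ is the number of shaded faces, and $F_u = F - F_s$. Next, since every edge borders an unshaded face, summing sides of unshaded faces also counts all $E$ edges once: $\sum_{\text{unshaded }F}(\#\text{sides of }F) = E = 2V$. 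Now invoke \Cref{lem:6_vert}: each unshaded face has at least $6$ sides, so $2V = \sum \geq 6 F_u$, giving $F_u \leq V/3$. Combining with $F_s = 2V/3$ and $F_s + F_u = F = V+2$ gives $2V/3 + F_u = V + 2$, hence $F_u = V/3 + 2$. But then $V/3 + 2 = F_u \leq V/3$, a contradiction. (One should double-check the edge-cases: if $L$ has half-twists, some shaded faces of $P_1$ glue to shaded faces of $P_2$, but this does not change the face/edge/vertex counts of the individual polyhedron $P_1$, nor its checkerboard structure; and one must confirm that $P_1$ genuinely has at least one unshaded face, which it does since the projection-plane faces are always present when there is a planar component.) This contradiction shows $\H$ has no order--3 rotation fixing $\infty$, so by \Cref{lem:H_orb} the cusp of $K_0$ cannot cover an $\SS^2(3,3,3)$ or $\SS^2(2,3,6)$ rigid cusp.

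\textbf{Alternative route and the main obstacle.} If the clean global Euler-characteristic count above runs into a subtlety — for instance if \Cref{lem:6_vert} is genuinely only a statement about faces visible from the cusp $K_0$ rather than all unshaded faces, or if degenerate polyhedra (bigon faces, etc.) must be excluded by hand — then the fallback is a local argument on the cusp torus $T_0$ at $\infty$: the order--3 symmetry $\sigma$ descends to an order--3 isometry of the quotient cusp cross-section, which for a torus cusp is impossible (a flat torus has no order--3 rotational symmetry unless it is hexagonal, and even then $K_0$ would have to be a rigid cusp, contradicting that $K_0$ is a knot-complement cusp once we remember the ultimate goal is a knot), while directly producing a blue horoball forced to overlap a red one in the tiling of \Cref{fig:o3_tiling} using the radius computation ($r = 1/6$, center at distance $1/\sqrt{3}$) already carried out in the proof of \Cref{lem:6_vert}. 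I expect the main obstacle to be precisely this matter of \emph{scope}: making sure the "at least $6$ vertices" conclusion of \Cref{lem:6_vert} applies to \emph{every} unshaded face of $P_1$ (not merely those adjacent to the vertex at $\infty$), since the global counting argument needs the universal statement; if it does not, one must instead extract the contradiction purely from the horoball geometry near $\infty$, tracking how the tiling forces the string of horoballs along an unshaded face to be too long to close up, which is more delicate but is already essentially done in the displayed radius estimate.
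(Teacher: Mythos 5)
Your proof is correct and takes essentially the same route as the paper's: both reduce via \Cref{lem:H_orb} to ruling out an order-3 rotation of $\H$ fixing $\infty$, invoke \Cref{lem:6_vert}, and conclude by an Euler-characteristic contradiction. The only cosmetic difference is that you run the $V$--$E$--$F$ count directly on $P_1$, whereas the paper phrases the same count dually via the nerve $\nerve$, a triangulation of $\SS^2$ which cannot have every vertex of valence at least $6$; your worry about the scope of \Cref{lem:6_vert} is unfounded, since every unshaded face does have a vertex on $K_0$ (red vertices are never adjacent), so each such face has a lift with a vertex at $\infty$ as needed.
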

\begin{proof}
By \Cref{lem:H_orb} the existence of such a cusp implies that there is an order--3 rotation fixing $\infty$. It follows from the \Cref{lem:6_vert} that if $\H$ has such an order--3 symmetry, then every unshaded face of $P_1$ has at least $6$ vertices. Following \cite[Lemma 2.3]{Pur11}, the nerve $\nerve$ of $P_1$ is a triangulation of $\SS^2$ (with no loops or multi-edges). However, every vertex in this nerve triangulation of $P_1$ is at least $6$-valent, which is impossible as  $\chi(\SS^2)=2$. 
\end{proof}

\subsection{Order 4 symmetries}
\label{sec:order_4}
In this section we investigate order-$4$ symmetries of the preferred horoball packing $\H$, with the goal of showing that $(\epsilon,d_L)$-twisted fillings of FALs cannot cover $\SS^2(2,4,4)$ rigid cusps.  First, though, we remark that a recent pre-print of the first author \cite[Theorem 1.1]{hoffman_cusp_types} obstructs hyperbolic knot complements from covering orbifolds with $(2,4,4)$ cusps, and thus provides an alternate proof to the main result of this section. Nevertheless, we retain this section in order to keep our proof self-contained, and because we prove along the way some structural results of independent interest.

We cannot rule out order--4 rotational symmetries for horoball packings of all FAL complements. In fact, octahedral FAL complements have horoball packings with order--4 symmetries, coming from the symmetries of the tessellation $\HH^3$ by regular ideal octahedra. 

In this section, as in the last, $L=K_0\sqcup C_1 \sqcup \dots \sqcup C_n$ will be an FAL with a single planar component. Suppose that $\H$ has an order--4 symmetry $\sigma$ fixing $\infty$, and that $\sigma$ does not identify any red horoball with any blue horoball. For each fixed $j_0\in \{0,\dots, m-1\}$, $\sigma(\line_{j_0})$ is a horizontal line of horoballs alternating in color, which must intersect each vertical line $\line_j$ in a horoball. In particular, we have a horizontal line of horoballs $\sigma(\line_0)$, which after translating in the meridian direction gives horizontal lines of horoballs along the lines $\im(z)=2k$ or $\im(z)=(2k+1)$ for $k\in \ZZ$. Rotating again by $\sigma$ now gives one of the two horoball patterns shown in \Cref{fig:packings}. (recall that our lift was chosen so that the horoball at $0$ is red).

\begin{figure}
	\begin{subfigure}{.49\textwidth}
 		\centering
   		\includegraphics[scale=1.1]{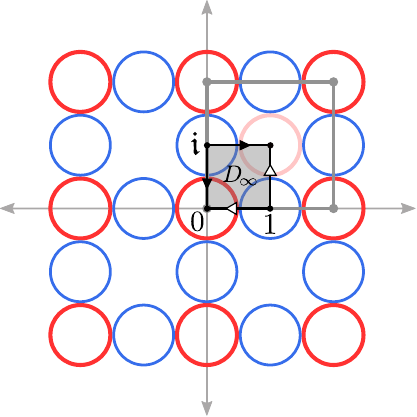}
   		\caption{}
   		\label{fig:packing_even}
	\end{subfigure}
	\begin{subfigure}{.49\textwidth}
 		\centering
   		\includegraphics[scale=1.1]{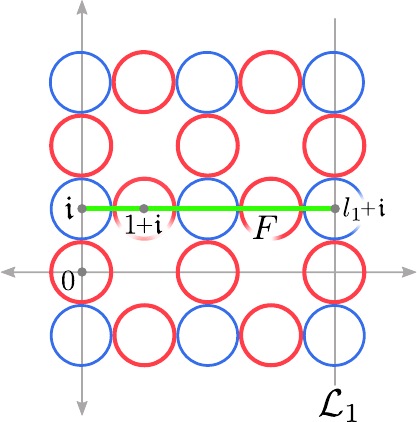}
   		\caption{}   		
   		\label{fig:packing_odd}
	\end{subfigure}	
	\caption{After rotating a line of full-sized horoballs by an order-4 rotation then translating by deck transformations, we get one of the two packings shown above. As usual, crossing circle horoballs are colored red and drawn thicker.}
	\label{fig:packings}
\end{figure}

\begin{figure}
	\begin{subfigure}{.49\textwidth}
 		\centering
   		\includegraphics[scale=1.5]{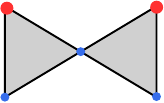}
   		\caption{}
   		\label{fig:local_P_i}
	\end{subfigure}
	\begin{subfigure}{.49\textwidth}
 		\centering
   		\includegraphics[scale=1.5]{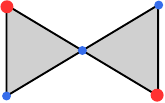}
   		\caption{}
   		\label{fig:bad_tris}
	\end{subfigure}	
	\caption{If all $l_j$ are even, then pairs of shaded faces meeting at a blue vertex must be as in (a). If an unshaded face does not satisfy either condition (1) or condition (2) of \Cref{lem:order4}, then (b) will appear. Red vertices are drawn larger in these figures.}
	\label{fig:tris}
\end{figure}

\begin{lem}\label{lem:order4}
Suppose $\H$ has an order--4 symmetry such that no red horoball is in the orbit of a blue horoball, and all $l_j$ are even. Then every unshaded face of $P_i$ has either (1) all blue vertices, or (2) vertices that alternate in color on a walk around the boundary of the face.	
\end{lem}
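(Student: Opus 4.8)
The goal is to understand which unshaded faces of $P_i$ can occur, given the constraint that all the full-sized horoballs are laid out in the pattern of \Cref{fig:packing_even}. The core geometric fact is the one already used in the proof of \Cref{lem:6_vert}: a lift of an unshaded face $F$ with a vertex at $\infty$ has its finite vertices $a_1 < a_2 < \cdots < a_n$ (possibly shifted by $\I$) at centers of a horizontal chain of mutually-tangent horoballs, where $a_1$ and $a_n$ are centers of full-sized horoballs lying on two vertical lines $\line_{j_0}$ and $\line_{j_0+1}$. Since all $l_j$ are even, $|l_{j_0+1}-l_{j_0}|$ is an even integer. The plan is to first dispose of the case $|l_{j_0+1}-l_{j_0}|\ge 4$ (so $F$ has many vertices — one should check that the constraints still force an alternating or all-blue pattern, or simply that this case is handled as in \Cref{lem:6_vert}-style counting, i.e. the face is long enough that a repeated non-alternating color would force an intersection), and then focus on the main case $|l_{j_0+1}-l_{j_0}|=2$, where $H_{a_1}$ and $H_{a_n}$ are full-sized horoballs distance $2$ apart.

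In the main case, I would argue locally about the colors of the intermediate horoballs $H_{a_2},\dots,H_{a_{n-1}}$. The first observation to establish is the content of \Cref{fig:local_P_i}: because all $l_j$ are even and no red horoball is $\sigma$-equivalent to a blue one, the two shaded faces of $P_i$ meeting a given blue vertex are forced into a specific configuration (this is where the evenness hypothesis does its work — it pins down the positions of the full-sized horoballs tangent to a blue full-sized horoball, and hence the shaded faces dual to them). Using this, the strategy is to walk along the chain $a_1,\dots,a_n$ and track colors. Suppose, for contradiction, that $F$ is not all-blue and the colors do not strictly alternate. Then somewhere along the walk there are two consecutive vertices of the same color, or else an internal stretch of blue vertices, and in either situation the local picture of \Cref{fig:local_P_i} (applied at the blue vertices) together with the horoball sizes forces the ``bad triangle'' configuration of \Cref{fig:bad_tris} to appear — concretely, a pair of tangent horoballs of a size that cannot fit without overlapping a horoball of the fixed packing in \Cref{fig:packing_even}. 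The contradiction is then a radius/distance computation exactly in the spirit of the $r=\tfrac16$ computation in \Cref{lem:6_vert}: one computes the largest possible radius for the horoballs forced into the ``bad'' slot and checks that tangency is incompatible with the chain having the right total length $2$ (or the right tangency pattern), given that intervening full-sized horoballs of the packing block the way.

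The main obstacle I expect is the bookkeeping of cases in the color-walk: one has to be careful about half-twists (which change how shaded faces of $P_1$ glue to $P_2$ and hence subtly affect which vertices are forced to be blue), about the two possible values $\delta\in\{0,1\}$ of the $\I$-shift of the lift of $F$, and about whether $H_{a_1}$ itself is red or blue — though, as in \Cref{lem:6_vert}, a color-reversal symmetry should let us fix one choice without loss of generality. The geometric estimates themselves are elementary (equilateral-triangle barycenter computations and Pythagoras with tangent horoballs), so the real work is organizing the local configuration lemma (\Cref{fig:local_P_i}) cleanly enough that the two conclusions (1) all-blue or (2) alternating fall out as the only survivors. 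I would state and prove that local configuration claim first as a sub-step, then reduce the face classification to it.
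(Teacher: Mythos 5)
Your overall architecture is right: establish the local configuration of \Cref{fig:local_P_i} first, using evenness of the $l_j$ to pin down which full-sized horoballs are which color, and then feed that into a classification of the unshaded faces. You also correctly identify that the colour-reversal reduction handles the ambiguity of $H_{a_1}$'s colour, and that the two conclusions (all-blue or alternating) should drop out as the only survivors. But the second half of your plan goes in the wrong direction. Once the local picture is in place, the paper's proof is purely \emph{combinatorial}, not metric. The content of \Cref{fig:local_P_i}, once you have it, is that the two vertices of any unshaded face adjacent to a given blue vertex must be the \emph{same colour} (because when that blue vertex is lifted to $\infty$, those two vertices are at $l_j + c\I$ and $l_{j+1}+c\I$ for the same height $c$, and with all $l_j$ even the colour in the checkerboard pattern of \Cref{fig:packing_even} depends only on $c$). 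From there, if an unshaded face $F$ were neither all-blue nor alternating, it would have three consecutive vertices coloured blue, blue, red (red vertices are never adjacent). But then the two shaded triangles opposite the two edges spanned by these vertices would have to be coloured as in \Cref{fig:bad_tris} (each shaded triangle has exactly one red vertex), which is incompatible with the local picture. No radius estimate is needed.

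The $r=\tfrac16$-type computation from \Cref{lem:6_vert} is a red herring here: that estimate was forced in the order-3 case because the horoball lines are spaced by irrational multiples of $\sqrt3$ and the full-sized horoballs do not sit on an integer lattice, so one genuinely has to bound radii. In the order-4 / all-$l_j$-even case, the full-sized horoballs sit on $\ZZ[\I]$ in a checkerboard, and the whole argument reduces to tracking colours around a 4-valent vertex. Your proposed case split on $|l_{j_0+1}-l_{j_0}|\geq 4$ is likewise unnecessary: the final argument is local at a blue vertex and never looks at the full length of a face. Your worry about half-twists is also moot — the lemma and the argument live entirely inside a single polyhedron $P_i$, and half-twists only affect how $P_1$ glues to $P_2$, not the internal combinatorics of either. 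If you replace the planned radius estimate with the vertex-colour count at a blue vertex, your proof would land on the paper's argument.
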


\begin{proof}
In either case shown in \Cref{fig:packings}, it follows that pairs of triangular faces of $P_i$ meeting at a blue vertex are as shown in \Cref{fig:local_P_i}. To see this, consider the blue vertex connecting the two triangles in \Cref{fig:local_P_i} as lying at $\infty$, so that the four other vertices are on lines $\line_j$ and $\line_{j+1}$ for some $j$. Since all $l_j$ are even, and since the horoball packing contains one of the patterns shown in \Cref{fig:packings}, all horoballs at points $l_j+b\I$ for fixed $b\in \ZZ$ are full-sized horoballs all of the same color. It follows that both vertices of a unshaded face adjacent to $\infty$ are the same color.


Therefore $P_i$ must be built by joining together pieces like the one shown in \Cref{fig:local_P_i}. Suppose that some unshaded face does not satisfy either (1) or (2). Then along this face there are $3$ consecutive vertices that are blue, blue, red (note that red vertices can never be adjacent to each other). Consider the two triangles opposite the two edges joined by these vertices. Since every triangle contains exactly one red vertex, the only possible configuration of vertices for these two triangles is as in \Cref{fig:bad_tris}, contradicting our assertion that every pair of triangles must have the vertex configuration shown in \Cref{fig:local_P_i}. 
\end{proof}

\begin{prop}\label{prop:fullH}
Let $L = K_0 \sqcup C_1 \sqcup \dots \sqcup C_n$ be an FAL, and let $p: \SS^3\setminus L \to \orb$ be an orbifold cover. Suppose that $K_0$ covers an $\SS^2(2,4,4)$ rigid cusp in $\orb$, and that no crossing circle covers an $\SS^2(2,4,4)$ rigid cusp. Then $H_z$ is a full-sized horoball of $\H$ if and only if $z =a+b\I$ with $a,b \in \ZZ$. Furthermore, \begin{enumerate}
\item all of the cusps corresponding to crossing circles cover a single cusp of $\orb$.
\item all blue full-sized horoballs are centered at $z\in \ZZ[\I]$ with $a+b \equiv 1 \mod 2$.
\item all red full-sized horoballs are centered at $z\in \ZZ[\I]$ with $a+b \equiv 0 \mod 2$.
\item every blue full-sized horoball is centered at a fixed point of an order-4 rotation fixing $\infty$.
\end{enumerate}
\end{prop}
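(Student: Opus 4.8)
The plan is to exploit the order-4 rotation furnished by \Cref{lem:H_orb} together with the combinatorics of the polyhedral decomposition $P_1\cup P_2$, in the spirit of the order-3 argument of \Cref{sec:order_3}.

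\emph{Producing and normalizing the rotation.} Since $K_0$ covers the rigid cusp $\C_0\cong\SS^2(2,4,4)$ of $\orb$, \Cref{lem:H_orb} applies (its hypothesis that no crossing circle covers a rigid cusp holds in our setting: the $\SS^2(2,4,4)$ case is excluded by assumption, and the order-3 cases by an argument parallel to \Cref{lem:no_ord3}) and gives an order-4 rotation $\sigma$ of $\H$ fixing $\infty$; equivalently $\H=\H_\orb$ and $\sigma$ lies in the stabilizer of $\C_0$ inside the covering group $\Gamma_\orb$, where $\orb=\HH^3/\Gamma_\orb$. Because $\sigma\in\Gamma_\orb$ is a deck transformation, it cannot send a red horoball to a blue horoball: this would identify a crossing-circle cusp of $\orb$ with $\C_0\cong\SS^2(2,4,4)$, contrary to hypothesis. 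So $\sigma$ preserves colors, placing us in the setting of \Cref{lem:order4} and \Cref{fig:packings}.

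\emph{Locating the full-sized horoballs.} The line $\line_0$ of full-sized horoballs lies over $\{\Re z=0\}$ with centers at $\ZZ\I$, alternating in color, and (by the chosen lift) the horoball at $0$ is red. The meridian $\mu$ of $K_0$ acts as translation by $2\I$, and $\sigma\mu\sigma^{-1}$ acts as a horizontal translation by $2$; both preserve $\H$. Translating and rotating $\line_0$ by the group these generate shows the full-sized horoballs contain the pattern of one of the two panels of \Cref{fig:packings}. Upgrading this to the exact statement $\{z:H_z\text{ is full-sized}\}=\ZZ[\I]$ is the technical heart of the proof and proceeds in two moves: first, rule out the panel of \Cref{fig:packings} incompatible with the desired conclusion, by showing it would force an unshaded face of $P_i$ with too many vertices, contradicting that the nerve $\nerve$ is a triangulation of $\SS^2$ (so its average vertex valence is below $6$) — exactly the mechanism of \Cref{lem:no_ord3}, now using the face-coloring constraint of \Cref{lem:order4}, and possibly supplemented by a direct horoball-packing argument; second, rule out any full-sized horoball centered off $\ZZ[\I]$, using the edge-midpoint property of \Cref{thm:hbp_FP07}: such a horoball would be a vertex of an unshaded face and would force a tangency incompatible with the lattice of horoballs already present. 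I expect this combinatorial rigidity step — heavier than, but parallel to, \Cref{lem:no_ord3} — to be the main obstacle.

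\emph{Reading off the structure.} With $\{z:H_z\text{ full-sized}\}=\ZZ[\I]$ in hand, the color of $H_{a+b\I}$ depends only on the parity of $a+b$: colors alternate along $\line_0$ with $H_0$ red, while $\sigma$, $\mu$, and $\sigma\mu\sigma^{-1}$ are color-preserving and each moves a lattice point to one of the same $a+b$-parity; hence the red full-sized horoballs are those with $a+b\equiv0\pmod2$ and the blue ones those with $a+b\equiv1\pmod2$, giving (2) and (3). For (4), the fixed point $c\in\CC$ of $\sigma$ is the foot of the axis of the order-4 elliptic $\sigma$; this axis projects to an order-4 singular geodesic of $\orb$, one end of which is the cusp $\C_0$. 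Its other end is a cusp of $\orb$ met by an order-4 singular locus, hence a rigid $\SS^2(2,4,4)$ cusp, hence not covered by a crossing circle; so $c$ covers $\C_0$, i.e. $c$ is a blue horoball center. The translations of $\Gamma_\orb$ fixing $\infty$ form the (square) translation lattice $\Lambda'$ of the $\SS^2(2,4,4)$ cusp group; $\Lambda'$ contains $\mu=2\I$, hence $2\ZZ[\I]$, and cannot equal $\ZZ[\I]$ (otherwise a conjugate of $\sigma$ would fix a red center, forcing a crossing circle to cover $\SS^2(2,4,4)$), so $\Lambda'=(1+\I)\ZZ[\I]$. Conjugating $\sigma$ by elements of $\Lambda'$ yields order-4 rotations fixing every point of $c+\Lambda'$, which — since $c$ has $a+b$ odd — is exactly the set of blue full-sized horoball centers; this is (4). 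Finally, for (1): by \Cref{lem:tangent} each crossing circle has a full-sized red horoball, centered at an even-parity lattice point by (3); the order-4 rotation about any blue lattice point cyclically permutes its four red neighbors, so those cover a common cusp of $\orb$, and since the red lattice points are connected under the relation ``sharing a blue neighbor'', all red full-sized horoballs cover a single cusp of $\orb$. Hence all crossing-circle cusps of $\SS^3\setminus L$ cover that one cusp, completing the proof.
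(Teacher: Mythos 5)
Your overall architecture — normalize the order-$4$ rotation, pin down the full-sized horoballs, then read off the coloring and the statements (1)--(4) — matches the paper's at the level of headings, but the two load-bearing steps you flag as the ``technical heart'' are not what the paper does, and as written your proposal does not actually prove them.

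\textbf{The location of the full-sized horoballs.} You propose to rule out the bad panel of \Cref{fig:packings} by showing it forces an unshaded face with too many vertices and then invoking the vertex-valence constraint for a triangulation of $\SS^2$, ``exactly the mechanism of \Cref{lem:no_ord3}.'' That mechanism cannot transfer: the $\chi(\SS^2)=2$ valence bound is what makes the \emph{order-$3$} case collapse outright, but the paper explicitly notes at the start of \Cref{sec:order_4} that order-$4$ symmetric horoball packings of FALs \emph{do} exist (octahedral FALs), so there is no such global combinatorial obstruction to exploit here. What the paper actually uses is the \emph{coloring} constraint of \Cref{lem:order4}: when all $l_j$ are even, \Cref{fig:packing_odd} produces a face with three consecutive blue vertices, which violates the ``all blue or alternating'' dichotomy; when some $l_j$ is odd, one compares the $\Periph$-orbits of $\line_0$ and $\line_j$ directly. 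Your second move (ruling out full-sized horoballs off $\ZZ[\I]$ via edge-midpoints) is also not how the paper closes this: once $\H$ contains the \Cref{fig:packing_even} pattern, the paper locates the fixed point $z_\sigma$ inside a $2\times 2$ fundamental domain for $\langle\sigma,\tau\rangle$, eliminates the positions $1+\I$ and the red centers, and then the existence of a red horoball at $1+\I$ is \emph{forced} by applying $\sigma$ to $H_0$; the full lattice and checkerboard then come for free. You acknowledge your step is unworked; in the paper this is precisely where the argument diverges most from \Cref{lem:no_ord3}, and your proposed substitute would not go through.

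\textbf{Item (4).} Your argument runs: the axis of $\sigma$ projects to an order-$4$ singular geodesic of $\orb$ with one end in $\C_0$, so ``its other end is a cusp of $\orb$,'' hence $c$ is a blue horoball center. The step ``its other end is a cusp'' is unjustified. The $4$-torsion edge of the singular locus leaving $\C_0$ may terminate at an interior trivalent vertex (a point whose stabilizer is a dihedral or octahedral group containing an order-$4$ element), in which case $c$ is not a parabolic fixed point of $\Gamma_\orb$ and not a horoball center at all. The paper avoids this entirely: it proves $z_\sigma$ sits on a blue horoball by a finite case check inside the fundamental domain using the color-preservation constraint ($z_\sigma$ cannot be at a red center since no crossing circle covers $\SS^2(2,4,4)$, and $z_\sigma = 1+\I$ is excluded because $\mu\sigma$ would then be an order-$4$ rotation about the red horoball at $2\I$); the remaining two positions, $1$ and $\I$, are blue, and translating by $\mu,\tau$ gives (4). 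Your parity computations for (2)--(3) and your connectivity argument for (1) are fine in spirit, but they presuppose the lattice/checkerboard structure that you have not established, so the gaps above are fatal to the proposal as it stands.

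One smaller point: you appeal to \Cref{lem:H_orb}, whose hypothesis is that no crossing circle covers \emph{any} rigid cusp, and dispose of the order-$3$ possibility ``by an argument parallel to \Cref{lem:no_ord3}.'' That lemma concerns $K_0$, not crossing circles, and does not transfer directly. This is not fatal — the forward direction of the argument in \Cref{lem:H_orb} only needs $K_0$ to be the unique cusp covering $\C_0$, which follows already from excluding crossing circles over $\SS^2(2,4,4)$ — but the shim as stated is not justified.
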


\begin{proof}
Suppose that $K_0$ covers an $\SS^2(2,4,4)$ rigid cusp. Since $K_0$ covers an $\SS^2(2,4,4)$ rigid cusp, there is an order--4 rotational symmetry $\sigma$ fixing infinity, which we may assume is a counter-clockwise rotation as seen from $\infty$. In this case the (maximal) horoball packing $\H$ contains one of the two patterns of horoballs shown in \Cref{fig:packings}, as discussed above. Note that since we are assuming that crossing circles do not cover an $\SS^2(2,4,4)$, the $\sigma$-orbits of blue horoballs and red horoballs are disjoint.

First assume that all $l_j$ are even, and suppose for a contradiction that $\H$ contains the pattern in \Cref{fig:packing_odd}. Referring to the face $F$ highlighted in \Cref{fig:packing_odd} (in which we are looking down on the face), we see that the unshaded face having edges connecting the red vertex at $\infty$ to the blue vertices at $\I$ and $l_1+\I$ also has a red vertex. In particular, the red diameter 1 horoball $H$ centered at $1+\I$ must be at a vertex of $F$. Since $F$ has three consecutive blue vertices, it cannot satisfy either condition of \Cref{lem:order4}. Thus if all $l_j$ are even then $\H$ cannot contain the pattern in \Cref{fig:packing_odd}, so it must contain that of \Cref{fig:packing_even}.

Let $\mu\in\pi_1(\SS^3\setminus L)$ be the lift of the meridian of $K_0$ to the peripheral subgroup at $\infty$, and let $\Periph=\langle \mu,\sigma\rangle$. If some $l_j$ is odd, then the $\Periph$-orbits of $\line_j$ and $\line_0$ are different: one of them looks like \Cref{fig:packing_even} and the other like \Cref{fig:packing_odd}. The union of these orbits looks like \Cref{fig:packing_even} with additional red full-sized horoballs in the empty spaces. Thus $\H$ contains as a subset the pattern in \Cref{fig:packing_even}.

In either case, we find that the horoball packing $\H$ must contain as a subset the horoball packing shown in \Cref{fig:packing_even}. Since $\tau=\sigma^{-1}\mu\sigma$ is a translation by 2 in the longitude direction (or $z \mapsto z+2$), a fundamental domain for $\langle \sigma,\tau\rangle$ is contained in
$$
\{(z,t)\in\HH^3 \mid 0\le Re(z) \le 2, 0\le Im(z) \le 2\}.
$$
The intersection of this domain with $H_\infty$ is shown as a (large) grey square in \Cref{fig:packing_even}. Thus we may assume that the other fixed point $z_\sigma$ of $\sigma$ satisfies $0\le \re(z_\sigma) < 2$ and $0\le \im(z_\sigma)<2$. Since we are assuming that no crossing circle covers an $\SS^2(2,4,4)$ rigid cusp, it follows that either $z_\sigma=1+\I$, or up to equivalence in $\Periph$, it is at one of the two blue horoballs centered at $1$ and $\I$ (since it cannot be centered at a red horoball). If $z_\sigma=1+\I$, then $\mu\sigma$ is a rotation fixing the center of the red horoball at $2\I$, which is not allowed. It follows that $z_\sigma$ is centered on a blue horoball, and in fact by composing with $\mu$ and $\tau$ it is easy to see that every blue horoball shown in \Cref{fig:packing_even} is centered at the fixed point of an order--4 symmetry. This establishes (4). It also follows that there must be a full-sized red horoball centered at $1+\I$, and the centers of full sized horoballs are exactly the lattice $\ZZ[\I]$, with checkerboard coloring as described by (2) and (3). 

To see that all of the crossing circle cusps cover the same cusp of $\orb$, we observe that each crossing circle cusp cobounds a thrice punctured sphere with the cusp $K_0$. Hence, there is a cusp neighborhood of $C_i$ represented by some red full-sized horoball in $\H$. We observe that all red full-sized horoballs are identified by $\Periph$, so that (1) follows.  
\end{proof}

\begin{lem}\label{lem:rigid}
Let $L = K_0 \sqcup C_1 \sqcup \dots \sqcup C_n$ be an FAL, and let $p:\SS^3\setminus L \to \orb$ be an orbifold cover. If $K_0$ covers an $\SS^2(2,4,4)$ rigid cusp in $\orb$, then either each crossing circle $C_i$ also covers an $\SS^2(2,4,4)$ rigid cusp in $\orb$, or $\orb$ is the orbifold in \Cref{fig:DL_orb}.
\end{lem}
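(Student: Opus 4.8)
The plan is to argue by contradiction with the first alternative: suppose some crossing circle of $L$ does not cover an $\SS^2(2,4,4)$ cusp, and deduce that $\orb$ must be the orbifold of \Cref{fig:DL_orb}. The first step is to reduce to the hypotheses of \Cref{prop:fullH} by showing that then \emph{no} crossing circle covers an $\SS^2(2,4,4)$ cusp. Since $K_0$ covers an $\SS^2(2,4,4)$ rigid cusp there is an order-$4$ rotation of $\H$ fixing $\infty$, so $\H$ contains one of the two patterns of \Cref{fig:packings}; and, exactly as at the end of the proof of \Cref{prop:fullH} (using that each $C_i$ cobounds a totally geodesic thrice-punctured sphere with $K_0$), each $C_i$ has a cusp neighborhood lifting to a red full-sized horoball. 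In either pattern the red full-sized horoballs lie in a single orbit of $\Periph$, the stabilizer of $\infty$ in the deck group $\Gamma$ of $\orb$; hence if one of them were centered at a fixed point of an order-$4$ element of $\Gamma$, then all of them would be, forcing \emph{every} $C_i$ to cover an $\SS^2(2,4,4)$ cusp and contradicting our assumption. Thus no crossing circle covers $\SS^2(2,4,4)$, \Cref{prop:fullH} applies, and we may use its full conclusion: the full-sized horoballs of $\H$ are precisely those centered at the lattice $\ZZ[\I]$, checkerboard-colored as in parts (2)--(3), all crossing circles cover a single cusp $\C$ of $\orb$, and every blue full-sized horoball is a fixed point of an order-$4$ rotation.

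The second step is to promote this local data to a complete description of $\SmL$ and of $\Gamma$. Feeding the $\ZZ[\I]$ pattern into \Cref{lem:order4} forces every unshaded face of $P_1$ to have either all blue vertices or vertices alternating in color around its boundary; running the resulting lower bound on face valence against the fact that the nerve $\nerve$ of $P_1$ is a loopless triangulation of $\SS^2$ (as in the proof of \Cref{lem:no_ord3}), together with the observation that the cusp of $\orb$ covered by $K_0$ is a square torus of meridian length $2$ carrying an order-$4$ symmetry that must normalize the peripheral lattice of $\pi_1(\SmL)$, should pin $P_1$ — hence $\SmL$, and in particular the number of crossing circles — down to a short explicit list. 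For each survivor, every horoball center and every horoball tangency point of $\H$ lies in $\QQ(\I)$, so the invariant trace field is $\QQ(\I)$; being cusped, $\SmL$ is then arithmetic and commensurable with $\PSL(2,\ZZ[\I])$, the packing $\H$ is the Bianchi horoball packing of $\ZZ[\I]$, and $\Gamma$ lies, after conjugation, in a maximal orientation-preserving group of this commensurability class.

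The final step is to read off $\orb$. The checkerboard coloring of $\H$ is exactly the partition of $\mathbb{P}^1(\QQ(\I))$ by reduction modulo $1+\I$ into the three classes of $\mathbb{P}^1(\FF_2)$: the cusp covered by $K_0$ is the union of two of these classes and $\C$ is the third, so $\Gamma$ surjects onto an order-$2$ subgroup of $\PSL(2,\FF_2)\cong S_3$ and is the corresponding index-$3$ subgroup of (the maximal group containing) $\PSL(2,\ZZ[\I])$. Computing its two cusp cross-sections and matching the resulting orbifold against \Cref{fig:DL_orb} then identifies $\orb$ and completes the argument. The main obstacle is the middle step: \Cref{prop:fullH} pins down only the \emph{full-sized} horoballs, so upgrading to ``the whole packing is the Bianchi packing'' genuinely requires controlling the combinatorics of $P_1$ and bounding the crossing-circle count, and one must also rule out the possibilities — not excluded a priori once a crossing circle is permitted to cover a rigid cusp — that the common cusp $\C$ is an $\SS^2(3,3,3)$, $\SS^2(2,3,6)$, or non-rigid cusp.
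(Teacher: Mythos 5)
The central route of the proposal --- first determine $\SS^3\setminus L$ itself (Step 2), then compute the deck group and identify $\orb$ (Step 3) --- cannot work, and it is quite different from the paper's argument. The hypotheses of the lemma do not pin down $\SS^3\setminus L$: there is no upper bound on the number of crossing circles of an FAL with a single planar component whose planar cusp covers an $\SS^2(2,4,4)$ cusp (octahedral FALs exist with arbitrarily many crossing circles, and among them are covers of rigid-cusped orbifolds). So the middle step --- pinning $P_1$ ``down to a short explicit list'' --- is impossible rather than merely unfinished, and the arithmetic/Bianchi claims in the final step do not follow either: \Cref{prop:fullH} constrains only the \emph{full-sized} horoballs of $\H$ and says nothing about the smaller horoballs or the trace field. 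There is also a circularity at the start of Step 1: the existence of an order-$4$ rotational symmetry of the \emph{preferred} packing $\H$ (as opposed to the orbifold packing $\H_\orb$) is exactly the content of \Cref{lem:H_orb}, whose proof already assumes that no crossing circle covers a rigid cusp --- the very hypothesis your Step 1 is trying to establish --- and the assertion that all red full-sized horoballs lie in a single $\Periph$-orbit is itself proved only \emph{after} \Cref{prop:fullH} has established the $\ZZ[\I]$ checkerboard.

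The paper's proof never identifies $\SS^3\setminus L$; instead it goes directly after a fundamental domain for $\orb$, exploiting that $\orb$ has only two cusps (by \Cref{prop:fullH}(1)), one of which is rigid. Fixing $\sigma$ to be the order-$4$ rotation about $H_1$, it cuts the $\Periph$-fundamental domain $D_\infty$ over the unit square by the two geodesic half-spaces exterior to $H_1$ and $H_\I$, obtaining a convex region $D$ with one blue ideal vertex and two $\Periph$-identified red ideal vertices. It then proves $D_\orb=D$: any further cutting half-space would either force an extra element in the peripheral subgroup at $\infty$ (ruled out because the candidate translation $\tau_0$ and order-$2$ rotation $\sigma_2$ each compose with $\sigma$ to an order-$4$ rotation centered on a red horoball) or would slice off a vertex of $D$, contradicting that $\orb$ has exactly two cusps. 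The four vertical faces of $D$ glue in pairs via $\Periph$, and the only allowed gluing of the two remaining shaded triangles is the order-$2$ fold (the order-$4$ rotation about the red edge being forbidden), which yields the orbifold of \Cref{fig:orb2}. That direct fundamental-domain argument is the step you should aim for in place of Steps 2 and 3.
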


\begin{proof}
We will assume 
that no crossing circle covers an $\SS^2(2,4,4)$ rigid cusp in $\orb$, and show that $\orb$ must be the orbifold shown in \Cref{fig:DL_orb}. From \Cref{prop:fullH}, we have that the full sized horoballs of $\H$ are centered above points in the lattice $\ZZ[\I]$ in a checkerboard pattern, and every blue horoball is the center of an order-4 rotation fixing $\infty$.

We will assume from here forward that $\sigma$ is the rotation fixing $H_1$. With this choice of $\sigma$, the action of $\Periph$ on $\HH^3$ has a fundamental domain contained in
$$
D_\infty=\{(z,t)\mid 0 \le Re(z)\le 1\}\cap \{(z,t)\mid 0 \le Im(z)\le 1\}.
$$

We will construct a convex polyhedral fundamental domain $D_\orb$ for $\orb$, which we may assume has an ideal vertex at $\infty$, by further cutting down $D_\infty$. For $z\in \{1,\I\}$, let $S_z$ be the unique half-space of $\HH^3$ containing $H_\infty$, and containing no point of $H_z$. Since $D_\orb$ is an intersection of half-spaces and contains an ideal vertex at $\infty$, it is contained in both $S_1$ and $S_\I$. It follows that $D_\orb\subset D\defeq D_\infty \cap S_1\cap S_\I$, which is a finite volume convex domain having three ideal vertices, one blue (at $\infty$), and two red (at $0$ and $1+\I$). The red vertices are identified under $\Periph$. 

\begin{figure}
 	\centering
   	\includegraphics[scale=1.5]{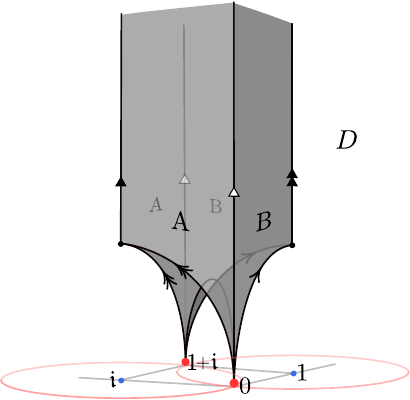}
   	\caption{
If an FAL complement covers an orbifold , with the (single) planar component covering a rigid cusp and all crossing circles covering flexible cusps, then the fundamental domain of the orbifold must be the domain $D$ above, with edge identifications as shown.}

 \label{fig:D_orb}
\end{figure}

\begin{figure}
	\begin{subfigure}{.27\textwidth}
 		\centering
   		\includegraphics[scale=1.5]{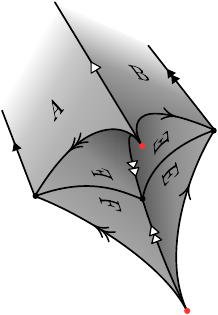}
   		\caption{}
   		\label{fig:orb2_gluing}
	\end{subfigure}
	\begin{subfigure}{.27\textwidth}
 		\centering
   		\includegraphics[scale=1.3]{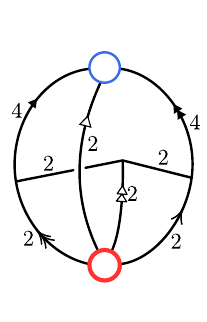}
   		\caption{}
   		\label{fig:orb2}
	\end{subfigure}	
	\begin{subfigure}{.4\textwidth}
 		\centering
   		\includegraphics[scale=1.6]{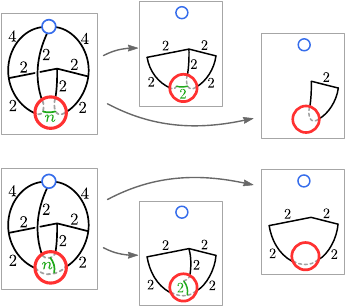}
   		\caption{}
   		\label{fig:cusp_kill}
	\end{subfigure}	

	\caption{There is exactly one possible gluing of $D$ that gives an orbifold with a flexible cusp covered by crossing circles, shown in (a). The resulting orbifold is shown in (b) as a graph in $\SS^3$, with three finite vertices and two ideal vertices, and edges labelled by their degree. If we fill the red cusp and apply the cusp-killing homomorphism as shown in (c), then a loop of 2-torsion remains (top and bottom shows two possible cases up to symmetry).}
	\label{fig:DL_orb}
\end{figure}

\begin{clm}\label{clm:eq_doms}
	$D_\orb = D$.
\end{clm}

\noindent\emph{Proof of \Cref{clm:eq_doms}.}
If $D_\orb\ne D$, then there exists an open half-space $S\subset\HH^3$ such that $D_\orb\subset D\cap S\neq \emptyset$. Let $\pi$ be the geodesic plane so that $S$ is a component of $\HH^3\setminus \pi$. If $\pi$ is a vertical plane then it must cut $D$, so the peripheral subgroup at $\infty$ must be strictly larger than $\Periph$. The only possible translation that does not map a blue horoball to a red horoball (up to composition with elements of $\Periph$) is $\tau_0=\mat 1 {1+\I} 0 1$. But $\sigma\tau_0$ is an order--4 rotation about the red horoball at $0$, so $\tau_0$ cannot be in $\pi_1(\orb)$ by our assumption that crossing circles do not cover $\SS^2(2,4,4)$ rigid cusps. Since there can be no order-4 rotation at a red horoball, the only possible rotation that does not exchange red and blue horoballs is an order-2 rotation at $\frac{1}{2}+\frac{\I}{2}$. If $\sigma_2$ is such a rotation, and $\sigma$ is the counter-clockwise rotation centered at $H_1$, then $\sigma_2\sigma$ is an order-4 rotation centered at the red horoball at $1+i$. Thus the rotation $\sigma_2$ cannot be in the peripheral subgroup, and so $\pi$ is not a vertical plane. Since the peripheral subgroup of $\pi_1(\orb)$ fixing $\infty$ is $\Periph$,  the covering map $\HH^3\to \orb$ is a homeomorphism on $\mathrm{int}(D\cap H_\infty)$. It follows that $\pi$ is a hemisphere of radius less than or equal to 1. Since $\orb$ has 2 cusps (one red and one blue), and since the two red vertices of $D$ are identified under $\Periph$, $\pi$ cannot cut off either of the red vertices of $D$. Since the two finite vertices of $D$ are in the (closed) horoball $H_\infty$, $\pi$ cannot cut either finite vertex off of $D$. It follows that all vertices of $D$ (finite and ideal) are contained in the closure of $\HH^3\setminus S$ (where we take the closure in $\HH^3\cup \del\HH^3$). Since $D$ is the convex hull of these vertices, $D$ must then be contained in the closure of $\HH^3\setminus S$. This contradicts our assumption that $S$ intersects $D$ non-trivially, and completes the proof of the claim.

\begin{clm}\label{clm:two_possibilties}
	 $\orb$ is the orbifold shown in \Cref{fig:orb2}. 

\end{clm}

\noindent\emph{Proof of \Cref{clm:two_possibilties}.}
Now consider the fundamental domain $D=D_\orb$. \Cref{fig:D_orb} shows edge identifications of this domain, induced by the action of $\Periph$ on $\HH^3$. Under this action, the four vertical faces of $D_\orb$ are identified in pairs, as shown. This leaves 2 triangular faces $\Delta_1$ and $\Delta_2$. The gluing of $\Delta_1$ must be realized by some $\phi\in\Isom^+(\HH^3)$ that either maps $\Delta_1$ to itself or to $\Delta_2$. The only two orientation-preserving maps satisfying this condition are the order--4 rotation about the edge connecting $0$ to $1+\I$, and the order--2 rotation whose axis lies on $S_\I$ and bisects $\Delta_1$. In the first case, the rotation is an order--4 rotation with fixed points on red horoballs, which is impossible given our assumption that crossing circles do not cover $\SS^2(2,4,4)$ rigid cusps in $\orb$. In the second case, $\Delta_1$ is folded onto itself along the axis of $\phi$, and similarly $\Delta_2$ is folded onto itself along a bisecting edge on $S_1$ (see \Cref{fig:orb2_gluing}, in which $\Delta_1$ is the union of the two faces labelled by $E$, and $\Delta_2$ is the union of faces labelled by $F$). With this gluing of $D$ we get the orbifold shown in \Cref{fig:orb2}. This completes the proof of the claim, and of the Lemma.
\end{proof}

For the following lemma we rely on the discussion of orbifold Dehn filling from \Cref{sub:orbDehnFilling}. This lemma will rule out the possibility that any Dehn filling of the orbifold in \Cref{fig:orb2} is covered by a knot complement. Note that the statement below does not require the knot complement to be $\edHT$, or even to be a filling of an FAL complement.

\begin{lem}\label{lem:no_knot_covers}
If $S^3 \setminus K$ a knot complement, then $S^3 \setminus K$ does not cover an orbifold obtained from filling the orbifold in \Cref{fig:orb2}. 
\end{lem}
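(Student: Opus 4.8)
Here is the plan. Throughout, write $\orb$ for the orbifold of \Cref{fig:orb2}; recall it has exactly two ends, a \emph{blue} cusp that is a rigid $\SS^2(2,4,4)$ cusp (the image of the planar component) and a \emph{red} cusp that is non-rigid (the image of the crossing circle cusps, all of which are tori). By \Cref{lem:slopes} no filling is supported on the blue cusp, so any orbifold obtained by filling $\orb$ is of the form $\orb_f$, the result of an orbifold Dehn filling along the red cusp (in the sense of \Cref{sub:orbDehnFilling}). If a knot complement $\SS^3\setminus K$ covers $\orb_f$, then — since knot complements have a single cusp and covering maps are surjective on cusps — the filling is non-trivial and $\orb_f$ has a single cusp, namely the rigid $\SS^2(2,4,4)$ cusp; write $\Lambda\le \pi_1^{\mathrm{orb}}(\orb_f)$ for its peripheral subgroup, a $(2,4,4)$-triangle group.

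The key formal step is the following general principle about knot complements, which I would isolate as a preliminary observation: if $M=\SS^3\setminus K$ covers a one-cusped orbifold $\orbQ$ with peripheral subgroup $\Lambda$, then $\pi_1^{\mathrm{orb}}(\orbQ)/\langle\langle\Lambda\rangle\rangle$ is finite. Indeed, filling $M$ along its meridian $\mu$ yields $\SS^3$, so $\pi_1(M)$ is normally generated by $\mu$ and hence by its peripheral $\ZZ^2$; thus under $p\colon M\to\orbQ$ the finite-index subgroup $G\defeq p_*\pi_1(M)$ is its own normal closure of $p_*\mu$, and $p_*\mu$ lies (up to conjugacy) in $\Lambda$. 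Hence $G\subseteq\langle\langle\Lambda\rangle\rangle$, so $\langle\langle\Lambda\rangle\rangle$ has finite index in $\pi_1^{\mathrm{orb}}(\orbQ)$ and the quotient is finite. Applied to $\orbQ=\orb_f$, this forces $\Gamma\defeq\pi_1^{\mathrm{orb}}(\orb_f)/\langle\langle\Lambda\rangle\rangle$ to be finite.

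The remaining step — and the real content — is to compute $\Gamma$ directly from the explicit graph orbifold of \Cref{fig:orb2} and show it is infinite. Concretely, I would start from the labelled graph in $\SS^3$ describing $\orb$, perform the orbifold Dehn filling along the red cusp using the local models of \Cref{sub:orbDehnFilling} (inserting the appropriate torsion curve/arc), and then impose the cusp-killing relations coming from $\Lambda$, i.e.\ trivialize the meridians and turnover relation at the blue cusp. Up to the symmetries of $\orb$ there are only finitely many possibilities for how the filling slope meets the rest of the singular locus; as recorded in \Cref{fig:cusp_kill}, it suffices to treat two representative cases, and in each the reduced orbifold retains an essential loop of order-$2$ torsion in its singular locus. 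A short analysis of the resulting graph orbifold then shows that $\Gamma$ surjects onto an infinite group (an infinite dihedral group, $\ZZ/2\ast\ZZ/2$, respectively a group containing $\ZZ$), contradicting the finiteness just established and completing the proof.

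I expect the last paragraph to be the main obstacle: correctly carrying out the orbifold Dehn filling on the concrete graph of \Cref{fig:orb2}, enumerating the cases up to symmetry, and then verifying in each case that the leftover order-$2$ loop genuinely forces $\pi_1^{\mathrm{orb}}$ to be infinite (rather than collapsing the group to a finite one). The reduction to this computation via the cusp-killing principle is routine.
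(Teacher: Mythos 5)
Your overall strategy — relate covering by a knot complement to normal generation properties, then derive an obstruction by killing peripheral data in the filled orbifold — is precisely the paper's. But there is a real gap in the reduction, and it matters. Your ``cusp-killing principle'' only establishes that $\pi_1^{\mathrm{orb}}(\orb_f)/\langle\langle\Lambda\rangle\rangle$ is \emph{finite} (true, and your elementary argument for it is fine). The paper instead invokes \cite[Proposition 2.3, Remark 2.4]{Hoffman2015small} together with the proof of \cite[Corollary 4.11]{BBCW12} to get the much stronger conclusion that $\pi_1^{\mathrm{orb}}(\orb_f)$ is \emph{normally generated} by the peripheral subgroup, i.e.\ $\pi_1^{\mathrm{orb}}(\orb_f)=\langle\langle\Lambda\rangle\rangle$, so the quotient is trivial (equivalently: killing the peripheral torsion must kill all torsion, since the $(2,4,4)$ group $\Lambda$ is generated by torsion). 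With the stronger fact, \emph{any} leftover cone locus after the cusp-killing operation is already a contradiction; one never has to decide anything about the size of the quotient. With only your weaker fact, finding a loop of $2$-torsion proves nothing: the reduced group could perfectly well be $\ZZ/2$ (or $D_4$, etc.), which is finite and compatible with your bound. This is exactly the scenario you flag in your last paragraph, and it is not a corner case to clean up later — whether the leftover $2$-torsion loop produces an infinite group depends on how it sits in the underlying space, which is precisely the kind of analysis the paper's stronger hypothesis is designed to avoid.

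So the fix is not to push harder on the final paragraph, but to upgrade the preliminary observation: replace ``$\langle\langle\Lambda\rangle\rangle$ has finite index'' with the normal-generation statement from the cited references. Once you have that, the remaining work is exactly the case analysis in \Cref{fig:cusp_kill}: up to the symmetry of $\orb$ there are only two orbi-tangle fillings, and in each case killing the peripheral torsion leaves a loop of order-$2$ singular locus, contradicting normal generation by $\Lambda$.
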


\begin{proof}
Suppose $\SS^3\setminus K$ covers an orbifold $\orbQ$, and $\orbQ$ is obtained by Dehn-filling the flexible cusp of the orbifold $\orb$ in \Cref{fig:DL_orb}. By \cite[Proposition 2.3, Remark 2.4]{Hoffman2015small} and the proof of \cite[Corollary 4.11]{BBCW12}, the orbifold fundamental group of $\orbQ$ must be normally generated by the peripheral subgroup. Thus if we just quotient $\pi_1(\orbQ)$ by the normal subgroup generated by peripheral torsion, the resulting group should be torsion free (and the corresponding orbifold should have empty singular set). The two possible orbi-tangle fillings of $\orb$ (up to symmetry) are shown in the left frames of \Cref{fig:cusp_kill}. In this picture we draw the tangle strands as dotted lines to indicate that the drawing does not show the actual tangle, but rather just indicates which singularities on the cusp are joined to each other by the tangle strands (this will be sufficient for our purposes). The middle and right frames of \Cref{fig:cusp_kill} show the result of killing the peripheral torsion, with the middle frame corresponding to $n$ even and the right frame corresponding to $n$ odd. In both cases, we are still left with some order-2 torsion, so such an orbifold cannot be covered by a knot complement. 
\end{proof}

\begin{rem}
One can also prove a version of \Cref{lem:no_knot_covers} in the context of FAL fillings, by showing that any filling curve for the flexible cusp of $\orb$ that lifts to a $\frac{1}{q}$-filling of some crossing circle of the FAL, must lift to an integer filling in some other crossing circle. This follows from the fact that the horoballs $H_0$ and $H_{1+i}$ are identified via the order--4 rotation fixing $1$ and $\infty$, so the longitude of the crossing circle lifting to $H_0$ is identified with the meridian of the crossing circle lifting to $H_{1+i}$. Unfortunately this argument still allows for the $(1,1)$-filling on each cusp.  
\end{rem}

\subsection{Proof of \Cref{thm:ht_no_rigid_cusps}}\label{sec:rigid}

We are now ready to prove \Cref{thm:ht_no_rigid_cusps}. 

\begin{proof}[Proof of \Cref{thm:ht_no_rigid_cusps}]

Suppose $\SmK$ is an $\edHT$-filling of $\SmL$ and suppose $\SmK$ covers a rigid cusped orbifold $\orb$. By \Cref{prop:HT_non-empty}, the cover $\SmK \rightarrow \orb$ induces a cover $\SmL \rightarrow \orbQ$, where $\orbQ \cong \orb \setminus \sqcup_{i=1}^{n} p(\gamma_{i})$. By construction, $\orbQ$ must have a rigid cusp, coming from the single rigid cusp of $\orb$, which the cusp corresponding to $K_0$ covers. At the same time, $\orbQ$ must have non-rigid cusps that are tori coming from drilling out the set of geodesics $\{p(\gamma_{i})\}_{i=1}^{n}$ from $\orb$. Consequently, either every crossing circle of $\SS^3\setminus L$ covers a non-rigid cusp in $\orbQ$, contradicting either \Cref{lem:rigid} or \Cref{lem:no_ord3}, or $\orbQ$ is the orbifold in \Cref{fig:DL_orb}, contradicting  \Cref{lem:no_knot_covers}.
\end{proof}

\section{Symmetries of $\edHT$ and generic knots}\label{sec:sym}

The symmetries of an $\edHT$ knot complement are restricted in the sense that the $\epsilon$-thin parts of the manifold must be permuted under any symmetry. When a knot complement is $\edHT$ and generic such a permutation is trivial, and we can narrowly characterize the symmetry group of such a knot complement. In what follows, let $\K=\SS^3\setminus K $ be an $\edHT$ and generic knot complement obtained by Dehn filling all of the crossing circles of an FAL complement $\L = \SS^3 \setminus L$, where $L$ has a single planar component. Our goal here is to prove the following theorem on the structure of the (orientation preserving) symmetry group $Sym^+(\K)$.

\begin{thm}
\label{thm:HTHDcovering}
 If $\K$ is an $\edHT$ and generic knot complement, then its orientation preserving symmetry group satisfies ${|Sym^+(M)| \leq 4}$.
\end{thm}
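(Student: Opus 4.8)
The plan is to combine \Cref{prop:symrelation} with an analysis of how symmetries act on the cusp of $M$, reducing everything to the arithmetic of $SL_2(\ZZ)$ together with the Smith conjecture. By \Cref{prop:symrelation}, $f$ identifies $Sym^+(M)$ with the subgroup $H\le Sym^+(N)$ of orientation-preserving symmetries of $N=\SS^3\setminus L$ that fix every cusp of $N$ (this uses genericity), so it suffices to prove $|H|\le 4$; equivalently, by \Cref{prop:HT_non-empty}, $|H|$ is the degree of the induced orbifold cover $N\to\orbQ=N/H$, and since $H$ fixes each cusp this degree is realized by the cover of the cusp cross-section of $K_0$ onto its image $\C_0$ in $\orbQ$. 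Now each element of $H$ comes via $f^{-1}$ from a symmetry of the knot complement $\SS^3\setminus K$, and since the meridian $\mu$ of the hyperbolic knot $K$ is the unique slope whose Dehn filling yields $\SS^3$ (Gordon--Luecke), every element of $H$ fixes the class $\langle\mu_{K_0}\rangle$ up to sign. Restriction to the cusp of $K_0$ therefore gives a homomorphism $\rho\colon H\to SL_2(\ZZ)$ whose image stabilizes a primitive line; since any finite subgroup of $SL_2(\ZZ)$ stabilizing a line is contained in $\{\pm I\}$, we get $[H:\ker\rho]\le 2$, and it remains only to bound $H_0:=\ker\rho$ by $2$ (so in particular $\C_0$ is non-rigid). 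Then $|Sym^+(M)|=|H|=|H_0|\cdot|H/H_0|\le 4$.

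To bound $H_0$: an element of $H_0$ acts trivially on $H_1$ of the cusp torus, hence acts on the cusp torus by a translation, and if that translation is trivial the symmetry fixes a horosphere pointwise and is the identity; so $H_0$ embeds into the torsion subgroup of the cusp torus $\CC/\Lambda$ and is finite abelian. Translation by $\tfrac12\mu_{K_0}$ is excluded: in the preferred horoball packing $\H$ this would send each full-sized horoball in a line of full-sized horoballs to the neighboring one, interchanging horoballs covering $K_0$ with horoballs covering crossing circles, which is impossible for an element of $H$. To see $H_0$ has exponent $2$, note that each nontrivial $\phi\in H_0$ preserves $\mu_K$, hence extends over the meridional filling of $M$ to a finite-order homeomorphism $\bar\phi$ of $\SS^3$ fixing $K$; by genericity $\bar\phi$ fixes each core geodesic $\gamma_i$, hence fixes each twist region of a twist-reduced diagram of $K$ setwise. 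If some nontrivial power $\bar\phi^{k}$ fixed both strands of some twist region individually, it would fix the two endpoints of that twist-region arc on $K$, hence---being finite order and orientation-preserving on the circle $K$---fix $K$ pointwise, so by the Smith conjecture $K$ would be unknotted, a contradiction. Since $\bar\phi$ permutes the (at most two) strands of each twist region, this rules out every element of odd order in $H_0$ and, applied to $\bar\phi^{2}$, every element of order $4$, so $H_0$ is elementary abelian and embeds into $\tfrac12\Lambda/\Lambda\cong(\ZZ/2)^2$. As $\tfrac12\mu_{K_0}\notin H_0$, this image omits a nonzero element, so $|H_0|\le 2$.

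The main obstacle is the second paragraph---pinning down $|H_0|\le 2$. The delicate points are: (a) that genericity really produces, for each $\phi\in H_0$, an action fixing each twist region of a fixed twist-reduced diagram of $K$, while still allowing the two strands of a twist region to be interchanged (this is why one works with the diagram, not merely with the core geodesics abstractly); (b) that $\phi$ extends over the meridional filling to a genuinely finite-order homeomorphism of $\SS^3$, so that the Smith conjecture applies; and (c) the presence of generalized crossing disks, where a crossing circle bounds several twice-punctured disks, which must be taken into account when ruling out large translational parts of $H_0$ (the horoball argument excluding $\tfrac12\mu_{K_0}$ itself is unaffected). By contrast, the reduction via \Cref{prop:symrelation} and the $SL_2(\ZZ)$ computation in the first paragraph are routine bookkeeping.
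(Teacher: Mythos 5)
You take a genuinely different route from the paper, which bounds the covering degree by studying how a well-chosen crossing disk projects under the quotient map (\Cref{lemma:cover3PS}, \Cref{lemma:3PSintersections}, \Cref{prop:CDs}). Your plan instead restricts the action of $H\cong Sym^+(M)$ to the cusp torus of $K_0$ and factors $|H|\le[H:H_0]\cdot|H_0|\le 2\cdot 2$. The first paragraph is correct as far as it goes: Gordon--Luecke guarantees the meridian is preserved up to sign, a finite subgroup of $SL_2(\ZZ)$ stabilizing a primitive line lies in $\{\pm I\}$, and the kernel $H_0$ is a finite group of horospherical translations of the cusp torus.

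The genuine gap is in the claim that $H_0$ has exponent $2$. Genericity gives that $\bar\phi$ fixes each core geodesic $\gamma_i$ setwise, but this does \emph{not} deliver your assertion that ``$\bar\phi$ fixes each twist region of a twist-reduced diagram of $K$ setwise,'' nor that $\bar\phi$ therefore acts on a $2$-element set of strands there. The pair of arcs of $K$ you need $\bar\phi$ to act on is only canonically attached to $\gamma_i$ once a particular generalized crossing disk has been chosen; a single crossing circle may bound several generalized crossing disks, and $\bar\phi$ may permute them, sending the would-be strand pair to an entirely different pair of arcs of $K$. Without an invariant crossing disk, no nontrivial power of $\bar\phi$ is forced to stabilize a proper sub-arc of $K$, so the Smith argument never fires, and a period-$n$ symmetry rotating $K$ freely with $n>2$ is not excluded. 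You flag this in (a), but (c) has the impact of generalized crossing disks exactly backwards: they leave the $\tfrac12\mu_{K_0}$ horoball exclusion intact, while it is precisely the Smith step that breaks. The paper's \Cref{prop:CDs} --- every FAL has a crossing circle with a unique generalized crossing disk --- is the missing ingredient: any element of $H$ fixes each cusp (\Cref{prop:symrelation}), hence each $\gamma_i$, hence that unique disk, and running your Smith argument at that single twist region would close the gap. As written, $|H_0|\le 2$ is not established and the proof is incomplete.
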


In fact, we will prove something slightly stronger, namely, that $|Sym^+(\K)| = 1$, $2$, or $4$. In the case that the group is order two, the non-trivial element could be a strong involution fixing points on the cusp or an order 2 symmetry acting freely on the cusp, but fixing points in the interior of $\K$. We collect these facts in \Cref{cor:free_cusp_symms} at the end of this section.

Together, \Cref{thm:ht_no_rigid_cusps} and \Cref{thm:HTHDcovering} provide important information about the commensurability class of $\K = \mathbb{H}^{3} / \Gamma$. Since $\edHT$ knot complements must be non-arithmetic (see \Cref{subsec:SuffTwisted}), work of Margulis \cite{Mar91} then implies that there exists a unique minimal (orientable) orbifold in the commensurability class of $M$, namely, $\orb = \mathbb{H}^{3} / C^{+}(\Gamma)$, where $C^+(\Gamma) = \{ g \in \Isom^+(\mathbb{H}^{3})  :  | \Gamma : \Gamma \cap g \Gamma g^{-1} | < \infty \}$. Our work places strong restrictions on  the covering map $\psi: \K \rightarrow \orb$ and the geometric structure of $\orb$.

\begin{cor}\label{cor:HTHDwithRigidCusps}
If $\K$ is an $\edHT$ and generic knot complement, then the degree of the minimal orbifold covering map $\psi:\K \rightarrow \orb$ is at most $4$. 
\end{cor}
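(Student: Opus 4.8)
The plan is to deduce this corollary directly from \Cref{thm:no_rigid_cusps}, \Cref{thm:HTHDcovering}, and the standard theory of commensurators, with essentially no new computation. Write $\Gamma = \pi_1(\K)$, so that $\K = \HH^3/\Gamma$. Since $\K$ is $\edHT$ it is non-arithmetic (see \Cref{subsec:SuffTwisted}), so by Margulis \cite{Mar91} the commensurator $C^+(\Gamma) = \{ g \in \Isom^+(\HH^3) : |\Gamma : \Gamma \cap g\Gamma g^{-1}| < \infty\}$ is discrete, contains $\Gamma$ with finite index, and $\orb = \HH^3/C^+(\Gamma)$ is the unique minimal orientable orbifold in the commensurability class of $\K$. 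In particular $\psi:\K \to \orb$ has finite degree $[C^+(\Gamma):\Gamma]$.

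The key point is that $\psi$ is a \emph{regular} cover. By \Cref{thm:no_rigid_cusps}, $\K$ admits no hidden symmetries; by the standard dictionary relating hidden symmetries to the commensurator (going back to Neumann--Reid), this is equivalent to the equality $C^+(\Gamma) = N^+(\Gamma)$, where $N^+(\Gamma)$ denotes the orientation-preserving normalizer of $\Gamma$ in $\Isom^+(\HH^3)$. Hence $\Gamma \nsg C^+(\Gamma)$, the cover $\psi$ is regular with deck group $C^+(\Gamma)/\Gamma = N^+(\Gamma)/\Gamma$, and this deck group is canonically identified with $Sym^+(\K)$, so that $\orb \cong \K/Sym^+(\K)$, exactly as noted in the introduction following \Cref{thm:no_rigid_cusps}.

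It then remains only to observe that the degree of a regular cover equals the order of its deck group, so $\deg(\psi) = |Sym^+(\K)|$, which is at most $4$ by \Cref{thm:HTHDcovering}. There is no genuine obstacle in this argument; the only point needing care is the passage from ``no hidden symmetries'' to ``$\psi$ regular with deck group $Sym^+(\K)$'', which rests on non-arithmeticity (so that $C^+(\Gamma)$ is discrete) and on the identification of hidden symmetries with elements of $C^+(\Gamma)$ that fail to normalize $\Gamma$.
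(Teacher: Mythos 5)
Your proof is correct and follows the paper's own argument essentially verbatim: non-arithmeticity gives discreteness of the commensurator (Margulis) and the minimal orbifold $\orb = \HH^3/C^+(\Gamma)$; absence of hidden symmetries (from \Cref{thm:no_rigid_cusps}) gives $C^+(\Gamma)$ equal to the normalizer, so $\orb \cong \K/Sym^+(\K)$; and then \Cref{thm:HTHDcovering} bounds the degree by $4$. The only difference is notational ($N^+(\Gamma)$ versus the paper's $\L^+(\Gamma)$ for the normalizer).
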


\begin{proof}
 Hidden symmetries of $\K$ correspond with elements of $C^{+}(\Gamma) \setminus \L^{+}(\Gamma)$, where $\L^{+}(\Gamma)$ is the normalizer of $\Gamma$ in $\Isom^{+}(\mathbb{H}^{3})$.  \Cref{thm:ht_no_rigid_cusps} and \Cref{thm:NR} imply that $\K$ has no hidden symmetries, and so, $C^+(\Gamma) = \L^+(\Gamma)$. Since $Sym^+(\K) \cong \L^+(\Gamma) / \Gamma$, we have that $\orb = \mathbb{H}^{3} / C^{+}(\Gamma) = \mathbb{H}^{3} / \L^{+}(\Gamma) = \K / Sym^{+}(\K)$. Thus,  \Cref{thm:HTHDcovering} restricts the degree of $\psi$ to at most $4$. 
\end{proof}

\vspace{0.1in}

As noted in the proof above, $C^{+}(\Gamma) = \L^+(\Gamma)$, and so $\psi: \K \rightarrow \orb$ is just the quotient of $\K$ via $Sym^+(\K)$. This fact only relies on \Cref{thm:ht_no_rigid_cusps}, and we will assume that it holds going forward. Thus, proving \Cref{thm:HTHDcovering} is equivalent to proving that the covering map  $\psi: \K \rightarrow \orb$ is at most degree $4$.

Our work partially follows the same line of argument as the proof of \cite[Theorem 1.7]{FM17}, which shows that certain hyperbolic $3$-manifolds are the minimal orbifold in their respective commensurability classes. Their work requires some very strong conditions placed on the geometry of thrice-punctured spheres inside of a cusped hyperbolic $3$-manifold in order to obtain the desired result. In a similar fashion, we exploit the geometry of thrice-punctured spheres in our FAL complements to help reach our desired covering restriction. 

Let $\orb \cong M / Sym^{+}(M)$. \Cref{prop:HT_non-empty} implies that the quotient map $\psi:\K \rightarrow \orb$  induces a quotient map  $\psi_{\L}: \L \rightarrow  \orbQ$ with $\orbQ \cong \orb \setminus \sqcup_{i=1}^{n} \psi(\gamma_{i})$, where $\gamma_{i}$ is the geodesic core of the $i^{th}$ surgery solid torus. Furthermore, \Cref{prop:symrelation} tells us that $\orbQ$ is the quotient of $N$ by the subgroup of $Sym^{+}(N)$ that maps each cusp of $N$ to itself. We will now consider how thrice-punctured spheres in $N$ behave under the quotient map $\psi_{N}$. 

For our hyperbolic FAL complement $\L = \mathbb{S}^{3} \setminus L$, consider a crossing disk, as described in \Cref{subsec:PD}. Such a crossing disk is isotopic in $\L$ to a totally geodesic thrice-punctured sphere $S \subset \L$ \cite[Lemma 2.1]{Pur11}. Let $p_{1}$, $p_{2}$, and $p_{3}$ be the three punctures of $S$. Let $\H$ be the preferred horoball packing for $L$, with associated covering map $\pi:\HH^3\to \L$, and let $n(K_0),n(C_1),\dots,n(C_n)$ be neighborhoods of the cusps of $\L$ that lift to $\H$. The neighborhood $n(K_0)$ contains neighborhoods of two of the punctures of $S$, say $p_2$ and $p_3$, given by the components of $S\cap n(K_0)$. Let $D_j$ be the connected component of $S\cap n(K_0)$ containing $p_j$, $j=2,3$. The third puncture $p_1$ is contained in $n(C_{i_S})$ for some crossing circle $C_{i_S}$, and we denote by $D_1$ the neighborhood of $p_1$ given by $S\cap n(C_{i_S})$.

Let $\psi_S=\psi_N|_S$ be the restriction of $\psi_{\L}$ to $S$. The next lemma restricts the degree of $\psi_S$, and is the first step toward restricting the degree of $\psi$.

\begin{lem}
\label{lemma:cover3PS}
Let $S \subset \L$ be a crossing disk. Then $\psi_S$ is either a 1-to-1 map, a 2-to-1 map or a 4-to-1 map. 
\end{lem}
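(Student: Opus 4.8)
The plan is to exploit the fact that $S$ is a totally geodesic thrice-punctured sphere, so its image $\psi_S(S)$ is a totally geodesic $2$-orbifold in $\orbQ$, and that the restriction $\psi_S : S \to \psi_S(S)$ is an orbifold cover. Since $S$ is a thrice-punctured sphere, i.e., the orbifold $\SS^2(\infty,\infty,\infty)$, I will first enumerate the possible quotient $2$-orbifolds: the degree of $\psi_S$ is constrained by the Euler characteristic, $\chi(S) = -1$, so $\deg(\psi_S) = \chi(S)/\chi(\psi_S(S)) = -1/\chi(\psi_S(S))$, and $\psi_S(S)$ must be a hyperbolic $2$-orbifold (finite area, since $S$ has finite area) with $\chi \in \{-1, -1/2, -1/3, -1/4, -1/6, \dots\}$ and with at least one cusp (since $S$ has cusps and cusps map to cusps under the orbifold cover). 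The goal is to show $\chi(\psi_S(S)) \in \{-1, -1/2, -1/4\}$, i.e., to rule out degree $3$, degree $6$, and any higher degree.

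Next I would pin down which cusped hyperbolic $2$-orbifolds can be covered by $\SS^2(\infty,\infty,\infty)$. A cover of the thrice-punctured sphere is branched only over points that are non-cusp cone points of the quotient, and the three punctures of $S$ must map onto cusps of $\psi_S(S)$. Using the earlier structural information — in particular that two of the punctures $p_2, p_3$ lie in the planar cusp $K_0$ and the third $p_1$ lies in a crossing-circle cusp $C_{i_S}$, together with \Cref{prop:symrelation} which says every symmetry giving rise to $\orbQ$ sends each cusp of $\L$ to itself (hence cannot interchange the $K_0$-punctures with the $C_{i_S}$-puncture) — I expect to conclude that $\psi_S(S)$ has at least two cusps, one covered by $\{p_2,p_3\}$ and one covered by $\{p_1\}$. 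Among cusped hyperbolic $2$-orbifolds with $\geq 2$ cusps and $\chi \geq -1$ whose covers include the thrice-punctured sphere, the candidates are essentially: $\SS^2(\infty,\infty,\infty)$ itself ($\chi = -1$, degree $1$), the disk orbifold with two cusps and one order-$2$ cone point, written $D^2(\infty,\infty;2)$ or equivalently the $(2,\infty,\infty)$ turnover-type orbifold ($\chi = -1/2$, degree $2$), and the $(2,2,\infty)$ orbifold with one cusp — but the last has only one cusp, so it is excluded by the two-cusp constraint; the $(2,4,\infty)$ orbifold ($\chi = -1/4$) is two-cone-point-one-cusp, also excluded unless the puncture count works out. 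The key point to verify carefully is exactly which orbifolds survive the "at least two cusps, punctures of $S$ distributed as $2+1$" requirement, yielding only degrees $1$, $2$, $4$ as claimed; I would double check the degree-$4$ case arises via an intermediate order-$2$ quotient (consistent with $p_2,p_3$ being swapped and then a further $\ZZ/2$), matching the statement's allowance of $4$-to-$1$.

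The main obstacle I anticipate is the careful case analysis ruling out degree $3$ and degree $6$: a priori $\SS^2(\infty,\infty,\infty)$ does cover $\SS^2(3,3,\infty)$-type or $\SS^2(2,3,\infty)$-type orbifolds, which would give degrees $3$ and $6$. These must be excluded using the cusp-distribution argument above — a degree-$3$ quotient would force the three punctures of $S$ to be permuted cyclically (since a $\ZZ/3$ acting on $\SS^2(\infty,\infty,\infty)$ with the right quotient permutes all three punctures), contradicting that no symmetry of $\L$ generating $\orbQ$ can move $p_1$ (in a crossing-circle cusp) to $p_2$ or $p_3$ (in the planar cusp $K_0$). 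Similarly, any quotient with $\chi(\psi_S(S)) = -1/6$ or smaller would again require mixing the two cusp types or would produce a one-cusped quotient. Once the cusp-type separation is in hand, the Euler characteristic bookkeeping and the classification of small cusped hyperbolic $2$-orbifolds finish the argument, leaving only $\deg \psi_S \in \{1,2,4\}$.
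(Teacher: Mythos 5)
Your approach is genuinely different from the paper's. You propose to classify the possible quotient $2$-orbifolds $\psi_S(S)$ via orbifold Euler characteristic and then use multiplicativity to determine $\deg(\psi_S)$. The paper instead argues directly at the level of the symmetry group: $\psi_S$ is the quotient map $S\to S/G_S$, where $G_S\le f(Sym^+(\K))$ is the subgroup of symmetries preserving $S$. Because $S$ has a unique hyperbolic structure, every self-homeomorphism is isotopic to an isometry, and isometries of the thrice-punctured sphere are determined by how they permute the three cusps together with an orientation choice on each. Since (by \Cref{prop:symrelation}) $G_S$ must fix the single crossing-circle puncture $p_1$, it embeds in the order-$4$ stabilizer of $p_1$ in $\mathrm{Isom}(S)\cong S_3\times\ZZ/2$, forcing $|G_S|\in\{1,2,4\}$. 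Both routes hinge on the same essential ingredient, namely that $G_S$ cannot move $p_1$ because symmetries of $\L$ coming from $\K$ preserve each cusp of $\L$, which is what kills degree $3$ and $6$.

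One caution about your route, however: you implicitly restrict attention to orientable quotient $2$-orbifolds. While $G\le Sym^+(\L)$ is orientation-preserving on the $3$-manifold, its restriction to the $2$-dimensional surface $S$ can be orientation-reversing, and indeed two of the three nontrivial elements of $G_S$ realized in the paper (see \Cref{fig:thrice_punctured_symms}) are reflections of $S$. As a result the degree-$4$ quotient $S/G_S$ is an orbifold with mirror boundary, not a closed orientable $2$-orbifold of the form $\SS^2(\dots)$; there is in fact no orientable, cusped, hyperbolic $2$-orbifold with two cusps and $\chi=-1/4$, so an enumeration limited to the orientable case would accidentally rule out degree $4$. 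To make your approach go through you would need to carry out the classification among all (possibly non-orientable, possibly with mirror boundary) $2$-orbifolds with at least two cusps and $\chi\ge -1$ that are covered by a thrice-punctured sphere with the prescribed $2+1$ distribution of punctures over cusps. This is doable, but more work than the paper's direct stabilizer computation, and is the main gap you would need to fill before the argument is complete.
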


\begin{proof}
Recall that $\psi_{\L}$ is the quotient map $\L/G$, where $G=f(Sym^+(\K))\le Sym^+(\L)$, and $S\subset \L$ is totally geodesic. It follows that $\psi_S$ is a quotient map for the quotient $S/G_S$, where $G_S\le G$ are the symmetries in $G$ that restrict to symmetries of $S$. By \Cref{prop:symrelation}, $G$ must map each cusp of $\L$ to itself. Thus, any symmetry in $G_S$ must take $D_1$ to itself, and permute $D_2$ and $D_3$ (possibly trivially). \Cref{fig:thrice_punctured_symms} shows three non-trivial symmetries on $S$ that satisfy these conditions, and generate a group of order 4.

We claim that the symmetries described in \Cref{fig:thrice_punctured_symms} are the only symmetries of $S$ satisfying the required conditions. First, a thrice-punctured sphere $S$ has a unique hyperbolic structure, which implies that every homeomorphism of $S$ is isotopic to an isometry. Since homeomorphisms of $S$ are completely determined by their action on the cusps, we only need to consider how an isometry could permute cusps and possibly switch orientations on cusps of $S$. For us, the cusp corresponding to $p_{1}$ must be fixed under any isometry under consideration, and so, we only have 4 possible isometries coming from permuting the two remaining cusps and possibly switching orientations. 
\end{proof}

\begin{figure}
 	\centering
   	\includegraphics[scale=2]{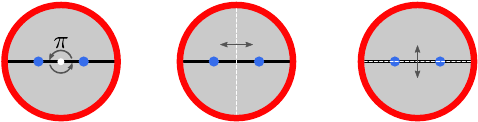}
   	\caption{
A schematic of the three non-trivial symmetries of the thrice punctured sphere that result in two cusped quotients. Using the standard fundamental domain for the thrice punctured sphere, two ideal triangles with vertices at $0$,$-1$,$1$, and $\infty$, these symmetries can be realized by a rotation of order 2 fixing $\I$, a reflection through $0$,$\infty$ and a reflection through the line with endpoints $-1$ and $1$.}
 \label{fig:thrice_punctured_symms}
\end{figure}

Let $R = \psi_{\L}(S)$. In order to leverage the degree of $\psi_S$ determined above to obtain information about the degree of $\psi$, we will need to determine the number of pre-images in $\psi_{\L}^{-1}(R)$.  To this end, we first show that $R$ must be an embedded, totally geodesic $2$-orbifold in $\orbQ$. The following lemma implies this fact.

\begin{lem}
\label{lemma:3PSintersections}
The quotient $\psi_{\L}(S)$ has no transverse self-intersections.
\end{lem}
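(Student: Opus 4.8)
The plan is to argue by contradiction: if $\psi_N(S)$ has a transverse self-intersection, we will produce a totally geodesic surface in $N$ (a lift of the intersection locus) whose geometry is incompatible with the structure of the FAL complement as encoded by the preferred horoball packing $\H$. First I would set up the basic dichotomy. Since $S$ is totally geodesic and $\psi_N$ is the quotient map $N \to N/G$, the image $R = \psi_N(S)$ is an immersed totally geodesic $2$-orbifold, and it has a transverse self-intersection if and only if there is another totally geodesic thrice-punctured sphere $S' \subset N$ (a translate $gS$ for some $g \in G \setminus G_S$, or more precisely an element of $\pi_1(N)$ conjugating the stabilizer of $S$) that meets $S$ transversely in $N$ itself. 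So the task reduces to showing that two distinct totally geodesic thrice-punctured spheres in $N$ coming from crossing disks (or generalized crossing disks) cannot intersect transversely — equivalently, that the full preimage of $S$ in $\HH^3$ consists of disjoint (or tangent) copies of $\HH^2$.

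The key step is to use the explicit geometry from Sections~\ref{subsec:PD} and \ref{subsec:HP}. Recall that a crossing disk lifts to a totally geodesic $\HH^2 \subset \HH^3$ sitting directly above one of the vertical lines $\line_{i_j}$ of full-sized horoballs in the picture of \Cref{fig:horo_lines} (it is the plane whose boundary circle degenerates, after contracting the link to vertices, to the vertical line through the centers $l_{i_j} + k\I$). Two such totally geodesic planes either are disjoint, are tangent at a point of $\del\HH^3$, or cross. If they crossed, their common perpendicular geodesic would be short, and more to the point the two thrice-punctured spheres would intersect in a closed geodesic or a geodesic arc running between cusps; but a thrice-punctured sphere in a hyperbolic $3$-manifold is the fiber of a canonical geometric structure and, by a standard fact (see for instance the use of thrice-punctured sphere geometry in \cite{FM17}, or Adams' rigidity results), two totally geodesic thrice-punctured spheres in an orientable cusped hyperbolic $3$-manifold are either disjoint or meet in a geodesic that must be shared peripherally — they cannot cross transversely in the interior. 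More concretely for our situation: any two crossing disks (or generalized crossing disks) of $L$ can be isotoped to be either disjoint or to meet only in a meridian of the planar component $K_0$ (they are the "obvious" embedded thrice-punctured spheres of the polyhedral decomposition, and the discussion after \Cref{fig:to_poly} shows how they sit); passing to the totally geodesic representatives preserves this, since totally geodesic surfaces realize the minimal intersection in their isotopy classes. Hence the preimages in $\HH^3$ form a disjoint-or-tangent family, so no transverse self-intersection of $R$ can occur.

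I would organize the write-up as: (1) reduce to showing two totally geodesic thrice-punctured spheres $S, S'$ arising as crossing disks in $N$ do not meet transversely; (2) identify their lifts to $\HH^3$ with the vertical $\HH^2$'s over the lines $\line_j$ of \Cref{fig:horo_lines}, using \Cref{thm:hbp_FP07} and the cusp-tiling description; (3) observe that distinct such lines are either disjoint or meet only at the ideal point $\infty$ (the common cusp $K_0$), so the planes above them are disjoint or tangent at $\infty$, never crossing; (4) conclude that $R = \psi_N(S)$ is embedded. The main obstacle I anticipate is step (3) in the presence of half-twists and of \emph{generalized} crossing disks: a crossing circle may bound several twice-punctured disks, and one must check that even these "extra" thrice-punctured spheres, when made totally geodesic, still lift to planes over the $\line_j$'s and so do not introduce transverse crossings — this requires carefully tracking which vertices of $P_1 \cup P_2$ lie on each lifted plane, using the nerve $\nerve$ and the checkerboard structure. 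A secondary subtlety is ruling out the case where $S$ and a translate $gS$ have the \emph{same} underlying plane in $\HH^3$ but are identified with a flip; that is not a transverse intersection, so it does no harm, but it should be mentioned so the statement "no transverse self-intersections" is cleanly established while leaving room for the non-transverse identifications that genuinely occur (and that feed into \Cref{lemma:cover3PS}).
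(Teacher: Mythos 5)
Your proposal takes a genuinely different route from the paper — a geometric argument via lifts to $\HH^3$ — but it has gaps that I do not believe can be repaired along the lines you sketch.

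The central step, that ``two totally geodesic thrice-punctured spheres in an orientable cusped hyperbolic $3$-manifold are either disjoint or meet in a geodesic that must be shared peripherally — they cannot cross transversely in the interior,'' overstates Adams's theorem. Adams shows that intersection curves, when they occur, are geodesics running out to cusps; that says nothing about \emph{avoiding} transverse intersection. Two totally geodesic thrice-punctured spheres can and do cross transversely (along an arc running into cusps), and such crossings genuinely occur in FAL complements, e.g.\ between the two generalized crossing disks bounded by the same crossing circle. So the claimed dichotomy in your step (3) is false, and the reduction does not go through.

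The lift-to-$\HH^3$ step has a related flaw. The planes over the vertical lines $\line_j$ are only a small subset of $\pi^{-1}(S)$ — they are exactly the lifts that pass through the chosen lift of $K_0$ at $\infty$. Most other lifts of $S$, and most lifts of $gS$ for $g \in G \setminus G_S$, are hemispheres in the upper half-space model, and these can certainly cross each other and cross the vertical planes. Showing that the vertical planes are pairwise disjoint (or tangent at $\infty$) says nothing about these.

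More structurally, your proposal never uses the two hypotheses that the paper's argument is built on: that the filling is \emph{generic}, so $\psi_{\L}$ preserves each cusp of $\L$ (\Cref{prop:symrelation}), and that the cusps of $\orbQ$ are non-rigid, so the image of a peripheral slope is a well-defined slope. The paper's proof localizes the problem to the cusps: a transverse self-intersection of $\psi_{\L}(S)$ would force two of the $\psi_{\L}(\partial D_i)$ to lie on the same cusp of $\orbQ$ with distinct slopes. Genericity forces those two to be $D_2, D_3$, both of which have boundary slope equal to the meridian of $K_0$; non-rigidity then implies their images have the same slope, a contradiction. A transverse intersection in the interior would push out to the cusps along an arc, landing back in the case just ruled out. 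That cusp-and-slope argument is what makes the lemma work; the lift-based picture does not replace it.
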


\begin{proof}

Here, we follow a similar procedure to the one used in  Lemma 2.8 of Futer--Millichap \cite{FM17}. We first rule out transverse self-intersections between the images of the 2-dimensional cusp neighborhoods, $\psi(D_{i})$ for $i= 1, 2, 3$. Then we show this implies that we can not have transverse self-intersections anywhere else in $\psi_{\L}(S)$, completing the proof.  

As noted in the proof of \cite[Lemma 2.8]{FM17}, the only way that $\psi_{\L}(D_i)$ can have transverse intersection with $\psi_{\L}(D_j)$ is if the cusp neighborhoods containing $D_i$ and $D_j$ are identified under $\psi_{\L}$, and $\psi_{\L}(\del D_i)$ and $\psi_{\L}(\del D_j)$ represent distinct slopes. The first condition can only be met for $D_2$ and $D_3$ (since $\psi_{\L}$ does not exchange cusps of $\L$), and the boundary of each of these meets the boundary torus of $n(K_0)$ in a meridian.

Since the cusps in $\orbQ$ are non-rigid, this meridian maps down to a well-defined slope in $\orbQ$. So, the slopes of $\psi_{\L}(\del D_i)$ and $\psi_{\L}(\del D_j)$ must be the same in $\orbQ$. Thus, transverse self-intersections do not occur between the cusps of $\psi_{\L}(S)$.

It remains to show that $\psi_{\L}(S)$ does not have transverse self-intersections outside of its cusp neighborhoods. For this the exact same argument as the one used in the proof of \cite[Lemma 2.8]{FM17} goes through unchanged, and we refer the reader to this work for explicit details. In short, if $\psi_{\L}(S)$ has a transverse self-intersection somewhere outside of its cusp neighborhoods, then there exists a path of transverse self-intersections in $\psi_{\L}(S)$ that goes into some cusp of $\psi_{\L}(S)$,  contradicting the previous paragraph. Therefore, $\psi_{\L}(S)$ has no transverse self-intersections. 
\end{proof}

Next we show that for some crossing disk $S$, the embedded totally geodesic 2-orbifold $R=\psi_{\L}(S)$ has pre-image exactly $S$. To do this, we will first need to show that there is a crossing circle that has only one generalized crossing disk, which is its standard crossing disk (recall  \Cref{subsec:PD} for the definition of generalized crossing disks). We first need a technical lemma that relies on the following definition. For $\SmL$, we have that $P_1$ and $P_2$ are the two polyhedra with shaded faces as in \Cref{sec:background}. 
The \emph{nerve} of $\SmL$ (or just the nerve of $L$) is the dual 1-skeleton of the unshaded faces of $P_1$ (see also \cite{Pur11} for more background).

\begin{lem}\label{lem:subgraph}
Let $C$ be a crossing circle of an FAL $L$. If $C$ has more than one generalized crossing disk, then the nerve $\nerve$ of $L$ contains the subgraph $\nerve_C$ shown in \Cref{fig:subgraph}, in which the red edge corresponds to the crossing circle $C$.	
\end{lem}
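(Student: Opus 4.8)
\textbf{Proof approach for \Cref{lem:subgraph}.}

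The plan is to work directly with the polyhedral decomposition $P_1 \cup P_2$ of $\SS^3 \setminus L$ and the correspondence between generalized crossing disks and certain totally geodesic thrice-punctured spheres. Recall from \Cref{subsec:PD} that a generalized crossing disk bounded by $C$ is a twice-punctured disk whose two punctures are meridians of the planar component $K_0$, and that such a disk is isotopic to a totally geodesic thrice-punctured sphere $S$ in $\SS^3\setminus L$, which appears in $P_1 \cup P_2$ as a pair of triangles — one in each $P_i$ — each bounded by a closed path crossing three unshaded faces and not homotopic into a shaded face (as explained in the discussion of ``extra'' thrice-punctured spheres in \Cref{subsec:PD}). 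Dually, in the nerve $\nerve$ (the 1-skeleton dual to unshaded faces of $P_1$), such a thrice-punctured sphere corresponds to a 3-cycle of edges, one of which is the red edge dual to the shaded face at $C$. The standard crossing disk gives one such 3-cycle; a second generalized crossing disk gives a second, distinct 3-cycle through the same red edge.

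First I would set up the dictionary precisely: a shaded face of $P_1$ corresponds to a crossing circle $C$, and it has exactly two vertices colored blue (lifts of $K_0$) and is a triangle; the red edge $e_C$ of $\nerve$ dual to this face is determined by $C$. A generalized crossing disk for $C$ yields a 3-cycle in $\nerve$ containing $e_C$: the other two edges are blue (dual to unshaded faces meeting along vertices of $K_0$), and the cycle bounds a disk in $\SS^2$ on each side, corresponding to the two triangles cut out in $P_1$ and $P_2$. Then I would argue that two distinct generalized crossing disks for $C$ give two distinct such 3-cycles sharing the edge $e_C$ but differing in at least one of the other two edges; concatenating these two cycles along $e_C$ produces exactly the subgraph $\nerve_C$ pictured in \Cref{fig:subgraph} (a theta-graph-like configuration, or whatever the figure precisely shows — I would match the combinatorics to the figure). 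The key point making these cycles genuinely distinct as subgraphs, rather than just as homotopy classes, is that each generalized crossing disk is a properly embedded, totally geodesic surface, so by Adams' results its intersection pattern with the polyhedral faces is rigid and the associated triangles in $P_i$ are uniquely determined; two non-isotopic such disks cannot give the same pair of triangles.

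The main obstacle I anticipate is ruling out degenerate configurations: one must ensure that the two 3-cycles do not share a second edge (which would force the two generalized crossing disks to be isotopic), and that the vertices involved are genuinely distinct so that $\nerve_C$ is an honest embedded subgraph rather than a quotient of the pictured graph with some vertices identified. Handling this cleanly will likely use the fact that $\nerve$ is a triangulation of $\SS^2$ with no loops or multi-edges (as cited from \cite[Lemma 2.3]{Pur11} and used in the proof of \Cref{lem:no_ord3}), which prevents the bad identifications, together with the observation that the two blue vertices of the shaded face at $C$ are distinct (since $K_0$ genuinely passes through the crossing disk twice). I would also need to check that a ``generalized crossing disk distinct from the standard one'' really does produce a cycle not equal to the standard cycle — this is where the isotopy discussed at the end of \Cref{subsec:PD} (wrapping the crossing circle around another strand) gets translated into a concrete statement about which unshaded faces the defining path of $S$ crosses. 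Once the dictionary is set up carefully, the extraction of $\nerve_C$ should be a direct combinatorial reading-off from $P_1$, with the planarity/triangulation property of $\nerve$ doing the work of excluding degeneracies.
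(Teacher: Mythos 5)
Your approach follows essentially the same route as the paper: both work in the polyhedral decomposition $P_1 \cup P_2$, translate generalized crossing disks into $3$-cycles of the nerve passing through the red edge $e_C$, and exploit the distinctness of the second disk from the standard crossing disk. That said, a few points should be corrected or sharpened.

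First, the red edge of $\nerve$ associated to $C$ is dual to the red \emph{vertex} of $P_1$ coming from $C$, not to the shaded face. Edges of $\nerve$ are dual to vertices of $P_1$: each vertex of $P_1$ contributes one edge joining the two unshaded faces that meet there, and the color of the edge is the color of that vertex.

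Second, the subgraph $\nerve_C$ of \Cref{fig:subgraph} is not simply the concatenation of two $3$-cycles along $e_C$. It is the union of (a) the red edge $e_C$, (b) \emph{both} triangular faces of $\nerve$ adjacent to $e_C$---which are dual to the two shaded triangles in $P_1$ coming from the standard crossing disk of $C$---and (c) the $3$-cycle through $e_C$ arising from the second generalized crossing disk. The essential step in the paper's argument, which your sketch circles but never quite states, is that since the only faces of the triangulation $\nerve$ adjacent to $e_C$ are those two standard ones, the new $3$-cycle cannot bound a single face of $\nerve$ and therefore must use \emph{two additional edges} beyond the boundary of those two faces; it is precisely this that produces the arrow-shaped quadrilateral $Q$ used in the innermost argument of \Cref{prop:CDs}.

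Your degeneracy worry is reasonable but is resolved exactly by the device you propose: since $\nerve$ is a triangulation of $\SS^2$ with no loops or multi-edges by \cite[Lemma 2.3]{Pur11}, any $3$-cycle through $e_C$ automatically uses the two endpoints of $e_C$ together with a third distinct vertex, and two distinct such $3$-cycles can share only the edge $e_C$. You do not need Adams' rigidity here; once you know $\tau_i$ is a triangle crossing three white faces and is not a shaded triangle of $P_i$ (which the thrice-punctured-sphere discussion in \Cref{subsec:PD} supplies), the rest is combinatorics in the simple triangulation $\nerve$.
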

\begin{proof} Let $P_i$, $i\in\{1,2\}$, be the polyhedra coming from the polyhedral decomposition of $L$, and let $\nerve$ be the nerve. If $C$ has a generalized crossing disk that is not the standard crossing disk, then there are triangles $\tau_i\subset P_i$ such that $\tau_i$ has $C$ as a vertex, and $\tau_i$ separates $P_i$. On $P_i$, $\tau_i$ is a path across white faces connecting 3 distinct vertices. Such a path gives a 3-cycle in the nerve, one edge of which must correspond to $C$ since $\tau_i$ has a vertex corresponding to $C$. The two triangles of $\nerve$ adjacent to $C$ correspond to the standard crossing disk of $C$, so there must be 2 additional edges that form the 3-cycle corresponding to $\tau_i$.	
\end{proof}

\begin{prop}
\label{prop:CDs}
Let $L$ be an FAL. Then $L$ has at least one crossing circle having only one generalized crossing disk.	
\end{prop}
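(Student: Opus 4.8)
The plan is to carry out the whole argument inside the nerve $\nerve$ of $L$. Recall that $\nerve$ is a triangulation of $\SS^2$ with no loops or multi-edges \cite[Lemma 2.3]{Pur11}, and that (since every shaded triangle of $P_i$ contains exactly one crossing-circle vertex, as used in the proof of \Cref{lem:order4}) every triangular face of $\nerve$ has exactly one red edge and two blue edges. Each crossing circle $C$ corresponds to a single red edge $e_C$, and the two faces of $\nerve$ adjacent to $e_C$ are exactly the ones coming from the standard crossing disk of $C$. I would argue by contradiction: assume every crossing circle of $L$ has more than one generalized crossing disk. Then by \Cref{lem:subgraph}, for every red edge $e$ of $\nerve$ there is a $3$-cycle $\gamma_e\subset\nerve$ through $e$ whose two other edges are blue (this is the $3$-cycle cut out by the triangle $\tau_1\subset P_1$ associated to a non-standard generalized crossing disk) and which is \emph{not} the boundary of a face of $\nerve$ (its third vertex is distinct from the two apexes of the faces at $e$). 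Call such a $3$-cycle an \emph{essential red triangle}.

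Next I would record a short Euler-characteristic fact. If $\gamma$ is an essential red triangle bounding a disk $\Delta\subset\SS^2$ that contains $f$ faces of $\nerve$, $r$ red edges in its interior, and $v$ vertices in its interior, then counting edge–face incidences and red-edge–face incidences inside $\Delta$ (using that $\partial\Delta=\gamma$ has exactly one red edge) yields $f=2r+1$ and $v=r$; and since $\gamma$ is not a face boundary, $f\geq 2$, hence $f\geq 3$ and $r=v\geq 1$. So every disk bounded by an essential red triangle contains a red edge in its interior.

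The main step is a descent/minimality argument. Among all pairs consisting of an essential red triangle together with a choice of one of its two complementary disks, pick $(\gamma_0,\Delta)$ minimizing the number of faces of $\nerve$ in $\Delta$ (well defined, since this count is always $\geq 3$). By the Euler fact, $\Delta$ contains some red edge $e'$ in its interior; let $C'$ be its crossing circle. By the contradiction hypothesis $C'$ has a non-standard generalized crossing disk, so $e'$ lies on an essential red triangle $\gamma'$. The two faces of $\nerve$ meeting $e'$ both lie in $\Delta$ (as $e'$ is an interior edge of the disk $\Delta$) and lie on opposite sides of $\gamma'$, so both complementary disks of $\gamma'$ meet $\mathrm{int}(\Delta)$; moreover the endpoints of $e'$ lie in $\Delta$. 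Now $\gamma'$ and $\gamma_0$ have distinct red edges ($e'\neq e_0$, the latter being on $\partial\Delta$), so they cannot share both of their remaining blue edges — that would give $\nerve$ two edges with the same endpoints. Running through the few possibilities for $\gamma'\cap\gamma_0$ — empty, a single vertex, or a single blue edge (the last handled by a theta-graph argument applied to $\gamma_0\cup\gamma'$) — one finds that the complementary disk of $\gamma'$ lying toward $e'$ is strictly contained in $\Delta$ and omits at least one face of $\Delta$ (for instance the face of $\nerve$ on the $\Delta$-side of $e_0$), hence has strictly fewer faces than $\Delta$. This contradicts the minimality of $\Delta$ and proves the proposition.

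I expect the genuine obstacle to be exactly this last case analysis: making precise that since $e'$ and both of its faces are trapped in the interior of the minimal disk, while $\gamma'$ can meet $\partial\Delta$ only in a vertex or a single blue edge, one side of $\gamma'$ must be a proper, strictly smaller sub-disk of $\Delta$. The absence of multi-edges in $\nerve$ and the fact that $\gamma_0$ and $\gamma'$ are $3$-cycles each carrying a single red edge are what keep this finite and tractable; the remaining ingredients (the polyhedral/nerve structure and Euler's formula) are routine bookkeeping.
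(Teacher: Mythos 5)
Your proposal follows essentially the same strategy as the paper's proof: pass to the nerve via \Cref{lem:subgraph}, choose an innermost/minimal complementary disk of a nontrivial $3$-cycle, locate an interior red edge, and derive a contradiction from the hypothesis that every crossing circle has a second generalized crossing disk. Your Euler-characteristic identity $f=2r+1$, $v=r$ is a clean, self-contained substitute for the paper's inspection of the quadrilateral $Q\subset\nerve_{C_{i_0}}$, and the rest of the argument mirrors the paper's innermost-disk contradiction; the only small slip is the assertion that both endpoints of $e'$ lie in the open disk $\Delta$ --- one may lie on $\gamma_0$, but since $\nerve$ has no multi-edges not both can, which is already enough to force $\gamma'$ into the closed disk $\overline{\Delta}$ and complete the descent.
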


\begin{proof}
Let $\nerve$ be the nerve for $L$. Suppose every crossing circle has $\ge 2$ generalized crossing disks. The preceding \Cref{lem:subgraph} then implies that for every crossing circle 	there is a subgraph as shown in \Cref{fig:subgraph}. In particular, for every crossing circle $C_i$ there is a 3-cycle $\beta_i$ in $\nerve$ that is non-trivial in the sense that it does not bound a triangle. Identify $\SS^2$ with $\widehat{\RR^2}=\RR^2\cup\{\infty\}$, so that the point $\infty$ is disjoint from the edges and vertices of $\nerve$. Let $\Delta_i$ be the component of $\widehat{\RR^2}\setminus \beta_i$ that does not contain $\infty$. For any other crossing circle $C_j$, the corresponding non-trivial 3-cycle $\beta_j$ is either disjoint from $\Delta_i$ (though it may intersect $\beta_i$), or it has 1 or more vertices contained in $\Delta_i$, in which case it is contained in $\Delta_i\cup \beta_i$. Note that $\Delta_i$ does not contain the vertices of $\beta_i$ as we have defined it. It follows that we can find a crossing circle $C_{i_0}$ whose non-trivial 3-cycle $\beta_{i_0}$ is innermost, in  the following sense: the component $\Delta_{i_0}$ of $\widehat{\RR^2}\setminus \beta_{i_0}$ that does not contain $\infty$, contains no vertex of any other non-trivial 3-cycle associated to a crossing circle. 

Now consider the subgraph $\nerve_{C_{i_0}}$ given by \Cref{lem:subgraph}. The (arrow shaped) quadrilateral $Q\subset \nerve_{C_{i_0}}$ must contain a red colored edge (i.e., one associated to a crossing circle), otherwise $Q$ would subdivide into triangles, each having all three edges uncolored, contradicting the fact that every triangle of a nerve must have exactly one colored edge. This colored edge cannot be part of a non-trivial 3-cycle, as such a 3-cycle would have a vertex in $\Delta_{i_0}$, contradicting our requirement that $\beta_{i_0}$ is innermost. Thus the associated crossing circle has no generalized crossing disk other than the standard one, contradicting our assumption. \end{proof}

\begin{figure}
 	\centering
   	\includegraphics[scale=1]{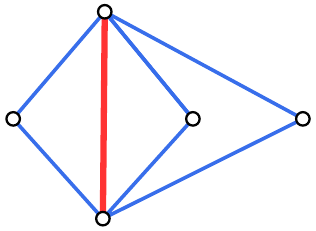}
   	\caption{The subgraph $\nerve_C$, with a (vertical) red edge corresponding to the crossing circle $C$, must be present if $C$ has more than one generalized crossing disk.}
   	\label{fig:subgraph}
\end{figure}

We are now ready to prove our main theorem for this section. 

\begin{proof}[Proof of \Cref{thm:HTHDcovering}]
Let $S$ be a totally geodesic thrice-punctured sphere that is a crossing disk for a crossing circle $C_{i}$ of $L$. First, we determine the structure of $\psi_{\L}^{-1}(R) = \psi_{\L}^{-1} \circ \psi_{\L} (S)$.  \Cref{lemma:3PSintersections} implies that $\psi_{\L}(S)$ is an embedded totally geodesic cusped $2$-orbifold in $\orbQ$. Thus, $\psi_{\L}^{-1}(R)$ must be a disjoint union of embedded totally geodesic cusped surfaces in $\L$. Let $F$ be a component of $\psi_{\L}^{-1}(R)$.

First, we will show that $F$ must be a totally geodesic thrice-punctured sphere in $\L$. This follows from the same argument used in the proof of \cite[Theorem 2.3]{FM17} and requires analyzing a particular decomposition of $S$ (coming from a Ford domain for $S$), which we now describe. Consider our preferred horoball packing $\H$ coming from \Cref{thm:hbp_FP07}. Under our covering $\pi:\HH^3\to \L$, this horoball packing descends to a particular choice of cusp neighborhoods for $\L$, which then (geometrically) determine the set of $2$-dimensional cusp neighborhoods $\{ D_i \}$ of $S$. Let $E_{i} \subset S$ be the closure of the set of points in $S$ that are closer to $D_{i}$ than to any other $D_{j}$. For this cusp expansion, the set $\{D_{i}\}$ is invariant under the symmetries of $S$ and each $\partial D_{i}$ has length $2$. Thus, the elements of $\{E_{i}\}$ are all isometric to a once-punctured disk with non-ideal boundary consisting of two geodesic segments meeting at angles of $2\pi/3$. We call this decomposition of $S$ into $\{E_{i}\}$ its Ford decomposition and each $E_{i}$ a Ford cell of $S$.

We now bootstrap off of this Ford decomposition of $S$ to determine the corresponding Ford decomposition of $F$. The Ford decomposition of $F$ (relative to our preferred horoball packing $\H$) decomposes $F$ into a collection of Ford cells, one for each cusp of $F$. Since each $\psi_{\L} (\partial D_{j})$ realizes exactly one slope on each cusp of $\orbQ$ (see the proof of \Cref{lemma:3PSintersections}), each cusp of $F$ has the same slope as some $\partial D_{j}$. Thus, each cusp of $F$ has a boundary slope of length $2$, which implies each cusp of $F$ has area $2$ and is isometric to a copy of $D_i$.  Since the density of the cusp neighborhoods of $F$ is the same as that of $S$, each Ford cell of $F$ must be isometric to $E_{i}$. The only complete, connected hyperbolic surface that can be constructed by gluing together some number of copies of $E_{i}$ is a thrice-punctured sphere, as needed. 
 
 We now claim  that $F$ must be a generalized crossing disk. \Cref{prop:symrelation} implies that there is a one-to-one correspondence between cusps of $\L$ and cusps of $\orbQ$, and so, we know that $F$ must have one cusp coming from intersecting $n(C_{i_S})$ and two cusps coming from intersecting $n(K_{0})$. The classification of thrice-punctured spheres in FAL complements given by Morgan--Spyropoulus--Trapp--Ziegler (\cite[Propositions 9 and 13]{morganbelted}) implies that $F$ must be a generalized crossing disk.

Since we are quotienting $\L$ by orientation-preserving symmetries, the singular locus of $\orbQ$ is at most one-dimensional, and so, we can conclude that the degree $d(\psi_{\L})$ of $\psi_{\L}$ is the same as the degree of the restriction $\psi_{\L}|_{\psi_{\L}^{-1}(R)}$ of $\psi_{\L}$ to the pre-image of $R=\psi_{\L}(S)$ for any crossing disk $S$. \Cref{lemma:cover3PS} tells us that $\psi_S$ is at most a 4-to-1 covering map. In addition,  \Cref{prop:CDs} along with the previous paragraph implies that there exists some crossing disk $S_0$ such that $\psi_{\L}^{-1}(R_0) = \psi^{-1} \circ \psi (S_0)  = S_0$.  So, for this particular crossing disk, we have $d(\psi_{\L}|_{\psi_{\L}^{-1}(R_0)}) \le 4$. Therefore, $d(\psi_{\L})=d(\psi)$ is at most $4$, which implies that $|Sym^+(\K)| \leq 4$.   \end{proof}

\begin{rem}\label{rem:tight}
We observe that this bound on the number of symmetries is tight: there exists infinitely many $\edHT$ and generic knot complements that have an order four (orientation-preserving) symmetry group. For instance, any $\edHT$ and generic 2-bridge knot complement will have exactly an order four (orientation-preserving) symmetry group;  of course this is well-known (see \cite{sakuma1990geometries} for example).
\end{rem}

We can completely describe the symmetry group of $\K$ based on how its inclusion into $Sym^+(N)$ acts on thrice punctured spheres. Thus, the proof of the following corollary follows directly from extending the action of the symmetries exhibited in \Cref{fig:thrice_punctured_symms}  to $N$ and then $\K$. 

\begin{cor}\label{cor:free_cusp_symms}
Let $\K$ be a $\edHT$ and generic knot complement. Then $Sym^+(\K)$ has at most three non-trivial elements. Furthermore, the group has zero or one element that acts freely on the cusp. 
\end{cor}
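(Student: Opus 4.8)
The bound on the number of non-trivial elements is immediate: \Cref{thm:HTHDcovering} gives $|Sym^{+}(\K)|\le 4$, so there are at most three non-trivial elements (and the embedding constructed below upgrades this to $|Sym^{+}(\K)|\in\{1,2,4\}$ as promised in the text). The real content is the second assertion, and everything reduces to showing that \emph{at most one} non-trivial element of $Sym^{+}(\K)$ acts freely on the cusp torus of $\K$. By \Cref{prop:CDs} I would fix a crossing circle $C_0$ of $L$ with a unique generalized crossing disk $S$, realized as a totally geodesic thrice-punctured sphere with one puncture in the $C_0$-cusp and two in the $K_0$-cusp. Composing the monomorphism $f:Sym^{+}(\K)\hookrightarrow Sym^{+}(\L)$ of \Cref{prop:symrelation} with restriction to $S$, I would first check this is well defined: since $\K$ is generic, $f(\beta)$ fixes every cusp of $\L$, so $f(\beta)(S)$ is again a totally geodesic thrice-punctured sphere with a puncture in the $C_0$-cusp, hence a generalized crossing disk for $C_0$ by the Morgan--Spyropoulos--Trapp--Ziegler classification (exactly as in the proof of \Cref{thm:HTHDcovering}), and therefore equals $S$. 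The resulting homomorphism $\rho:Sym^{+}(\K)\to \Aut(S)$ is injective, because any element of its kernel maps under $f$ to an isometry of $\L$ fixing the totally geodesic surface $S$ pointwise, and the only such orientation-preserving isometry is the identity. By \Cref{lemma:cover3PS}, $\im\rho$ is contained in the Klein four-group $\{1,r,s_1,s_2\}\le\Aut(S)$ of symmetries fixing the $C_0$-puncture, where $r$ is the order-$2$ rotation (orientation-preserving on $S$), $s_1,s_2$ are the two reflections, and $r=s_1s_2$; I would label things so that the mirror geodesic of $s_1$ is the one joining the two punctures of $S$ that lie in the $K_0$-cusp (the reflection through $0$ and $\infty$ in the model of \Cref{fig:thrice_punctured_symms}).

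Next I would set $H\defeq\{1\}\cup\{\beta\in Sym^{+}(\K):\beta\text{ acts freely on the cusp torus of }\K\}$ and observe that $H$ is a subgroup: an orientation-preserving finite-order fixed-point-free isometry of a flat torus is a translation by a non-lattice vector, a composition of translations is a translation (hence trivial or fixed-point-free), and a non-trivial symmetry of $\K$ cannot restrict to the identity on the cusp torus, since its lift would then centralize the peripheral subgroup, forcing it to be peripheral and hence trivial. Now suppose $s_1\in\im\rho$ and put $\beta_1\defeq\rho^{-1}(s_1)\neq 1$. Then $f(\beta_1)|_S=s_1$ fixes pointwise the mirror geodesic $\delta\subset S$ of $s_1$, a properly embedded geodesic line in $\L$ both of whose ends exit the $K_0$-cusp; since $f$ is restriction to the $\epsilon/d_L$-thick part, $\beta_1$ agrees with $f(\beta_1)$ there and so fixes the thick portion of $\delta$ pointwise, hence fixes the entire geodesic of $\K$ through it. The Dehn filling producing $\K$ leaves the $K_0$-cusp untouched, so this geodesic still exits the (unique) cusp of $\K$ at both ends, whence $\beta_1$ fixes at least two points of a cusp cross-section and $\beta_1\notin H$. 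Since $\rho(H)$ is a subgroup of the Klein four-group not containing $s_1$, it lies in $\{1,r\}$ or $\{1,s_2\}$ (equivalently: $H$ cannot contain both preimages of $r$ and $s_2$, as their product is $\beta_1\notin H$), so $|H|=|\rho(H)|\le 2$; via the isomorphism $\rho$ this says at most one non-trivial element of $Sym^{+}(\K)$ acts freely on the cusp.

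Finally, if $s_1\notin\im\rho$ then $\im\rho$ is one of $\{1\}$, $\{1,r\}$, $\{1,s_2\}$, so $|Sym^{+}(\K)|=|\im\rho|\le 2$ and there is at most one non-trivial element altogether; either way at most one element acts freely on the cusp, which (together with $|Sym^{+}(\K)|\le 4$) proves the corollary, and shows the free-acting element, when present, is the one restricting to the rotation $r$ of $S$. I expect the main obstacle to be the geometric claim in the middle paragraph: correctly identifying $s_1$ as the reflection whose mirror joins the two $K_0$-punctures of $S$, and verifying that this mirror is genuinely a properly embedded line that survives the filling as a geodesic exiting the cusp of $\K$ which $\beta_1$ fixes pointwise — this requires tracking the combinatorics of \Cref{lemma:cover3PS} and \Cref{fig:thrice_punctured_symms} carefully and using that $f$ is only a priori a restriction to the thick part.
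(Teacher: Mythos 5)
Your argument is correct and is a careful fleshing-out of the paper's one-sentence sketch, which simply points to extending the three symmetries of \Cref{fig:thrice_punctured_symms} from the crossing disk $S$ to $\L$ and then to $\K$. Two small remarks. First, the monomorphism $f$ from Kojima is obtained by restricting to $\K_{\thickD}\cong\L_{\thick}$ and then appealing to Mostow rigidity, so strictly speaking $f(\beta)$ and $\beta$ agree on the thick part only up to isotopy; this is the issue you correctly anticipate at the end. It is harmless here because, for an orientation-preserving involution of a torus, the free versus non-free dichotomy is detected entirely by the induced automorphism of the peripheral $\ZZ^2$ ($+\mathrm{id}$ versus $-\mathrm{id}$), and that automorphism is the same for $\beta$ on the cusp of $\K$ as for $f(\beta)$ on the planar cusp of $\L$. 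Second, your closing assertion that the unique free element (if present) must restrict to the rotation $r$ does not follow from the argument as written: you establish only $s_1\notin\rho(H)$, leaving the possibilities $\rho(H)\in\bigl\{\{1\},\{1,r\},\{1,s_2\}\bigr\}$; ruling out $\{1,s_2\}$ would require a separate argument tracking the $s_2$-mirror through the filling. Neither remark affects the corollary itself, which only asserts $|H|\leq 2$.
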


\section{Commensurability classes of $\edHT$ and generic knots}\label{sect:unique_in_comm_class}

In this section, we show that an $\edHT$ and generic knot complement with at least 9 twist regions, each having at least 6 crossings, is the only knot complement in its commensurability class. Proving this result will rely on our classifications of hidden symmetries and symmetries from \Cref{sec:hidden_syms} and  \Cref{sec:sym}, along with an analysis of short filling slopes for these knot complements and the orbifolds that they cover. In particular, the aforementioned lower bound on the number of twists regions and crossings will imply a lower bound on filling slope lengths, allowing us to obstruct non-hyperbolic surgeries, which by a theorem of Boileau--Boyer--Cebanu--Walsh will imply the claimed result. 

To be more specific, Boileau, Boyer, Cebanu and Walsh \cite{BBCW12} showed that if a hyperbolic knot complement $\SS^3 \setminus K$ without hidden symmetries is commensurable with a second (distinct) knot complement $\SS^3 \setminus K'$, then both knot complements cover an orbifold $\orbQ$ with a torus cusp, and both covers are cyclic. In fact, the authors show that the set of knot complements that cover $\orbQ$ are in bijective correspondence with the set of finite cyclic fillings of $\orbQ$ (see \cite[Proposition 4.13]{BBCW12}). Their work also defines and characterizes \define{orbi-lens spaces}, which are cyclic quotients of $\SS^3$ (in the case that the quotient acts freely on $\SS^3$, the resulting space is just a lens space). Thus if $\SS^3\setminus K$ is commensurable with another knot complement, then it covers an orbifold with two distinct orbi-lens space fillings. The main technical result of this section will rule out the existence of orbi-lens space fillings on the quotients of $\edHT$ and generic knot complements.

\subsection{Geometric Bases}
\label{sec:GeometricBases}

We begin by discussing geometric bases for co-compact groups of translations in the plane. We will identify the plane with $\CC$, with the structure of a 2-dimensional $\RR$-vector space. We say a  \define{geometric basis} for such a group $G$ is a set of two $\RR$-linearly independent elements $\{a,b\}$ of $\CC$ such that for any $n,m \in \ZZ$ we have $|a| \leq |na+mb|$ and $|b| \leq |na+mb|$. 

A geometric basis can be found by a greedy algorithm. We start with the observation that given two non-zero elements $a,b \in \CC$, $|ta+b|$ has a unique minimum for  $t\in \RR$ and one or two minima for $t \in \ZZ$. Moreover, if $|ta+b| \le |(t+1)a+b|$, then either $|(t-1)a+b| < |ta+b|$ or $|ta+b|$ is a minimum for $t \in \ZZ$. (Of course if $|(t \pm 1)a+b| = |ta+b|$, then both $|(t\pm 1)a+b|$ and $|ta+b|$ are minima.) In this way, we can start with two elements that form a basis $\{a,b\}$ (assume $|a| \leq |b|$) and replace $b$ with $ta+b$ to form a basis with shorter entries. If  $|ta+b| < |a|$ we switch the roles of $a, b$ above and repeat until the process terminates. Note that even up to ordering such that $|a| \leq |b|$ a geometric basis might not be unique. For example, non-uniqueness occurs if  $|b|= |a+b|$ or $|b|=|-a+b|$.

The following proposition relates a geometric basis for a group of translations $G_1$ to geometric bases of an index 2 subgroup $G_2$. When used in context, $G_1$ will correspond to the peripheral subgroup of an orbifold and $G_2$ will be the peripheral subgroup of its manifold cover. Of course, the proposition applies much more generally.

\begin{prop}\label{prop:quotientbases}
Let $G_1$ be co-compact group of translations with geometric basis $\{a, b\}$, $|a|\le |b|$, and let $G_2$ be an index 2 subgroup. Then one of the following holds:
\begin{enumerate}
\item $a  \in G_2$ and $2b \in G_2$, and $\{a, 2b\}$ or $\{a, a \pm 2b\}$ form a geometric basis for $G_2$,
\item $2a  \in G_2$ and $b \in G_2$ and $\{b, 2a\}$ or $\{b, b \pm 2a\}$ form a geometric basis for $G_2$,
\item $a\pm b  \in G_2$ and a geometric basis is a subset of $\{a \pm b, 2a\}$.   
 \end{enumerate}
  
\end{prop}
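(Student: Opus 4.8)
The plan is to turn the statement into a short, explicit run of the greedy algorithm described above. Since $[G_1:G_2]=2$ there is a surjection $\phi\colon G_1\to \ZZ/2\ZZ$ with $\ker\phi=G_2$, and because $G_1=\ZZ a\oplus\ZZ b$ this $\phi$ is determined by $(\phi(a),\phi(b))\in(\ZZ/2\ZZ)^2\setminus\{(0,0)\}$. This yields exactly three cases. If $(\phi(a),\phi(b))=(0,1)$ then $G_2=\langle a,2b\rangle$ with $a,2b\in G_2$; if $(\phi(a),\phi(b))=(1,0)$ then $G_2=\langle 2a,b\rangle$ with $2a,b\in G_2$; and if $(\phi(a),\phi(b))=(1,1)$ then $G_2=\{na+mb: n+m\ \text{even}\}$, which equals $\langle a+b,a-b\rangle=\langle a+b,2a\rangle=\langle a-b,2a\rangle$, so $a\pm b\in G_2$. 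These are precisely situations (1), (2), (3), so it remains, in each case, to locate a geometric basis of $G_2$ among the indicated vectors.

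After a similarity of $\CC$ we may assume $a=1$, so that the hypothesis that $\{a,b\}$ is a geometric basis becomes $b=x+iy$ with $|x|\le\tfrac12$ and $x^2+y^2\ge 1$ (and we may take $y>0$); invariantly, a basis $\{u,v\}$ with $|u|\le|v|$ is geometric exactly when $|\Re(\overline{u}v)|\le\tfrac12|u|^2$, which is also the termination condition of the greedy algorithm. In each case I would feed the explicit generating set above into the greedy algorithm. One greedy step replaces the longer generator $v$ by $v-nu$, where $n$ is the nearest integer to $\Re(\overline{u}v)/|u|^2$. The crux of the argument is that, because $\{a,b\}$ is already geometric (so $|\Re(\overline{a}b)|\le\tfrac12|a|^2$), this integer $n$ always lies in $\{-1,0,1\}$, and moreover the resulting vector never becomes shorter than $u$; hence after at most one reordering followed by one greedy step the basis is already geometric, with no further iteration or swap required.

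Carrying this out: in case (1) the longer generator $2b$ has $\Re(\overline{a}\cdot 2b)/|a|^2=2x\in[-1,1]$, so the new vector is $2b$, $a+2b$, or $2b-a$; its imaginary part is $2y$ with $4y^2\ge 3>1$, so it is strictly longer than $a$ and no swap occurs, giving a geometric basis of the form $\{a,2b\}$ or $\{a,a\pm 2b\}$. In case (2) one splits on $|b|$ versus $2|a|$: if $|b|\ge 2|a|$ then $|\Re(\overline{a}b)|\le\tfrac12|a|^2$ directly verifies $|b|\le|b\pm 2a|$, so $\{2a,b\}$ is already geometric; if $|b|<2|a|$ then $\{b,2a\}$ is the correct order and the single greedy step (again with $n\in\{-1,0,1\}$ since $|b|\ge|a|$) produces $\{b,2a\}$ or $\{b,b\pm 2a\}$, the bound $|\Re(\overline{a}b)|\le\tfrac12|a|^2$ being exactly what forces the new vector to have length $\ge|b|$. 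In case (3), after the harmless normalization $\Re(\overline{a}b)\le 0$ (replace $b$ by $-b$ if needed, so $|a+b|\le|a-b|$), one computes $\Re(\overline{(a+b)}(a-b))=|a|^2-|b|^2\in[-|a+b|^2,0]$, so the greedy step on $\{a+b,a-b\}$ either leaves it unchanged — giving the geometric basis $\{a+b,a-b\}$ — or replaces $a-b$ by $2a$; in the latter case $\{a+b,2a\}$, in whichever length order, is checked directly to be geometric using $\Re(\overline{a}b)\le 0$ (so that $|a+b|\le|a-b|$ and $|a+b|\le|3a+b|$), and the only subsequent greedy coefficient is $0$. In all of (3) the basis obtained is a two-element subset of $\{a+b,a-b,2a\}$.

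The main obstacle is precisely this bookkeeping in cases (2) and (3): one must be certain that the greedy algorithm does not trigger an additional swap or a second nontrivial reduction step, since a priori an index-$2$ sublattice of a reduced lattice need not stay ``one step away'' from being reduced. What makes everything collapse to a single-step computation is the one inequality $|\Re(\overline{a}b)|\le\tfrac12|a|^2$ supplied by the hypothesis that $\{a,b\}$ is a geometric basis; it simultaneously bounds the greedy coefficient and lets one verify by hand each ``is this pair already reduced?'' inequality. I would therefore organize the write-up around repeated use of this single inequality, presenting the three cases as three short parallel computations.
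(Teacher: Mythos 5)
Your proof follows the same route as the paper: identify the three index-2 sublattices as the kernels of the three surjections $\ZZ^2\to\ZZ/2\ZZ$, then run the greedy (Gauss/Lagrange) reduction algorithm once on each of the obvious generating sets, using the hypothesis $|\Re(\bar ab)|\le\tfrac12|a|^2$ to keep every reduction coefficient in $\{-1,0,1\}$ and to see that at most one swap can occur. The paper gives only this one-line plan; your write-up correctly carries it out, with the only loose end being in case (3), where for the length-order $|a+b|\le|2a|$ the conditions you should verify are $|2a|\le|3a+b|$ and $|2a|\le|a-b|$ (both of which do follow from $3|a|^2+2\Re(\bar ab)\le|b|^2$, which holds exactly when the greedy coefficient on $\{a+b,a-b\}$ is $-1$), not the inequalities $|a+b|\le|a-b|$, $|a+b|\le|3a+b|$ you cite, which are the ones for the other length order.
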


The above proposition follows from a straightforward application of the greedy algorithm to each of the  kernels of the three possible surjective maps from $\ZZ \times \ZZ \rightarrow \ZZ/2\ZZ$. 

\begin{lem}\label{lem:bounds}
Let $G_1$ be co-compact group of translations with geometric basis $\{a_1, b_1\}$, and let $G_2$ be an index 2 subgroup with geometric basis $\{a_2,b_2\}$. If $|a_2|=2$ and $|b_2|>16$, then either $|b_1|>6$ or $|a_1|>6$.
	
\end{lem}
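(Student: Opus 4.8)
The plan is to argue by contradiction: suppose $|a_1| \le 6$ and $|b_1| \le 6$, and show this forces $|b_2| \le 12$, contradicting $|b_2| > 16$. Since the conclusion is symmetric in $a_1$ and $b_1$, I may relabel so that $|a_1| \le |b_1| \le 6$, which puts the geometric basis of $G_1$ in the order required to apply \Cref{prop:quotientbases}. (The hypothesis $|a_2| = 2$ is only used to record that $|a_2| < |b_2|$, so that $\{a_2,b_2\}$ lists the two successive minima of $G_2$ in the expected order; the real driver is that $|b_2| > 16 > 12$.)

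By \Cref{prop:quotientbases}, $G_2$ has a geometric basis $\{a_2', b_2'\}$ of one of the three displayed forms. Because any two geometric bases of a rank-$2$ lattice have the same unordered pair of norms (the successive minima $\lambda_1 \le \lambda_2$, which are invariants of the lattice), it suffices to bound $\max(|a_2'|, |b_2'|)$, and this maximum equals $|b_2|$. In case (3), every candidate basis vector lies in $\{a_1 \pm b_1,\, 2a_1\}$, so by the triangle inequality $|a_1 \pm b_1| \le |a_1| + |b_1| \le 12$ and $|2a_1| \le 12$. In cases (1) and (2) one basis vector is $a_1$ (resp. $b_1$), of norm at most $6$, and the other vector $v$ is either $\pm 2b_1$ (resp. $\pm 2a_1$), of norm at most $12$, or else has the form $a_1 \pm 2b_1$ (resp. $b_1 \pm 2a_1$). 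In this last situation I would use the minimality property defining a geometric basis rather than the triangle inequality (which alone gives only $18$): writing $v = a_1 + 2b_1$, we have $2b_1 = (-1)a_1 + (1)v \in G_2$, and since the $v$-coefficient is nonzero, the geometric basis property of $\{a_1, v\}$ yields $|v| \le |2b_1| = 2|b_1| \le 12$; the case $v = a_1 - 2b_1$, and case (2), are identical after swapping the roles of $a_1$ and $b_1$.

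Thus in every case $\max(|a_2'|,|b_2'|) \le 12$, hence $|b_2| = \lambda_2(G_2) \le 12 < 16$, contradicting $|b_2| > 16$; this proves the lemma. The one point requiring a little care is the claim that the norms of a geometric basis depend only on the lattice, which is what allows replacing the given basis $\{a_2, b_2\}$ by the one produced by \Cref{prop:quotientbases}; this follows from the definition (the shorter element realizes $\lambda_1$ and the longer realizes $\lambda_2$), or one could instead carry out the whole argument with $\{a_2,b_2\}$ taken from the outset to be the basis supplied by \Cref{prop:quotientbases}. I do not anticipate a genuine obstacle here; the only substantive ingredient is the sharp norm estimate in the $\pm$ cases, which relies on minimality of geometric bases rather than on the triangle inequality alone.
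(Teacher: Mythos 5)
Your proof is correct, and it takes a genuinely different route from the paper's. The paper argues in the ``forward'' direction: for each candidate geometric basis $\{a_2,b_2\}$ of $G_2$ supplied by Proposition~\ref{prop:quotientbases}, it solves for $a_1,b_1$ as $\frac{1}{2}$-linear combinations of $a_2,b_2$ and uses $|a_2|=2$, $|b_2|>16$ with the triangle inequality to force $|a_1|>6$ or $|b_1|>6$; all twenty-odd cases are tabulated, and one must separately note (the shaded rows) that certain cases are inconsistent with $\{a_1,b_1\}$ being a geometric basis of $G_1$ in the first place. You instead take the contrapositive and bound $\lambda_2(G_2)$ from above: assuming $|a_1|,|b_1|\le 6$, every vector appearing in any of the bases allowed by Proposition~\ref{prop:quotientbases} has norm at most $12$, whence $|b_2|=\lambda_2(G_2)\le 12<16$. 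The single nontrivial step is your observation that in the case of a basis like $\{a_1,\,a_1+2b_1\}$, the naive triangle inequality gives only $18$, but the defining minimality of a geometric basis applied at $(n,m)=(-1,1)$ (permissible since the $v$-coefficient is nonzero, so $na_1+mv\notin\ZZ a_1$) gives $|a_1+2b_1|\le|2b_1|\le 12$. That is exactly right. Your route is shorter and sidesteps the table and the shaded-row bookkeeping entirely; the only lattice facts it uses are that the norms of a geometric basis are the successive minima (hence basis-independent) and the reduced-basis minimality inequality. What it gives up is the per-case information in Table~\ref{tab:short_quotient_slopes}, which the paper reuses later when tracking short parabolic elements and is the reason the paper chose the forward, tabulated form. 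You also correctly observe that your argument uses only $|a_2|\le|b_2|$ rather than the precise value $|a_2|=2$.
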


\begin{proof}
If $|a_1|\le |b_1|$, then \Cref{prop:quotientbases} gives the possible bases for $G_2$. If $|b_1|<|a_1|$ then cases (1) and (2) in \Cref{prop:quotientbases} are unaffected, and in case (3) we simply exchange the roles of $a=a_1$ and $b=b_1$, giving the additional possible bases $\{2b_1, a_1\pm b_1\}$. \Cref{tab:short_quotient_slopes} shows all possible bases $\{a_2,b_2\}$, and the resulting bounds on $a_1$ and $b_1$. 
\end{proof}

\begin{table}[ht]
\begin{tabular}{|c|c|c|c|c||c|c|c|c|}
\hline
&$a_2$  & $b_2$ &  $|a_2|$ & $|b_2|$  & $a_1$ & $b_1$ & $|a_1|$ & $|b_1|$\\ 
\hline 
\hline
\multirow{3}{*}{(1)} &$a_1$ & $2b_1$ & 2 & $>12$ & $a_2$ & $ \frac{1}{2}b_2$ & 2 & $>6$\\ 
&$a_1$ & $a_1+2b_1$  & $2$ & $>14$ & $a_2$ & $\frac{1}{2}(b_2-a_2)$ & $2$ & $>6$\\ 
&$a_1$ & $a_1-2b_1$ & $2$ & $>14$  & $a_2$ & $\frac{1}{2}(a_2-b_2)$ & $2$ & $>6$\\
\hline
\multirow{3}{*}{(2)}& $2a_1$ & $b_1$ & $2$ & $>6$  & $\frac{1}{2}a_2$ & $ b_2$ & $1$ & $>6$\\ 
&$b_1+2a_1$ & $b_1$ & $2$ & $>6$ & $\frac{1}{2}(a_2-b_2)$ & $ b_2$ & $>2$ & $>6$\\ 
&$b_1-2a_1$ & $b_1$ & $2$ & $>6$  & $\frac{1}{2}(b_2-a_2)$ & $b_2$ & $>2$ & $>6$\\ 
\hline
\multirow{10}{*}{(3)} &\cellcolor{Gray} $a_1+b_1$ &\cellcolor{Gray} $a_1-b_1$ &\cellcolor{Gray} $2$ &\cellcolor{Gray} $>14$ &\cellcolor{Gray}  $\frac{1}{2}(a_2+b_2)$ &\cellcolor{Gray} $\frac{1}{2}(a_2-b_2)$ &\cellcolor{Gray} $>6$ &\cellcolor{Gray} $>6 $ \\
&\cellcolor{Gray} $a_1-b_1$ &\cellcolor{Gray} $a_1+b_1$ &\cellcolor{Gray} $2$ &\cellcolor{Gray} $>14$ &\cellcolor{Gray} $\frac{1}{2}(a_2-b_2)$ &\cellcolor{Gray} $\frac{1}{2}(a_2+b_2)$ &\cellcolor{Gray} $>6$ &\cellcolor{Gray} $>6$ \\
&\cellcolor{Gray} $a_1+b_1$ &\cellcolor{Gray} $2a_1$ &\cellcolor{Gray} $2$ &\cellcolor{Gray} $>16$ &\cellcolor{Gray} $\frac{1}{2}b_2$ &\cellcolor{Gray} $a_2- \frac{1}{2}b_2$ &\cellcolor{Gray} $>8$ &\cellcolor{Gray} $>6$\\
& $2a_1$ & $a_1+b_1$  & $2$ & $>7$ &$\frac{1}{2}a_2$ & $b_2- \frac{1}{2}a_2$ & $1$ & $>6$\\
&\cellcolor{Gray} $a_1+b_1$ &\cellcolor{Gray} $2b_1$ &\cellcolor{Gray} $2$ &\cellcolor{Gray} $>16$ &\cellcolor{Gray} $a_2-\frac{1}{2}b_2$ &\cellcolor{Gray} $\frac{1}{2}b_2$ &\cellcolor{Gray} $>6$ &\cellcolor{Gray} $>8$\\
& $2b_1$ & $a_1+b_1$  & $2$ & $>7$ & $b_2-\frac{1}{2}a_2$ &$\frac{1}{2}a_2$ & $ >6$ & $1$\\
&\cellcolor{Gray} $a_1-b_1$ &\cellcolor{Gray} $2a_1$ &\cellcolor{Gray} $2$ &\cellcolor{Gray} $>16$ &\cellcolor{Gray} $a_2+\frac{1}{2}b_2$ &\cellcolor{Gray} $\frac{1}{2}b_2$&\cellcolor{Gray} $>6$ &\cellcolor{Gray} $>8$\\
& $2a_1$ & $a_1-b_1$  & $2$ & $>7$ &$\frac{1}{2}a_2$&  $b_2+\frac{1}{2}a_2$ &$1$ & $>6$\\
&$a_1-b_1$ & $2b_1$ & $2$ & $>16$ & $a_2+\frac{1}{2}b_2$ & $\frac{1}{2}b_2$& $>6$ & $>8$\\
& $2b_1$ & $a_1-b_1$  & $2$ & $>7$ & $b_2+\frac{1}{2}a_2$ & $\frac{1}{2}a_2$& $>6$ & $1$\\
\hline

\end{tabular}

\caption{\label{tab:short_quotient_slopes}
The relations and lengths for $a_1$, $b_1$, $a_2$, $b_2$, with rows grouped by the case breakdown in \Cref{prop:quotientbases}. A shaded row means that the choices of $a_1$ and $b_1$ cannot form a geometric basis given the constraints on $a_2$ and $b_2$, but are still relevant when accounting for short parabolic elements. For example, if $a_1=\frac{1}{2}(a_2+b_2)$  and $b_1=\frac{1}{2}(a_2-b_2)$ are a basis with $|a_2|=2$ and $|b_2|> 13$ then  $a_2 = a_1+ b_1$ is shorter than both basis elements. Finally, we note that $|b_2|>16$ is the cutoff we need.
 }
\end{table}

\subsection{Orbifold Dehn fillings}
\label{sect:6Theorem}

We say a \define{good orbifold} is an orbifold that is covered by a manifold. Otherwise, it is \define{bad}. An orientable 2-orbifold is \define{bad} if it is $S^2(n)$ or $S^2(n,m)$ such that $n\ne m$ (see for example \cite[Proposition 2.10]{BMP03}). Rather directly, we can see that an orientable $3$-orbifold is bad if it contains a bad $2$-suborbifold. The following theorem of Boileau, Mallot, and Porti shows this is actually equivalent to being bad:

\begin{thm} [{\cite[Corollary 3.28]{BMP03}}]  A compact 3-orbifold is the quotient of a compact 3-manifold by an orientation preserving finite group action if and only if it does not contain a bad 2-suborbifold. \label{thm:BMP03}
\end{thm}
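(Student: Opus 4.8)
The plan is to prove the two implications separately, since they are of very different character. The forward direction is elementary: if $\mathcal{O}=M/G$ for a compact $3$-manifold $M$ and a finite group $G$ of orientation-preserving diffeomorphisms, then $M\to\mathcal{O}$ is a finite orbifold covering, so every $2$-suborbifold $\Sigma\subset\mathcal{O}$ pulls back to a properly embedded surface in $M$ that orbifold-covers $\Sigma$. A bad $2$-orbifold (a teardrop $S^2(n)$ or a spindle $S^2(n,m)$ with $n\ne m$) admits no covering by a surface, so $\mathcal{O}$ cannot contain one. The substantive content is the converse: a compact orientable $3$-orbifold $\mathcal{O}$ with no bad $2$-suborbifold must be \emph{very good}, i.e.\ a finite quotient of a manifold by an orientation-preserving action (which for compact $3$-orbifolds is equivalent to being good). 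Here one should note that the local structure of an orientable $3$-orbifold is always that of a quotient by a finite subgroup of $SO(3)$, so badness is a purely global phenomenon; the task is really to produce a torsion-free finite-index subgroup of $\pi_1^{\mathrm{orb}}(\mathcal{O})$, and this is exactly what a geometric structure buys.

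For the converse I would follow the standard decompose--geometrize--reassemble scheme. First, split $\mathcal{O}$ along a maximal system of essential spherical $2$-suborbifolds into an orbifold connected sum of irreducible pieces and discal pieces; the hypothesis forces every splitting orbifold to be a \emph{good} spherical $2$-orbifold, and very-goodness passes up and down such connected sums, so it suffices to handle the irreducible summands. Next, split each irreducible piece along its characteristic family of essential Euclidean $2$-suborbifolds (tori, pillowcases $S^2(2,2,2,2)$, and Euclidean turnovers) into Haken pieces and ``small'' pieces, using orbifold JSJ/Haken theory; Haken orbifolds are very good by the orbifold Haken theorem. Each small piece is geometric by the Orbifold Theorem, so it remains to observe that a compact orientable geometric $3$-orbifold containing no bad $2$-suborbifold is very good: hyperbolic, Euclidean, Nil, Sol, $\widetilde{\mathrm{SL}_2\mathbb{R}}$, $\mathbb{H}^2\times\mathbb{R}$, and $S^2\times\mathbb{R}$ orbifolds are quotients of a model geometry by a lattice, which is finitely generated and virtually torsion-free (Selberg's lemma, Bieberbach), yielding a finite manifold cover; an $S^3$-geometry orbifold $S^3/\Gamma$ is literally a finite quotient of the manifold $S^3$; and for a Seifert-fibered piece the no-bad-suborbifold hypothesis guarantees that the base $2$-orbifold is good, hence (being compact) very good, so a finite manifold cover of the base lifts through the fibration to a finite manifold cover of the piece.

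Finally, reassemble: the cutting suborbifolds are themselves very good, and one matches up the peripheral structures of the finite manifold covers built over adjacent pieces so that they glue along a common finite cover of each cutting orbifold, producing a finite manifold cover of $\mathcal{O}$; orientability of $\mathcal{O}$ lets one arrange the deck group to preserve orientation. The main obstacle is that the argument is not self-contained: the step invoking the Orbifold Theorem (Boileau--Leeb--Porti, Cooper--Hodgson--Kerckhoff) carries essentially all the geometric weight, and the reassembly step, while more bookkeeping than geometry, is the second place where the absence of bad $2$-suborbifolds is essential — it is what guarantees that every piece \emph{and} every cutting orbifold can be tamed simultaneously inside one finite cover. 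For the purposes of this paper it is of course enough to cite \cite[Corollary 3.28]{BMP03} and move on.
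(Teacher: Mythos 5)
The paper does not prove this statement at all: it is quoted verbatim as \cite[Corollary 3.28]{BMP03} and used as a black box, so there is no ``paper's own proof'' to compare against. Your sketch is a faithful high-level outline of how Boileau--Maillot--Porti actually establish it (spherical decomposition, toral/JSJ decomposition, orbifold Haken theory for the Haken pieces, the Orbifold Theorem for the small non-Haken pieces, virtual torsion-freeness of geometric holonomies via Selberg/Bieberbach, then reassembly), and you correctly flag both that essentially all the geometric weight sits in the Orbifold Theorem and that for the purposes of this paper a citation is all that is wanted. The one place I would tighten the phrasing is the parenthetical ``which for compact $3$-orbifolds is equivalent to being good'': the implication good $\Rightarrow$ very good for compact $3$-orbifolds is itself a theorem resting on the same geometrization machinery, not a formal equivalence, so it should not be presented as if it were cheap; and in the reassembly step one should say explicitly that a common finite cover of each cutting $2$-orbifold is chosen first and the covers of the adjacent pieces are then passed to further finite covers that restrict to it on the boundary, rather than hoping the covers match up after the fact.
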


We now prove a weak corollary of the 6 Theorem of Agol \cite{agol2000bounds} and Lackenby \cite{lackenby2000word}, which will be sufficient for our purposes. In particular, our eventual goal is to obstruct orbi-lens space fillings of certain  hyperbolic orbifolds having no short filling slopes (with length as defined in \Cref{sub:orbDehnFilling}). Since orbi-lens spaces are good {(they are covered by $\SS^3$)}, it is enough then to show that filling along a long slope (i.e., one of length greater than 6) results in an orbifold that is either hyperbolic or contains a bad 2-suborbifold. Our statement is as follows:

\begin{thm}
\label{thm:orb_6_theorem}
Let $\orbQ$ be an (orientable) hyperbolic 3-orbifold  that is the  quotient of a hyperbolic $3$-manifold with torus cusps and assume  the singular locus of $\orbQ$ is either empty, a single embedded knot $\Sigma$ consisting of cone points of order $n$, or a set of properly embedded arcs and simple closed curves all having cone points of order $2$. Furthermore, denote by $\mathcal{H}_\orbQ$ an (embedded) horoball packing of $\orbQ$. Let $\alpha$ be a multi-slope of $\orbQ$ such that the length of each component of $\alpha$ measured by its displacement in $\mathcal{H}_\orbQ$ is greater than $6$. Then $\orbQ(\alpha)$, the orbifold resulting from filling along $\alpha$, is either hyperbolic or contains a bad 2-suborbifold. 
\end{thm}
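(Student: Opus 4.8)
The plan is to reduce the orbifold statement to the manifold $6$ Theorem of Agol and Lackenby by passing to an appropriate cover, then to argue that the geometric conclusion descends back to $\orbQ(\alpha)$. First I would choose a manifold cover: by hypothesis $\orbQ = \HH^3/\Gamma$ is the quotient of a hyperbolic $3$-manifold $M$ with torus cusps, say $M = \HH^3/\Gamma'$ with $\Gamma' \lhd \Gamma$ of finite index $d$ (we may pass to a further finite cover to arrange $\Gamma'$ normal and torsion-free, which only helps). The multi-slope $\alpha$ on the non-rigid cusps of $\orbQ$ pulls back to a multi-slope $\tilde\alpha$ on the cusps of $M$: each cusp $\C$ of $\orbQ$ is covered by some cusps of $M$, and a filling slope on $\C$ lifts to a genuine slope on each such cusp of $M$ (using that $\C$ is non-rigid, exactly as in \Cref{sub:orbDehnFilling}; rigid cusps receive no filling by \Cref{lem:slopes}, and by the hypothesis on the singular locus the filled cusps are tori or pillowcases). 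Crucially, length is compatible with covers: the horoball packing $\mathcal H_\orbQ$ pulls back to a horoball packing $\mathcal H_M$ of $\HH^3$ descending to $M$, and the displacement of a lift of $\alpha$ in $\mathcal H_M$ equals the displacement of $\alpha$ in $\mathcal H_\orbQ$ (this is the consistency remark at the end of \Cref{sub:orbDehnFilling}). Hence every component of $\tilde\alpha$ has length $> 6$ in $\mathcal H_M$.

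Next I would apply the $6$ Theorem (Agol \cite{agol2000bounds}, Lackenby \cite{lackenby2000word}) to $M$: since every filling slope of $\tilde\alpha$ has length greater than $6$, the filled manifold $M(\tilde\alpha)$ is either hyperbolic, or a Seifert fibered space, or reducible, or toroidal — equivalently, it is hyperbolic or else admits an essential sphere, disk, torus, or annulus; in particular it is \emph{irreducible with infinite $\pi_1$ unless it is hyperbolic}, and more to the point it contains no essential $\SS^2$. Now observe that $\orbQ(\alpha)$ is covered by $M(\tilde\alpha)$: the cover $M \to \orbQ$ restricted to the complements of the filled cusps extends over the filling solid tori / orbi-tangles, since each component of $\tilde\alpha$ is a lift of a component of $\alpha$, so the Dehn-filling and orbi-Dehn-filling operations are compatible with the covering (this is the standard fact that filling along compatible slopes commutes with covers; for the pillowcase case one uses the orbi-tangle description of \Cref{sub:orbDehnFilling}). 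Thus $\orbQ(\alpha)$ is a closed orientable $3$-orbifold covered by the closed $3$-manifold $M(\tilde\alpha)$, so in particular $\orbQ(\alpha)$ is a \emph{good} orbifold.

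Finally I would dichotomize on whether $M(\tilde\alpha)$ is hyperbolic. If it is, then $\orbQ(\alpha) = M(\tilde\alpha)/(\Gamma/\Gamma')$ is a hyperbolic $3$-orbifold by Mostow–Prasad rigidity applied to the finite group action (the action is by isometries in the hyperbolic metric on $M(\tilde\alpha)$ since it is the deck group), giving the first alternative. If $M(\tilde\alpha)$ is not hyperbolic, then since the filling slopes are long the $6$ Theorem still guarantees $M(\tilde\alpha)$ has no essential sphere, so it is irreducible; being a closed orientable irreducible $3$-manifold with a geometric or toroidal/Seifert piece it has the property that $\pi_1$ is infinite and it is aspherical or Seifert — in any case $M(\tilde\alpha)$ is not a counterexample-free good cover forcing $\orbQ(\alpha)$ good \emph{without} bad suborbifolds unless it is itself geometric. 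The cleanest route here: by \Cref{thm:BMP03} (Boileau–Maillot–Porti), a compact orientable $3$-orbifold fails to be a quotient of a manifold by an orientation-preserving finite group action exactly when it contains a bad $2$-suborbifold; since $\orbQ(\alpha)$ \emph{is} such a quotient, it contains no bad $2$-suborbifold — but this is the wrong direction. So instead I argue contrapositively on the statement: suppose $\orbQ(\alpha)$ is \emph{not} hyperbolic and contains \emph{no} bad $2$-suborbifold; I must derive a contradiction, i.e., show some filling slope has length $\le 6$. Here I would invoke the orbifold geometrization/$6$-theorem circle of ideas: a good closed $3$-orbifold with no bad suborbifold that is not hyperbolic and not small is either reducible, toroidal, or Seifert; pulling any such essential $2$-suborbifold back to the cover $M(\tilde\alpha)$ produces an essential sphere, disk, torus, or annulus in $M(\tilde\alpha)$ (an essential orbifold surface lifts to an essential surface, using that the cover is a manifold so the lift has no orbifold points), which by the $6$ Theorem forces one of the lifted filling slopes — hence one of the original slopes of $\alpha$ — to have length at most $6$, the desired contradiction.

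The main obstacle I anticipate is the last step: making precise that an essential (compressing or $\pi_1$-injective) $2$-suborbifold of $\orbQ(\alpha)$ lifts to a genuine essential surface in the manifold cover $M(\tilde\alpha)$, and that this essential surface can be isotoped off the filling tori to meet the hypothesis of the $6$ Theorem (which produces essential surfaces disjoint from the cores of the filling solid tori). One must be careful about the pillowcase fillings, where the orbi-tangle's order-$2$ arc means the relevant "essential sphere" downstairs could be a $\SS^2(2,2,2,2)$ or a turnover; the hypothesis that the singular locus of $\orbQ$ is a single $n$-torsion knot or a $2$-torsion tangle is exactly what keeps the covering degrees and the possible suborbifold types under control. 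I expect the routine parts — compatibility of slopes, lengths, and filling with covers, and the rigidity argument in the hyperbolic case — to go through without difficulty; it is the normal-surface-theoretic descent of essential suborbifolds that requires the careful argument, and it is there that I would lean on \Cref{thm:BMP03} together with the standard fact that finite covers of essential suborbifolds by surfaces preserve essentiality.
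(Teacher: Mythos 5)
Your plan runs in the ``wrong order'' relative to the paper, and this order matters: you take a manifold cover $M\to\orbQ$ of the \emph{unfilled} orbifold, fill $M$, and then claim ``$\orbQ(\alpha)$ is covered by $M(\tilde\alpha)$\dots so in particular $\orbQ(\alpha)$ is a \emph{good} orbifold.'' That claim is false in general, and it is exactly the obstruction the theorem's second alternative is there to absorb. When a cusp $T$ of $\orbQ$ is filled with an orbi-torus $D^2(n)\times S^1$ (or a pillowcase with an orbi-tangle whose interior arc has $n$-torsion), the preimage of that filling piece under your fixed cover $M\to\orbQ$ is a cover of $D^2(n)\times S^1$ determined by the peripheral subgroup $\tilde\Pi_T<\Pi_T$ you already have. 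Such a cover is a solid torus only when the image of $\tilde\Pi_T$ in $\pi_1^{\mathrm{orb}}(D^2(n)\times S^1)\cong\ZZ/n\times\ZZ$ is torsion-free, which imposes a divisibility condition between $n$ and the local covering degree. Your $M$ was chosen before $\alpha$ and has no reason to satisfy this; e.g.\ if $T$ lifts at degree $2$ and $n=3$, no extension of the cover over that filling piece is a manifold. When that happens $M(\tilde\alpha)$ simply does \emph{not} map onto $\orbQ(\alpha)$, and goodness of $\orbQ(\alpha)$ does not follow. (Indeed $\orbQ(\alpha)$ can genuinely be bad, via teardrop or spindle suborbifolds created by the filling; that is why the conclusion is an ``or.'') The paper sidesteps this by arguing in the other direction: first split on whether $\orbQ(\alpha)$ contains a bad $2$-suborbifold; if not, \Cref{thm:BMP03} hands you a manifold cover $M_\alpha\to\orbQ(\alpha)$ of the \emph{filled} orbifold, and drilling the preimages of the filling cores produces a cusped manifold $\overline M$ covering $\orbQ$ on which the slope lengths are still $>6$, so the $6$ Theorem applies directly to $\overline M\rightsquigarrow M_\alpha$.

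A secondary issue: you misstate the $6$ Theorem. For slopes of length $>6$ its conclusion (with geometrization) is that the filled manifold \emph{is} hyperbolic; it is not a four-way dichotomy ``hyperbolic, or Seifert, or reducible, or toroidal.'' Once this is corrected, your entire final paragraph---the contrapositive detour through essential suborbifolds, where you yourself notice the logic is ``the wrong direction''---becomes unnecessary: after the covering-compatibility is repaired in the manner above, you apply the $6$ Theorem once, get hyperbolicity of the manifold cover, and deduce hyperbolicity of $\orbQ(\alpha)$ by Mostow rigidity. In short: the covering-step needs to be replaced with the conditional-on-goodness $+$ \Cref{thm:BMP03} $+$ drilling argument, and the last paragraph should be deleted.
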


\begin{proof}
If $\orbQ$ is a manifold, then the standard 6 Theorem applies. So, assume the set of cone points of $\orbQ$ is non-empty.

If $\orbQ(\alpha)$ contains a bad 2-suborbifold, we are done.  Thus we may assume that $\orbQ(\alpha)$ is good and appeal to \Cref{thm:BMP03}, which gives a manifold cover $p:M_\alpha \rightarrow \orbQ(\alpha)$, for some 3-manifold $M_\alpha$. Let $\overline{\orbQ}$ be the result of drilling out from $\orbQ(\alpha)$ the surgery solid tori and surgery solid pillowcases (so that $\mathrm{int}(\overline{\orbQ})\cong \orbQ$), and let $\overline{M}=p^{-1}(\overline{\orbQ})$. The covering map $p$ restricts to a cover $p|_{\overline{M}}:\overline{M} \to \overline{\orbQ}$, which then extends to a cover $p_\orbQ:\mathrm{int}(\overline{M})\to \orbQ$. Note that $\mathrm{int}(\overline{M})$ is a cusped hyperbolic 3-manifold which admits a horoball packing consistent with $\mathcal{H}_\orbQ$. Under this packing each of the lifts of curves in $\alpha$ have length greater than $6$. Thus, $M_\alpha$ is hyperbolic by the standard 6 Theorem and $\orbQ(\alpha)$ is hyperbolic as well. 
\end{proof}

We now prove the main theorem of this section. We define an \define{exceptional} filling of a hyperbolic orbifold to be a filling that results in a good non-hyperbolic orbifold. We separate out the case that a filling results in  bad orbifold, because this distinction is relevant to our argument. This theorem was stated in the introduction. We restate here for convenience.

\begin{thmn}[\ref{lem:second_slope_long}]
\secondSlopeLongText
\end{thmn}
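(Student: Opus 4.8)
The plan is to prove the two assertions of the theorem separately. The first — that $\SS^3 \setminus K$ itself has no non-trivial exceptional fillings — is immediate from \cite[Corollary 1.8]{FP07}: since $K$ is obtained by fully augmenting (and re-twisting) a knot it admits a prime, twist-reduced diagram, and by hypothesis this diagram has at least $9$ twist regions each with at least $6$ crossings, exceeding the thresholds of Futer--Purcell's theorem. (One could also argue directly: by \Cref{lem:tiled} the cusp of $K_0$ in the preferred packing $\H$ is a grid whose width equals the number of crossing disks met by $K_0$, hence width at least $18$ once there are at least $9$ twist regions; thus every slope on the cusp of $\SS^3 \setminus K$ other than the meridian has length greater than $6$ in the induced embedded packing, and the $6$ Theorem applies.) The plan for the second assertion is to reduce, via \Cref{cor:free_cusp_symms}, to the case $\orbQ = (\SS^3\setminus K)/\ZZ_2$, verify the hypotheses of the orbifold $6$ Theorem (\Cref{thm:orb_6_theorem}) for $\orbQ$, and then bound the filling slopes of $\orbQ$ below by $6$ — handling the (at most one) short slope by hand.

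For the reduction: by \Cref{cor:free_cusp_symms} the symmetries of $\SS^3\setminus K$ acting freely on the cusp form a group of order $1$ or $2$. If it is trivial then $\orbQ = \SS^3\setminus K$ and we are done by the first assertion, so suppose $\orbQ = (\SS^3\setminus K)/\ZZ_2$ for an orientation-preserving involution $\iota$ acting freely on the cusp. Then the cusp cross-section of $\orbQ$ is again a torus, and $\mathrm{Fix}(\iota)$ — the fixed locus of an orientation-preserving involution of an orientable $3$-manifold, disjoint from the cusp — is a disjoint union of simple closed curves. Hence $\orbQ$ is a hyperbolic orbifold with a torus cusp whose singular locus consists of simple closed curves with cone points of order $2$, so \Cref{thm:orb_6_theorem} applies to it. It therefore suffices to show that every filling slope on the torus cusp of $\orbQ$ has length greater than $6$ in the embedded horoball packing induced by $\H$, up to finitely many short slopes treated separately.

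To bound the slopes, let $G_2 \le G_1$ be the peripheral subgroups of $\SS^3\setminus K$ and of $\orbQ$ at their torus cusps, so $[G_1:G_2] = 2$. Since $\SS^3\setminus K$ is $(\epsilon,d_L)$-twisted, its $K_0$-cusp geometry is governed by that of the FAL (\Cref{lem:tiled}, recalling that filling the crossing circles does not alter the combinatorics of the $K_0$-cusp grid): $G_2$ has a geometric basis $\{a_2,b_2\}$ in which $a_2$ is the meridian of $K$ with $|a_2| = 2$, while $|b_2|$ is at least twice the number of twist regions, hence $|b_2| > 16$. Applying \Cref{prop:quotientbases} and \Cref{lem:bounds}, and reading off \Cref{tab:short_quotient_slopes}, we conclude that every primitive class of $G_1$ has length greater than $6$ except possibly a single slope $a_1$ with $|a_1| \le 6$, and that in the exceptional case $a_1$ is either equal to $a_2$ (when $a_1 \in G_2$) or satisfies $2a_1 = a_2$ (when $a_1 \notin G_2$). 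For every filling slope of length greater than $6$, \Cref{thm:orb_6_theorem} shows that the filled orbifold — for any cone order on the core of the filling — is either hyperbolic or contains a bad $2$-suborbifold, hence is not a non-trivial good non-hyperbolic filling.

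It remains to treat fillings along the exceptional short slope $a_1$. A filling along $a_1$ with trivial cone order lifts, through the cover $\SS^3\setminus K \to \orbQ$, to the meridian filling of $\SS^3\setminus K$, which returns $\SS^3$; the resulting orbifold is an orbi-lens space covered by $\SS^3$ — this is the trivial filling, which is excluded. A filling along $a_1$ with cone order $n \ge 2$ produces an orbifold covered by the $n$-fold cyclic branched cover $\Sigma_n(K)$, which is hyperbolic: for $n \ge 3$ by Thurston's orbifold theorem (as $K$ is hyperbolic), and for $n = 2$ because $\Sigma_2(K)$ is realized as a long $\tfrac{1}{q_i}$-filling of the (hyperbolic) double cover of $\SS^3\setminus L$ branched over $K_0$, to which the $6$ Theorem applies. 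In every case the filled orbifold is hyperbolic, so $\orbQ$ admits no non-trivial exceptional filling that is a good orbifold. The crux of the argument is exactly this last step: the $6$ Theorem gives no control over the short slope, so one must pin down via \Cref{tab:short_quotient_slopes} precisely which short parabolics can occur, identify the trivial-cone-order filling with the trivial surgery on $K$, and establish hyperbolicity of the cyclic branched covers of the highly twisted knot $K$ for all remaining cone orders.
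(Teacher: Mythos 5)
Your overall plan differs from the paper's in an important way, and this difference is where two gaps appear. The paper never applies the orbifold $6$ Theorem to $\orbQ$ directly; instead it observes (via \Cref{prop:HT_non-empty} and \Cref{prop:symrelation}) that the involution $\beta$ on $\SS^3\setminus K$ corresponds to a symmetry of $\SS^3\setminus L$ and hence gives a two-cusped orbifold $\orbQ_L$, and that a filling of $\orbQ$ along a slope $s$ is the same as filling $\orbQ_L$ along a multi-slope $(s,\alpha_p)$. All slope lengths are then measured in the cusp geometry of $\orbQ_L$, which is inherited from the explicit FAL horoball packing $\H$. Your proof instead measures slopes on the torus cusp of $\orbQ$ itself, asserting that ``filling the crossing circles does not alter the combinatorics of the $K_0$-cusp grid.'' That is not a justification: Dehn filling does deform the cusp shape of $\SS^3\setminus K$, and while the combinatorics of \Cref{lem:tiled} and the length bounds of \cite[Theorem 3.10]{FP07} apply to $\SS^3\setminus L$, you would need quantitative control (\`a la \cite{FPS19}) to carry those bounds over to $\SS^3\setminus K$ and hence to $\orbQ$. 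The paper's passage to $\orbQ_L$ is precisely what avoids needing such estimates. This is a genuine gap, not merely an omitted citation.

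The second gap is in your treatment of the short slope $a_1$. You are correct that $a_1$ is either $\mu$ itself or satisfies $2a_1=\mu$, and your claim that cone-order-$1$ filling along $a_1$ is the trivial filling is right. But for cone order $n\ge 2$ your argument that the result is hyperbolic is incomplete. For $n\ge 3$ you invoke hyperbolicity of $(\SS^3,K_n)$ via the orbifold theorem, which is plausible but should be cited carefully. For $n=2$ your proposed argument — realizing $\Sigma_2(K)$ as a $\frac{1}{q_i}$-filling of the double branched cover of $\SS^3\setminus L$ over $K_0$ and applying the $6$ Theorem — does not work as stated: to form that branched cover you must first fill the $K_0$-cusps of the cyclic double cover of $\SS^3\setminus L$ along the lifted meridian, and that slope has length approximately $4 < 6$, so the $6$ Theorem gives no control. (The paper avoids this entirely by restricting its lemma to slopes $s$ ``not the quotient of the meridian,'' and its corollary needs only that no other primitive slope direction on $\orbQ$ supports an orbi-lens space filling; it does not need to decide hyperbolicity of cone fillings in that one short direction.) If you wanted to push through a direct-on-$\orbQ$ argument you would need a separate hyperbolicity statement for $\Sigma_2(K)$, and it is not immediate from the tools in the paper.
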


\begin{proof}
Futer and Purcell's main result from \cite[Corollary 1.8]{FP07} establishes that $\SS^3 \setminus K$ does not admit any non-trivial exceptional surgeries.  

By \Cref{cor:free_cusp_symms},
$\SS^3 \setminus K$ has at most one non-trivial symmetry $\beta$ that acts freely on the cusp.  Assume $\orbQ$ is a non-trivial orbifold quotient of $\SS^3 \setminus K$ by such a symmetry $\beta$, and let $\orbQ_L$ be the orbifold quotient of $\SS^3 \setminus L$ by the corresponding symmetry $f(\beta)$ of $\SS^3\setminus L$ given by the \Cref{prop:symrelation}. 

By \Cref{prop:symrelation}, $f(\beta)$ maps each cusp of $\SS^3\setminus L$ to itself, so any cusp expansion of $\SS^3 \setminus L$ induces an embedded cusp expansion of $\orbQ_L$. For the remainder of the proof we will assume that lengths of parabolic elements are measured based on the preferred horoball packing of $\HH^3$, which by the preceding observation descends to embedded cusp expansions of both $\SS^3 \setminus L$ and $\orbQ_L$.

Let $\alpha$ be the multi-slope that we have Dehn filled along to obtain $\SS^3 \setminus K$ from $\SS^3 \setminus L$, let $p$ be the covering map from $\SS^3 \setminus K$ to $\orbQ$, and let $p_L$ be the covering map from $\SS^3 \setminus L$ to $\orbQ_L$. The slope $\alpha$ projects via $p_L$ to a slope $\alpha_p$, along which $\orbQ_L$ is filled to obtain $\orbQ$. Let $s$ be a Dehn surgery slope for $\orbQ$ that is not the quotient of the meridian corresponding to the knot complement. Then filling $\orbQ$ along $s$ is equivalent to filling $\orbQ_L$ along $(s,\alpha_p)$: 

\[
\xymatrix{
\SS^3 \setminus L \ar[d]^{p_L} \ar[r]^{\alpha} & \SS^3 \setminus K \ar[d]_{p} &\\
\orbQ_L \ar[r]^{\alpha_p} \ar@/_2pc/[rr]_{(s,\alpha_p)} & \orbQ \ar[r]^-{s} & \orbQ(s)=\orbQ_L(s,\alpha_p) 
}
\]

It therefore is enough to show that fillings of the form $(s, \alpha_p)$ of $\orbQ_L$ have no non-trivial exceptional surgeries that are good orbifolds. We claim that for every such multi-slope $(s, \alpha_p)$, each component has length greater than $6$. 

The length of each component of $\alpha_p$ is equal to the length of the component of $\alpha$ that it lifts to. By \cite[Theorem 3.10]{FP07} the length of the each component of $\alpha$ is at least $ \sqrt{6^2+1}>6$ since each twist region has at least $6$ crossings. Thus it remains to analyze the length of $s$. Applying \Cref{prop:quotientbases} with $G_1$ corresponding to the peripheral subgroup of $\orbQ_L$ and $G_2$ corresponding to the peripheral subgroup of the cover $\SS^3 \setminus L$ allows us to analyze the lengths of surgery slopes of the cusps of $\orbQ_L$ in terms of the slope lengths on the cusps of $\SS^3 \setminus L$. In particular, our hypotheses imply that the longitude of the planar cusp has length at least $18>16$. By \Cref{lem:bounds}, this results in $s$ having length greater than $6$. Combining both length bounds with \Cref{thm:orb_6_theorem}, this filling of $\orbQ_L$ is either hyperbolic or a bad orbifold. Hence, the corresponding filling of $\orbQ$ is either hyperbolic or a bad orbifold.
\end{proof}
We conclude with the following corollary, which is stated previously in the introduction. 

\begin{corn}[\ref{thm:only_knot_comm_class}]
\uniqueKnotComp
\end{corn}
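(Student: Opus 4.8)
The plan is to combine Theorem~\ref{lem:second_slope_long} with the structure theory of Boileau--Boyer--Cebanu--Walsh for commensurable knot complements without hidden symmetries. Suppose, for contradiction, that $\SS^3\setminus K$ is commensurable with a hyperbolic knot complement $\SS^3\setminus K'$ with $\SS^3\setminus K'\not\cong\SS^3\setminus K$. Since $\SS^3\setminus K$ is $\edHT$, \Cref{thm:no_rigid_cusps} (via \Cref{thm:NR}) shows it admits no hidden symmetries, so \cite[Proposition 4.13]{BBCW12} applies: both $\SS^3\setminus K$ and $\SS^3\setminus K'$ cyclically cover a common orientable orbifold $\orbQ$ carrying a single torus cusp, and the knot complements covering $\orbQ$ are in bijection with the orbi-lens space (``finite cyclic'') fillings of $\orbQ$, each orbi-lens space being a cyclic quotient of $\SS^3$. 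In particular, because $K\ne K'$, the orbifold $\orbQ$ admits at least two distinct orbi-lens space fillings.

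Next I would identify $\orbQ$ with one of the orbifolds to which Theorem~\ref{lem:second_slope_long} applies. The cover $\SS^3\setminus K\to\orbQ$ is regular with cyclic deck group $C\le Sym^{+}(\SS^3\setminus K)$. Since $\orbQ$ has a torus cusp, the action of $C$ on the cusp torus of $\SS^3\setminus K$ has torus quotient, and an orientation-preserving finite cyclic action on a torus with torus quotient is necessarily free (Riemann--Hurwitz). Hence $C$ acts freely on the cusp, so by \Cref{cor:free_cusp_symms} we have $|C|\in\{1,2\}$, and $\orbQ$ is either $\SS^3\setminus K$ itself (when $|C|=1$) or exactly the quotient of $\SS^3\setminus K$ by an order-$2$ symmetry acting freely on the cusp (when $|C|=2$) --- in either case the orbifold addressed by Theorem~\ref{lem:second_slope_long}.

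Finally I would extract the contradiction. One of the two orbi-lens space fillings of $\orbQ$ is the one recovering $\SS^3\setminus K$, namely the filling along the image in $\orbQ$ of the meridian of $K$; the other, which recovers the genuinely different $\SS^3\setminus K'$, is therefore a filling of $\orbQ$ along a slope other than this meridian image, i.e.\ a \emph{non-trivial} filling. An orbi-lens space is a good orbifold (it is covered by $\SS^3$) and is not hyperbolic, so this non-trivial filling is an exceptional filling of $\orbQ$ that is a good orbifold. When $|C|=1$ this contradicts the first clause of Theorem~\ref{lem:second_slope_long} ($\SS^3\setminus K$ has no non-trivial exceptional surgeries, orbi-lens spaces among them), and when $|C|=2$ it contradicts the second clause (no non-trivial good-orbifold exceptional fillings of $\orbQ$). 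Hence no such $K'$ exists, so $\SS^3\setminus K$ is the only knot complement in its commensurability class. The only delicate point, and the one I would take most care with, is the identification of the BBCW common quotient $\orbQ$ with the quotient of $\SS^3\setminus K$ by free-on-the-cusp symmetries, so that Theorem~\ref{lem:second_slope_long} is literally applicable --- this hinges on the cyclicity of the cover, \Cref{cor:free_cusp_symms}, and the torus-cusp hypothesis forcing freeness of the deck action on the cusp.
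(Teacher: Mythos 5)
Your proof is correct and follows essentially the same route as the paper's: rule out hidden symmetries via \Cref{thm:no_rigid_cusps}, invoke \cite[Proposition 4.13]{BBCW12} to reduce commensurable knot complements to orbi-lens space fillings of a common cyclic-quotient orbifold $\orbQ$, and then invoke \Cref{lem:second_slope_long} to kill any non-trivial such filling. The only real difference is that you supply an explicit justification (via the cyclicity of the cover, the torus-cusp hypothesis, Riemann--Hurwitz, and \Cref{cor:free_cusp_symms}) for the identification of the BBCW quotient with $\SS^3\setminus K$ or its order-$2$ free-on-the-cusp quotient, whereas the paper simply takes $\orbQ$ to be the quotient by the full group of free-on-cusp symmetries and cites BBCW to assert the correspondence; your extra care here is sound and arguably makes the step more transparent.
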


\begin{proof}
By \Cref{thm:ht_no_rigid_cusps} $\SS^3 \setminus K$ does not cover a rigid cusped orbifold, and thus $\SS^3 \setminus K$ does not admit hidden symmetries (see \cite[Proposition 9.1]{NR92}). Denote by $\orbQ$ the (possibly trivial) quotient of $\SS^3 \setminus K$ by the group of symmetries that act freely on the cusp. Then by the main results of \cite{BBCW12} (especially \cite[Proposition 4.13]{BBCW12}),  each knot complement commensurable with $\SS^3 \setminus K$ (including $\SS^3 \setminus K$ itself) corresponds to an orbi-lens space filling of $\orbQ$. We have shown above in \Cref{lem:second_slope_long} that all non-trivial fillings of $\orbQ$ are hyperbolic or bad orbifolds. In particular, only the trivial filling of $\orbQ$, i.e., along the image of the meridian of $\SS^3\setminus K$, can result in an orbi-lens space filling.

Therefore, $\SS^3 \setminus K$  is the unique knot complement in its commensurability class.
\end{proof}

If we add the further assumption that $\SS^3 \setminus K$  does not admit any symmetries that act freely on the cusp, then $\SS^3 \setminus K$ would have to admit a lens space filling to be commensurable with a knot complement by \cite[Proposition 4.13]{BBCW12}. In this case, we can apply the bounds directly from Futer and Purcell \cite[Corollary 1.8]{FP07} with the Perelman's affirmative solution to the Geometrization Conjecture and obtain a similar result (see \cite{MorganTian2014} for example).
\begin{cor}\label{cor:unique_knot_comp_no_free_symms}
Let $\SS^3 \setminus K$ be an $(\epsilon,d_L)$-twisted and generic knot complement with at least $4$ twist regions  such that each twist region has at least $6$ crossings. If $\SS^3 \setminus K$ does not admit any symmetries that act freely on the cusp, then $\SS^3 \setminus K$ is the only knot complement in its commensurability class.
\end{cor}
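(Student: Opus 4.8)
The plan is to follow the same strategy as the proof of \Cref{thm:only_knot_comm_class}, but to use the extra hypothesis to avoid the analysis of short slopes on a quotient orbifold altogether---this is precisely what allows the twist-region count to drop from $9$ to $4$. First I would invoke \Cref{thm:ht_no_rigid_cusps} together with \cite[Proposition 9.1]{NR92} to conclude that $\SS^3\setminus K$ admits no hidden symmetries. Then, exactly as in the proof of \Cref{thm:only_knot_comm_class}, I would apply \cite[Proposition 4.13]{BBCW12}: every knot complement commensurable with $\SS^3\setminus K$ (including $\SS^3\setminus K$ itself) corresponds to a cyclic orbi-lens space filling of the orbifold $\orbQ$ obtained from $\SS^3\setminus K$ by quotienting by its subgroup of symmetries that act freely on the cusp.

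The crucial simplification is that, under the added assumption, this subgroup is trivial, so $\orbQ = \SS^3\setminus K$ itself, which is a manifold with a single torus cusp. Hence an orbi-lens space filling of $\orbQ$ is literally a lens space produced by Dehn surgery on $K$, and the trivial (meridional) surgery, which gives $S^3$, is the filling that corresponds to $\SS^3\setminus K$ in the Boileau--Boyer--Cebanu--Walsh dictionary. So the statement reduces to showing that no \emph{non-trivial} Dehn surgery on $K$ yields a lens space. This is where the diagram hypotheses are used: since $K$ has a prime, twist-reduced diagram with at least $4$ twist regions, each with at least $6$ crossings, \cite[Corollary 1.8]{FP07} combined with Perelman's proof of geometrization (see \cite{MorganTian2014}) shows that $\SS^3\setminus K$ has no non-trivial exceptional surgeries; since lens spaces are non-hyperbolic, no non-trivial surgery on $K$ can be a lens space. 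Thus the meridian is the only slope of $\orbQ$ giving an orbi-lens space, and $\SS^3\setminus K$ is the unique knot complement in its commensurability class.

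I do not expect a genuine obstacle here, only a point requiring care: the correct application of \cite[Proposition 4.13]{BBCW12}, namely verifying that the common quotient orbifold appearing in their correspondence is exactly the quotient of $\SS^3\setminus K$ by its free-cusp symmetries (so that it collapses to $\SS^3\setminus K$ under our hypothesis), and that an ``orbi-lens space filling'' of this orbifold is an ordinary lens space surgery on $K$ once the cusp-quotient is trivial. Everything else is a direct citation. In contrast with the proof of \Cref{lem:second_slope_long}, no use of \Cref{prop:quotientbases}, \Cref{lem:bounds}, or the orbifold $6$-theorem \Cref{thm:orb_6_theorem} is needed, because we never pass to a non-trivial quotient orbifold---this is why $4$ twist regions suffice in place of $9$.
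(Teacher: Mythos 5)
Your proof is correct and follows essentially the same route as the paper, which presents this argument informally in the paragraph immediately preceding the corollary statement: since the cusp-quotient orbifold $\orbQ$ collapses to $\SS^3\setminus K$ itself, the orbi-lens space fillings of \cite[Proposition 4.13]{BBCW12} become ordinary lens space surgeries on $K$, which are ruled out by \cite[Corollary 1.8]{FP07} together with geometrization. Your careful unpacking of why the BBCW correspondence reduces to lens space surgery when $\orbQ$ is a manifold is a welcome clarification of the paper's terser presentation, but it is the same argument.
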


\section{Quantifying Results}
\label{sec:QR}

Our main goal of this section is to address the following question:

\begin{question} Can we find quantifiable bounds on $\epsilon$ and the multi-slope $\alpha$ so that  $M=N(\alpha)$ is an $(\epsilon, d_{N})$-twisted  manifold (as defined in \Cref{def:edHTN})?
\end{question}

$(\epsilon, d_{N})$-twisted  manifolds are a generalization of $(\epsilon, d_{L})$-twisted knot complements.
Working in this broader context allows for a somewhat more streamlined discussion as we adapt various effectivization results of \cite{FPS19} to our setting. After adapting the necessary bounds, this section culminates in the construction of an infinite family of examples of  $\edHT$  and generic fillings in \Cref{subsec:examples-twisted-generic}.

Throughout this section, let $N$ be a hyperbolic manifold with cusps $\{C_0, C_1, \dots, C_n\}$, and let $M$ be a manifold obtained by Dehn filling all but one component of $N$. We may take the unfilled component to be $C_0$, so that our filling is along a multi-slope $\alpha = (-, s_1, \dots, s_n)$.  Here $s_i=\frac{p_i}{q_i}$ for integers $p_i,q_i \ne 0$ and $\gcd(p_i,q_i)=1$ and we use `$-$' to demarcate the unfilled component of the boundary of N. Given such an $N$, let $d_N=4\frac{vol(N)}{v_0}$, where just as before $v_0$ is the volume of the regular ideal tetrahedron.  

\begin{defin}\label{def:edHTN}
	We say that a (hyperbolic) cusped manifold $M$  is \define{$(\epsilon,d_N)$-twisted}, or alternatively that $M$ is an \define{$(\epsilon,d_N)$-twisted filling} of $N$, if for $0<\epsilon < 3.45$ the following three conditions hold:
\begin{enumerate}[label=(\roman*)]
\item $N_{\thick}$ is homeomorphic to $N \setminus n(\bigcup_i C_i)$, 
\item  $M_{\thick}$ is homeomorphic to $N_{\thick}$,  and
\item  $M_{\thickDN}$ is homeomorphic to $N_{\thick}$. 
\end{enumerate}
Furthermore, we say that $M$ is an \define{$(\epsilon,d_N)$-twisted and generic} filling of $N$ if it is $\edHTN$ and  no isometry of $M$ non-trivially permutes the set of core geodesics of the filling solid tori.

\end{defin}

\begin{prop}\label{prop:edNnonarithmetic}
If $M$ is an $(\epsilon,d_N)$ filling of $N$, then $M$ is non-arithmetic. Moreover, $d_N=4 \frac{vol(N)}{v_0}$ is an upper bound for the degree of any cover $p: M \rightarrow \orb$, where $\orb$ is an orientable orbifold.
\end{prop}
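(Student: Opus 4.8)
The plan is to establish the two assertions in turn. The degree bound is a formal consequence of non-arithmeticity together with the monotonicity of volume under Dehn filling, exactly as in the proof of Proposition~\ref{prop:HT_non-empty}(2); so the real content lies in showing that $M$ is non-arithmetic, which I expect to be the main obstacle.

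\emph{Non-arithmeticity.} I would argue by contradiction: suppose $M$ is arithmetic. Since $M$ is an $(\epsilon,d_N)$-twisted filling of $N$, at least one cusp of $N$ is filled, so Definition~\ref{def:edHTN} produces core geodesics $\gamma_1,\dots,\gamma_n$ ($n\ge 1$), namely the cores of the solid-torus components of the thin part $M_{\thin}$; conditions (ii) and (iii) force each $\gamma_i$ to have length strictly less than $\epsilon/d_N$. Because $N$ has at least two cusps, the known lower bounds on the volume of a cusped hyperbolic $3$-manifold give a definite lower bound on $vol(N)$, hence on $d_N=4\,vol(N)/v_0$; combined with the constraint $\epsilon<3.45$ in Definition~\ref{def:edHTN}, this forces $\ell(\gamma_1)<\epsilon/d_N<c_0$, where $c_0>0$ is the universal lower bound for the systole of a cusped arithmetic hyperbolic $3$-manifold. (Such a $c_0$ exists because the systole of any manifold in the commensurability class of a Bianchi group $\mathrm{PGL}(2,\mathcal{O}_d)$ is at least that of the minimal orbifold $\mathbb{H}^3/\mathrm{PGL}(2,\mathcal{O}_d)$, and these systoles are controlled by, and uniformly bounded away from $0$ by, the algebraic integers of smallest modulus exceeding $2$ in $\mathbb{Q}(\sqrt{-d})$; the value $3.45$ is calibrated precisely so that $\epsilon/d_N$ remains below $c_0$.) An arithmetic manifold cannot contain a closed geodesic of length $<c_0$, so $M$ is non-arithmetic.

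\emph{The degree bound.} Let $p:M\to\orb$ be a cover onto an orientable orbifold. Then $\orb$ is commensurable with $M$, hence also non-arithmetic, so Lemma~\ref{lem:min_vol_non_arithmetic} gives $vol(\orb)\ge v_0/4$. Since $M$ is obtained from $N$ by a nontrivial Dehn filling along a nonempty set of cusps, volume strictly decreases and $vol(M)<vol(N)$. Therefore
\[
\deg(p)=\frac{vol(M)}{vol(\orb)}<\frac{vol(N)}{v_0/4}=4\,\frac{vol(N)}{v_0}=d_N ,
\]
and since $\deg(p)$ is an integer it is at most $d_N$, as claimed.

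The main obstacle is the non-arithmeticity step: invoking, with the correct constant, the uniform lower bound on the length of the shortest geodesic in a cusped arithmetic hyperbolic $3$-manifold, and checking that the value $3.45$ in Definition~\ref{def:edHTN} was chosen small enough relative to that bound and to the least possible value of $d_N$. Everything after that is routine — the degree estimate is a pure volume computation, and the existence and shortness of the core geodesics are immediate from conditions (ii) and (iii) of Definition~\ref{def:edHTN}.
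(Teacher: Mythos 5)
Your approach is correct and is essentially the paper's proof: you use conditions (ii)--(iii) of Definition~\ref{def:edHTN} to extract a core geodesic of length $<\epsilon/d_N$, contradict the universal lower bound on the systole of a cusped arithmetic hyperbolic $3$-manifold, and then get the degree bound by the volume argument via Lemma~\ref{lem:min_vol_non_arithmetic}. The only thing you leave implicit is the precise routing of the constants, which the paper makes explicit: $vol(M)>2v_0$ (Cao--Meyerhoff) combined with $vol(N)>vol(M)$ gives $d_N>8$, and \cite[Theorem 4.6]{NR92} gives the arithmetic systole bound $0.43137$, so $\epsilon/d_N<3.45/8=0.43125<0.43137$, which is exactly why $3.45$ was chosen.
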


\begin{proof}
First we observe that $d_N=4\frac{vol(N)}{v_0} > 4 \frac{vol(M)}{v_0}$. By \cite{CM01} we have $vol(M)>2v_0$ so $d_N >8$. If $M$ is arithmetic, then by \cite[Theorem 4.6]{NR92} its shortest geodesic is at least 0.43137. Since $\frac{\epsilon}{d_N}<\frac{3.45}{8}=0.43125$, we have $M_{\thickDN}\cong M \not\cong N_{\thick}$, contradicting (iii). The final part of the claim follows directly from the minimum volume bounds established by \Cref{lem:min_vol_non_arithmetic}.
\end{proof}

Each of the three conditions of \Cref{def:edHTN} can be translated into restrictions on the geometry of $N$ and the multi-slope $\alpha$. Condition (i) demands that we choose $\epsilon$ smaller than  the systole length of $N$. Condition (ii) requires us to choose each filling slope large enough so that the core geodesics introduced under Dehn surgery are each shorter than $\epsilon$, while stabilizing the geodesic lengths coming from the unfilled manifold $N$. Condition (iii) further requires these core geodesics to be smaller than the  geodesics coming from  $N$ by a factor of  $d_N=4\frac{vol(N)}{v_0}$. Based on this description,  one can expect an explicit dependence on the systole length of $N$, the volume of $N$, and the multi-slope $\alpha$. Our goal is to use tools coming from the recent work of Futer--Purcell--Schleimer \cite{FPS19} (which builds off of the work of Hodgson and Kerckhoff \cite{HK05}) to make this relationship as explicit as possible. We now introduce some terminology and the results needed from Futer--Purcell--Schleimer.

Choose a cusp expansion for the cusps of $N$ corresponding to a maximal horoball packing in $\mathbb{H}^3$.  Under this cusp expansion, each cusp $C_{i}$ of $N$ has a torus boundary $\partial C_{i}$. For a slope $s_i$ on $\partial C_{i}$ define the \define{normalized length} of $s_i$ to be 
\begin{equation*}
\NL_{i} =  \frac{\ell_{\mathbb{E}}(s_i)}{\sqrt{area(\partial C_{i})}},
\end{equation*}
where $\ell_{\mathbb{E}}(s_i)$ is the length of the Euclidean geodesic isotopic to $s_i$. For a multi-slope ${\alpha = (-,s_1, \dots, s_n)}$, the \define{total normalized length $\NL$} is defined via 
\[ \frac{1}{\NL^2} = \sum_{i=1}^{n} \frac{1}{\NL_{i}^{2}}. \]

Let $\Sigma = \{ \gamma_{i} \}_{i=1}^{n}$ be the set of $n$ core geodesics introduced in $N(\alpha)$ via Dehn filling $N$. Let $\ell(\Sigma)$ be the total length of the $n$ geodesics in $\Sigma$, as measured in the complete metric on $M$. Our first step is showing that we can explicitly choose the multi-slope $\alpha$ as a function of $\epsilon$ and $vol(N)$ in such a way that $\Sigma \subset N(\alpha)_{<\epsilon/d_N}$. For this, we will need the following reformulation of \cite[Corollary 6.13]{FPS19}, specialized for our particular Dehn fillings. 

\begin{cor}[{\cite[Corollary 6.13]{FPS19}}] \label{cor:FPS1}
Let $M$ be the manifold obtained by filling $N$ along the multi-slope $\alpha = (-, s_1, \ldots, s_n)$. If  $\NL^{2} \geq 61.2$, then
\[ \frac{2\pi}{\NL^{2}+16.17} < \ell(\Sigma) < \frac{2\pi}{\NL^2 - 28.78} \]
\end{cor}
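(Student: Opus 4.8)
This corollary is a direct transcription of \cite[Corollary 6.13]{FPS19} into the notation of this section, so the plan is to align conventions and then read off the constants. First I would recall the content of that corollary: provided the normalized length of the filling multi-slope is bounded below by the relevant universal threshold --- numerically the condition $\NL^2 \ge 61.2$, which up to rounding is the square of the normalized-length cutoff that makes the cone-deformation estimates of Hodgson--Kerckhoff \cite{HK05} and Futer--Purcell--Schleimer \cite{FPS19} valid --- the total length $\ell(\Sigma)$ of the core geodesics created by the filling is pinched between $2\pi/(\NL^2 + c_1)$ and $2\pi/(\NL^2 - c_2)$ for two explicit constants $c_1, c_2$. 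Passing to our situation is harmless: \cite{FPS19} permits filling an arbitrary subcollection of cusps, and here we simply fill $C_1, \dots, C_n$ and leave $C_0$ unfilled, so no generality of their statement is lost.

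The one point that genuinely needs to be checked is that the quantity $\NL$ defined above agrees with the total normalized length of $\alpha$ used in \cite{FPS19}. For this I would make two observations. First, each individual normalized length $\NL_i = \ell_{\mathbb E}(s_i)/\sqrt{\mathrm{area}(\partial C_i)}$ does not depend on the choice of cusp cross-section, since rescaling the horoball covering $C_i$ multiplies both $\ell_{\mathbb E}(s_i)$ and $\sqrt{\mathrm{area}(\partial C_i)}$ by the same factor; consequently the stipulation that our cusp expansion come from a maximal horoball packing is immaterial, and we may freely use whatever normalization \cite{FPS19} uses. Second, the ``parallel sum'' $1/\NL^2 = \sum_{i=1}^n 1/\NL_i^2$ is precisely how \cite{FPS19} combines the per-cusp normalized lengths into the normalized length of a multi-slope. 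With these two remarks the hypothesis $\NL^2 \ge 61.2$ and the conclusion of \cite[Corollary 6.13]{FPS19} translate verbatim into the statement above.

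The remaining work is purely bookkeeping of the constants $16.17$ and $28.78$: \cite[Corollary 6.13]{FPS19} records its error terms as explicit rational functions of $\NL^2$, and one verifies --- using the monotonicity of those bounds in $\NL^2$ together with the hypothesis $\NL^2 \ge 61.2$ --- that they collapse to the clean form stated here (in particular $\NL^2 - 28.78 > 0$, so the lower bound is meaningful). I expect this arithmetic to be the only place an error could slip in: one must keep careful track of which of \cite{FPS19}'s constants governs the upper bound versus the lower bound, and confirm that the threshold $61.2$, rather than something larger, genuinely suffices for both inequalities simultaneously. Since $61.2$ is essentially the smallest normalized length for which the machinery of \cite{FPS19} runs, there is no slack there and the bound must be invoked exactly.
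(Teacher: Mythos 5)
The paper gives no proof for this corollary---it is a verbatim specialization of \cite[Corollary 6.13]{FPS19} with the constants $61.2$, $16.17$, and $28.78$ carried over directly---and your proposal correctly treats it as such, supplying the right sanity checks: scale-invariance of $\NL_i$ under rescaling of the cusp cross-section, the ``parallel sum'' convention for the total normalized length of a multi-slope, and the fact that filling only the subcollection $C_1,\dots,C_n$ while leaving $C_0$ open loses none of the generality of the cited result. This is exactly the paper's (tacit) approach; nothing further is needed.
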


Intuitively, this lemma says that the total length of the core geodesics introduced from Dehn filling our crossing circle cusps is approximately $\frac{2\pi}{\NL^{2}}$, assuming our Dehn filling slopes are sufficiently long (in terms of normalized length). 

Since we only need to make sure that each $\gamma_{i} \in \Sigma$ is sufficiently short, we will only need the upper bound on $\ell(\Sigma)$. This requires an upper bound on $\frac{1}{\NL^2}$, given by the below lemma:

\begin{lem} \label{lem:ExpThin}
Let $N(\alpha)$ be the manifold obtained by Dehn filling $N$ along the multi-slope $\alpha = (-,s_1, \dots, s_n)$. For any $\epsilon >0$, if 
$$
\frac{1}{\NL^2} \leq \min \bigg\{ \frac{1}{61.2}, \frac{\epsilon v_0}{8\pi vol(N)+ 28.78 \epsilon v_0 } \bigg\},
$$
then $\Sigma \subset N(\alpha)_{<\epsilon/d_N}$.
\end{lem}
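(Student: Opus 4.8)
The plan is to apply \Cref{cor:FPS1} and then carry out a short algebraic rearrangement using the definition of $d_N$. Note first that the first term in the minimum, $\tfrac{1}{\NL^2}\le \tfrac{1}{61.2}$, is exactly the hypothesis $\NL^2\ge 61.2$ needed to invoke \Cref{cor:FPS1}. That corollary then yields the upper bound
$$\ell(\Sigma) \;<\; \frac{2\pi}{\NL^2 - 28.78},$$
in which the denominator is positive since $\NL^2\ge 61.2>28.78$.

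Next I would unpack the second term in the minimum. Taking reciprocals (both sides being positive), the inequality $\tfrac{1}{\NL^2}\le \tfrac{\epsilon v_0}{8\pi\, vol(N) + 28.78\,\epsilon v_0}$ is equivalent to $\NL^2 \ge \tfrac{8\pi\, vol(N)}{\epsilon v_0} + 28.78$, i.e.\ $\NL^2 - 28.78 \ge \tfrac{8\pi\, vol(N)}{\epsilon v_0}$. Substituting this into the bound above, and using $d_N = 4\,vol(N)/v_0$, gives
$$\ell(\Sigma) \;<\; \frac{2\pi}{\NL^2 - 28.78} \;\le\; \frac{2\pi\,\epsilon v_0}{8\pi\, vol(N)} \;=\; \frac{\epsilon v_0}{4\, vol(N)} \;=\; \frac{\epsilon}{d_N}.$$

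Finally, since $\ell(\Sigma) = \sum_{i=1}^n \ell(\gamma_i)$ is a sum of positive terms, each core geodesic satisfies $\ell(\gamma_i)\le \ell(\Sigma) < \epsilon/d_N$. A point lying on a closed geodesic of length $\ell$ has a lift to $\HH^3$ displaced distance exactly $\ell$ by the loxodromic (hence infinite-order) holonomy of that geodesic, so any closed geodesic of length less than $\delta$ lies in the $\delta$-thin part; applying this with $\delta = \epsilon/d_N$ shows $\gamma_i \subset N(\alpha)_{<\epsilon/d_N}$ for every $i$, hence $\Sigma \subset N(\alpha)_{<\epsilon/d_N}$. I expect no real obstacle here: the numerical constants in the hypothesis have been reverse-engineered precisely so that this chain of inequalities closes. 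The only points worth flagging are that a bound on the \emph{total} length of $\Sigma$ automatically controls each \emph{individual} $\gamma_i$ (immediate from positivity) and the standard fact that a sufficiently short closed geodesic lies in the corresponding thin part; the genuine analytic input lives upstream in \cite[Corollary 6.13]{FPS19}.
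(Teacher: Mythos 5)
Your proof is correct and follows essentially the same route as the paper: invoke \Cref{cor:FPS1} via the first term of the minimum, then algebraically rearrange the second term to conclude $\ell(\Sigma)<\epsilon/d_N$. You spell out the rearrangement and the final step (short geodesic $\Rightarrow$ lies in the thin part) more explicitly than the paper does, but the content is identical.
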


\begin{proof}
Since $\frac{1}{\NL^2} < \frac{1}{61.2}$, \Cref{cor:FPS1} applies, so $\ell(\Sigma) <\frac{2\pi}{\NL^2-28.78}$. Thus we just need to ensure that $\frac{2\pi}{\NL^2-28.78} \le \frac{\epsilon}{d_N} = \frac{\epsilon v_0}{4 vol(N)}$. Solving this equation for $\frac{1}{\NL^2}$ gives the second upper bound in the statement of the lemma.
\end{proof}

The above lemma explicitly shows that, for any choice of $\epsilon$, we can choose $\alpha$ so that our set of core geodesics $\Sigma$ is in  $N(\alpha)_{<\epsilon/d_N}$. We also want to make sure that these are the only geodesics in $N(\alpha)_{\thin}$ and quantify this relationship in terms of $\alpha$ and $\epsilon$. Another result of Futer--Purcell--Schleimer, restated for our purposes, is useful here.

\begin{thm}[{\cite[Theorem 1.2]{FPS19}}] \label{thm:FPS1}
Let $N(\alpha)$ be the manifold obtained by filling $N$ along the multi-slope $\alpha = (-, s_1, \dots, s_n)$. Fix $0 < \epsilon \leq \log(3)$ and $J >1$. If \[ \ell(\Sigma) \leq \min \bigg\{ \frac{\epsilon^{5}}{6771\cosh^{5}(0.6\epsilon + 0.1475)}, \frac{\epsilon^{5/2}\log(J)}{11.35} \bigg\}, \] then there exists a J-bilipschitz inclusion $\psi: N_{\thick} \hookrightarrow N(\alpha)_{\thick /1.2}$. 
\end{thm}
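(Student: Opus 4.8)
The plan is to reproduce the cone-manifold deformation argument of Hodgson--Kerckhoff in the effective form due to Futer--Purcell--Schleimer. First I would set up a smooth path of hyperbolic cone metrics $g_t$ on the fixed underlying manifold $N(\alpha)$, parametrized by the cone angle $t\in[0,2\pi]$ along the multi-curve $\Sigma=\{\gamma_i\}$, with $g_0$ the complete metric on $N = N(\alpha)\setminus\Sigma$ (the core curves becoming rank-2 cusps) and $g_{2\pi}$ the smooth filled metric on $N(\alpha)$. The existence and smoothness of this path follows from Hodgson--Kerckhoff local rigidity together with their deformation/degeneration control, provided a geometric non-degeneracy condition holds along the entire path; the crucial point is to phrase this condition as a uniform lower bound on the embedded tube radius around $\Sigma$ (equivalently, via the length--area dictionary, a lower bound on the normalized length of $\alpha$), and to check that the hypothesis forcing $\ell(\Sigma)$ to be small guarantees it not just at the endpoints but for every $t\in[0,2\pi]$.

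Next comes the analytic core. At each time $t$ the infinitesimal change of structure is represented by a harmonic strain field $\eta_t$ on $N(\alpha)\setminus\mathcal{T}(\Sigma)$, where $\mathcal{T}(\Sigma)$ is a fixed embedded tube. The key estimate is a pointwise bound on $\|\eta_t\|$ at any point of the $\epsilon$-thick part in terms of the $L^2$-norm of $\eta_t$ on $\partial\mathcal{T}(\Sigma)$ and the distance from that point to the tube — this is the Hodgson--Kerckhoff boundary-term/mean-value estimate, with the explicit constants worked out in \cite{FPS19}. The boundary $L^2$-norm is in turn controlled by $\dot t$ times the rate of change of the complex length of $\Sigma$, hence ultimately by $\ell(\Sigma)$ and the tube geometry. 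Integrating the elementary inequality $\frac{d}{dt}\log(\text{local bilipschitz constant})\le \|\eta_t\|_{\mathrm{pt}}$ over $t\in[0,2\pi]$ bounds the bilipschitz distortion of the identity map $(N(\alpha)\setminus\mathcal{T}(\Sigma),g_0)\to(N(\alpha)\setminus\mathcal{T}(\Sigma),g_{2\pi})$; choosing $\ell(\Sigma)$ below $\epsilon^{5/2}\log(J)/11.35$ makes that distortion at most $J$.

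Finally I would reconcile the two metrics with the thick parts named in the statement. On the $g_0$ side, the $\epsilon$-thick part $N_{\ge\epsilon}$ is disjoint from the cusp of $N$ and hence from a neighborhood of $\Sigma$; on the $g_{2\pi}$ side, because each $\gamma_i$ is short its Margulis tube is fat, and horoball-packing/tube-density estimates bound how close a point of the $\epsilon$-thick part can come to $\Sigma$. This is precisely where $\epsilon\le\log 3$ and the bound $\ell(\Sigma)\le \epsilon^5/(6771\cosh^5(0.6\epsilon+0.1475))$ enter: they ensure $N_{\ge\epsilon}\subset N(\alpha)\setminus\mathcal{T}(\Sigma)$ and that its image under the $J$-bilipschitz identity map lands in $N(\alpha)_{\ge\epsilon/1.2}$, the factor $1.2$ absorbing both the distortion of $J$ on ``thickness'' and the small loss from excising the tube. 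The map $\psi$ is then this identification restricted to $N_{\thick}$.

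The hard part will be the middle step: obtaining the uniform-in-$t$ pointwise control of the harmonic strain field, and, prerequisite to it, preventing the cone deformation from degenerating (keeping the tube radius bounded below throughout $[0,2\pi]$, not merely at $t=0$ and $t=2\pi$). These rely on the full Hodgson--Kerckhoff machinery together with the careful explicit constant-tracking of \cite{FPS19}; by contrast, the interpolation setup and the final thick-part bookkeeping are comparatively routine once those estimates are available.
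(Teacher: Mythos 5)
This statement is not proven in the paper at all: it is a direct citation of Theorem~1.2 of Futer--Purcell--Schleimer~\cite{FPS19}, restated in the authors' notation and used as a black box. There is therefore no ``paper's own proof'' to compare your argument against.

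That said, your sketch does correctly identify the high-level strategy underlying the cited theorem: deforming through a one-parameter family of hyperbolic cone metrics on $N(\alpha)$ with cone angle running from $0$ (the drilled, complete metric on $N$) to $2\pi$ (the filled metric), controlling the harmonic strain field at each time via the Hodgson--Kerckhoff boundary/mean-value estimates, integrating a pointwise norm bound to obtain bilipschitz control, and ensuring non-degeneracy of the cone deformation via tube-radius lower bounds throughout the path. Those are indeed the main moving parts in \cite{FPS19}. You are also right that the genuine difficulty is maintaining the tube-radius bound uniformly in $t$ and tracking explicit constants through the strain-field estimates; this is precisely what occupies most of \cite{FPS19}. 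Your final ``thick-part bookkeeping'' paragraph is the loosest step: the factor of $1.2$ in $N(\alpha)_{\geq \epsilon/1.2}$ is a fixed constant coming out of the specific geometric lemmas in \cite{FPS19} (and in particular does not simply ``absorb $J$,'' since $J$ is a free parameter that may exceed $1.2$). But for the purposes of this paper none of this matters: the authors invoke the result as stated and do not reprove it, so a reader should simply consult \cite{FPS19} rather than attempt to reconstruct the argument here.
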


In our applications, we will always have $J \ge 1.001 $. In this case we let 
$$
C(\epsilon) = \frac{\epsilon^{5}}{6771\cosh^{5}(0.6\epsilon + 0.1475)},
$$
and observe that $C(\epsilon) \le  \frac{\epsilon^{5/2}\log(J)}{11.35}$ for any $\epsilon > 0$, $J\ge 1.001$, so we can disregard the latter bound. In what follows, let $\ell_{N}$ denote length as measured in the complete hyperbolic metric for $N$. Let $\ell_{N}(\gamma_s)$ denote the systole length of $N$, that is, the length of a shortest closed geodesic in $N$.

\begin{thm} \label{thm:quantify}
Let $N(\alpha)$ be the manifold obtained by filling $N$ along the multi-slope $\alpha = (-, s_1,\dots, s_n)$. If $\epsilon \leq \min \{ \frac{\ell_{N}(\gamma_s)}{1.001}, \log(3)\}$, and 
$$
\frac{1}{\NL^2} \leq \min \bigg\{  \frac{\epsilon v_0}{8\pi vol(N)+ 28.78 \epsilon v_0 }, \frac{1}{\frac{2\pi}{C(\epsilon)}+28.78} \bigg \},
$$
then  $M = N(\alpha)$ is $(\epsilon, d_{N})$-twisted. \end{thm}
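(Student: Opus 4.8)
The plan is to verify the three conditions of \Cref{def:edHTN} in turn; the preliminary requirement $0<\epsilon<3.45$ is automatic, since $\epsilon\le\log 3<3.45$. Condition (i) is immediate: the hypothesis $\epsilon\le \ell_{N}(\gamma_s)/1.001$ gives $\epsilon<\ell_{N}(\gamma_s)$, so no closed geodesic of $N$ is shorter than $\epsilon$, whence $N_{\thin}$ is a union of horoball cusp neighborhoods and $N_{\thick}\cong N\setminus n(\bigcup_i C_i)$. Conditions (ii) and (iii) will then follow by the same reasoning used in the proof of \Cref{prop:HT_non-empty}, once the qualitative Dehn-filling input of \cite{DunbarMeyerhoff1994} is replaced by two quantitative facts: (a) a bilipschitz comparison $N_{\thick}\hookrightarrow M_{\thick/1.2}$, and (b) the statement that the core geodesics $\Sigma=\{\gamma_i\}$ all lie in $M_{\thinDN}$.

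Both facts come out of the two bounds on $1/\NL^{2}$ in the hypothesis. The bound $\frac{1}{\NL^{2}}\le \frac{1}{2\pi/C(\epsilon)+28.78}$ rearranges to $\NL^{2}-28.78\ge 2\pi/C(\epsilon)$; since $\frac{d}{d\epsilon}\log C(\epsilon)=\frac{5}{\epsilon}-3\tanh(0.6\epsilon+0.1475)>0$ on $(0,\log 3]$, the function $C$ is increasing and tiny there, so in particular $\NL^{2}\ge 61.2$, \Cref{cor:FPS1} applies, and $\ell(\Sigma)<\frac{2\pi}{\NL^{2}-28.78}\le C(\epsilon)$. By the observation following \Cref{thm:FPS1} (valid with $J=1.001$ since $\epsilon\le\log 3$), that theorem then produces a $1.001$-bilipschitz inclusion $\psi:N_{\thick}\hookrightarrow M_{\thick/1.2}$, establishing (a). For (b), the bound $\frac{1}{\NL^{2}}\le \frac{\epsilon v_0}{8\pi\, vol(N)+28.78\,\epsilon v_0}$ together with the inequality $\NL^{2}\ge 61.2$ just obtained is precisely the hypothesis of \Cref{lem:ExpThin}, which gives $\Sigma\subset M_{\thinDN}$.

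With (a) and (b) in hand, conditions (ii) and (iii) follow as in \Cref{prop:HT_non-empty}. Since $\epsilon\le\log 3$, the $\epsilon$-thin part (and the $\epsilon/d_N$-thin part, as $\epsilon/d_N<\epsilon$) of the orientable hyperbolic manifold $M$ is a disjoint union of standard pieces: horoball cusp neighborhoods and solid-torus tubes. Topologically $M$ is built from $N_{\thick}$ by gluing in $n$ filling solid tori $V_i\supset\gamma_i$ and a cusp neighborhood of $C_0$; since $\psi$ is $\pi_1$-injective and $M$ is Haken, $\psi$ is isotopic to the canonical inclusion, so $M\setminus\psi(\mathrm{int}\,N_{\thick})=\bigsqcup_i V_i\sqcup n(C_0)$. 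The $1.001$-bilipschitz comparison shows that every closed geodesic of $M$ meeting $\psi(N_{\thick})$ has length $\ge\ell_{N}(\gamma_s)/1.001\ge\epsilon$ --- its $\psi$-preimage is freely homotopic to a geodesic of $N$ of length $\ge\ell_{N}(\gamma_s)$, whose length is distorted by at most $1.001$ --- so the only thin parts of $M$ are a tube about each $\gamma_i$ (nonempty by (b)) and a horoball about $C_0$. Cutting these off yields $M_{\thick}\cong M_{\thickDN}\cong \psi(N_{\thick})\cong N_{\thick}$, which are exactly (ii) and (iii).

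I expect the last paragraph to be the main obstacle: upgrading the bilipschitz inclusion $\psi$ and the length bound on $\Sigma$ to the exact homeomorphisms $M_{\thick}\cong N_{\thick}$ and $M_{\thickDN}\cong N_{\thick}$. This hinges on knowing that $\psi$ is isotopic to the natural topological inclusion and on tracking precisely how geodesics of $M$ sit relative to $\psi(N_{\thick})$ --- exactly the drilling/filling bookkeeping of \cite{FPS19} (building on \cite{HK05}), whose qualitative shadow is the Dehn-filling argument of \cite{DunbarMeyerhoff1994}. In the write-up I would invoke those works for these structural conclusions rather than reprove them; the remaining items (monotonicity and smallness of $C(\epsilon)$, the bookkeeping with the constants $28.78$ and $61.2$, and $d_N>8$) are routine.
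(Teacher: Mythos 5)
Your proposal is correct and follows essentially the same route as the paper's proof: verify condition (i) from $\epsilon<\ell_N(\gamma_s)$, use \Cref{lem:ExpThin} (after checking $\NL^2\ge 61.2$) to put $\Sigma$ in the $\epsilon/d_N$-thin part, and use \Cref{thm:FPS1} with $J=1.001$ to get the $1.001$-bilipschitz inclusion $N_{\thick}\hookrightarrow N(\alpha)_{\thick/1.2}$, which forces every closed geodesic of $M$ coming from $N$ to have length at least $\epsilon$. The last paragraph, in which you flag the upgrade from geodesic-length control to the homeomorphisms in conditions (ii) and (iii), is actually more careful than the paper, which leaves that topological bookkeeping implicit and just records the two length inequalities.
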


\begin{proof}
Since $\epsilon < \ell_{N}(\gamma_s)$, item (1) from the definition $(\epsilon, d_N)$-twisted is satisfied. For items (2) and (3) to hold, we need to show that $\Sigma \subset N(\alpha)_{<\epsilon/d_N}$ and that these are the only closed geodesics in $N(\alpha)_{\thin}$. First, note that since $\epsilon \le \log(3) =  1.099..$, we have that  
$$
\frac{1}{\NL^2} \leq \frac{1}{\frac{2\pi}{C(1.099)}+28.78}  \leq 0.0000086 < \frac{1}{61.2}. 
$$
As a result, our conditions on $\alpha$ guarantee that the hypotheses of  \Cref{lem:ExpThin} hold, and so, $\Sigma \subset N(\alpha)_{<\epsilon/d_N}$. 

Let $\gamma \subset N$ be a closed geodesic and let $\gamma'$ be the corresponding geodesic in $N(\alpha)$. Note that since $\epsilon \le \frac{\ell_{N}(\gamma_s)}{1.001} < \ell_{N}(\gamma_s)$, $\gamma$ is in $N_{\thick}$. We need to show that $\gamma' \subset N(\alpha)_{\thick}$. To do this, we will apply  \Cref{thm:FPS1} with $J = 1.001$ and $\epsilon$ as given in the statement of the theorem. To satisfy the hypothesis of  \Cref{thm:FPS1} we need $\ell(\Sigma) \leq C(\epsilon).$ Similar to the proof of  \Cref{lem:ExpThin}, we use inequality (1), namely

$$
\ell(\Sigma) < \frac{2\pi}{\NL^2 - 28.78},  
$$
 to restate this condition in terms of our Dehn filling parameters. If we choose our multi-slope $\alpha = (-, s_1,\dots, s_n)$ so that $\frac{ 2\pi}{\NL^2-28.78} \leq C(\epsilon)$, then we have a $1.001$-bilipschitz map $\psi: N_{\thick} \hookrightarrow N(\alpha)_{\thick /1.2}$. It follows that we need 
$$
\frac{1}{\NL^2} \leq \frac{2}{\frac{2\pi}{C(\epsilon)}+28.78}.
$$
Under these conditions, our bilipschitz map guarantees that 
$$
\epsilon \le  \frac{\ell_{N}(\gamma_s)}{1.001}  \leq \frac{\ell_{N}(\gamma)}{1.001} \leq \ell_{N(\alpha)}(\gamma').
$$
\end{proof}

If we want to apply \Cref{thm:quantify} to particular examples of manifold $N$, it is useful to have a way to estimate $\NL_i^2$. A bit of plane geometry gives the below lemma:

\begin{lem}\label{lem:NL_bound}
Let $C_i$ be a cusp of $N$ with longitude $\lambda\in \RR_{\ge 0}$ and meridian $\mu=re^{i\theta}$, and let $s=p\mu+q\lambda$ be a slope on $\del C_i$. Then
$$
\NL_i^2 \ge \frac{p^2r^2+q^2\lambda^2}{r\lambda \sin \theta}-|pq \cot(\theta)|
$$	
\end{lem}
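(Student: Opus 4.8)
The plan is to compute the normalized length $\NL_i = \ell_{\EE}(s)/\sqrt{\mathrm{area}(\del C_i)}$ directly in terms of the given data. First I would set up coordinates on the cusp torus $\del C_i$ by identifying it with $\CC/\Lambda$, where $\Lambda$ is the lattice generated by the longitude $\lambda$ (which we may place along the real axis since $\lambda\in\RR_{\ge 0}$) and the meridian $\mu = re^{i\theta}$. With this normalization the area of the cusp torus is the area of the fundamental parallelogram, namely $\mathrm{area}(\del C_i) = |\im(\overline{\lambda}\mu)| = r\lambda\sin\theta$ (using $\lambda$ real and positive). The slope $s = p\mu + q\lambda$ is represented by the vector $p\mu + q\lambda = (q\lambda + pr\cos\theta) + i\,pr\sin\theta \in \CC$, so its Euclidean length squared is $\ell_{\EE}(s)^2 = (q\lambda + pr\cos\theta)^2 + (pr\sin\theta)^2 = q^2\lambda^2 + 2pqr\lambda\cos\theta + p^2r^2$.

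Dividing, we get
\[
\NL_i^2 = \frac{\ell_{\EE}(s)^2}{\mathrm{area}(\del C_i)} = \frac{p^2r^2 + q^2\lambda^2 + 2pqr\lambda\cos\theta}{r\lambda\sin\theta} = \frac{p^2r^2 + q^2\lambda^2}{r\lambda\sin\theta} + \frac{2pq\cos\theta}{\sin\theta}.
\]
To finish I would bound the cross term from below: $\frac{2pq\cos\theta}{\sin\theta} \ge -\left|\frac{2pq\cos\theta}{\sin\theta}\right| = -2|pq|\,|\cot\theta|$, which immediately yields
\[
\NL_i^2 \ge \frac{p^2r^2 + q^2\lambda^2}{r\lambda\sin\theta} - 2|pq\cot\theta|.
\]
This is actually marginally stronger than the stated bound (which has $|pq\cot\theta|$ rather than $2|pq\cot\theta|$); since the paper only needs a lower bound, I would simply note that $\NL_i^2 \ge \frac{p^2r^2+q^2\lambda^2}{r\lambda\sin\theta} - 2|pq\cot\theta| \ge \frac{p^2r^2+q^2\lambda^2}{r\lambda\sin\theta} - |pq\cot\theta|$ is false in general — so instead I should double-check the intended normalization of $\mu$; most likely the meridian is taken to have the form $\mu = re^{i\theta}$ measured so that the cross term contributes $pq\cot\theta$ rather than $2pq\cot\theta$, or the convention places a factor differently. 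In any case the computation is the same modulo this constant, and the honest statement to prove is the displayed inequality above (or with the factor of $2$), obtained purely from the law of cosines applied to the parallelogram spanned by $\lambda$ and $\mu$.

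The only mild subtlety — and the step I would be most careful about — is making sure the area formula and the slope-length formula use consistent conventions for which generator is the meridian and which the longitude, and that $\theta$ is the angle between them as drawn in the cusp torus (not its complement). Once coordinates are fixed, everything is a one-line application of $|u+v|^2 = |u|^2 + |v|^2 + 2\langle u,v\rangle$ together with the triangle inequality on the inner-product term, so there is no real obstacle; the lemma is genuinely just "a bit of plane geometry" as the text says.
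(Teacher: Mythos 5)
Your direct computation is exactly the ``bit of plane geometry'' the paper alludes to; there is no proof in the paper to compare against, so the real question is whether your algebra is right, and it is. Placing $\lambda$ on the real axis, we have $\mathrm{area}(\del C_i)=r\lambda\sin\theta$ and
$$
\ell_\EE(s)^2=\bigl|pr e^{i\theta}+q\lambda\bigr|^2=p^2r^2+q^2\lambda^2+2pqr\lambda\cos\theta,
$$
so indeed
$$
\NL_i^2=\frac{p^2r^2+q^2\lambda^2}{r\lambda\sin\theta}+2pq\cot\theta\ \ge\ \frac{p^2r^2+q^2\lambda^2}{r\lambda\sin\theta}-2\lvert pq\cot\theta\rvert.
$$
You are right to flag the discrepancy: the cross term carries a factor of $2$, not $1$. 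One slip in your write-up: you initially call your bound ``marginally stronger'' than the paper's; it is the opposite --- since $-2|x|\le-|x|$, the bound with the $2$ is \emph{weaker}, which is precisely why it does not imply the displayed inequality in the lemma. (You do catch this in the next clause.) There is no normalization of $\mu$ that rescues the factor of $1$ in general; for example $p=q=r=\lambda=1$, $\theta=2\pi/3$ gives $\NL_i^2=2/\sqrt{3}\approx1.155$, while the right-hand side of the lemma as printed is $\sqrt{3}\approx1.732$. So the stated lemma needs the factor of $2$ (or a sign hypothesis such as $pq\cos\theta\ge 0$, under which the cross term is nonnegative and may be dropped outright). Fortunately, this has no effect downstream: in the paper's only application one has $p_i=1$, $q_i\ge 3$, and $0\le\cot\theta_i\le1$, so $2p_iq_i\cot\theta_i\ge 0$ and the much better bound $\NL_i^2\ge\frac{p^2r^2+q^2\lambda^2}{r\lambda\sin\theta}$ already suffices; the subsequent inequality $\NL_i^2\ge\frac{1+4q_i^2}{6}-q_i$ still holds (indeed with room to spare). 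So your approach is correct, your computation is correct, and you have identified what is almost certainly a missing factor of $2$ in the statement --- a genuine but harmless error.
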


In this paper our main interest is in the case where $N$ is an FAL complement with a planar component $K_0$ and crossing circles $C_1,\dots ,C_n$, and the filling multi-slope has the form $\alpha=(-, \frac{1}{q_1},\dots, \frac{1}{q_n})$. For a crossing circle the longitude always has length 2, and the value of $\theta$ in the above lemma is bounded away from $0$ and $\pi$. This allows us to greatly simplify the bound in \Cref{lem:NL_bound}, as is shown by the following result of Purcell.

\begin{lem}[{\cite[Proposition 6.5]{Pur08b}}] \label{lem:NL}
Let $N(\alpha)$ be the knot compliment obtained by filling an FAL complement $N=\SS^3\setminus L$ along the multi-slope $\alpha = (-, \frac{1}{q_{1}}, \ldots, \frac{1}{q_{n}})$. Then the normalized length $\NL_{i}$ of the slope $\frac{1}{q_i}$ satisfies $\NL_{i} \geq \sqrt{2q_{i}}$.
\end{lem}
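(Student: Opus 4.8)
The statement is Purcell's \cite[Proposition 6.5]{Pur08b}, so the quickest route is simply to quote it; but it can also be recovered from \Cref{lem:NL_bound} together with the crossing-circle cusp geometry recorded in \Cref{lem:tiled}, and that is the plan I would carry out to keep this section self-contained.

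First fix a crossing circle $C_i$ and take the cross section of its cusp. By \Cref{lem:tiled} the longitude $\lambda$ of $C_i$ has Euclidean length exactly $2$, and—when $C_i$ carries no half-twist—the cross section is a Euclidean rectangle, so the meridian $\mu$ is orthogonal to $\lambda$; in the notation of \Cref{lem:NL_bound} this is the case $\lambda = 2$ and $\mu = re^{i\theta}$ with $\theta = \pi/2$. The slope $\tfrac1{q_i}$ is the curve $\mu + q_i\lambda$, i.e. $p = 1$ and $q = q_i$, so \Cref{lem:NL_bound} gives
\[
\NL_i^2 \;\ge\; \frac{r^2 + 4q_i^2}{2r} \;=\; \frac{r}{2} + \frac{2q_i^2}{r} \;\ge\; 2\sqrt{\tfrac{r}{2}\cdot\tfrac{2q_i^2}{r}} \;=\; 2q_i
\]
by the AM--GM inequality, and taking square roots yields $\NL_i \ge \sqrt{2q_i}$. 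Equivalently, one can split $\mu + q_i\lambda$ into its components parallel and perpendicular to $\lambda$: since the meridian is orthogonal to $\lambda$ the parallel part is exactly $q_i\lambda$, of length $2q_i$, the perpendicular part has length $area(\del C_i)/|\lambda|$, and the same AM--GM estimate (now between these two lengths) gives the bound.

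For the case where $C_i$ carries a half-twist, \Cref{lem:tiled} is not stated verbatim, but—as in the remark following it—the cross section is built from the same tiles glued with a shear. Purcell's analysis shows the longitude still has length $2$ while the angle $\theta$ between meridian and longitude, though no longer $\pi/2$, stays bounded away from $0$ and $\pi$, and the $\lambda$-component of $\mu$ is adjusted so that the parallel part of $\mu + q_i\lambda$ has length at least $2q_i$. Feeding the resulting values of $r$ and $\theta$ into \Cref{lem:NL_bound} and simplifying then produces the same inequality $\NL_i^2 \ge 2q_i$; in practice I would simply cite \cite[Proposition 6.5]{Pur08b} at this point.

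The only real obstacle is exactly this half-twist case: pinning down how the half-twist shears the crossing-circle cusp precisely enough to guarantee that the parallel component of $\mu + q_i\lambda$ is not shortened below $q_i|\lambda| = 2q_i$, so that the subtracted term $|pq\cot\theta|$ in \Cref{lem:NL_bound} is absorbed. Re-deriving this would require revisiting the polyhedral gluing of a half-twisted crossing circle, a digression we can avoid by quoting Purcell.
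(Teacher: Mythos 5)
Your proposal is correct and takes essentially the same approach as the paper: the paper gives no proof of \Cref{lem:NL}, simply citing it as Purcell's Proposition~6.5, exactly as you ultimately do. Your supplementary derivation via \Cref{lem:NL_bound} is sound in the no-half-twist case (there the crossing-circle cusp cross-section is a Euclidean rectangle by \Cref{lem:tiled}, so $\theta=\pi/2$, and plugging $p=1$, $q=q_i$, $\lambda=2$ into \Cref{lem:NL_bound} followed by AM--GM gives $\NL_i^2\ge 2q_i$), and you rightly identify the half-twisted crossing circles as the point where the shear must be controlled and defer to Purcell there.
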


By combining \Cref{lem:NL} with the bound $vol(N)\le 10v_0(n-1)$ given in the appendix of \cite{Lac04}, we get the following corollary of \Cref{thm:quantify}:

\begin{cor} \label{thm:FALquantify}
Let $N(\alpha)$ be the knot complement obtained by filling an FAL complement $N=\SS^3\setminus L$ along the multi-slope $\alpha = (-, \frac{1}{q_1},\dots, \frac{1}{q_n})$. If $\epsilon \leq \min \{ \frac{\ell_{N}(\gamma_s)}{1.001}, \log(3)\}$, and 
$$
\sum_{i=1}^n\frac{1}{q_i^2} \leq \min \bigg\{  \frac{2\epsilon }{80\pi(n-1)+ 28.78 \epsilon}, \frac{2}{\frac{2\pi}{C(\epsilon)}+28.78} \bigg \},
$$
then  $M = N(\alpha)$ is $(\epsilon, d_{L})$-twisted. \end{cor}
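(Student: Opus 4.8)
The plan is to deduce the statement directly from \Cref{thm:quantify} by feeding in the two FAL-specific estimates. First I would observe that since $N=\SS^3\setminus L$ is an FAL complement we have $d_N = 4\,vol(N)/v_0 = d_L$, so an $(\epsilon,d_N)$-twisted filling of $N$ in the sense of \Cref{def:edHTN} is exactly an $(\epsilon,d_L)$-twisted knot complement in the sense of \Cref{def:edHT} (and $\epsilon\le\log 3<3.45$, so the extra hypothesis of \Cref{def:edHTN} is automatic). Hence it suffices to check the two hypotheses of \Cref{thm:quantify}: the length condition $\epsilon\le\min\{\ell_N(\gamma_s)/1.001,\ \log 3\}$, which is assumed verbatim in the corollary, and the normalized-length condition
\[
\frac{1}{\NL^2}\ \le\ \min\Big\{\ \frac{\epsilon v_0}{8\pi\, vol(N)+28.78\,\epsilon v_0},\ \ \frac{1}{\tfrac{2\pi}{C(\epsilon)}+28.78}\ \Big\}.
\]

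To verify the normalized-length condition, I would bound $\tfrac1{\NL^2}$ from above using Purcell's estimate. By \Cref{lem:NL} each slope $\tfrac1{q_i}$ satisfies $\NL_i^2\ge 2q_i^2$, so from the definition of total normalized length,
\[
\frac{1}{\NL^2}\ =\ \sum_{i=1}^n\frac{1}{\NL_i^2}\ \le\ \frac12\sum_{i=1}^n\frac{1}{q_i^2}.
\]
Thus it is enough to show $\tfrac12\sum_i\tfrac1{q_i^2}$ is at most the minimum displayed above, equivalently that $\sum_i\tfrac1{q_i^2}\le \min\big\{\tfrac{2\epsilon v_0}{8\pi\, vol(N)+28.78\,\epsilon v_0},\ \tfrac{2}{\,2\pi/C(\epsilon)+28.78\,}\big\}$. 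The second entry of this minimum is already the second entry in the corollary's hypothesis, so nothing further is needed there. For the first entry, I would substitute the volume bound $vol(N)\le 10 v_0(n-1)$ from the appendix of \cite{Lac04}: since $8\pi\,vol(N)+28.78\,\epsilon v_0\le v_0\big(80\pi(n-1)+28.78\,\epsilon\big)$, we get
\[
\frac{2\epsilon v_0}{8\pi\, vol(N)+28.78\,\epsilon v_0}\ \ge\ \frac{2\epsilon}{80\pi(n-1)+28.78\,\epsilon},
\]
so the corollary's hypothesis $\sum_i\tfrac1{q_i^2}\le \tfrac{2\epsilon}{80\pi(n-1)+28.78\,\epsilon}$ suffices to control the first entry. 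This verifies both hypotheses of \Cref{thm:quantify}, which then yields that $M=N(\alpha)$ is $(\epsilon,d_L)$-twisted.

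The whole argument is a chain of inequalities, so I do not expect a genuine obstacle. The only points requiring care are the factor of $2$ coming from Purcell's bound (this is precisely why the hypothesis is phrased in terms of $\sum 1/q_i^2$ rather than $\sum 1/q_i$) and confirming that the hypotheses buried inside the proof of \Cref{thm:quantify} — in particular $\NL^2\ge 61.2$ needed to invoke \Cref{cor:FPS1}, and $\ell(\Sigma)\le C(\epsilon)$ needed to invoke \Cref{thm:FPS1} — are implied, but these are already handled internally by \Cref{thm:quantify} once its stated normalized-length bound holds, so no extra work is required.
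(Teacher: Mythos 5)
Your approach is the paper's approach: specialize \Cref{thm:quantify} to an FAL complement using $d_N=d_L$, control $1/\NL^2$ via Purcell's slope-length bound, and use $vol(N)\le 10v_0(n-1)$ from the appendix of \cite{Lac04} to replace the volume-dependent bound by the $n$-dependent one. The reductions $\frac{1}{\NL^2}=\sum_i \NL_i^{-2}$, the absorption of the factor $2$, and the inequality $8\pi\,vol(N)+28.78\,\epsilon v_0\le v_0\big(80\pi(n-1)+28.78\,\epsilon\big)$ are all fine, and you correctly note that $\epsilon\le\log 3<3.45$ makes \Cref{def:edHTN} apply.

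The one step that does not match the paper as written is your invocation of \Cref{lem:NL}. You state $\NL_i^2\ge 2q_i^2$, but \Cref{lem:NL} says $\NL_i\ge\sqrt{2q_i}$, i.e.\ $\NL_i^2\ge 2q_i$, which is linear in $q_i$, not quadratic. With the lemma as printed, your chain gives only $\frac{1}{\NL^2}\le\frac12\sum_i\frac{1}{q_i}$, and since $1/q_i^2\le 1/|q_i|$ the corollary's hypothesis $\sum 1/q_i^2\le \cdots$ is \emph{weaker} than what would be needed; it would have to read $\sum 1/q_i\le\cdots$, exactly as in the companion \Cref{thm:ArithQuant}. In other words, the paper's Corollary \ref{thm:FALquantify} (with $\sum 1/q_i^2$) and Corollary \ref{thm:ArithQuant} (with $\sum 1/q_i$) cannot both be obtained from the same \Cref{lem:NL} and the same argument, so the source has a typo somewhere. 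The example computation preceding \Cref{thm:concrete_fam}, which uses \Cref{lem:NL_bound} and produces $\NL_i^2\ge(1+4q_i^2)/6-q_i$ (quadratic in $q_i$), strongly suggests the intended statement of \Cref{lem:NL} is $\NL_i\ge\sqrt{2}\,q_i$ and that \Cref{thm:FALquantify} is stated correctly while \Cref{lem:NL} and/or \Cref{thm:ArithQuant} carry the misprint. Your proposal implicitly adopts this reading without saying so. You should make the discrepancy explicit: either correct \Cref{lem:NL} to $\NL_i\ge\sqrt{2}\,q_i$ before using it, or observe that with \Cref{lem:NL} as printed the hypothesis of the corollary should instead involve $\sum 1/q_i$.
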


While  \Cref{thm:FALquantify} does provide a quantified method to build $(\epsilon, d_{L})$-twisted  knot complements, there are many explicit dependencies required for this method: $\epsilon$ depends on the systole length of $N$ and $\alpha$ depends on both the systole length of $N$ and the volume of $N$ (which can be restated in terms of a dependence on the $n$ crossing circles). It is natural to ask if all of these dependencies are necessary. In fact, work of Meyer--Millichap--Trapp \cite{MeyerMillichapTrapp2018} shows that they are.

First, we note the dependence on volume  is necessary for \Cref{thm:FALquantify}. Recall from the proof of \Cref{prop:HT_non-empty} that $d_L=4\frac{vol(\SS^3\setminus L)}{v_0}$ was introduced to provide a bound on the degree of any cover from $N$ to an orbifold. Since there exist FAL complements that have both volume and orbifold covering degrees growing linearly with the number of crossing circles, we cannot avoid this dependence. The pretzel FAL complements discussed in \cite{MeyerMillichapTrapp2018}, exhibit this feature; see the proofs of Theorem 7.7 and Corollary 7.8 from their paper for a description of particular orbifold covers and volumes of these manifolds. 

Now, consider a pretzel FAL $L_n$ with $n$ crossing circles, no half-twist going through the first crossing circle, and a single half-twist going through each of the $n-1$ remaining crossing circles. Such an FAL always has a single planar component, and for $n \geq$ 3, $N_n = \SS^3 \setminus L_n$ is hyperbolic. The proof of \cite[Proposition 5.2]{MeyerMillichapTrapp2018} shows that $N_n$ has a closed geodesic $\gamma_n$ of length  $\ell(\gamma_n) = 2 \ln(\frac{\csc(\pi/n)+1}{\csc(\pi/n)-1})$. As $n \rightarrow \infty$, $\ell(\gamma_n) \rightarrow 0$. Thus, there exist FAL complements with arbitrarily short systole length. This implies that we cannot avoid the dependence of $\epsilon$ on the systole length of $N$ in the construction of an $(\epsilon, d_L)$-twisted knot complement.

Although the dependence on systole length is necessary when considering the full class of FAL complements, we can remove this dependence if we restrict to arithmetic FAL complements. This class includes all octahedral FALs (see \cite[Proposition 3.8]{Pur11} for an explicit description of this infinite class). By work of Neumann and Reid \cite[Corollary 4.5, Theorem 4.6]{NR92} any arithmetic link complement, in particular any arithmetic FAL complement has $\ell_{N}(\gamma_s) > 0.862554$. Using this fact, we get the following immediate corollary of \Cref{thm:FALquantify}, which gives a stronger version of that theorem, at the cost of restricting to arithmetic FALs.

\begin{cor} \label{thm:ArithQuant}
Let $N(\alpha)$ be the knot complement obtained by filling an arithmetic FAL complement $N=\SS^3\setminus L$ along the multi-slope $\alpha = (\frac{1}{q_{1}}, \ldots, \frac{1}{q_{n}})$. Let $\epsilon \le  \frac{0.86255}{1.001}<0.861688$ and 
$$
\sum_{i=1}^{n} \frac{1}{q_{i}} \leq \min \bigg\{  \frac{2\epsilon}{80\pi (n-1)+ 28.78 \epsilon }, \frac{2}{\frac{2\pi}{C(\epsilon)}+28.78} \bigg \}.
$$ 
Then  $M = N(\alpha)$ is $(\epsilon, d_{L})$-twisted. In particular, if we choose $\epsilon = 0.86168$ and we require  
$$
\sum_{i=1}^{n} \frac{1}{q_{i}} \leq \min  \bigg \{ \frac{1.72336}{80\pi(n-1) + 24.8}, 7.963 \times 10^{-6}  \bigg  \},
$$
then  $M = N(\alpha)$ is $(\epsilon, d_{L})$-twisted.
\end{cor}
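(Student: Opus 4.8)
The plan is to deduce \Cref{thm:ArithQuant} from \Cref{thm:FALquantify} by feeding in the universal lower bound on systoles of arithmetic link complements, so that the dependence of $\epsilon$ on the (a priori unbounded) systole of a general FAL disappears. The first step is to recall from Neumann--Reid \cite[Corollary 4.5, Theorem 4.6]{NR92} that every arithmetic link complement, hence every arithmetic FAL complement $N = \SS^3\setminus L$, has systole length $\ell_N(\gamma_s) > 0.862554$. Consequently $\ell_N(\gamma_s)/1.001 > 0.862554/1.001 > 0.86255/1.001$, and since $0.86255/1.001 < 0.862 < \log 3$, the requirement $\epsilon \le \min\{\ell_N(\gamma_s)/1.001,\ \log 3\}$ appearing in the hypothesis of \Cref{thm:FALquantify} is automatically satisfied as soon as $\epsilon \le 0.86255/1.001$. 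This is exactly where arithmeticity is used: for a general FAL the systole can be arbitrarily small (as the pretzel examples discussed above show), so no uniform cutoff for $\epsilon$ exists, but in the arithmetic case the Neumann--Reid bound supplies one.

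The second step handles the condition on the multi-slope. Because each $q_i$ is a nonzero integer, $|q_i|\ge 1$, hence $1/q_i^2 \le 1/|q_i|$ and therefore $\sum_i 1/q_i^2 \le \sum_i 1/|q_i|$. The two minima appearing in \Cref{thm:ArithQuant} and in \Cref{thm:FALquantify} are taken over the identical pair of quantities $\frac{2\epsilon}{80\pi(n-1)+28.78\epsilon}$ and $\frac{2}{2\pi/C(\epsilon)+28.78}$, so the bound $\sum_i 1/|q_i| \le \min\{\cdots\}$ assumed in \Cref{thm:ArithQuant} implies the bound $\sum_i 1/q_i^2 \le \min\{\cdots\}$ required by \Cref{thm:FALquantify}. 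Applying \Cref{thm:FALquantify} then immediately yields that $M = N(\alpha)$ is $(\epsilon,d_L)$-twisted, which is the general assertion of the corollary.

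For the explicit ``in particular'' statement I would set $\epsilon = 0.86168$, observe that $0.86168 < 0.86255/1.001$ so the general statement applies, and simply evaluate the two bounds. One has $2\epsilon = 1.72336$ and $28.78\,\epsilon = 24.799\ldots < 24.8$, so $\frac{1.72336}{80\pi(n-1)+24.8} \le \frac{2\epsilon}{80\pi(n-1)+28.78\epsilon}$; and a direct numerical evaluation of $C(0.86168) = \frac{0.86168^5}{6771\,\cosh^5(0.6\cdot 0.86168 + 0.1475)}$ gives $\frac{2}{2\pi/C(\epsilon)+28.78}$ approximately equal to $7.963\times 10^{-6}$. Hence the displayed numerical bound on $\sum_i 1/q_i$ implies the bound coming from the general statement with this choice of $\epsilon$, and the conclusion follows.

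There is no real obstacle here; \Cref{thm:ArithQuant} is a routine specialization of \Cref{thm:FALquantify}. The only points requiring a little care are verifying the numerical inequalities $0.86255/1.001 < \ell_N(\gamma_s)/1.001$ and $0.86255/1.001 < \log 3$ (so that the $\epsilon$-hypothesis transfers), and carrying out the arithmetic that evaluates $C(0.86168)$ together with the two denominators to confirm the closed-form bounds $\frac{1.72336}{80\pi(n-1)+24.8}$ and $7.963\times 10^{-6}$.
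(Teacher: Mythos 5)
Your proof is correct and follows exactly the route the paper intends: appeal to the Neumann--Reid systole bound for arithmetic link complements to eliminate the systole dependence in \Cref{thm:FALquantify}, note that the multi-slope bound transfers because $1/q_i^2 \le 1/|q_i|$, and then evaluate the two expressions at $\epsilon = 0.86168$ to recover the displayed numerical constants. The paper presents the corollary as ``immediate'' with no further argument, and your proposal is precisely that immediate argument fleshed out.
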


\subsection{Examples of $\edHT$ and generic knot complements.}\label{subsec:examples-twisted-generic}

We conclude this section with a concrete family of $\edHT$ and generic knot complements. We will first discuss a rigorous construction and then contrast our numbers against experimental evidence which, although it may be susceptible to numerical errors, seems to suggest that $\edHT$ and generic knot complements are obtained via relatively low filling parameters.

\begin{figure}
\includegraphics[width=.35\textwidth]{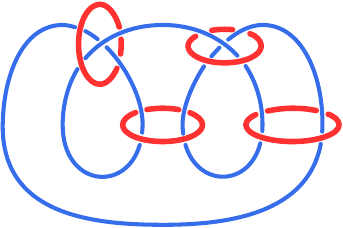}
\caption{\label{fig:OctFALexample} The octahedral FAL $L_4$, from which we exhibit $\edHT$ and generic knot complements that are the only knot complements in their respective commensurability classes.}
\end{figure}

We denote by $L_4$ the FAL shown in \Cref{fig:OctFALexample}. Using SnapPy's Sage interface \cite{CDGW09}, we can rigorously compute the orientation preserving symmetry group of $\SS^3 \setminus L_4$ and its action on the cusps. In particular, this group is isomorphic to $\ZZ_2\times \ZZ_2$, and it fixes each cusp of $\SS^3\setminus L_4$. From this it follows that any $\edHT$ filling of  $\SS^3 \setminus L_4$ is automatically an $\edHT$ and generic filling.

Given that the link complement  $\SS^3 \setminus L_4$ is in SnapPy's octahedral census as $\mathtt{ooct06\_06059}$, we see that $\SS^3 \setminus L_4$ is an octahedral FAL complement, i.e., it is arithmetic and has invariant trace field $\QQ(\I)$. In this case \cite[Corollary 4.5, Theorem 4.6]{NR92} gives a lower bound on the systole of $0.962424$, which is an improvement on the bound used for \Cref{thm:ArithQuant}. While we could plug this value directly into \Cref{thm:FALquantify}, we can get a smaller lower bound on the $q_i$ by using \Cref{thm:quantify}. In particular, since SnapPy can rigorously compute the cusp translations for $\SS^3\setminus L_4$, we can use \Cref{lem:NL_bound} to get a better bound on $\NL_i^2$ then is provided by \Cref{lem:NL}. As the link complement is octahedral, we can apply exact values from $\ZZ[i]$ with  SnapPy's combinatorial calculations (eg the computation of peripheral tori) to rigorously obtain the following inequalities: $0\le \cot\theta_i\le 1$, $r_i\lambda_i\sin\theta \le 6$, and $r_i\ge 1$. Given that $p_i=1$ for all $i$ and crossing circles always have longitudes of length $\lambda_i=2$, \Cref{lem:NL_bound} yields
$$
\NL_i^2 \ge \frac{1+4q_i^2}{6}-q_i,
$$
which implies that $\frac{1}{\NL_i^2} \le \frac{3}{2(q_i-1)^2}$ since we may assume that $q_i\ge 3$.

To apply \Cref{thm:quantify}, we find that we need $\frac{1}{\NL^2} \le \sum_{i=1}^n \frac{3}{2(q_i-1)^2} \le .0000057524$ to ensure that the resulting filling of $\SS^3\setminus L_4$ is $\edHT$. It follows that filling along slopes $\frac{1}{q_i}$ with $q_i\ge 1023$ for all $i$ will yield an $\edHT$ knot complement, which will moreover be $\edHT$ and generic given the symmetry group calculation for $\SS^3\setminus L_4$.

All calculations above are rigorous, and thus by \Cref{cor:unique_knot_comp_no_free_symms} the knots constructed above are an infinite family of knots that are the only knots in their commensurability classes. Furthermore, these are a new family that has not previously appeared in the literature, as we now show. By \cite{FKP08}, the constructed knots all have volumes within $1.18\times 10^{-11}$ of $vol(\SS^3\setminus L_4)=21.9831742603..$. There are three families of knots in the literature (that we are aware of) for which each is the only knot in its commensurability class. In \cite{MaMa08}, the authors show that for $n\ne 7$, a $(-2,3,n)$ pretzel knot complement is the unique knot complement in its commensurability class. Such knot complements are all obtained by surgery on the arithmetic two-component census link $\mathtt{L9n9}$. Thus their volumes are bounded by $vol(\mathtt{L9n9})=5.333..$, so our knots are distinct from these. In \cite{Mil17} it is shown that certain pretzel knots obtained by long Dehn fillings of a subfamily of FALs have no other knots in their respective commensurability classes. Again using \cite{FKP08}, these knots must have volumes close to their FAL ancestors, whose volumes are all at least $29.31$, so our knots are distinct from these as well. Finally, in \cite{RW08} the authors show that 2-bridge knots have no other knots in their commensurability classes, and that such knots always admit non-trivial symmetries acting freely on the cusp. Since no symmetry of $\SS^3\setminus L_4$ acts freely on the planar cusp, it follows that no symmetry of an $\edHT$ filling will act freely on the cusp, so our knots are not 2-bridge knots. Thus the family of knots we construct is distinct from the three known families above, which gives the following theorem:

\begin{thm}\label{thm:concrete_fam}
Let $L_4$  be the link in \Cref{fig:OctFALexample}. Let $\SS^3 \setminus K$ be a knot complement obtained by preforming $1/q_i$ Dehn filling on each crossing circle cusp of $\SS^3 \setminus L_4$. If each $q_i\geq 1023$, then $\SS \setminus K$ is an $\edHT$ and generic filling and therefore it admits no hidden symmetries and is the unique knot complement in its commensurability class.
\end{thm}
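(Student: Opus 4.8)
The plan is to verify the hypotheses of \Cref{thm:quantify} for the FAL complement $N=\SS^3\setminus L_4$ and the multi-slope $\alpha=(-,\tfrac{1}{q_1},\dots,\tfrac{1}{q_4})$, which will certify that $\SS^3\setminus K$ is $(\epsilon,d_N)$-twisted — hence $\edHT$, since $N$ is an FAL complement — and then to read off genericity and the two stated conclusions from the results of \Cref{sec:sym} and \Cref{sect:unique_in_comm_class}. Note that \Cref{thm:only_knot_comm_class} is not directly available here, as it requires at least $9$ twist regions while $L_4$ has only $4$ crossing circles; instead I would route the uniqueness claim through \Cref{cor:unique_knot_comp_no_free_symms}, which needs only $4$ twist regions together with the absence of symmetries acting freely on the cusp.

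First I would record the relevant (rigorously verified) SnapPy data for $\SS^3\setminus L_4$. It appears in the octahedral census as $\mathtt{ooct06\_06059}$, so it is arithmetic with invariant trace field $\QQ(\I)$; by \cite[Corollary 4.5, Theorem 4.6]{NR92} this forces $\ell_{N}(\gamma_s)\ge 0.962424$, and since $0.962424/1.001<\log 3$ we may fix some $\epsilon$ just below $0.9614$ as required by \Cref{def:edHTN} and \Cref{thm:quantify}. A verified computation of $Sym^+(\SS^3\setminus L_4)\cong\ZZ_2\times\ZZ_2$ shows that this group fixes each cusp and that no non-trivial element acts freely on the planar cusp. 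By \Cref{prop:symrelation}, the fact that $Sym^+(\SS^3\setminus L_4)$ fixes each cusp means every $\edHT$ filling of $\SS^3\setminus L_4$ is automatically $\edHT$ and generic; transporting the last fact through the monomorphism $f$ of \Cref{prop:symrelation} shows in addition that no symmetry of such a filling acts freely on its cusp.

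Next comes the normalized-length estimate. Using SnapPy's exact $\ZZ[\I]$ description of the maximal horoball cusp cross-sections, one checks that each crossing-circle cusp $C_i$, with longitude length $\lambda_i=2$ and meridian $\mu_i=r_i e^{i\theta_i}$, satisfies $0\le\cot\theta_i\le 1$, $r_i\lambda_i\sin\theta_i\le 6$, and $r_i\ge 1$. Feeding these and $p_i=1$ into \Cref{lem:NL_bound} gives $\NL_i^2\ge\tfrac{1+4q_i^2}{6}-q_i$, hence $\tfrac{1}{\NL_i^2}\le\tfrac{3}{2(q_i-1)^2}$ once $q_i\ge 3$. Since $L_4$ has $4$ crossing circles, taking each $q_i\ge 1023$ gives $\tfrac{1}{\NL^2}\le\sum_{i=1}^4\tfrac{3}{2(q_i-1)^2}\le\tfrac{6}{1022^2}<5.75\times 10^{-6}$, and one verifies this lies below both upper bounds in \Cref{thm:quantify} for the chosen $\epsilon$: the volume bound $\tfrac{\epsilon v_0}{8\pi vol(N)+28.78\,\epsilon v_0}$ (using $vol(\SS^3\setminus L_4)=21.983\ldots$) and the bound $(2\pi/C(\epsilon)+28.78)^{-1}$. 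Thus \Cref{thm:quantify} applies and $\SS^3\setminus K$ is $(\epsilon,d_N)$-twisted, hence $\edHT$, hence $\edHT$ and generic by the previous paragraph.

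With this established, \Cref{thm:no_rigid_cusps} immediately gives that $\SS^3\setminus K$ admits no hidden symmetries. Finally, since $q_i\ge 1023$ implies $K$ has $4$ twist regions each containing far more than $6$ crossings and, by the first step, $\SS^3\setminus K$ admits no symmetry acting freely on its cusp, \Cref{cor:unique_knot_comp_no_free_symms} applies and shows $\SS^3\setminus K$ is the unique knot complement in its commensurability class. The step I expect to be the main obstacle is the rigorous (interval-arithmetic) certification of the SnapPy inputs — the identification of $\SS^3\setminus L_4$ as $\mathtt{ooct06\_06059}$, the computation of its symmetry group together with the induced action on cusps, and the bounds on the cusp-shape quantities $\cot\theta_i$, $r_i\lambda_i\sin\theta_i$, $r_i$ — because everything downstream is a routine application of \Cref{thm:quantify}, \Cref{thm:no_rigid_cusps}, and \Cref{cor:unique_knot_comp_no_free_symms}, whereas the whole conclusion rests on these inputs being exact rather than merely numerically plausible.
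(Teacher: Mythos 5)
Your proposal matches the paper's proof essentially step for step: both verify via rigorous SnapPy computations that $\SS^3\setminus L_4$ is the octahedral census link $\mathtt{ooct06\_06059}$ with $Sym^+\cong\ZZ_2\times\ZZ_2$ fixing every cusp (hence every $\edHT$ filling is generic and no filling symmetry acts freely on the cusp), use the arithmetic systole bound $0.962424$ and the cusp-shape estimates $0\le\cot\theta_i\le1$, $r_i\lambda_i\sin\theta_i\le6$, $r_i\ge1$ in \Cref{lem:NL_bound} to get $1/\NL_i^2\le\frac{3}{2(q_i-1)^2}$, feed this into \Cref{thm:quantify} to obtain $q_i\ge1023$, and then conclude via \Cref{thm:no_rigid_cusps} and \Cref{cor:unique_knot_comp_no_free_symms}. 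Your explicit remark that \Cref{thm:only_knot_comm_class} is unavailable because $L_4$ has only four crossing circles, and that you must therefore route uniqueness through \Cref{cor:unique_knot_comp_no_free_symms} using the absence of free cusp symmetries, is exactly the route the paper takes.
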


 While the bound of $q_i\ge 1023$ is as far as we are able to push things with rigorous computations, the following non-rigorous computations suggest that a far more modest bound will suffice. Using SnapPy, we find that filling the crossing circle cusps of $L_4$ along slopes $\frac{1}{22}$ yields a knot complement $M$ whose 4 shortest geodesics all have length less than $.02$. All other geodesics of $M$ have length at least $1.76$. Since the volume of $\SS^3\setminus L_4$ is $\sim 21.9831$, and the systole is $\sim 1.76$, if we choose $\epsilon = 1.75$ then we get $\frac{\epsilon}{d_L} > .02$. Thus the four short geodesics of $M$ are in $M_{<\epsilon/d_L}$, and these are the only geodesics in $M_{\thin}$. Since drilling out these geodesics yields a manifold homeomorphic to $\SS^3\setminus L_4$, it follows that $M$ is $\edHT$ and generic. Although systole computations and drilling geodesics are non-rigorous in SnapPy, these calculations suggest that for $L_4$, $q_i \ge 22$ is likely large enough.

\bibliographystyle{alpha2}
\bibliography{biblio}

\end{document}